\font\notsosmall=cmr8
\newtheorem{theorem}{Theorem}[section]
\newtheorem{lemma}[theorem]{Lemma}
\newtheorem{prop}[theorem]{Proposition}
\newtheorem{proposition}[theorem]{Proposition}
\newtheorem{corollary}[theorem]{Corollary}
\newtheorem{conjecture}[theorem]{{Conjecture}}
\newtheorem{question}[theorem]{{Question}}
\newtheorem{exer}[theorem]{{Exercise}}
\newtheorem{claim}[theorem]{{Claim}}
\theoremstyle{remark}
\newtheorem{remark}[theorem]{Remark}
\theoremstyle{definition}
\newtheorem{example}[theorem]{{Example}}
\newtheorem{definition}[theorem]{{Definition}}
\newtheorem{hypothesis}[theorem]{{Hypothesis}}
\newtheorem{notation}[theorem]{{Notation}}
\def\bclaim{\begin{claim}}
\def\eclaim{\end{claim}}
\def\bexer{\begin{exer}}
\def\eexer{\end{exer}}
\def\bquestion{\begin{question}}
\def\equestion{\end{question}}
\def\bdefin{\begin{definition}}
\def\edefin{\end{definition}}
\def\bcor{\begin{corollary}}
\def\ecor{\end{corollary}}
\def\bthm{\begin{theorem}}
\def\ethm{\end{theorem}}
\def\bconj{\begin{conjecture}}
\def\econj{\end{conjecture}}
\def\blem{\begin{lemma}}
\def\elem{\end{lemma}}
\def\blemma{\begin{lemma}}
\def\elemma{\end{lemma}}
\def\bprop{\begin{prop}}
\def\eprop{\end{prop}}
\def\bremark{\begin{remark}}
\def\eremark{\end{remark}}
\def\bhyp{\begin{hypothesis}}
\def\ehyp{\end{hypothesis}}
\def\bnot{\begin{notation}}
\def\enot{\end{notation}}
\def\bexample{\begin{example}}
\def\eexample{\end{example}}
\def\MA{Monge--Amp\`ere }
\def\K{K\"ahler }
\def\i{\sqrt{-1}}
\def\del{\partial}
\def\dbar{\bar\partial}
\def\ddbar{\del\dbar}
\def\ra{\rightarrow}
\def\eps{\epsilon}
\newcommand{\RR}{\mathbb{R}}
\newcommand{\CC}{\mathbb{C}}
\newcommand{\NN}{\mathbb{N}}
\newcommand{\PP}{\mathbb{P}}
\def\del{\partial}
\newcommand{\calH}{\mathcal{H}}
\def\sm{\setminus}
\def\vol{\hbox{vol}}
\DeclareMathOperator{\Ric}{Ric}
\def\sm{\setminus}
\def\w{\wedge}
\def\o{\omega}
\newcommand{\dcal}{\mathcal{D}}
\newcommand{\ecal}{\mathcal{E}}
\newcommand{\hcal}{\mathcal{H}}
\newcommand{\lcal}{\mathcal{L}}\newcommand{\mcal}{\mathcal{M}}
\newcommand{\rcal}{\mathcal{R}}
  \def\calR{\rcal}
\def\calM{\mcal}  \def\calL{\lcal}
 \def\calH{\hcal} \def\H{\hcal}
  \def\calE{\ecal}
\def\a{\alpha} \def\be{\beta}
\def\o{\omega} 
\def\vp{\varphi} \def\eps{\epsilon}
\def\K{K\"ahler } 
\def\KE{K\"ahler--Einstein } \def\KEno{K\"ahler--Einstein}
\def\polishl{\char'40l}
 \def\Blocki{B\polishl{}ocki}
\def\Holder{H\"older }
\def\Ric{\hbox{\rm Ric}\,} 
\def\ovpn{\o^n_{\vp}}
\def\gM{g_{\hbox{\sml M}}}
\def\h#1{\hbox{#1}}
\def\text{\textstyle}
\def\dis{\displaystyle}
\def\q{\quad} \def\qq{\qquad}
\def\PSH{\mathrm{PSH}}
\def\Aut{{\operatorname{Aut}}}
\def\id{{\operatorname{id}}}
\def\Isom{{\operatorname{Isom}}}
\def\h#1{\hbox{#1}}
\def\MA{Monge--Amp\`ere }
\def\bpf{\begin{proof}}
\def\epf{\end{proof}}
\def\beq{\begin{equation}}
\def\eeq{\end{equation}}
\def\beqno{\begin{equation*}}
\def\eeqno{\end{equation*}}
\def\eaeq{\end{aligned}}
\def\baeq{\begin{aligned}}
\newcommand\blfootnote[1]{%
  \begingroup
  \renewcommand\thefootnote{}\footnote{#1}%
  \addtocounter{footnote}{-1}%
  \endgroup
}
\def\Ric{\hbox{\rm Ric}\,}
\def\Rico{\Ric\,\!\o}
\def\opcit{\underbar{\phantom{aaaaa}}}
\def\ovp{\omega_{\varphi}}
\def\ovpn{\omega^n_{\varphi}}
\def\on{\omega^n}
\def\fo{f_\omega}
\def\Ho{\calH_\omega}
\def\ginv{g^{-1}}
\def\fs{\mathfrak{s}}
\def\lb{\label}
\def\Ent{\h{\rm Ent}}
\def\aut{\h{\rm aut}}
\def\AutMJ{\Aut(M,\JJJ)}
\def\AutMJz{\Aut(M,\JJJ)_0}
\def\autMJ{\aut(M,\JJJ)}
\def\E{\calE}
\def\vpt{\vp_t}
\def\beq{\begin{equation}}
\def\eeq{\end{equation}}
\def\beqno{\begin{equation*}}
\def\eeqno{\end{equation*}}
\def\eaeq{\end{aligned}}
\def\baeq{\begin{aligned}}
\def\bpf{\begin{proof}}
\def\epf{\end{proof}}
\def\Rico{\Ric\,\!\o}
\def\a{\alpha} \def\be{\beta}
\def\o{\omega} 
\def\vp{\varphi} \def\eps{\epsilon}
\def\ovp{{\o_{\vp}}}
\def\on{\omega^n}
\def\ovpn{\omega_{\vp}^n}
\def\intM{\int_M}
\def\Ric{\hbox{\rm Ric}\,}
\def\ovpn{\o^n_{\vp}}
\def\gM{g_{\hbox{\sml M}}}
\def\gE{g_{\hbox{\sml E}}}
\def\gC{g_{\hbox{\sml C}}}
\def\gM{g_{\hbox{\sml M}}}
\def\dM{d_{\hbox{\sml M}}}
\def\dC{d_{\hbox{\smlsev C}}}
\def\dCpq{d_{\hbox{\smlsev C,p,q}}}
\def\dMinput#1{d_{\hbox{\sml M,#1}}}
\def\dCinput#1{d_{\hbox{\sml C,#1}}}
\def\ellC{\ell_{\hbox{\smlsev C}}}
\def\ellM{\ell_{\hbox{\smlsev M}}}
\def\dD{d_{\hbox{\smlsev D}}}
\def\dDG{d_{\hbox{\smlsev D},G}}
\def\ellD{\ell_{\hbox{\smlsev D}}}
\def\h#1{\hbox{#1}}
\def\mathoverr#1#2{\buildrel #1 \over #2}
\def\strutdepth{\dp\strutbox}
\def\specialstar{\vtop to \strutdepth{
    \baselineskip\strutdepth
    \vss\llap{$\star$\ \ \ \ \ \ \ \ \  }\null}}
\def\marginalstar{\strut\vadjust{\kern-\strutdepth\specialstar}}
\def\marginal#1{\strut\vadjust{\kern-\strutdepth
    {\vtop to \strutdepth{
    \baselineskip\strutdepth
    \vss\llap{{ \small #1 }}\null}
    }}
    }
\def\fo{f_\omega}
\def\ga{\gamma}
\def\JJJ{\h{\rm J}}
\def\JJJsml{\h{\notsosmall J}}
\def\isom{\h{\rm isom}}
\def\V{V^{-1}}
\font\sml=cmr6
\font\smlsev=cmr7
\font\Bbb=msbm10 
\font\Bbbfootnote=msbm7 scaled \magstephalf
\def\outlinfootnote#1{\hbox{\Bbbfootnote #1}}
\def\sseq{\subseteq}
\def\tpsi{\tilde\psi}
\def\CCfoot{{\outlinfootnote C}}
\def\intm{\int_M}
\def\qqq#1{\hskip#1em\relax}
\def\D{\Delta}
\title{
Tian's properness conjectures: \\ an introduction to K\"ahler geometry
}
\author{Yanir A. Rubinstein
\blfootnote{2010 Mathematics subject classification 
32Q20, 
58E11 (primary); 
53C25, 
53C55, 
14J50, 
32W20, 
32U05 (secondary).} 
}
\date{}
\begin{document}

\maketitle

\def\thhit#1{\hbox{${\hbox{#1}}^{\,{\hbox{\itnotsosml th}}}$}}
\font\itnotsosml=cmti7

\vglue-0.3cm
\centerline{\it Dedicated to Gang Tian
on the occassion of
his
6\thhit{0}
 birthday }

\begin{abstract}
This manuscript served as lecture notes for a minicourse in the 2016 Southern California Geometric Analysis Seminar Winter School.
The goal is to give a quick introduction to K\"ahler geometry by describing the recent resolution of Tian's three influential properness conjectures
in joint work with T. Darvas. These results---inspired by and analogous to 
work on the Yamabe problem in conformal geometry---give an  analytic characterization for the existence of K\"ahler--Einstein metrics and other important canonical 
metrics in  complex geometry, as well as strong borderline Sobolev type inequalities referred to as the (strong) Moser--Trudinger inequalities. 

\end{abstract}

\tableofcontents


\section*{Prologue}
\addcontentsline{toc}{section}{Prologue}

Harmonic functions are special. They enjoy a high degree of regularity, and in some vague sense are considered to be more aesthetically pleasing than an arbitrary function.
In geometry, one similarly seeks aesthetically pleasing structures on a given space. A typical example is that of an Einstein structure. Among all Riemannian structures Einstein structures are special in many ways; the interested reader is referred to the book by Besse \cite{Besse}.

Harmonic functions can be defined as solutions to the Laplace equation. 
A fundamental result in analysis is that harmonic functions are also characterized as minimizers of the Dirichlet energy. This result is fundamental in many ways.
First, the Dirichlet  energy makes sense for functions whose gradient is merely square integrable while Laplace's equation requires two derivatives to exist pointwise.
Second, it gives an approach to actually constructing harmonic functions.

K\"ahler--Einstein metrics can be defined as solutions to a fully nonlinear 
analogue of the Laplace equation. A nonlinear analogue of the Dirichlet 
energy was introduced by Mabuchi 30 years ago. 
A basic aspect of this analogy is that the Euler--Lagrange equation for the Mabuchi functional is precisely the K\"ahler--Einstein equation.
One possible way to view these lectures is as an attempt to explain some aspects of this analogy in more detail. In doing so, we strive to give a quick---and at least partly introductory---course in K\"ahler geometry.

\section*{A second prologue}
\addcontentsline{toc}{section}{A second prologue}

The K\"ahler--Einstein problem is also strongly motivated by an analogy with the Yamabe problem, that is, of course, itself motivated by the classical Dirichlet problem described above. The following table serves as an overall guidance to the K\"ahler--Einstein problem, especially for those familiar with the resolution of the Yamabe problem. 
Our goal in these lectures is to describe the right column of this table,
culminating in a complete analytic description of 
``Tian's properness conjectures" and ``Tian's
Moser--Trudinger conjecture" at the bottom right.

A few remarks are in place. 
First, this table is highly schematic, and its main purpose
is to highlight some possible analogies between the two analytic problems. Second, the infimum in the definition of
$
\mu_{[\o]}
$
is, of course, solely for the analogy, since 
$
\mu_{[\o]}
=\frac12\int_M R_{g_{\ovp}}\ovpn/\int_M\ovpn
$
is a cohomological invariant of the \K class.
Third, an alternative sufficient and necessary condition
that appeared very recently and after these lectures were delivered
can now be described in terms of
an invariant that is different but related to 
$\alpha_{[\omega]}$ coming from algebraic geometry
and K-stability  \cite{FujitaOdaka,cds,T15}. 
Yet, this last characterization is purely algebraic, and so it is less pertinent to the analogy with the Yamabe problem.
Finally, one may also discuss the more general problem of constant scalar curvature (csc) \K metrics. While this is beyond the scope of the lectures,
it is worth mentioning briefly the state-of-the-art on this problem at the time the lectures were given. Indeed, in \cite{DR2} aside from solving the \KE case we also
were able to reduce the general csc problem to the regularity of weak minimizers of the Mabuchi energy.  Shortly afterwards, our techniques played a r\^ole 
in the resolution of this regularity problem, and hence of the analytic characterization of constant scalar curvature \K metrics \cite{BDL2,CC,CC2}.
Some of these important developments are already described in the survey \cite{Dsur},
while others just appeared and seem to involve important new ideas beyond
the scope of these lectures.

\vfill\eject

\vglue-1cm
\hglue-1.8cm
\begin{tabular}{|l|l|l|}
\hline
\qqq1&
Yamabe problem\qqq{7} & K\"ahler--Einstein problem\qqq{4} \\
\hline
{\it structure} & Riemannian manifold $(M^n,g)$
& \K manifold $(M^{2n},\hbox{\rm J},\omega)$ \\
&&\\
{\it class} & conformal class $(M^n,[g])$ & \K class $(M^{2n},[\omega])$\\
& $=\Big\{Fg\,:\, C^\infty(M)\ni F>0$ \Big\} & $=\Big\{\o+\i\ddbar\vp>0\,:\,
\vp\in C^\infty(M)\Big\}$ \\
&&\\
{\it problem} & (non)-existence of constant & (non)-existence of a \KE \\
& scalar curvature metric in & metric in a \K class $(M^{2n},[\omega])$\\
& a conformal class $(M^n,[g])$ &\\
&&\\
{\it equation} &
$u^{N-2}g$ has constant scalar
&
$\ovp:=\o+\i\ddbar\vp$ has constant
\\
& curvature $\mu$ (here $N:=\frac{2n}{n-2}$) 
&Ricci curvature $\mu$\\
&&\\
&\qqq6$\Updownarrow$&\qqq8$\Updownarrow$\\
&&\\
&$\mu=\dis u^{N-1}\big(R_gu-(N+2)\D_gu\big)$&
$\mu=\dis-\log\frac{\ovpn}{\on e^{f_\o}}\Big/\vp$\\
&&\\
&(here $R_g=\;$scalar curvature of $g$)&
(here $\D_\o f_\o =\frac12(R_{g_\o}-2n\mu_{[\o]})$ where $\mu_{[\o]}$ \\
&&
is defined below and $g_\o(\,\cdot\,,\,\cdot\,):=\o(\cdot,\JJJ\cdot)$)\\
&& \\
{\it functional} &
Yamabe energy&
Mabuchi energy\\
&&\\
&
$u^{N-2}g\mapsto Y(u):=$
&
$\ovp\mapsto E(\vp):=\dis\int_M\log\frac{\ovpn}{\on e^{f_\o}}\ovpn$
\\
& $\dis\frac{\int_M\Big((N+2)|\nabla_gu|^2+R_gu^2\Big)dV_g}
{\Big(\int_Mu^NdV_g\Big)^{2/N}}$
&
$\dis-\mu\!\!\int_M\!\!\i\del\vp\w\dbar\vp\w\sum_{k=0}^{n-1}
\frac{\o^{n-1-k}\w \ovp^{k}}{{(n+1)}/{(k+1)}}$\\
&(invariant under $u\mapsto cu, \; c>0$)&
(invariant under $\vp\mapsto \vp+c, \; c\in\RR$)\\
&&\\
{\it sign} &
$\mu_{[g]}:=\dis\inf_{u>0}$ $\int_M R_{ug}dV_{ug}/(\int_MdV_{ug})^{2/N}$
&
$\mu_{[\o]}:=\dis\inf_{\vp>0}$ $\frac12\int_M R_{g_{\ovp}}\ovpn/\int_M\ovpn$
\\
\raise8pt\hbox{\it invariant}&&\\
{\it solution ($\mu\le  $0)} &
always exists (Aubin,Trudinger,Yamabe)
&
always exists (Aubin, Yau)
\\
&&\\
{\it sufficient crit-} & 
exists if $\mu_{[g]}<\mu_{[g_{S^n}]}$\qq (Aubin)
&
exists if $\alpha_{[\o]}>\dis\frac{n\mu_{[\o]}}{n+1}$\qq (Tian)
\\
\raise4pt\hbox{\it 
erion ($\mu>$0)} & where $\mu_{[g_{S^n}]}=n(n-1)\vol(S^n(1))^{\frac2n}$
& where
$\alpha_{[\o]}:=\dis\sup_{c>0} 
\{c\,:\,
\!\!\!\!\!\!\!\!
\sup_
{
\mathoverr{\;\;\;\;\;\ovp>0}{\sup\!\!\!\!\vp=-1}
}
$
$
\!\!\!\!\!\!
\int_M
e^{-c\vp}\o^n< \infty
 \}$
\\
&&\\
{\it necessary }& Aubin's criterion always holds&
Tian's properness conjectures?\\
{\it condition } & (if $[g]\neq[g_{S^n}]$) (Aubin, Schoen)
&\\
&&\\
{\it strong border-} &Aubin's strong Moser--Trudinger & Tian's Moser--Trudinger conjecture?\\
{\it line Sobolev} &inequality on ${S^n}$&\\
\hline
\end{tabular}

\vglue1cm

\section{Introduction}

The main motivation for these lectures 
are three conjectures:
Tian's properness conjectures and 
 Tian's Moser--Trudinger conjecture.
Consider the space 
\begin{equation}
\label{HEq}
\textstyle\calH
=
\{\omega_\vp:=\o+\i\ddbar\vp \,:\, \vp\in C^{\infty}(M), \,  \omega_\vp>0\}
\end{equation}
of all \K metrics representing a fixed cohomology class  
on a compact \K manifold $(M,\JJJ,\o)$. 

Motivated by results in conformal geometry 
and the direct method in the calculus of variations,
in the 90's Tian  introduced the notion of ``properness on $\calH$"
\cite[Definition 5.1]{T1994}
in terms of the Aubin nonlinear 
energy functional $J$ \cite{Aubin84}  and the Mabuchi K-energy $E$ 
\cite{Mabuchi87} as follows (both functionals are defined \S\ref{MabSec} below, see 
 \eqref{AubinEnergyEq} and  \eqref{EEq}).

\bdefin
The functional $E:\calH\ra\RR$ is said to be proper if 
\beq
\label{PropernessEq}
\forall\,\o_j\in\calH,\q
\lim_jJ(\o_j)\ra \infty 
\q\Longrightarrow\q
\lim_jE(\o_j)\ra \infty. 
\eeq
\edefin
\noindent
Tian made the following influential conjecture 
\cite[Remark 5.2]{T1994},
\cite[Conjecture 7.12]{Tianbook}.
Denote by $\AutMJz$ the identity component of the group of
automorphisms of $(M,\JJJ)$, and denote by
$\aut(M,\JJJ)$ its Lie algebra, consisting of holomorphic vector fields.

\bconj
\lb{TianConj} {\rm (Tian's first properness conjecture)}
Let $(M,\JJJ,\o)$ be a  Fano manifold. 
%
Let $K$ be a maximally compact subgroup of $\AutMJz$. Then 
$\H$ contains a K\"ahler--Einstein  
metric
if and only if $E$ is proper on the subset $\H^K\subset\H$
consisting of $K$-invariant metrics.
\econj

Tian's conjecture is central in \K geometry
since it predicts an analytic characterization of 
\KE manifolds. 
Appropriate analogues of this conjecture in conformal
geometry are known and were crucial in the solution of the famous Yamabe problem
concerning the existence of constant scalar curvature metrics in conformal classes.
We briefly discuss this in Section \ref{SecondSec}.

The conjecture has attracted
much 
attention
over the past two and a half decades including motivating 
much work on equivalence between 
algebro-geometric notions of stability and existence of 
canonical metrics, as well as on the interface
of pluripotential theory and \MA equations. 
While the algebraic-geometric characterization of \KE manifolds
has been finally obtained \cite{cds,T15}, the analytic characterization
of Conjecture \ref{TianConj} has remained open. 
We refer to the surveys \cite{Thomas,PhongSturmSurvey,Tian2012,PhongSongSturm,R14}.

Conjecture \ref{TianConj} (which we refer to as the {\it Tian's first properness conjecture}) gives a characterization of K\"ahler--Einstein manifolds in terms of the Mabuchi energy. Thus, it can be seen as the analogue of the properness of the Yamabe energy which led to the resolution of the Yamabe problem. Another central theorem in conformal geometry is
Aubin's strong Moser--Trudinger inequality on spheres.
Tian's Moser--Trudinger conjecture suggests a \K geometry analogue of this inequality on any \KE manifold,
that we now turn to describe.

Denote by $\Lambda_1$ the real eigenspace of
the smallest positive eigenvalue of $-\Delta_\o$, and set
$$
\calH_\o^\perp:=\{\vp\in\H\,:\, \int \vp\psi\on=0,\;\forall \psi\in\Lambda_1\}.
$$
When $\o$ is \KEno, 
it is well-known that $\Lambda_1$ is in a one-to-one correspondence with
holomorphic gradient vector fields \cite{ga}.
Tian made the following conjecture in the 90's 
\cite[Conjecture 5.5]{Tian97},
\cite[Conjecture 6.23]{Tianbook},\cite[Conjecture 2.15]{Tian2012}.

\bconj
\lb{TianConj2}
{\rm (Tian's Moser--Trudinger conjecture)}
Suppose $(M,\JJJ,\o)$ is Fano K\"ahler-Einstein.
Then for some $C,D>0$,
$$
E(\vp) \geq C J(\vp) - D, \qq  \vp \in \H_\o^\perp.
$$
\econj

By the end of these lectures we will present results that resolve both Conjectures
 \ref{TianConj} and \ref{TianConj2}. 
For Conjecture  \ref{TianConj}, the special case 
when $K$ is trivial
has already been known for almost 20 years from the work of Tian and Tian--Zhu \cite{Tian97,TZ00}. Treating the general case  has remained open since. Somewhat surprisingly, Conjecture
 \ref{TianConj} was actually disproved by Darvas and the author recently \cite{DR2}. More precisely, 
Theorem \ref{KEGexistenceIntroThm} establishes precisely for which manifolds Conjecture \ref{TianConj} holds, giving a converse
to a result of Phong et al. \cite{pssw}.
Next, and this is the second main result of \cite{DR2}, an alternative formulation to
Conjecture \ref{TianConj} is established---which we refer to
as {\it Tian's second properness conjecture}---giving the sought after
analytic characterization for  K\"ahler--Einstein  metrics. 
This is stated in Theorem \ref{KEexistenceIntroThm}.
Finally, the strong Moser--Trudinger inequality for \KE manifolds is established, confirming 
Conjecture \ref{TianConj2} \cite{DR2}. This is stated in Corollary \ref{TianConj2Cor}

We leave out a few relevant topics due to space and time limitations.
Notably, we mostly do not delve into the the pluripotential theoretic
and Bergman kernel aspects of the proof of Theorem \ref{KEexistenceIntroThm},
for which we refer to Darvas' survey that has appeared in the meantime \cite{Dsur}.
On the other hand, our treatment is rather self-contained 
and reviews most of the basics, giving an opportunity to the interested reader
for a rapid introduction to current research in \K geometry.


\section {K\"ahler and Fano manifolds} 

In these lectures all manifolds will be assumed to be complex. Complex manifolds are just like topological or differentiable manifolds except that the transition functions between the different charts in the atlas are required to be holomorphic in both directions (i.e., biholomorphic). Thus, all our manifolds will be of even dimension. In dimension two all complex manifolds are also {\it  K\"ahler}. In higher dimensions, however, the latter condition is rather subtle and reflects the existence of a Riemannian structure highly compatibile with the given complex structure--- we explain this next.

Let $(M,\JJJ , g)$ denote a complex manifold together with a Riemannian metric $g$ on $M$
that is compatible with $\JJJ $
in the sense that 
$$
g(x,y)=g(\JJJ x,\JJJ y), \q \forall\, x,y\in \Gamma(M,TM),
$$
where $\Gamma(M,TM)$ denote smooth vector fields on $M $.
Since $\JJJ ^2=-I$, the formula
$\omega:=\o=g(\JJJ \,\cdot\,,\,\cdot\,)$ 
defines a 2-form on $M$. 
We call $(M,\JJJ , g)$ a  K\"ahler manifold if the form $\o$ 
is a closed 2-form,
$$
d\omega=0.
$$  

\def\bdz#1{\overline{dz^{#1}}}
\def\b#1{\bar{#1}}

K\"ahler manifolds have many other equivalent characterizations; we refer the reader to \cite[\S2.1.4]{RThesis}.

In these lectures we will be interested in the curvature of  K\"ahler manifolds. In particular, we will be interested in trying to understand 
{\it when  K\"ahler manifolds admit Einstein metrics.} Just as the Riemannian metric can be transformed using the complex structure into a skew-symmetric form, so can the Ricci curvature tensor
$\Ric\, g$. We denote the Ricci form by
$$
\Rico=\Ric\, g(\JJJ \,\cdot\,,\,\cdot\,).
$$ 
Thus, on a K\"ahler manifold the Einstein equation
$$
\Ric\, g= cg,
$$
transforms to
\begin{equation}
\begin{aligned}
\label{KE1Eq}
\Rico= c\o.
\end{aligned}
\end{equation}
Let $z_1,\ldots,z_n$ be local holomorphic coordinates on a neighborhood in $M $. In those coordinates express the form $\o$,
$$
\o=g_{i\bar j}dz^i\w \bdz j.
$$
As discovered by 
Schouten \cite{Schouten1},
Schouten and van Dantzig \cite{SchoutenVanDantzig,SchoutenVanDantzig2}, and 
\K \cite{Kahler} 
(see \cite[p. 35]{RThesis} for more references)
the Ricci form then has the following expression
\beq
\label{RicciFormEq}
\Rico=
-
{\i}
\ddbar\log\det [g_{i\b j}].
\eeq
In fact, the proof is not hard. First, recall some useful formul\ae:

\bexer
\lb{DetExer}
{\rm
Let $D $ be a constant coefficient first-order operator defined on 
some domain in $\RR^m$ and let $A $ be a matrix-valued function on the same domain. Then,
$$
D\log\det A=A^{ij} DA_{ij},
$$
and
$$
D A^ {ij} = -A^ {i t} DA_{t  s} A^{s j},
$$
where $A^ {ij} $ is the coefficient in the $i $-th row
and $j $-th column of the inverse matrix of $A$. 
}\eexer

 Therefore,
$$
\ddbar\log\det [g_{i\b j}]
=
-g^{i\b t}g_{t\b s,k}g^{s\b j}g_{i\b j, \b l}
+g^{i\b j}g_{i\b j,k\b l}.
$$
Now, $d\o=0$ implies that both $\del \omega =\b\del \omega= 0 $. Thus, $g_{i\b j,k}
= g_{k\b j,i}$ and $g_{i\b j,\b k}
= g_{i\b k,\b j}$.

\begin{exer} {\rm
\label{}
Complete the proof of  \eqref{RicciFormEq}.
} \end{exer}


Thus, if $\eta$ is any  K\"ahler form such that locally
 $\eta = h_{i\bar j}\bdz i\w dz^j$ then
\begin{equation}
\begin{aligned}
\label{RicdiffEq}
\Rico-\Ric\,\eta=
{\i}{}\ddbar\log\frac{\det[h_{i\b j}]}{\det[g_{i\b j}]}
={\i}\ddbar\log\frac{\eta^n}{\on}
\end{aligned}
\end{equation}
is an exact two form on $M $ since $\log\frac{\eta^n}{\on}$ is a globally defined smooth function on $M $
as $\frac{\eta^n}{\on}>0$. Therefore, the Ricci form of any  K\"ahler metric is not only a closed two-form (as is evident from  \eqref{RicciFormEq}), but also lies in a fixed cohomology class.
Up to a constant factor, this class is called the first Chern class of $M $ and is denoted by 
$2\pi c_1 (M) $.

The point of this discussion is that Einstein metrics on a  K\"ahler manifold
 can exist only  if the equality of cohomology classes
\beq\label{c1}
\mu[\o]= 2\pi c_1(M) 
\eeq
holds. Now, as a rule of thumb,
Einstein metrics of negative Ricci curvature exist in abundance on Riemannian manifolds, while Einstein metrics of positive Ricci curvature are quite harder to come by \cite{Besse}. Somewhat analogously, it is easier to prove existence of K\"ahler--Einstein metrics of negative Ricci curvature, i.e., when $\mu <0 $: a
fundamental theorem of Aubin and Yau states that  then \eqref{c1} also implies that there exists a unique K\"ahler--Einstein metric whose cohomology class is $[\omega] $.
Around the same time, Yau also showed that the same is true when $\mu = 0 $. 
However, when $\mu>0$ it was shown by Matsushima already in the 50's that  \eqref{c1} is not sufficient. \K manifolds for which \eqref{c1} holds with $\mu>0$ are 
called Fano manifolds.
Thus, it is natural to ask:

\bquestion
When does a Fano manifold admit a  
K\"ahler--Einstein  metric?
\equestion
As just explained, if such a \KE metric exists it must
have positive Ricci curvature. (Conversely, if a \K manifold
admits a \K metric of positive Ricci curvature it is Fano.)
In these lectures we describe an answer to this question. The key player will be the Mabuchi energy, which we now turn to describe.

\section {The Mabuchi  energy}
\lb{MabSec}

Before defining the Mabuchi energy we introduce several other basic  functionals.

The two most basic functionals, introduced by Aubin
\cite{Aubin84}, are defined by
\beq
\label{AubinEnergyEq}
\begin{aligned}
J(\vp)&=J(\o_\vp):=
V^{-1}\int_M\vp\o^n
-
\frac{V^{-1}}{n+1}\int_M
\vp\sum_{l=0}^{n}\o^{n-l}\w\o_{\vp}^{l},
\cr
I(\vp)&=I(\o_\vp):=
V^{-1}\int_M\vp(\o^n-\ovpn).
\end{aligned}
\eeq
Here,
$$
V =\int^{}_{} \ovpn,
$$
is a constant independent of $ \o_\varphi \in\H $.

\begin{exer} {\rm
\label{}
Show that $V $ is $n! $ times the volume of $M $ with respect to the Riemannian metric $g $.
(See the end of the proof of Proposition 2.1 in \cite{ClarkeR} for a solution.)

} \end{exer}

The notation $J(\vp)=J(\o_{\vp})$ (and similarly for $I $)  is justified by the fact that $J(\vp)=J(\vp+c)$
for any $c\in\RR$.
These two functionals, as well as their difference,  are mostly equivalent, in the sense that,
\begin{equation}
\label{IJEq}
\frac1{n^2}(I-J)\le \frac1{n(n+1)}I\le\frac1n J\le I-J\le \frac{n}{n+1} I\le nJ.
\end{equation} 

\bremark
We will be rather sloppy and often say ``$\vp\in\H$" when we really mean
$\ovp\in\H$. However, see Remark \ref{HHoRemark} where we start being more
precise.
\eremark

A closely related functional is the Aubin--Mabuchi functional, introduced by Mabuchi \cite[Theorem 2.3]{Mabuchi87},
\begin{equation}
\label{AMdef}
\h{\rm AM}(\vp):= V^{-1}\int_M\vp\o^n-J(\vp)=
\frac{V^{-1}}{n+1}\sum_{j=0}^{n}\int_M \vp\, \o^j \wedge \o_\vp^{n-j},
\end{equation}

\begin{exer} {\rm
\label{}
 Prove the integration by parts formula
$$
\int  g\i\ddbar f\wedge \alpha^ j\wedge\beta ^ {n  - j -1}=
\int  f\i\ddbar g\wedge \alpha^ j\wedge\beta ^ {n  - j -1},
$$
whenever $\alpha,\beta $ are smooth closed (1, 1)-forms and
$f,g\in C^2(M)$.
Then, show that if $\delta \mapsto  \varphi (\delta) $ denotes a \hbox{$C^1$} curve in $\H, $ (in the sense that $\delta\mapsto \vp(\delta)(x)$ is $C^1$ map for each $x\in M$,
and $\o_{\vp(\delta)}\in\H$ for each $\delta$), 
\begin{equation}
\begin{aligned}
\label{AMVarEq}
\frac{d}{d\delta}\h{\rm AM}(\vp(\delta)) = V^{-1} \int^{}_{} \frac{d}{d\delta}\varphi(\delta)
\o_{\vp(\delta)}^n.
\end{aligned}
\end{equation}
} \end{exer}

Denote by
\beq
\Ent(\nu,\chi)=
\frac{1}{V}\int_M\log\frac{\chi}{\nu}\chi,
\eeq
the entropy of the measure $\chi$ with respect to the measure $\nu$ (where
here $V=\int_M\chi=\int_M\nu$).

The Mabuchi energy (sometimes also called the K-energy as in 
Mabuchi's  original article) 
$$E:\H\ra\RR,$$ 
is defined by
\cite[(5.27)]{R14},\cite{Mabuchi87}, 
\beq
\label{EEq}
E(\ovp)= E(\vp) 
:=
\Ent(e^{f_\o} \o^n,\o^n_\vp)
-\mu  \h{\rm AM}(\vp)
+ \mu\V\int_M \vp\ovp^n.
\eeq
Here, $\fo$ is a smooth function depending on $\omega$ that we define next.
\begin{definition}
\label{foDef}
The Ricci potential of $\o$,
$f_\o$, satisfies
\beq\label{foEq}
\i\ddbar\fo=\Ric\,\o-\mu\o,
\eeq
where it is
convenient to require the normalization $\int e^{\fo}\on=\int \on$.
\end{definition}

\begin{exer} {\rm
\label{IJAMExer}
Show that
\beq
\lb{IJIEq}
\h{\rm AM}(\vp)
=(I-J)(\vp)+\V\int \vp\ovp^n,
\eeq
and therefore the last two terms in \eqref{EEq} equal $-\mu(I-J)(\o,\ovp)$, so
\beq
\label{E2ndEq}
E(\ovp)=
\Ent(e^{f_\o} \o^n,\o^n_\vp)
-\mu (I-J)(\o,\ovp).
\eeq

}\end{exer}

From this formula, we see that understanding the K-energy essentially means understanding the interplay between the entropy and the Aubin functional $I-J$
(or, the equivalent functionals $I$ or $J$, recall  \eqref{IJEq}).
This is in some sense the holy grail, the difficult analytical question at the heart of Tian's conjecture.
A first (and fundamental) result in this direction is Theorem \ref{EntThm} below, however only after much more work do we obtain a clearer picture of this relationship,
culminating in Theorem \ref{KEexistenceIntroThm}.

\bexer
{\rm Show that indeed $E(\ovp)= E(\vp)$, i.e., that
$E(\vp+C)= E(\vp)$ for any $C\in\RR$.
}\eexer

There is another way to write the K-energy:
\begin{equation}
\label{Kendef}
E(\vp):= 
\Ent(\o^n,\o^n_\vp)
+  
s_0 \h{\rm AM}(\vp) 
- 
\frac{1}{V}\sum_{j=0}^{n-1}\int_M \vp \Ric \o \wedge \ovp^{j} \wedge \o^{n-1-j},
\eeq
where $s_0 = V^{-1}\int_M s_\o \o^n$ is the average scalar curvature. 
Of course, 
$$V^{-1}\int_M s_\o \o^n=V^{-1}\int_M n\Rico\w \o^{n-1}=n\mu$$
so $s_0=n\mu$. 
\bexer
{\rm
Show that  \eqref{Kendef} coincides with   \eqref{EEq} when $\mu[\omega] = 2\pi c_1 (M) 
\; (\,=[\Rico])$.
}\eexer

The point of \eqref{Kendef} is that it actually makes sense on any \K manifold. We will however stick to the first formula in these lectures for simplicity.

\subsection {The K-energy when $\mu<0$}

Using Exercise \ref{IJAMExer}, Conjecture \ref{TianConj} is seen to hold in the case $\mu <0$ as follows.
First, convexity of the exponential function implies that
$$
\int \log f d\nu \le \log \int fd\nu,
$$
whenever $d\nu$ is a probability measure (so $\int d\nu =1$), so
\begin{equation}
\label{}
\Ent(\nu,\chi)=
-\int\log\frac{\nu}{\chi}\frac\chi V
\ge
-\log\int\frac{\nu}{\chi}\frac\chi V=0,
\end{equation}
i.e., the entropy is always nonnegative.
Therefore,
\beq
\label{EnegEq}
E(\vp) 
=
\Ent(e^{f_\o} \o^n,\o^n_\vp)
-\mu(I-J)(\vp)\ge
\frac{|\mu|}{n}J(\vp),
\eeq
as desired.

\subsection {The K-energy when $\mu\ge0$}

We now describe a technique, due to Tian \cite[\S7]{Tianbook}, 
to treat some of the cases when $\mu\ge0$ by showing the entropy itself is always proper.
Since
\begin{equation}
\label{EntProperEq}
\Ent(e^{f_\o} \o^n,\o^n_\vp)\ge \Ent(\o^n,\o^n_\vp)-\sup f_\o,
\end{equation}
it suffices to estimate 
$
\Ent(\o^n,\o^n_\vp).
$
Since the functionals $I $ and $J $ are interchangeable as far as properness goes
(recall \eqref{IJEq}), what we would like to show is: 
\begin{theorem}
\label{EntThm}
There exists
a positive $\beta,C $ such that
$$
\Ent(\o^n,\o^n_\vp)\ge
\beta I(\vp)-C=-\beta 
V^{-1}\int_M\Big(\vp-V^{-1}\int_M\vp\on\Big)\ovpn-C.
$$
\end{theorem}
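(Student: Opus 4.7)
The plan is to use the convex-duality (Gibbs variational) characterization of entropy, which is a direct consequence of Jensen's inequality applied to $\log$ with the probability measure $\o_\vp^n/V$: for any integrable function $u$ on $M$,
$$\Ent(\o^n,\o^n_\vp) \;\ge\; V^{-1}\!\int_M u\,\o^n_\vp \;-\; \log\Big(V^{-1}\!\int_M e^u\o^n\Big).$$
The natural choice of test function is $u=-c\vp$ for a small constant $c>0$ to be fixed shortly, giving
$$\Ent(\o^n,\o^n_\vp) \;\ge\; -c\,V^{-1}\!\int_M \vp\,\o^n_\vp \;-\; \log\Big(V^{-1}\!\int_M e^{-c\vp}\o^n\Big).$$

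Both sides of the desired inequality are unchanged under $\vp\mapsto \vp+\mathrm{const}$, so I would normalize $\sup_M\vp=0$. Then $\vp\le 0$, hence $V^{-1}\int_M\vp\o^n\le 0$, and therefore
$$-V^{-1}\!\int_M \vp\,\o^n_\vp \;=\; I(\vp) - V^{-1}\!\int_M \vp\,\o^n \;\ge\; I(\vp).$$
Thus, the whole argument reduces to obtaining a uniform upper bound, valid over all $\vp\in\calH$ with $\sup_M\vp=0$, for the exponential integral $V^{-1}\!\int_M e^{-c\vp}\o^n$.

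This last step is precisely the content of Tian's $\alpha$-invariant $\alpha_{[\o]}$ displayed in the table of the Prologue: the fundamental fact that $\alpha_{[\o]}>0$ implies that for every $0<c<\alpha_{[\o]}$ there is a constant $C_c<\infty$ with $V^{-1}\!\int_M e^{-c\vp}\o^n\le C_c$ for all $\o_\vp\in\calH$ satisfying $\sup\vp=0$. Fixing such a $c$ and setting $\beta:=c$, $C:=\log C_c$ concludes the proof. The main obstacle—everything else being a one-line application of Jensen and a normalization—is the positivity $\alpha_{[\o]}>0$, which is a nontrivial pluripotential-theoretic input proved by Tian using H\"ormander's $L^2$-estimates (or, equivalently, Skoda's integrability theorem) combined with a compactness argument in $\PSH(M,\o)/\RR$. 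I would either take this as a black box or sketch its proof as a separate lemma.
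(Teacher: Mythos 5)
Your proof is correct and, after unwinding the presentation, essentially identical to the paper's: both reduce the inequality to a uniform bound on $V^{-1}\int_M e^{-\beta(\vp-V^{-1}\int_M\vp\o^n)}\o^n$ via Jensen's inequality (you phrase it as the Gibbs variational/duality formula for entropy; the paper computes $\beta I(\vp)-\Ent(\o^n,\ovpn)$ directly and then applies Jensen), and then both invoke the positivity of Tian's $\alpha$-invariant, Theorem \ref{TianalphaThm}, which is indeed proved in the paper via H\"ormander's subharmonic-function estimate together with the uniform lower bound on the Green kernel. Your extra normalization $\sup_M\vp=0$ is harmless and, in fact, is the same normalization the paper uses in its proof of Theorem \ref{TianalphaThm} (the $\sup$-normalized and average-normalized formulations are equivalent by the mean value inequality \eqref{MeanVal}), so there is no gap.
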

Rewriting the functional $I $ in this way is useful for the following reason:
\begin{equation}
\baeq
\label{}
\beta I(\vp)-
\Ent(\o^n,\o^n_\vp)
&=
\int\log e^{\log\frac{\on}{\ovpn}-\be(\vp-V^{-1}\int_M\vp\on)}\ovpn/V
\cr
&\le
\log\int e^{\log\frac{\on}{\ovpn}}e^{-\be(\vp-V^{-1}\int_M\vp\on)}\ovpn/V
\cr
&=
\log\int e^{-\be(\vp-V^{-1}\int_M\vp\on)}\on/V,
\eaeq
\end{equation}
and so the question reduces to whether there exists a positive $\beta $ such that the functional
$$
\vp\mapsto \int e^{-\be(\vp-V^{-1}\int_M\vp\on)}\on
$$
is uniformly bounded on $\H $. Observe that we have managed to eliminate the dependence on the measure $\ovpn$. To be more precise, the question is now about integrability properties of functions in $\H $ with respect to a {\it fixed} measure.  
We treat this question in the next subsection. 
Before doing so, observe that an affirmative answer implies the K-energy $E$ is proper whenever $\mu = 0 $. When $\mu > 0$, an affirmative answer implies using  \eqref{IJEq},
\begin{equation}
\begin{aligned}
\label{}
E(\vp)\ge (\be - n\mu/(n+1)) I(\vp).
\end{aligned}
\end{equation}
Thus, if $\beta $ can be taken larger than $n\mu/(n+1)$ then the K-energy is proper even when $\mu > 0$.

\subsection {Tian's invariant}

The preceding question is equivalent to the following:

\begin{question}
\label{TianInvQuestion}
Is
\beq\label{SecondDefAlpha}
\alpha(M,[\o])
=
\sup\Big\{\; \be \,:\,
\sup_{\vp\in\H}\int_M
e^{-\be(\vp-V^{-1}\int_M\vp\on)}\o^n< C(\be)
\h{\rm \ for some constant $C(\be)>0$}
\Big\}
\eeq
positive? 

\end{question}

By definition, the number $\alpha(M,[\o])$ is an invariant of the  K\"ahler class $[\omega] $.
It was introduced by Tian, who answered Question \ref{TianInvQuestion} affirmatively
 \cite[Proposition 2.1]{Tian1987}.

\begin{theorem}
\label{TianalphaThm}
$\alpha(M,[\o])>0$.
\end{theorem}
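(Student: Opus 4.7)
The plan is to reduce the global integrability claim to a classical \emph{local} integrability estimate for plurisubharmonic functions on balls in $\CC^n$, originally due to Skoda and provable via H\"ormander's $L^2$ theory for $\bar\partial$.

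First I would normalize. Since the integrand is invariant under $\vp\mapsto \vp+c$, one may assume $V^{-1}\int_M \vp\,\o^n = 0$. Next, I would invoke the standard $L^1$-compactness of $\o$-plurisubharmonic functions: there is a constant $C_0=C_0(M,\o)>0$ such that every $\vp\in \PSH(M,\o)$ satisfies
$$\sup_M \vp \;\le\; V^{-1}\!\!\int_M \vp\,\o^n + C_0.$$
Hence under the normalization above $\sup_M \vp \le C_0$, and after replacing $\vp$ by $\vp-C_0$ and absorbing $e^{\be C_0}$ into the final constant, the task reduces to producing $\be,C>0$ with $\int_M e^{-\be\vp}\o^n\le C$ for every $\vp\in\PSH(M,\o)$ satisfying $\sup_M \vp=0$.

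Next, I would cover $M$ by finitely many pairs of coordinate open sets $U_\a\Subset U_\a'$ biholomorphic to Euclidean balls, on each of which $\o=\i\ddbar\rho_\a$ for some smooth strictly plurisubharmonic local potential $\rho_\a$. On $U_\a'$ the function $u_\a:=\rho_\a+\vp$ is plurisubharmonic, and $\sup_{U_\a'}u_\a$ is bounded above uniformly in $\vp$ because $\vp\le 0$ and $\rho_\a$ is smooth. The heart of the argument is then the Skoda--H\"ormander local integrability theorem: for a plurisubharmonic $u$ on the unit ball $B\subset\CC^n$ with $\sup_B u\le 0$, there exist $\be_0(n),C_1(n)>0$ such that $\int_{\frac12 B}e^{-\be u}\,dV\le C_1$ for all $\be\in(0,\be_0)$. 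Applying this to $u_\a$ (shifted by a uniform upper bound for its sup on $U_\a'$) and summing over the finite cover, while comparing Lebesgue measure with $\o^n$, yields a uniform bound on $\int_M e^{-\be\vp}\o^n$; thus $\alpha(M,[\o])\ge \be_0>0$.

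The only substantive ingredient is the local Skoda--H\"ormander integrability bound, and this is the main obstacle: it is a genuinely complex-analytic statement, proved either by solving $\bar\partial u = f$ with a plurisubharmonic weight $e^{-\be u}$ (H\"ormander's $L^2$ theorem) or by a direct Jensen-type argument exploiting the sub-mean-value property of psh functions on complex lines. The remaining steps---the normalization of $\o$-psh functions and the patching over a finite atlas---are standard.
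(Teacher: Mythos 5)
Your overall strategy matches the paper's: reduce to local integrability of plurisubharmonic functions on balls (via local K\"ahler potentials on a finite cover), then patch. However, there is a genuine gap in the way you state the key local lemma, and as stated your argument does not go through.

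The local integrability result you invoke—``for psh $u$ on the unit ball $B\subset\CC^n$ with $\sup_B u\le 0$, there exist $\be_0(n),C_1(n)>0$ such that $\int_{\frac12 B}e^{-\be u}\,dV\le C_1$ for all $\be\in(0,\be_0)$''—is false. Take $u=N\log|z|$; then $\sup_B u\le 0$ but $\int_{\frac12 B}e^{-\be u}\,dV=\int_{\frac12 B}|z|^{-\be N}\,dV$ diverges once $N\ge 2n/\be$, so no constant $C_1$ independent of $u$ can exist. The correct hypothesis in H\"ormander's theorem (Theorem~\ref{HThm} and Corollary~\ref{HorCor} in the paper) is \emph{two-sided}: one needs $\psi\le 0$ on $B_R(0)$ \emph{and} $\psi(0)\ge -1$, i.e., a lower bound at a fixed point. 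This is the condition that rules out unbounded Lelong numbers. Your proposal supplies only the upper bound on each chart.

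The missing ingredient is precisely what the paper extracts from the Green-kernel mean-value estimate \eqref{MeanVal}: after normalizing $\sup_M\vp=0$, the average $V^{-1}\int_M\vp\,\o^n$ is bounded below by $-nA_\o$, hence $\int_M|\vp|\,\o^n\le nA_\o V$, and therefore by a pigeonhole argument in each ball $B_r(x_j)$ of the cover there is a point $y_j$ with $\vp(y_j)\ge -nA_\o V/\vol(B_r(x_j))$. Only then does H\"ormander's theorem apply to $\vp+\psi_j$ with constants uniform in $\vp$. The $L^1$-compactness fact you quoted can be rearranged to yield exactly this $L^1$-bound, so the ingredient is actually recoverable from your setup—but you would need to make this explicit and correct the statement of the Skoda--H\"ormander lemma before the argument closes.
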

As explained above, Theorem \ref{TianalphaThm} implies Theorem \ref{EntThm}.  

Before going into the detailed proof of this theorem we observe that the last statement of the previous subsection can be stated as follows.

\begin{theorem}
\label{TianbigalphaThm}
Suppose \eqref{c1} holds. The K-energy is proper whenever $\alpha(M,[\o])>n\mu/(n+1)$.
\end{theorem}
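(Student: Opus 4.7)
The plan is to assemble the chain of inequalities already set up in the two preceding subsections, with the hypothesis $\alpha(M,[\o])>n\mu/(n+1)$ providing just enough room to turn a nonstrict bound into a strict one. Starting from the second form of the K-energy from Exercise \ref{IJAMExer},
\[
E(\vp)=\Ent(e^{f_\o}\o^n,\ovpn)-\mu(I-J)(\vp),
\]
I would first peel off the Ricci potential as in \eqref{EntProperEq}, obtaining $\Ent(e^{f_\o}\o^n,\ovpn)\ge \Ent(\o^n,\ovpn)-\sup f_\o$. The burden is then to bound $\Ent(\o^n,\ovpn)$ from below by a positive multiple of $I(\vp)$, which is exactly what the earlier Jensen-inequality manipulation delivers once Theorem \ref{TianalphaThm} is in hand.

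Concretely, for any $\beta$ with $0<\beta<\alpha(M,[\o])$, the definition of $\alpha(M,[\o])$ in \eqref{SecondDefAlpha} supplies a constant $C(\beta)$ with $\int_M e^{-\beta(\vp-V^{-1}\int_M\vp\on)}\o^n\le C(\beta)$ uniformly in $\vp\in\calH$. Applying Jensen's inequality to $\log(\o^n/\ovpn)$ against the probability measure $\ovpn/V$, exactly as in the derivation leading to Theorem \ref{EntThm}, gives
\[
\Ent(\o^n,\ovpn)\ge \beta I(\vp)-\log\bigl(C(\beta)/V\bigr).
\]
Combining the two bounds with the estimate $(I-J)(\vp)\le \tfrac{n}{n+1}I(\vp)$ from \eqref{IJEq} (and remembering $\mu\ge 0$ in the Fano case, so that the sign works in our favour) yields
\[
E(\vp)\ \ge\ \Bigl(\beta-\tfrac{n\mu}{n+1}\Bigr)I(\vp)\ -\ \log\bigl(C(\beta)/V\bigr)\ -\ \sup f_\o.
\]

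By the hypothesis $\alpha(M,[\o])>n\mu/(n+1)$ there is room to pick $\beta$ strictly between $n\mu/(n+1)$ and $\alpha(M,[\o])$, and for such $\beta$ the coefficient of $I(\vp)$ is a positive constant $c$. A second application of \eqref{IJEq}, namely $I(\vp)\ge J(\vp)$, converts this into $E(\vp)\ge c J(\vp)-C$, which is precisely the properness defined in \eqref{PropernessEq}: whenever $J(\o_j)\to\infty$ one forces $E(\o_j)\to\infty$. The case $\mu=0$ is subsumed since then any positive $\beta<\alpha(M,[\o])$ works and Theorem \ref{TianalphaThm} guarantees $\alpha(M,[\o])>0$.

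The main obstacle is really not in this theorem at all---the analytic heart of the matter is Theorem \ref{TianalphaThm}, whose proof invokes $L^2$-estimates and pluripotential theory. Given that input, the proof of Theorem \ref{TianbigalphaThm} amounts to careful bookkeeping: the one subtle point is that the inequality $\beta<\alpha(M,[\o])$ in the definition \eqref{SecondDefAlpha} is strict (one only has uniform integrability for $\beta$ strictly below the supremum), so the strict hypothesis $\alpha(M,[\o])>n\mu/(n+1)$ is exactly what allows a valid choice of $\beta$ leaving a strictly positive coefficient in front of $I(\vp)$.
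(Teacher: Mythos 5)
Your proof is correct and follows essentially the same route the paper takes in the two subsections preceding Theorem~\ref{TianbigalphaThm}: peel off $f_\o$ via \eqref{EntProperEq}, bound the entropy from below by $\beta I(\vp)$ via Jensen and Theorem~\ref{TianalphaThm}, and absorb $-\mu(I-J)$ using $(I-J)\le \frac{n}{n+1}I$. The observation that the strict hypothesis $\alpha(M,[\o])>n\mu/(n+1)$ is what permits a valid (strictly smaller) choice of $\beta$ is exactly the point the paper is implicitly making.
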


Thanks to Theorem \ref{TianalphaThm}, Theorem \ref{TianbigalphaThm} treats in a unified fashion the negative, zero, and some positive cases.

\begin{remark} {\rm
\label{}
Using  \eqref{Kendef} instead of  \eqref{EEq} one may generalize Theorem \ref{TianbigalphaThm} to cohomology classes nearby $c_1(M)/\mu$, as shown recently by Dervan \cite[Theorem 1.3] {Dervan}. 
} \end{remark}

We now turn to proving Theorem  \ref{TianalphaThm}.
The key is an elementary, but by no means trivial, result  on subharmonic functions 
in the plane from H\"ormander's book \cite[Theorem 4.4.5]{H}.
Denote  
the ball of radius $R $ about the origin  in $\CC$ by
$$
B_R(0).
$$

\begin{theorem}
\label{HThm}
Let $R>0$
and let $\psi$ be a smooth subharmonic function defined on 
$B_R(0)\subset\CC$, satisfying
\begin{equation}
\begin{aligned}
\psi(0)& \ge -1,\cr
\psi & \le 0, \h{\ on\ } B_R(0).
\end{aligned}
\end{equation}
Then 
for every $\rho\in[R/2,e^{-1/2}R)$ 
there exists a constant $C$ depending only on $R,\rho$ 
such that
\begin{equation}
\begin{aligned}
\label{Integral}
\int_{B_{\rho}(0)} e^{-\psi}\i dz\w\overline{dz} \le C.
\end{aligned}
\end{equation}
\end{theorem}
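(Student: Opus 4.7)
The plan is to exploit the Riesz representation theorem for subharmonic functions. On a slightly smaller disk $B_{R'}(0)$ with $\rho<R'<R$, we may decompose
\[
\psi(z)=h(z)+\int_{B_{R'}(0)}\log|z-w|\,d\mu(w),
\]
where $h$ is harmonic on $B_{R'}(0)$ and $\mu$ is a positive Borel measure proportional to $\Delta\psi$ (positive because $\psi$ is subharmonic; since $\psi$ is smooth, $d\mu$ is in fact a smooth multiple of Lebesgue measure). Because $\psi\le 0$ and $\psi(0)\ge -1$, the harmonic function $h$ is bounded below on $B_\rho(0)$ by standard Poisson-kernel/Harnack bounds, so $e^{-h}$ contributes only a bounded multiplicative factor. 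It therefore suffices to estimate the integral over $B_\rho(0)$ of the exponential of the logarithmic potential.

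Next I would bound the Riesz mass via the Poisson--Jensen formula applied at an intermediate radius $R''\le R$:
\[
\psi(0)=\frac{1}{2\pi}\int_0^{2\pi}\psi(R''e^{i\theta})\,d\theta-\int_{B_{R''}(0)}\log\frac{R''}{|w|}\,d\mu(w).
\]
Since $\psi\le 0$ on $\partial B_{R''}(0)$ and $\psi(0)\ge -1$, this yields $\int_{B_{R''}(0)}\log(R''/|w|)\,d\mu(w)\le 1$, and hence the mass concentration estimate $\mu(B_\sigma(0))\le 1/\log(R''/\sigma)$ for any $\sigma<R''$. I would then split $\mu=\mu_1+\mu_2$, where $\mu_1=\mu|_{B_\sigma(0)}$ and $\mu_2=\mu|_{B_{R'}(0)\setminus B_\sigma(0)}$ for a carefully chosen $\sigma$ with $\rho<\sigma<R''$. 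The potential of $\mu_2$ is uniformly bounded on $B_\rho(0)$ (since $\log|z-w|$ is bounded on the relevant region), and the problem reduces to estimating the exponential of the potential of $\mu_1$.

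For this core step, setting $m:=\mu_1(\CC)$, Jensen's inequality applied to the convex function $\exp$ with respect to the probability measure $d\mu_1/m$ gives
\[
\exp\Bigl(-\int\log|z-w|\,d\mu_1(w)\Bigr)\le\frac{1}{m}\int\frac{d\mu_1(w)}{|z-w|^m}.
\]
Integrating over $B_\rho(0)$ and applying Fubini then reduces the whole problem to the finiteness of $\int_{B_\rho(0)}|z-w|^{-m}\,dA(z)$, which in the plane holds uniformly in $w$ precisely when $m<2$.

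The principal obstacle, and the source of the sharp cutoff $e^{-1/2}R$, is guaranteeing $m<2$ strictly. From the mass bound above, $\mu(B_\sigma(0))<2$ requires $\log(R''/\sigma)>1/2$ for some admissible choice $\rho<\sigma<R''\le R$, which forces $\rho<e^{-1/2}R$---exactly the stated restriction. The remaining work is bookkeeping: tracking the dependence of each bounded factor (the Harnack bound on $h$, the uniform bound on the $\mu_2$-potential, and the constant in the Jensen--Fubini estimate) on $R$ and $\rho$ yields a final constant $C=C(R,\rho)$.
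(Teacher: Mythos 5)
Your proposal is correct and runs parallel to the paper's proof; the key ingredients---Jensen's formula to control the Riesz mass, Jensen's inequality applied to the normalized mass, Fubini, and the crucial threshold that mass $m<2$ is what makes $|z-w|^{-m}$ locally integrable in the plane, forcing $\rho<e^{-1/2}R$---are all identical. The packaging differs slightly: you use the classical Riesz decomposition $\psi=h+p_\mu$ on a slightly smaller disk $B_{R'}$ with $h$ harmonic, whereas the paper represents $\psi$ directly via the Green's function of $B_R$ plus a Poisson boundary integral, so no separate harmonic summand appears. The one spot where your write-up is too quick is the assertion that ``$h$ is bounded below on $B_\rho$ by standard Poisson-kernel/Harnack bounds.'' Harnack requires $h$ to be bounded \emph{above} first, which is not immediate: $h=\psi-p_\mu$ with $\psi\le 0$ and $p_\mu\le\log(2R')\,\mu(B_{R'})$ gives only a lower bound $h\ge\psi-\log(2R')\mu(B_{R'})$, useless since $\psi$ is unbounded below, and no pointwise upper bound on $h$ is available since $p_\mu$ can be very negative near $\mathrm{supp}\,\mu$. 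What does work---and is exactly the step the paper handles with its display bounding $\int_0^{2\pi}|\tilde\psi(Re^{i\theta})|\,d\theta\le 4\pi$---is to use Jensen's formula again, now at the radius $R'$, to show $\int_0^{2\pi}|\psi(R'e^{i\theta})|\,d\theta\le 2\pi$; combined with the mass bound on $\mu(B_{R'})$ this controls $\int_0^{2\pi}|p_\mu(R'e^{i\theta})|\,d\theta$ and hence $\int_0^{2\pi}|h(R'e^{i\theta})|\,d\theta$, and then the Poisson kernel representation of $h$ (bounded kernel for $z\in B_\rho$, $\rho<R'$) gives the pointwise bound on $h$. With this repair, the argument is complete and matches the paper's in substance.
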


\bpf
Let $\tpsi:=\psi+1$ (so that $\tpsi\le1$ and $\tpsi(0)\ge0$).
We prove  \eqref{Integral} for $\tpsi$ which is the same thing as  \eqref{Integral} for 
$\psi$ up to a factor of $e$. 

The Riesz (or Poisson) representation of a smooth function $f:B_R(0)\ra\RR$ takes the form
\begin{equation}
\begin{aligned}
\label{Riesz}
f(z)=\frac1{2\pi}\int_{B_R(0)}\log\Big|\frac{Rz-R\zeta}{R^2-z\bar\zeta}\Big|\Delta f(\zeta)
\frac\i2 d\zeta\w d\bar\zeta
+\int_{0}^{2\pi}\frac{R^2-|z|^2}{|z-Re^{\i\theta}|^2} f(Re^{\i\theta})\frac{d\theta}{2\pi}.
\end{aligned}
\end{equation}

Now we consider   \eqref{Riesz} for $f=\tpsi$ and try to obtain bounds for each of the terms.

{\noindent \it Second term:}
First we show that the second term in  \eqref{Riesz} for $f=\tpsi$ is actually itself uniformly bounded
when $z\in B_{R/2}(0). $ 
Indeed, putting $z=0$ and $f=\tpsi$ in  \eqref{Riesz},
$$
0\le\tpsi(0)
=
\frac1{2\pi}\int_{B_R(0)}\log\frac{|\zeta|}{R}\Delta \tpsi(\zeta)
\frac\i2 d\zeta\w d\bar\zeta
+\int_{0}^{2\pi} \tpsi(Re^{\i\theta})\frac{d\theta}{2\pi}.
$$
Hence
\begin{equation}
\begin{aligned}
\label{Eqtwo}
2\pi\ge 2\pi-2\pi\tpsi(0)
=
\int_{B_R(0)}\log\frac {R}{|\zeta|}\Delta \tpsi(\zeta)
\frac\i2 d\zeta\w d\bar\zeta
+\int_{0}^{2\pi} (1-\tpsi(Re^{\i\theta}))d\theta.
\end{aligned}
\end{equation}
Since $\tpsi\le1$ the second integrand is nonnegative. So is the first, since 
 $\Delta\tpsi\ge0$.
So each of the terms is nonnegative and hence bounded from above by $2\pi$. Therefore,
$$
\int_{0}^{2\pi}|\tpsi(Re^{\i\theta})|d\theta\le
\int_{0}^{2\pi} (1-\tpsi(Re^{\i\theta}))d\theta
+\int_{0}^{2\pi} 1\cdot d\theta\le 2\pi+2\pi=4\pi,
$$
hence
\begin{equation}
\begin{aligned}
\label{6Eq}
\Big|
\int_0^{2\pi}
\frac{R^2-|z|^2}{|z-\zeta|^2} 
\tpsi(Re^{\i\theta})\frac{d\theta}{2\pi}
\Big|
\le
\sup_{z\in B_{r}(0), \,|\zeta|=R}
\frac{R^2-|z|^2}{|z-\zeta|^2}
\int_0^{2\pi}
|\tpsi(Re^{\i\theta})|\frac{d\theta}{2\pi}
\le C(r,R)\cdot\frac1{2\pi}4\pi=6,
\end{aligned}
\end{equation}
where $C(r,R)$ is some constant depending only on $r,R$. 

{\noindent \it First term:}
The first term in  \eqref{Riesz} is not uniformly bounded, however we will show it is uniformly exponentially integrable in the sense of the statement of the theorem.
We split this first term into two parts one of which will be actually uniformly bounded
(all we really need is a uniform bound from below):
\begin{claim}
\label{}
For each $z$ such that $|z|<r<\rho$ 
one has
$$
\bigg|
\frac1{2\pi}\int_{B_R(0)\setminus B_{\rho}(0)}\log\Big|\frac{Rz-R\zeta}{R^2-z\bar\zeta}\Big|
\Delta \tpsi \frac\i2 d\zeta\w d\bar\zeta
\bigg|
\le  C,
$$
for some constant $C> 0 $ depending only on $r,\rho, R$.
\end{claim}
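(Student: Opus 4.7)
The plan is to reduce the claim to the companion estimate already extracted during the proof of \eqref{Eqtwo}, namely
$$
\int_{B_R(0)}\log(R/|\zeta|)\,\Delta\tpsi\,\tfrac{\i}{2}d\zeta\wedge d\bar\zeta\le 2\pi.
$$
Since $\tpsi$ is subharmonic, the measure $\Delta\tpsi\,\tfrac{\i}{2}d\zeta\wedge d\bar\zeta$ is nonnegative; and the kernel $G(z,\zeta):=\log\bigl|\tfrac{R(z-\zeta)}{R^2-z\bar\zeta}\bigr|$ is (up to a multiplicative constant) the Dirichlet Green's function of $B_R(0)$, hence nonpositive on the disk and vanishing on $\partial B_R(0)$. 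The integral in the claim thus equals $\tfrac{1}{2\pi}\int_{B_R\setminus B_\rho}|G(z,\zeta)|\,\Delta\tpsi\,\tfrac{\i}{2}d\zeta\wedge d\bar\zeta$, and the whole matter reduces to producing a pointwise comparison
$$
|G(z,\zeta)|\le C_1(r,\rho,R)\,\log(R/|\zeta|),\qquad |z|\le r,\q \rho\le|\zeta|\le R.
$$

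This kernel comparison is the heart of the argument, and I expect it to be the one nontrivial step; all other steps are bookkeeping. The tool I would use is the algebraic identity
$$
|R^2-z\bar\zeta|^2-R^2|z-\zeta|^2=(R^2-|z|^2)(R^2-|\zeta|^2),
$$
which transparently encodes the simultaneous vanishing of $G$ on the boundary circle and gives the reformulation
$$
-2G(z,\zeta)=\log\Bigl(1+\frac{(R^2-|z|^2)(R^2-|\zeta|^2)}{R^2|z-\zeta|^2}\Bigr).
$$
With this in hand, the comparison follows by applying $\log(1+x)\le x$, the separation $|z-\zeta|\ge \rho-r$ valid on the region, the trivial bound $R^2-|z|^2\le R^2$, and the pointwise estimate $R^2-|\zeta|^2\le 2R^2\log(R/|\zeta|)$ (a consequence of $1-t\le -\log t$ on $(0,1]$ applied to $t=|\zeta|/R$). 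Chaining these gives $|G(z,\zeta)|\le R^2(\rho-r)^{-2}\log(R/|\zeta|)$, so $C_1=R^2/(\rho-r)^2$ works.

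Integrating the pointwise bound against $\Delta\tpsi$ and invoking the companion estimate yields
$$
\tfrac{1}{2\pi}\int_{B_R\setminus B_\rho}|G(z,\zeta)|\,\Delta\tpsi\,\tfrac{\i}{2}d\zeta\wedge d\bar\zeta\le \frac{C_1}{2\pi}\int_{B_R}\log(R/|\zeta|)\,\Delta\tpsi\,\tfrac{\i}{2}d\zeta\wedge d\bar\zeta\le C_1,
$$
which is the asserted bound with $C$ depending only on $r,\rho,R$. I foresee no real obstacle beyond recognizing the Green-function structure of $G$ and spotting the factorization identity; once those are in place the comparison---and hence the claim---drops out.
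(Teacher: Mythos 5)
Your proof is correct and takes a genuinely different and cleaner route than the paper's. The paper splits the annulus $B_R\setminus B_\rho$ into a thin boundary shell $B_R\setminus B_{R(1-\eps)}$, where the kernel is comparable to $|1-|\zeta|/R|$ and the estimate \eqref{1sttotalmassEq} applies, and a compact inner annulus $B_{R(1-\eps)}\setminus B_\rho$, where the kernel is simply bounded and one invokes Claim~\ref{harmonicmassClaim} on the mass of $\Delta\tpsi$ away from the boundary. You instead exploit the Green-function factorization identity
$|R^2-z\bar\zeta|^2-R^2|z-\zeta|^2=(R^2-|z|^2)(R^2-|\zeta|^2)$, which together with $\log(1+x)\le x$, the separation $|z-\zeta|\ge\rho-r$, and $R^2-|\zeta|^2\le 2R^2\log(R/|\zeta|)$ yields a single pointwise majorization $|G(z,\zeta)|\le (\rho-r)^{-2}R^2\log(R/|\zeta|)$ on the whole annulus; integrating against the nonnegative measure $\Delta\tpsi$ and using the bound of $2\pi$ on $\int_{B_R}\log(R/|\zeta|)\Delta\tpsi$ from \eqref{Eqtwo} finishes it. What your route buys is that no region-splitting is needed and Claim~\ref{harmonicmassClaim} is not invoked at all (it follows as a special case of your kernel bound by setting $z=0$), and the dependence on the parameters is explicit; the paper's route is more pedestrian but foregrounds the two mechanisms (boundary smallness of the kernel vs.\ interior mass bound) separately, which may be pedagogically useful. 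All individual steps in your argument check out.
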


\bpf
Recall that in  \eqref{Eqtwo} each of the terms was bounded by $2\pi $,
so 
$$
\int_{B_R(0)}\log\frac {R}{|\zeta|}\Delta \tpsi(\zeta)
\frac\i2 d\zeta\w d\bar\zeta
\le
2\pi.
$$
Thus, as $\log(1+b)\ge Cb$ for some constant $C=C(\eps)\in (0,1)$ when $b\in(0,\eps)$, 
$$
\int_{B_R(0)\sm B_{R(1-\eps)}(0)}(C\frac{|R-|\zeta||}{|\zeta|})\Delta \tpsi(\zeta)
\frac\i2 d\zeta\w d\bar\zeta
\le 2\pi.
$$
We have,
\begin{equation*}
C\int_{B_R(0)}\frac{|R-|\zeta||}{|\zeta|}\Delta \tpsi(\zeta)
\frac\i2 d\zeta\w d\bar\zeta
\le 2\pi,
\end{equation*}
in particular,
\begin{equation}
\label{1sttotalmassEq}
C\int_{B_R(0)}{|1-|\zeta|/R|}\Delta \tpsi(\zeta)
\frac\i2 d\zeta\w d\bar\zeta
\le 2\pi.
\end{equation}
Now,
$$
\Big|\frac{R^2-z\bar\zeta}{Rz-R\zeta}\Big|\in \del B_1(0), \q\forall \zeta\in\del B_R(0),
$$
as can be checked from the fact that for each $z$ such that $|z|<1$  
the map $\zeta\mapsto \frac{1-z\bar\zeta}{z-\zeta}$ is a M\"obius map, i.e., sends $\del B_1(0)$
to itself and then scaling.
Thus, if $z\in B_r(0)$ with $r<1$, there exists $C>0$ possibly depending on $r,\rho,R$  such that
$$
\bigg|\log\Big|\frac{Rz-R\zeta}{R^2-z\bar\zeta}\Big|\bigg|
\le 
\left\{\aligned
&C \big|1-|\zeta|/R\big| &\qquad \h{for $\zeta\in(R(1-\eps),R)$},\\
&C                       &\qquad \h{for $\zeta\in(\rho,R(1-\eps))$.}
\endaligned
\right.
$$
Then,
$$
\baeq
\bigg|
\frac1{2\pi}
\int_{B_R(0)\setminus B_{\rho}(0)}
\log
&
\Big|\frac{Rz-R\zeta}{R^2-z\bar\zeta}\Big|
\Delta \tpsi \frac\i2 d\zeta\w d\bar\zeta
\bigg|
\cr
& 
\le
C
\frac1{2\pi}\int_{B_R(0)\setminus B_{R(1-\eps)}(0)}
\big|1-|\zeta|/R\big|
\Delta \tpsi \frac\i2 d\zeta\w d\bar\zeta
\cr
&\quad
+
C\frac1{2\pi}\int_{B_{R(1-\eps)}(0)\setminus B_\rho(0)}
\Delta \tpsi \frac\i2 d\zeta\w d\bar\zeta.
\eaeq
$$
The first term on the right hand side is uniformly bounded by  \eqref{1sttotalmassEq}.
The second term is uniformly bounded by Claim \ref{harmonicmassClaim}  below.
\epf

Since $\rho<e^{-1/2}R$ from now and on we write
$$\rho=e^{-1/2-\eps}R,
$$
with $\epsilon > 0 $ small, say $\eps=1/500$, where
$500$ could be replaced by a generous quantity
(cf. \cite[Proposition 8.1]{Bamler}).
In order to estimate $e^{-\tpsi}$ we only need to estimate for each $z$ such that
$|z|<r$ 
the exponential term (note the minus sign)
\begin{equation}
\begin{aligned}
\label{ExponEq}
\exp\Big(   
-\frac1{2\pi}
\int_{B_{e^{-1/2-\eps}R}(0)}\log\Big|\frac{Rz-R\zeta}{R^2-z\bar\zeta}\Big|
\Delta \tpsi \frac\i2 d\zeta\w d\bar\zeta
\Big).
\end{aligned}
\end{equation}
This can be done using Jensen's inequality, or just the arithmetic mean-geometric mean inequality.
For that need to normalize the meausure so that it integrates to 1 (i.e., becomes a probability
measure).
That is need to divide by 
$$
a:=\frac1{2\pi}\int_{B_{e^{-1/2-\eps}R}(0)} \Delta \tpsi \frac\i2 d\zeta\w d\bar\zeta,
$$
 i.e., the mass of the harmonic measure on this ball. It is well-known that the mass of the harmonic measure on a compact subset of a ball is uniformly bounded by a constant depending on the distance to the boundary of the ball whenever the function is uniformly bounded from above on the whole ball and its value is fixed at one point. Moreover,
$$
a<2\; !
$$
Indeed, this is the reason to choose $\rho=e^{-1/2-\epsilon}R$:
$$
a=\frac1{2\pi}\int_{B_{\rho}(0)} \Delta \tpsi \frac\i2 d\zeta\w d\bar\zeta
\le
\frac1{2\pi}
\int_{B_{\rho}(0)}\frac{\Big(2\log\frac {R}{|\zeta|}\Big)}
{1+2\epsilon} \Delta \tpsi \frac\i2 d\zeta\w d\bar\zeta
\le \frac2{1+2\epsilon}\cdot \frac{2\pi}{2\pi}=\frac2{1+2\epsilon},
$$
since earlier we proved the first term of   \eqref{Eqtwo} is bounded by $2\pi$
(note all we did was insert a term between the large parenthesis which is bigger than 1).

For an earlier reference we state the following claim whose proof is identical to the computation of $a $.
\begin{claim}
\label{harmonicmassClaim}
For every $\epsilon\in (0, 1) $ there is a constant $C = C (\epsilon) $
such that 
\begin{equation*}
\begin{aligned}
\label{}
\frac1{2\pi}\int_{B_{R(1-\eps)}(0)} \Delta \tpsi \frac\i2 d\zeta\w d\bar\zeta\le C.
\end{aligned}
\end{equation*}  
\end{claim}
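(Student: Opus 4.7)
The plan is to use inequality \eqref{Eqtwo} proved earlier, which gave
$$
\int_{B_R(0)} \log\frac{R}{|\zeta|}\, \Delta \tpsi(\zeta)\, \frac{\i}{2} d\zeta\w d\bar\zeta \le 2\pi.
$$
Both the integrand and the measure $\Delta\tpsi\,\frac{\i}{2}d\zeta\w d\bar\zeta$ are nonnegative (the latter because $\tpsi$ is subharmonic), so one may discard the contribution of $B_R(0)\setminus B_{R(1-\eps)}(0)$ and obtain an upper bound for the weighted integral over $B_{R(1-\eps)}(0)$.

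The key observation is that on the smaller ball the weight $\log(R/|\zeta|)$ is bounded \emph{below} by a positive constant depending only on $\eps$: indeed, for $|\zeta|\le R(1-\eps)$,
$$
\log\frac{R}{|\zeta|}\ge \log\frac{1}{1-\eps}=-\log(1-\eps)>0.
$$
This constant can therefore be pulled out of the integral.

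Combining these two facts yields
$$
-\log(1-\eps)\cdot \int_{B_{R(1-\eps)}(0)} \Delta \tpsi\, \frac{\i}{2} d\zeta\w d\bar\zeta
\le \int_{B_R(0)} \log\frac{R}{|\zeta|}\, \Delta \tpsi\, \frac{\i}{2} d\zeta\w d\bar\zeta
\le 2\pi,
$$
so the claim holds with $C(\eps):=1/(-\log(1-\eps))$. There is no real obstacle here: the argument is a direct one-line consequence of \eqref{Eqtwo} together with nonnegativity of the measure and a trivial lower bound for the logarithmic weight on $B_{R(1-\eps)}(0)$. The same reasoning explains, a posteriori, why the specific choice $\rho=e^{-1/2-\eps}R$ was needed in the proof of Theorem \ref{HThm}: there one needs the sharper constant $2/(1+2\eps)<2$ rather than the cruder $1/(-\log(1-\eps))$ obtained here.
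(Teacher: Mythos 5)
Your proof is correct and is essentially identical to the paper's: the paper explicitly says the claim's "proof is identical to the computation of $a$", and that computation inserts the factor $\log(R/|\zeta|)$ (bounded below by a positive constant on the smaller ball) and invokes the total-mass bound from \eqref{Eqtwo}, exactly as you do. One small correction to your closing remark: applying your very estimate at radius $\rho=e^{-1/2-\eps}R$, where $-\log(\rho/R)=\tfrac12+\eps$, gives $C=1/(\tfrac12+\eps)=2/(1+2\eps)$ — so the "sharper" constant in the proof of Theorem \ref{HThm} is not a different method but the same bound evaluated at a different radius; the $\eps$ of this claim and the $\eps$ in the choice of $\rho$ simply play different roles.
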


\begin{exer} {\rm
\label{}
Compute the constant $C (\epsilon) $ in the previous claim.
} \end{exer}

So we come back to   \eqref{ExponEq}, and apply the arithmetic mean-geometric mean inequality:

$$
\baeq
\hbox{ \eqref{ExponEq}} & =
\exp\Big(   
\int_{B_{e^{-1/2-\eps}R}(0)} -a\cdot\log\Big|\frac{Rz-R\zeta}{R^2-z\bar\zeta}\Big|
\Delta \tpsi \frac\i2 d\zeta\w d\bar\zeta/(2\pi a)
\Big)
\cr
& \le
\int_{B_{e^{-1/2-\eps}R}(0)} \Big|\frac{R^2-z\bar\zeta}{Rz-R\zeta}\Big|^a
\Delta \tpsi \frac\i2 d\zeta\w d\bar\zeta/(2\pi a)
\cr
& \le
C
\int_{B_{e^{-1/2-\eps}R}(0)} 
\frac1{|z-\zeta|^a}\Delta \tpsi \frac\i2 d\zeta\w d\bar\zeta/(2\pi a).
\eaeq
$$

Now this itself may not be bounded, however, it is in $L^1$ in $z$---more precisely in
$L^1(B_r(0))$---and this is what we want to show. It is crucial here that $a<2$ or
in other words to chose $\rho=e^{-1/2-\eps}R$ earlier.
To be precise, we integrate now in $z$ to get 
$$
\baeq
&
\int_{B_{1/2R}(0)} 
\int_{B_{e^{-1/2-\eps}R}(0)} 
\frac1{|z-\zeta|^a}\Delta \tpsi(\zeta) \frac\i2 d\zeta\w d\bar\zeta/(2\pi a)
\w \frac\i2 dz\w d\bar z/2\pi
\cr
&\le
\int_{B_{(1/2+e^{-1/2-\eps})R}(0)} 
\int_{B_{e^{-1/2-\eps}R}(0)} 
\frac1{|\xi|^a}\Delta \tpsi(\zeta) 
\frac\i2 d\zeta\w d\bar\zeta/(2\pi a)
\w \frac\i2 d\xi\w d\bar\xi/2\pi
\cr
&=\frac{2\pi}{2-a}[(1/2+e^{-1/2-\eps})R]^{2-a}
\int_{B_{e^{-1/2-\eps}R}(0)}\Delta \tpsi(\zeta) 
\frac\i2 d\zeta\w d\bar\zeta/(2\pi a)
\cr
&\le\frac{2\pi}{2-a}[(1/2+e^{-1/2-\eps})R]^{2-a}=C(\eps).
\eaeq
$$
Note Fubini's theorem applies thanks to the last estimate, so the change
of order of integration is justified, and so the original integral is 
bounded, concluding the proof of Theorem \ref{HThm}.
\epf

\begin{exer} {\rm
\label{6Exer}
Show that the fraction in   \eqref{6Eq} is bounded above by a constant depending only on  $r/R$ as claimed
and blows up as $r $ approaches 0. 
When $r = R/2 $ show that this constant is equal to $3$. 
(it is even simpler to see it must be $\le4$).
} \end{exer}

\begin{exer} {\rm
\label{}
Compute the Green kernel of $B_R(0)$ and then 
derive the Riesz representation formula  \eqref{Riesz}  starting with the identity (cf. \cite[\S2.4]{GT}, \cite{Wanby})
$$
f(x)=-\int_{B_R(0)} G(x,y)\Delta f(y) dy+\int_{\del B_R(0)} \del_r G(x,y)f(y)dy.
$$
} \end{exer}

\begin{exer} {\rm
\label{}
Show that Theorem  \ref{HThm} holds for any subharmonic function $f $ by using the fact that the Riesz representation   \eqref{Riesz} holds with the same expression by interpreting 
$\Delta f(\zeta) \frac\i2 d\zeta\w d\bar\zeta$
as the harmonic measure associated to  $f$ (a positive measure with respect to which the Green kernel is integrable) \cite[Theorem 4.5.1]{Ransford}.  
} \end{exer}

Theorem \ref{HThm} can be extended to any dimension as follows  \cite[Theorem 4.4.5]{H}.

\begin{corollary}  
\label{HorCor}
The same result holds in $\CC^n$ with constants that might additionally depend on $n$. In other words, if $\psi$ is a smooth plurisubharmonic function on $B_R(0)\subset\CC^n$, satisfying
\begin{equation}
\begin{aligned}
\psi(0)& \ge -1,\cr
\psi & \le 0, \h{\ on\ } B_R(0),
\end{aligned}
\end{equation}
then 
for every $\rho\in[R/2,e^{-1/2}R)$ 
there exists a constant $C$ depending only on $R,\rho,n$ 
such that
\begin{equation}
\begin{aligned}
\label{Integral}
\int_{B_{\rho}(0)} e^{-\psi}\i dz_1\w\overline{dz_1}\w\cdots\w\i dz_n\w\overline{dz_n} \le C.
\end{aligned}
\end{equation}
\end{corollary}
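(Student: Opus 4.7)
The plan is to reduce to the one-dimensional Theorem \ref{HThm} by slicing $B_R(0)\subset\CC^n$ by complex lines through the origin. The key observation is that plurisubharmonicity of $\psi$ is precisely the statement that, for every unit vector $w\in S^{2n-1}\subset\CC^n$, the slice function $\psi_w:B_R(0)\subset\CC\to\RR$ given by $\psi_w(\zeta):=\psi(\zeta w)$ is a smooth subharmonic function on the one-dimensional disk of radius $R$. Since $\psi_w(0)=\psi(0)\geq -1$ and $\psi_w\leq 0$, Theorem \ref{HThm} applies uniformly in $w$, producing a constant $C_1=C_1(R,\rho)$ (independent of $w$) with
$$
\int_{B_\rho(0)\subset\CC} e^{-\psi(\zeta w)}\,\i d\zeta\wedge d\bar\zeta \leq C_1, \qquad \forall\, w\in S^{2n-1}.
$$

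Next I would integrate this inequality in $w\in S^{2n-1}$ against its standard measure $d\sigma$ and then convert the result into an integral over $B_\rho(0)\subset\CC^n$. Writing $\zeta=re^{\i\alpha}$, applying Fubini, and exploiting the invariance of $d\sigma$ under the $U(1)$-action $w\mapsto e^{\i\alpha}w$ (which is a unitary transformation of $\CC^n$, hence measure preserving on $S^{2n-1}$), the sphere-integrated estimate above becomes
$$
4\pi\int_0^\rho r\int_{S^{2n-1}} e^{-\psi(rw)}\,d\sigma(w)\,dr \leq C_1\,\sigma(S^{2n-1}).
$$
On the other hand, standard complex polar coordinates on $\CC^n$ express the target integral as
$$
\int_{B_\rho(0)\subset\CC^n} e^{-\psi}\,\i dz_1\wedge\overline{dz_1}\wedge\cdots\wedge\i dz_n\wedge\overline{dz_n} = 2^n\int_0^\rho r^{2n-1}\int_{S^{2n-1}} e^{-\psi(rw)}\,d\sigma(w)\,dr.
$$

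Finally, the two radial integrands differ only by a factor of $r^{2n-2}$, which is harmless on the compact interval $[0,\rho]$: the trivial pointwise bound $r^{2n-1}\leq \rho^{2n-2}r$ yields
$$
\int_{B_\rho(0)\subset\CC^n} e^{-\psi}\,\i dz_1\wedge\overline{dz_1}\wedge\cdots\wedge\i dz_n\wedge\overline{dz_n} \leq \frac{2^{n-2}}{\pi}\,\rho^{2n-2}\,C_1\,\sigma(S^{2n-1}),
$$
which is a constant $C=C(R,\rho,n)$ as required. There is no substantial new analytic obstacle beyond the one-dimensional case; the entire content of the dimension-raising step is the tautology that plurisubharmonicity restricts to subharmonicity on every complex line, together with the uniform dependence of the constant in Theorem \ref{HThm} on $R$ and $\rho$ alone, which is exactly what makes Fubini distribute the one-dimensional estimate over the sphere of directions.
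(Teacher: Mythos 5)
Your proof is correct and takes essentially the same route as the paper: slice $B_R(0)\subset\CC^n$ by complex lines through the origin, observe that plurisubharmonicity restricts to subharmonicity on each line so Theorem \ref{HThm} applies with a constant uniform in the direction, and then integrate over the sphere of directions, absorbing the Jacobian factor $r^{2n-2}$ by its trivial bound $\rho^{2n-2}$ on $[0,\rho]$. The paper states the same spherical decomposition somewhat informally (with a discussion of the $2\pi|w|$ normalization); your write-up of the Fubini/change-of-variables step is a cleaner version of the same idea.
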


\def\la{\lambda}

\bremark
In both Theorem \ref{HThm} and Corollary \ref{HorCor} one may drop the smoothness assumption on $\psi$: indeed convolve $\psi$ with a smooth mollifier and 
then observe the integrals on the left had side converge in the limit.
\eremark

\bpf
Write
\begin{equation}
\begin{aligned}
\label{HormSperical}
\int_{B_r(0)\sseq\CCfoot^n} e^{-\psi}
=
\int_{\del B_1(0)\sseq \CCfoot^n} dV_{S_r^{2n-1}}(\la)
\int_{B_r(0)\sseq\CCfoot} |w|^{2n-2}e^{-\psi(\la w)}\frac\i2 dw\w d\bar w/2\pi,
\end{aligned}
\end{equation}
and $|w|^{2n-2}\le r^{2n-2}$ is  uniformly bounded, so we can apply the previous result for
$n=1$. To obtain   \eqref{HormSperical}, note that we are integrating over a $2n+1$ dimensional manifold
on the right-hand side and on a $2n$-dimensional one on the
left-hand side. We normalize by $2\pi$,
the area of $S^1$ since each point is counted ``$S^1$ times", since if wish to write
$z=\la w$ with $w\in\CC$, $\la\in S^{2n-1}$, and $|z|=|w|, \, |\lambda|=1$ then $w$ is only determined in $\CC$ up to multiplication by a number in $S^1$! This is not quite precise since
we should normalize by an $S^1$ of varying radius! So need to actually divide by $2\pi|w|$, however
the change of variables introduces a factor $|w|^{2n-1}$; 
so we get  \eqref{HormSperical}.
\epf

\bpf[Proof of Theorem \ref{TianalphaThm}] Let the injectivity radius of $(M,\o)$ be $6r$ (so at each point exists a geodesic ball
of that radius). Choose an $r$-net of $M$, that is a colle
ction of points $\{x_j\}_{j=1}^N$ such
that $M=\cup_j B_r(x_j)$. For each $\vp\in\H$ one has $n+\Delta_\o\vp>0$. 
Hence Green's formula  says that \cite[Theorem 4.13 (a), p. 108]{Aubinbook}
\begin{equation}
\begin{aligned}
\label{MeanVal}
\vp(x)=\V\intm\vp\on+\V\int-G(x,y)\Delta_\o\vp(y)\on(y)\le \V\intm\vp\on+nA_\o,
\end{aligned}
\end{equation}
where $G(x,y)\ge-A_\o$ and $\intm G(x,y)\on(y)=0$ for each $x\in M$.
Note that $A_\o$ is a constant depending only on $(M,\omega) $, in other words, the Green kernel is uniformly bounded from below \cite[Theorem 4.13 (d), p. 108]{Aubinbook}. 
Since we are also assuming $\sup\vp=0$, we obtain (the RHS of   \eqref{MeanVal} is independent of $x$)
$$
\V\intm\vp\on+nA_\o\ge0.
$$
Hence, since $\vp$ is non-positive,
$$
\int_{B_{r}(x_j)}\vp\on \ge \intm\vp\on\ge-nA_\o V,
$$
that is,
\begin{equation}
\begin{aligned}
\label{MinValBall}
\sup_{B_{r}(x_j)}\vp\ge\frac{-nA_\o V}{\vol({B_{r}(x_j)}}.
\end{aligned}
\end{equation}
Choose a local \K potential $\psi_j$ satisfying $\i\ddbar\psi_j=\o|_{B_{5r}(x_j)}$.
Let $C_2:=\sup_j\sup_{B_{5r}(x_j)}\psi_j$. Therefore, since
$\vp\le0$,
$$
\psi_j+\vp\le C_2, \h{\ on \ } {B_{5r}(x_j)}.
$$
Also, let $y_j\in {B_{r}(x_j)}$ be such that (using   \eqref{MinValBall})
$$
\vp(y_j) \ge\frac{-nA_\o V}{\vol_\o({B_{r}(x_j))}}.
$$
We may also assume without loss of generality that $\psi_j(y_j)=0$, otherwise we add a constant
to $\psi_j$ (and then $C_2$ possibly increases).

Now the function $\vp+\psi_j$ is plurisubharmonic (psh for short) on $B_{5r}(y_j)$ (recall that $\vp$ is not psh only $\o$-psh so we
add to it a local potential for $\o$ in order to be able to apply H\"ormander's result).
Also, since $B_{4r}(y_j)\sseq B_{5r}(x_j)$, we have that on the set $B_{4r}(y_j)$
it holds
$$
\baeq
\vp+\psi_j-C_2 & \le 0\cr
(\vp+\psi_j-C_2)(y_j) & \ge -C_2-\frac{nA_\o V}{\vol({B_{r}(x_j)}}.
\eaeq
$$
Therefore can apply H\"ormander's result to 
$f_j:=(\vp+\psi_j-C_2)/(C_2+\frac{nA_\o V}{\vol({B_{r}(x_j)}})$
on $B_{4r}(y_j)$
(note $C_2\ge0$, $A_\o>0$, so we are not dividing by zero), 
namely obtain that
$$
\int_{B_{2r}(y_j)} e^{-f_j}\;\on\big|_{B_{2r}(y_j)}<C_j
$$
(instead of $2r$ could have taken any number in the range $[2r,e^{-1/2}4r)$).
Patching these up, using the fact that $B_{2r}(y_j)\supseteq B_r(x_j)$ and $M$ 
is covered by the latter, we obtain that
regardless of $\vp$, one has
$$
\intM e^{-a\vp}\on< C,
$$
where $a:=\min_j 1/(C_2+\frac{nA_\o V}{\vol({B_{r}(x_j)}})$, and
consequently $\alpha(M,[\o])\ge a>0$.
\epf


\section {The  K\"ahler--Einstein  equation}

The K\"ahler--Einstein equation  \eqref{KE1Eq} is a fourth order equation in terms of the  K\"ahler potential.
The remarkable formula  \eqref{RicciFormEq} for the Ricci form, however, allows to integrate it to a second order equation. Indeed, subtracting $\Rico$ from both sides of the equation and using  \eqref{RicdiffEq} yields
$$
\Ric\,\ovp-\Rico=
{\i}\ddbar\log\frac{\on}{\ovpn}
=\mu\ovp-\Rico
=\mu\i\ddbar\vp-\i\ddbar f_\o,
$$
where $\fo$ is called the Ricci potential of $\o$,
and satisfies $\i\ddbar\fo=\Ric\,\o-\mu\o$, where it is
convenient to require the normalization $\int e^{\fo}\on=\int \on$.
We thus obtain the \KE equation,
\begin{equation}
\label{KEEq}
\ovpn=\on e^{\fo-\mu\vp}, \quad \h{ on } M
\end{equation}
for a global smooth function $\vp$ (called the \K potential of $\ovp$ relative to $\o$).
The function $\fo$, in turn, is given in terms of the reference geometry and is thus known.
Observe that, strictly speaking, the right-hand side of   \eqref{KEEq} should be
$$
\on e^{\fo-\mu\vp+C}
$$
For some constant $C  $; whenever $\mu\not = 0 $ we can incorporate the constant $C $ by subtracting $C/\mu $ from $ \varphi $ since the left-hand side of  \eqref{KEEq} is invariant under this. When $\mu = 0 $, the constant $C $ must be zero by  \eqref{foEq}. 

Note also that the solution $ \varphi $ is only determined up to a constant when $\mu = 0 $, while it is uniquely determined when $\mu\not = 0 $ by  \eqref{foEq}.

We close this section by noting a relationship between the K-energy and  K\"ahler--Einstein  metrics: the Euler--Lagrange equation of the K-energy is precisely the  K\"ahler--Einstein  equation. Indeed,
\beq
\lb{EderivEq}
\baeq
\frac{d}{d\eps}\Big|_{\eps=0}E(\vp(\eps)) 
&= 
\frac{d}{d \eps}\Big|_{\eps=0} \Ent(e^{f_\o} \o^n,\o_{\vp(\eps)}^n)
\cr
&\q-\mu \V\int_M \dot\vp\o_{\vp}^n
+ \mu\V\int_M \dot\vp\o_{\vp}^n
+ \mu\V\int_M \vp\Delta\dot\vp\o_{\vp}^n
\cr
&=
\V\int_M \big(\Delta_{\ovp}\dot\vp+\log\frac{\o_{\vp}^n}{e^{f_\o} \o^n} \Delta\dot\vp\big)\o_{\vp}^n
+ \mu\V\int_M \vp\Delta\dot\vp\o_{\vp}^n
\cr
&=
\V\int_M \big(\log\frac{\o_{\vp}^n}{e^{f_\o} \o^n} \Delta\dot\vp\big)\o_{\vp}^n
+ \mu\V\int_M \vp\Delta\dot\vp\o_{\vp}^n
\cr
&=
-\V\int_M \dot\vp\Delta_\ovp f_\ovp \o_{\vp}^n,
\eaeq
\eeq
where we used Exercise \ref{DetExer} and the following exercise.

\begin{exer} {\rm
\label{}
Show that the Ricci potential satisfies the following equation
\begin{equation}
\begin{aligned}
\label{RicciPotEq}
f_{\ovp}=\log\frac{e^{f_\o} \o^n}{\o_{\vp}^n}-\mu\vp-\log\V\int_M e^{f_\o-\mu\vp} \o^n.
\end{aligned}
\end{equation}
} \end{exer}
Thus, the Euler--Lagrange equation of the K-energy is precisely
$$
\Delta_\ovp f_\ovp=0,
$$
i.e., 
$$
f_\ovp= \h{const},
$$
that, recalling Definition \ref{foDef}, means that  
$\Ric\,\ovp=\mu\ovp.$

\section {Properness implies existence}
\lb{PropExistSec}

In this section we prove the easier part of Conjecture  \ref{TianConj}:

\begin{theorem}
\label{propernessexistence}
If the Mabuchi energy $E$ is proper on $\H^K$ then there exists  a $K$-invariant K\"ahler--Einstein metric in $\H$.
\end{theorem}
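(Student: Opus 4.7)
The plan is a variational argument: produce a minimizer of $E$ on $\H^K$ using the compactness afforded by properness, then upgrade it to a smooth \KEno potential via regularity theory. First, I establish that $E$ is bounded below on $\H^K$: for $\mu \le 0$ this is essentially \eqref{EnegEq}, while for $\mu>0$ it follows from properness itself, since otherwise a sequence $\vp_j\in\H^K$ with $E(\vp_j)\to-\infty$ together with the coercive lower bound $E\ge \beta J - C$ on any such sublevel set (produced by properness applied after a suitable normalization) would force a contradiction. I then fix a minimizing sequence $\vp_j\in\H^K$ normalized so that $\sup\vp_j=0$; properness yields $J(\vp_j)\le C$.

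Next, I pass to Darvas' finite-energy $L^1$-type metric completion $(\mcal^1,d_1)$ of $\H$---a complete geodesic metric space consisting of $\o$-plurisubharmonic functions of finite energy---together with its closed $K$-invariant subspace $\mcal^{1,K}$. The combined bound $\sup\vp_j=0$ and $J(\vp_j)\le C$ gives a $d_1$-bounded sequence, since $J$ is comparable to $d_1(0,\cdot)$ after this normalization (via \eqref{IJEq}, \eqref{IJIEq}, and a Green-function argument in the spirit of \eqref{MeanVal}). By $d_1$-compactness of sublevel sets there is a subsequential $d_1$-limit $\vp_\infty\in\mcal^{1,K}$. The K-energy $E$ extends to $\mcal^1$ via \eqref{EEq}---the entropy piece interpreted in the measure-theoretic sense and the Aubin--Mabuchi piece by pluripotential continuity---as a $d_1$-lower semicontinuous functional, so $\vp_\infty$ is a minimizer of $E$ on $\mcal^{1,K}$.

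The main obstacle is the regularity step: promoting the weak minimizer $\vp_\infty\in\mcal^{1,K}$ to a smooth element of $\H^K$. Here I invoke the Berman--Berndtsson strategy: any $\mcal^1$-minimizer of $E$ satisfies a weak Euler--Lagrange equation of the form $\o_{\vp_\infty}^n = c\cdot e^{f_\o-\mu\vp_\infty}\o^n$---the integral analogue of \eqref{EderivEq} combined with \eqref{RicciPotEq}---which is a complex \MA equation. Ko{\l}odziej's $L^\infty$ theory first yields $\vp_\infty\in L^\infty$, and the Yau second-order estimate together with Evans--Krylov and Schauder bootstrapping then give $\vp_\infty\in C^\infty$. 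Once smoothness is established, \eqref{EderivEq} yields $\Delta_{\o_{\vp_\infty}}f_{\o_{\vp_\infty}}=0$, so $f_{\o_{\vp_\infty}}$ is constant and $\o_{\vp_\infty}$ is \KEno. Since $K$ acts by $d_1$-isometries and $\vp_\infty$ is a $d_1$-limit of $K$-invariant potentials, it is itself $K$-invariant, furnishing the desired $K$-invariant \KEno metric in $\H$.
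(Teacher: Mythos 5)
Your proof takes a genuinely different route from the paper. For Theorem \ref{propernessexistence} the paper runs a self-contained PDE argument: the two-parameter continuity method on the set $A=(-\infty,0]\times[0,1]\cup[0,\mu]\times\{1\}$ in \eqref{TwoParamCMEq}, with openness via the implicit function theorem and Lemma \ref{PoincareLemma}, $C^0$ bounds on the sub-rectangle and on the interval, Laplacian estimates from Lemma \ref{LaplacianLemma}, and Evans--Krylov/Schauder to close the argument. Your variational approach --- a minimizing sequence, $d_1$-compactness of entropy-bounded, $d_1$-bounded sets, $d_1$-lower semicontinuity of the extended $E$, and Berman's regularity for weak minimizers --- is in fact the machinery the paper deploys only later, in the abstract existence principle (Theorem \ref{ExistencePrinc}) and its application to Theorem \ref{KEexistenceIntroThm}. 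Yours is thus ``overkill'' for this statement (it imports Theorem \ref{d1CompletionThm}, Proposition \ref{EbetaExt}, and \cite[Prop.~5.28]{DR2}, none of which are needed for the paper's proof), but it is a legitimate and in fact more robust route, and the paper implicitly endorses it through the easy direction of Theorem \ref{ExistencePrinc}.

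Two points need tightening. First, the properness hypothesis is the qualitative implication \eqref{PropernessEq}, not the quantitative coercivity $E\geq\beta J-C$; you should not assert a coercive lower bound ``on any such sublevel set'' as a consequence of properness. To get boundedness below, simply use the explicit formula \eqref{E2ndEq}: for $\mu>0$, $E(\vp)\geq -\mu(I-J)(\vp)\geq -\mu n J(\vp)$ by \eqref{IJEq}, so $E$ is bounded below on $J$-bounded sets; combined with properness (which forces $J$ bounded along any non-divergent $E$-sequence), this gives $\inf_{\H^K}E>-\infty$. Second, and more substantively, you minimize $E$ only over $\mathcal E_1^K$, but Berman's weak Euler--Lagrange equation and regularity theorem \cite[Theorem 1.1]{brm1} are stated for minimizers over all of $\mathcal E_1$. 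You must justify that the $K$-invariant minimizer $\vp_\infty$ still satisfies the weak \MA equation. The fix is short: the first variation of $E$ at $\vp_\infty$ pairs a test function $\psi$ against $\Delta_{\o_{\vp_\infty}}f_{\o_{\vp_\infty}}\,\o_{\vp_\infty}^n$ (cf.\ \eqref{EderivEq}), and this density is itself $K$-invariant because $\vp_\infty$ is, so vanishing for all $K$-invariant $\psi$ already forces it to vanish. But this needs to be said explicitly, since without it the regularity step does not apply.
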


This result is due to Tian \cite{Tian97}, even though it is in some sense already implicit in Ding--Tian  \cite{dt}. The proof we give follows the same ideas as in the original proof, with some simplifications in the presentation.

In particular, in combination with Theorem \ref{TianbigalphaThm}, we obtain as a corollary 
a theorem of Tian \cite[Theorem 2.1]{Tian1987}:

\begin{corollary}
\label{TianbigalphaKEThm}
Let  $\mu > 0 $ and suppose that  \eqref{c1} holds. 
Whenever $\alpha(M,2\pi c_1(M)/\mu)>n\mu/(n+1)$ there exists K\"ahler--Einstein metric
cohomologous to $2\pi c_1(M)/\mu$. 
\end{corollary}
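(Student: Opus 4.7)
I would use the continuity method along Aubin's path
\begin{equation}
\label{AubPathEq}
\ovptn = \on e^{f_\o - t\mu\vp_t}, \qquad t \in [0,1],
\end{equation}
whose endpoint $t=1$ is precisely the \KE equation \eqref{KEEq}. Let $S\subseteq [0,1]$ be the set of $t$ for which \eqref{AubPathEq} admits a smooth $K$-invariant solution $\vp_t$; the plan is to prove $S = [0,1]$ by showing non-emptiness, openness, and closedness. At $t=0$, \eqref{AubPathEq} is the Calabi problem, solvable by Aubin--Yau, and $K$-invariance is achieved by averaging the solution over $K$ (since $\o$ and $f_\o$ are $K$-invariant). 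For openness at $t_0 \in (0,1)$, differentiating \eqref{AubPathEq} in the spatial variables yields $\Ric\,\o_{\vp_{t_0}} = (1-t_0)\mu\o + t_0\mu\,\o_{\vp_{t_0}}$, so the linearization at $\vp_{t_0}$ is $\D_{\o_{\vp_{t_0}}} + t_0\mu$; Lichnerowicz applied to the strict bound $\Ric\,\o_{\vp_{t_0}} > t_0\mu\,\o_{\vp_{t_0}}$ makes this operator invertible on $K$-invariant H\"older spaces, and the implicit function theorem extends the solution locally while preserving $K$-symmetry.

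\textbf{Closedness via properness.} By the standard a priori chain (Yau's $C^2$ estimate, Calabi/Evans--Krylov $C^{2,\a}$ estimates, Schauder bootstrapping), closedness of $S$ reduces to a uniform $\|\vp_t\|_\infty$ bound on closed subintervals of $S$. A Moser iteration driven by Tian's $\alpha$-invariant (via Theorem \ref{TianalphaThm}) further reduces this to a uniform bound on $J(\ovpt)$. Properness of $E$ on $\H^K$ then closes the argument, provided $E(\ovpt)$ stays bounded along the path---which I would verify by monotonicity. Comparing Ricci forms in \eqref{AubPathEq} gives $f_{\ovpt} = -(1-t)\mu\vp_t$ up to additive constants, so the variational formula \eqref{EderivEq} becomes
\begin{equation*}
\tfrac{d}{dt}E(\vp_t) = -\tfrac{(1-t)\mu}{V} \int_M \langle\nabla\dot\vp_t, \nabla\vp_t\rangle_{\ovpt} \ovptn.
\end{equation*}
Linearizing \eqref{AubPathEq} in $t$ yields $\D_{\ovpt}\dot\vp_t + t\mu\dot\vp_t = -\mu\vp_t$ modulo constants, and combining this with the Lichnerowicz spectral bound $-\D_{\ovpt}\geq t\mu$ on mean-zero functions (applied to $\Ric\,\ovpt\geq t\mu\,\ovpt$) produces $\tfrac{d}{dt}E(\vp_t) \leq 0$ for $t\in (0,1)$. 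Hence $E(\ovpt)\leq E(\o_{\vp_0})$ throughout, giving the uniform $J$-bound and hence closedness.

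\textbf{Main obstacle.} The technically heaviest step is the $\alpha$-invariant-driven Moser iteration reducing the $C^0$ bound on $\vp_t$ to a $J$-bound: it exploits the quantitative positivity of $\alpha(M,[\o])$ from Theorem \ref{TianalphaThm} in an iterated fashion against the exponential nonlinearity in \eqref{AubPathEq}. The monotonicity of $E$ along Aubin's path is a clean Bochner--Lichnerowicz computation, and openness and non-emptiness are classical. Once closedness is in place, $S=[0,1]$ and the solution $\vp_1$ at $t=1$ yields the sought $K$-invariant \KE metric $\o_{\vp_1}$; $K$-invariance propagates at each $t$ either by IFT on $K$-invariant function spaces or by uniqueness of the solution when $t>0$ and $\mu>0$.
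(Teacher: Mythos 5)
Your proposal is essentially the same approach as the paper's, just unpacked: the paper derives Corollary \ref{TianbigalphaKEThm} as a two-line consequence of Theorem \ref{TianbigalphaThm} ($\alpha > n\mu/(n+1) \Rightarrow E$ proper) and Theorem \ref{propernessexistence} (properness $\Rightarrow$ existence), while you re-prove the latter along the way. Your monotonicity computation for $E$ on the Aubin path and the Bochner--Lichnerowicz openness argument match \S\ref{LinfintSubSec} and Lemma \ref{PoincareLemma} essentially verbatim.

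The one genuine technical difference is the continuity path: you use the classical one-parameter Aubin path, whereas the paper uses the two-parameter family \eqref{TwoParamCMEq}, running the sub-rectangle $(-\infty,0]\times[0,1]$ (with the automatic solution $\vp(s,0)=0$) to reach the Calabi endpoint $(0,1)$ before turning onto the Aubin leg $[0,\mu]\times\{1\}$. You instead invoke Aubin--Yau as a black box to get the Calabi endpoint. Both are legitimate; the paper's choice is made precisely so that the existence of solutions at some parameter values is automatic and no external theorem is needed. With your shortcut, the self-contained proofs of Corollary \ref{AubinYauCor} in the $\mu\le 0$ and $\mu=0$ cases are of course no longer byproducts, which is a small loss.

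Two small imprecisions to flag. First, ``averaging the solution over $K$'' at $t=0$ is not the right move: the Monge--Amp\`ere operator is nonlinear, so the average of a solution is generally not a solution. The correct observation is that Yau's uniqueness-up-to-constant and compactness of $K$ force the constant $c(g)$ in $\vp_0\circ g=\vp_0+c(g)$ to vanish, so $\vp_0$ is automatically $K$-invariant; or, as the paper does, average the \emph{metric} $\o$, which is linear. Second, the phrase ``Moser iteration driven by Tian's $\alpha$-invariant'' conflates two things: the $\alpha$-invariant enters only through Theorem \ref{TianbigalphaThm} to give properness; the Moser iteration step in \S\ref{LinfintSubSec} that passes from a $J$-bound to a $C^0$-bound uses only the entropy formula, the mean-value inequality \eqref{MeanVal}, and the Sobolev inequality of the fixed reference metric, not the $\alpha$-invariant. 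You should also record that openness at $t_0=0$ requires restricting the linearized operator $\Delta_{\o_{\vp_0}}$ to mean-zero functions, since it has nontrivial kernel there; your openness discussion covers only $t_0\in(0,1)$.
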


We also  obtain as a corollary the theorems of Aubin and Yau:

\begin{corollary}
\label{AubinYauCor}
Let  $\mu \le 0 $ and suppose that  \eqref{c1} holds. Then there exists a unique K\"ahler--Einstein metric
cohomologous to $\o$. 
\end{corollary}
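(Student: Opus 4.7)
My plan is to split the assertion into existence and uniqueness, reducing existence to Theorem~\ref{propernessexistence} combined with the entropy bound Theorem~\ref{EntThm}, and treating uniqueness by the maximum principle.

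For existence, I would verify that the Mabuchi energy $E$ is proper on $\H$ (which implies properness on $\H^K$ for any compact $K\subset \AutMJz$), so that Theorem~\ref{propernessexistence} applies with $K$ trivial. When $\mu<0$, the bound \eqref{EnegEq}---which uses only nonnegativity of the entropy---gives $E(\vp)\ge\frac{|\mu|}{n}J(\vp)$ directly, so $E$ is proper with $D=0$. When $\mu=0$ the Mabuchi energy reduces to $\Ent(e^{f_\o}\o^n,\o^n_\vp)$, and combining \eqref{EntProperEq} with Theorem~\ref{EntThm} and the inequality $I\ge\frac{n+1}{n}J$ from \eqref{IJEq} yields $E(\vp)\ge\beta'J(\vp)-C$ for suitable positive constants $\beta',C$. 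In either case Theorem~\ref{propernessexistence} then produces a K\"ahler--Einstein metric cohomologous to $\o$.

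For uniqueness the plan is a standard maximum principle argument applied to $u:=\vp_1-\vp_2$, the difference of potentials of two K\"ahler--Einstein metrics in $\H$. When $\mu<0$, at a point $x_0$ where $u$ attains its maximum, $\o_{\vp_1}(x_0)\le\o_{\vp_2}(x_0)$ as Hermitian forms, hence $\o^n_{\vp_1}(x_0)\le\o^n_{\vp_2}(x_0)$; comparing the right-hand sides of \eqref{KEEq} gives $e^{-\mu u(x_0)}\le 1$, and since $-\mu>0$ this forces $u(x_0)\le 0$, so $u\le 0$ on $M$, and by symmetry $u\equiv 0$. When $\mu=0$ the same strategy only yields uniqueness of the potential up to an additive constant, which is the correct statement since \eqref{KEEq} is invariant under $\vp\mapsto\vp+c$ in this case while the metric $\o_\vp$ is not; here both potentials satisfy $\o^n_{\vp_1}=\o^n_{\vp_2}=\o^n e^{f_\o}$, and the telescoping identity $\o^n_{\vp_1}-\o^n_{\vp_2}=\i\ddbar u\wedge T$ with $T:=\sum_{k=0}^{n-1}\o^k_{\vp_1}\wedge\o^{n-1-k}_{\vp_2}>0$, after multiplying by $u$ and integrating by parts, yields $\int_M \i\del u\wedge\dbar u\wedge T=0$, forcing $du\equiv 0$ and hence $\o_{\vp_1}=\o_{\vp_2}$.

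The main obstacle is essentially bookkeeping rather than analytic: in the $\mu=0$ case one must chain together \eqref{EntProperEq}, Theorem~\ref{EntThm}, and \eqref{IJEq} with the correct signs, and one must handle the additive-constant ambiguity in the potential. All the heavy analytic lifting has already been done in Theorems~\ref{propernessexistence} and~\ref{EntThm}; the remainder reduces to the maximum principle plus a single integration by parts, and I do not expect any genuine analytic difficulty.
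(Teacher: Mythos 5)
Your proof is correct, but it deliberately reverses the logical order that the paper takes. The paper's remark immediately after Corollary~\ref{AubinYauCor} stresses that its proof \emph{will not} use properness of $E$: instead the corollary is obtained as a by-product of the two-parameter continuity method of Theorem~\ref{propernessexistence}, by running the method only over the sub-rectangle $(-\infty,0]\times[0,1]$, where the $L^\infty$ a priori estimates (Lemmas~\ref{NegCzeroLemma} and~\ref{CYLemma}) come for free from the maximum principle and Yau's Moser iteration and never invoke $E$, $J$, or Tian's invariant. You instead supply the hypothesis of Theorem~\ref{propernessexistence} by proving $E$ is proper---which for $\mu<0$ is an elementary use of \eqref{EnegEq}, but for $\mu=0$ requires the full strength of Theorem~\ref{EntThm} and hence Theorem~\ref{TianalphaThm} (Tian's $\alpha$-invariant); this is a substantially heavier input than the paper needs, and moreover the properness hypothesis ends up being vacuous inside the proof of Theorem~\ref{propernessexistence} when $\mu\le 0$ (it is only used on the interval $[0,\mu]\times\{1\}$, which is empty or a single point in this regime). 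So your route is valid, just circuitous. On the other hand you make a genuine contribution the paper elides: you explicitly prove uniqueness, via the maximum principle on $u=\vp_1-\vp_2$ when $\mu<0$ and Calabi's integration-by-parts identity $\int\i\del u\wedge\dbar u\wedge T=0$ with $T=\sum_k\o_{\vp_1}^k\wedge\o_{\vp_2}^{n-1-k}$ when $\mu=0$, correctly noting that only the metric (not the potential) is unique in the latter case. The paper simply asserts uniqueness as part of the statement without argument, so this is a welcome addition.
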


\begin{remark} {\rm
\label{}
The proof of Corollary  \ref{AubinYauCor} we will give will not directly use the fact that the K-energy is proper whenever $\mu \le 0 $ (which holds according to Theorem \ref{TianbigalphaThm}).
In fact, the proof of Corollary  \ref{AubinYauCor} will be more or less a step in the proof of Theorem \ref{propernessexistence}. 
} \end{remark}

\subsection {A two-parameter continuity method}

We will give a somewhat nonstandard proof of Theorem  \ref{propernessexistence} using a two-parameter continuity method instead of the more standard proofs that use one-parameter continuity methods or the Ricci flow equation. Namely, we consider the two-parameter family of equations,
\begin{equation}
\begin{aligned}
\label{TwoParamCMEq}
\o_{\vp}^n=e^{tf_\o+c_t-s\vp}\on,
\quad
c_t:=-\log\frac1V\int_M e^{tf_\omega}\omega^n, \quad (s,t)\in A,
\end{aligned}
\end{equation}
where 
$$
A:=(-\infty,0]\times[0,1]\;\cup [0,\mu]\times\{1\}
$$
is the parameter set---the union of a semi-infinite rectangle and
 an interval,
and show that there exists a unique solution $\vp(s,t)$ for each $(s,t)\in A$
once we require that the solution be continuous in the parameters  $s, t$. 
Of course, we are really looking to show the existence of $ \varphi (\mu, 1) $. Thus, the strategy is to first construct $ \varphi (s, t) $ for other values of  $(s, t) $ for which existence is easier to show and then perturbing the equation and eventually arriving at the equation for the values $(\mu, 1) $. Hence, the name `continuity method.'

To show existence for the sub-rectrangle $(-\infty,0]\times[0,1]$ is easier and is precisely 
what proves Corollary  \ref{AubinYauCor}. Also, one could restrict to the sub-rectangle
$(-S,\mu]\times[0,1]$ for any value $S> 0$ as far as proving the existence theorems is concerned. Working on $A$ requires no more work and is somewhat more natural and canonical, since the value  $s = -\infty  $ corresponds, in a sense that can be made precise \cite[Proposition 7.3]{Wu},\cite[\S9]{JMR},\cite{Berman-zerotemp}, to the initial reference metric $\omega$. Thus, one can view this continuity method as starting with the given reference metric and deforming it to the  K\"ahler--Einstein  metric. In fact, the one-parameter continuity method with  $t = 1 $
fixed and $s$ varying between $-  \infty $ and $\mu $ can be viewed as the continuity method analogue of the K\"ahler--Ricci  flow, and is called the Ricci continuity method, introduced in \cite{R08} and further developed in \cite{JMR}. One of the reasons we work with the two-parameter family in these lectures is that then the existence of solutions for some parameter values is automatic. 
Indeed,
\begin{equation}
\begin{aligned}
\label{s0solEq}
 \varphi (s, 0) = 0, \qquad s\in (- \infty ,0].
\end{aligned}
\end{equation}
Working with the one-parameter Ricci continuity method is harder since it is nontrivial to show the existence of solutions for some parameter value  $(s, 1) .$ The full strength of the Ricci continuity method however goes beyond that of the two-parameter family in that the former can be used to show existence of K\"ahler--Einstein edge metrics, which are a natural generalization of K\"ahler--Einstein metrics that allows for a singularity along a complex submanifold of codimension one. In that context the two-parameter continuity method does not always seem to work.

\begin{exer} {\rm
\label{Ricovpst}
Show that for each $(s,t)\in A$,
\begin{equation}
\begin{aligned}
\label{RicovpstEq}
\Ric\o_{\vp(s,t)}=(1-t)\Ric\o+s\o_{\vp(s,t)}+(\mu t-s)\o.
\end{aligned}
\end{equation}
Note that this implies that if indeed, as claimed,  $\vp(s,1)\ra 0$  as $s\ra - \infty$
then $f_\o-s\vp$ should be small, i.e.,  $ \varphi \approx f_\o/s$ in that regime.   
} \end{exer}

\subsection {Openness}
\lb{OpenSubSec}

Let
$$
\PSH(M,\o)=\{ \vp \in L^1(M,\o^n)\,:\, \h{$\vp$ is upper semicontinuous and } 
\ovp\geq 0 \}
$$
denote the set of $\o$-plurisubharmonic functions on $M$.
Define $M_{s,t}:C^{2,\gamma}\cap\PSH(M,\o)\ra C^{0,\gamma}$ by 
$$
M_{s,t}(\vp):= \log \frac{\o_{\vp}^n}{\on}-tf_\o+s\vp-c_t,
\quad (s,t)\in A.
$$
If $\vp(s,t)\in C^{2,\gamma}\cap\PSH(M,\o)$ is a solution of \eqref{TwoParamCMEq}, 
we claim that its linearization 
\begin{equation}
\left. DM_{s,t}\right|_{\vp(s,t)} =\Delta_{\vp(s,t)}+s :C^{2,\gamma}\ra C^{0,\gamma},
\quad (s,t)\in A,
\label{oxf}
\end{equation}
is an isomorphism when $s \neq 0$ and  $s< \mu$ . If $s = 0$, this map is an isomorphism if we restrict on each side 
to the codimension one subspace of functions with integral equal to $0$ with respect to $\o^n_{\vp(0,t)}$.  Furthermore, we also 
claim that $ C^{2,\gamma}\cap\PSH(M,\o)\times A \ni (\vp,s,t) \mapsto M_{s,t}(\vp) \in 
C^{0,\gamma}$ is a $C^1$ mapping. Given these claims, the Implicit Function Theorem then 
guarantees the existence of a solution $\vp(\tilde s,\tilde t) \in C^{2,\gamma}$ 
for all $(\tilde s,\tilde t)\in A$ sufficiently close to $(s,t)$. This solution must necessarily be contained in $\PSH(M,\o)$ since $M_{s,t}(\vp (s, t)) = 0 $ means that  \eqref{TwoParamCMEq} holds, so in particular 
$\o^n_{\vp(s,t)}>0$, and by the continuity of $\o_{\vp(s,t)}$ in the parameters it follows that all the eigenvalues of the metric stay positive along the deformation.

We concentrate on the first claim, since the second claim is easy to check.

Now, $DM_{s,t}$ is an elliptic operator and there is a classical and well-developed theory for those kind of operators acting on H\"older spaces \cite{GT}. 
In particular, such an operator has a generalized inverse, or Green kernel.
Also, it is Fredholm of index 0. 
Using the existence of a Green kernel shows that $C^{2,\gamma}$ decomposes as a direct sum $\{Gf\,:\, f\in C^{0,\gamma}\}\oplus K_{s,t}$, where $K_{s, t} $ denotes the kernel of the operator $DM_{s,t}$. 
Thus, whenever $K_{s,t}=\{0\}$ the operator is an isomorphism.

The nullspace of $DM_{s,t}$ is clearly trivial when $s < 0$ since the spectrum of 
$\Delta_{\vp(s,t)}$ is contained in $(-  \infty , 0]$. When $s=0$ the nullspace consists of the constants and so is an isomorphism when restricted to functions of zero average. Thus, the claim is verified whenever $s\le 0$. To deal with the case when $s$  is positive the following lemma is needed.

\begin{lemma}
\label{PoincareLemma}
Suppose that $M_{s,1}(\vp (s, 1)) = 0 $ and that $s\in(0,\mu)$. 
Then the spectrum of $\Delta_{\vp(s,1)} $ is contained in  
$(-  \infty , -s)$.
\end{lemma}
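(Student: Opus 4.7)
The plan is to exploit the Ricci lower bound coming from \eqref{RicovpstEq} together with the K\"ahler Bochner--Lichnerowicz identity. Specializing \eqref{RicovpstEq} to $t=1$ gives
$$
\Ric\,\o_{\vp(s,1)} - s\,\o_{\vp(s,1)} \;=\; (\mu-s)\,\o,
$$
and since $s\in(0,\mu)$ and $\o$ is a K\"ahler form, the right-hand side is a strictly positive $(1,1)$-form at every point of $M$. The whole argument hinges on this strict positivity.

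Next I would apply the Bochner formula to a non-constant eigenfunction. Suppose $f\in C^\infty(M)$ satisfies $-\D_{\vp(s,1)}f = \lambda f$ with $\lambda > 0$. Integrating by parts on a K\"ahler manifold produces the identity
$$
\int_M |\dbar\,\nabla^{1,0}f|^2\,\o_{\vp(s,1)}^n \;=\; \lambda\!\int_M |\nabla^{1,0}f|^2\,\o_{\vp(s,1)}^n \;-\; \int_M \Ric\,\o_{\vp(s,1)}(\nabla^{1,0}f,\overline{\nabla^{1,0}f})\,\o_{\vp(s,1)}^n,
$$
with gradients and norms taken with respect to $g_{\vp(s,1)}$. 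Since the left-hand side is non-negative, substituting the Ricci identity derived above yields
$$
(\lambda-s)\int_M |\nabla^{1,0}f|^2\,\o_{\vp(s,1)}^n \;\geq\; (\mu-s)\int_M \o(\nabla^{1,0}f,\overline{\nabla^{1,0}f})\,\o_{\vp(s,1)}^n.
$$

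The integrand on the right is non-negative since $\o > 0$, and not identically zero since $f$ is non-constant and smooth; hence the right-hand side is strictly positive. Dividing by the (strictly positive) integral on the left I conclude $\lambda > s$. Therefore every non-zero eigenvalue of $-\D_{\vp(s,1)}$ strictly exceeds $s$, or equivalently the non-zero spectrum of $\D_{\vp(s,1)}$ is contained in $(-\infty,-s)$, as claimed. In particular $-s$ is not an eigenvalue, giving the invertibility of the linearization $\D_{\vp(s,1)}+s$ needed in \eqref{oxf}.

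The main obstacle is the K\"ahler Bochner--Lichnerowicz identity itself, which is the nontrivial technical input; the geometric content that drives the strict inequality $\lambda > s$ rather than the weak bound $\lambda \geq s$ is the strict positivity of the ``twisting'' term $(\mu-s)\,\o$ in the Ricci identity, inherited directly from the fact that the reference form $\o$ is K\"ahler and that we are in the regime $s < \mu$.
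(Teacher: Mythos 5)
Your proof is correct and follows essentially the same route as the paper: apply a Bochner-type identity to the eigenfunction, use the strict Ricci lower bound $\Ric\,\o_{\vp(s,1)} = s\,\o_{\vp(s,1)} + (\mu - s)\o > s\,\o_{\vp(s,1)}$ coming from \eqref{RicovpstEq} with $t=1$ and $s < \mu$, and conclude $\lambda > s$. The only difference is bookkeeping: you invoke the integrated K\"ahler Bochner--Kodaira identity $\int|\dbar\nabla^{1,0}f|^2\,\o_{\vp(s,1)}^n = \lambda\int|\nabla^{1,0}f|^2\,\o_{\vp(s,1)}^n - \int\Ric\,\o_{\vp(s,1)}(\nabla^{1,0}f,\overline{\nabla^{1,0}f})\,\o_{\vp(s,1)}^n$ directly, whereas the paper starts from the pointwise real Bochner--Weitzenb\"ock formula and then integrates and integrates by parts, which amounts to the same identity.
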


\bpf
 Let $\psi$ be an eigenfunction of $\Delta_{\o_{\vp(s,t)}}$ with eigenvalue $-\lambda_1$.
By standard theory, $\psi $ is smooth.
The Bochner--Weitzenb\"ock formula states that
\[
\frac12\Delta_g|\nabla_g f|_g^2 = \Ric(\nabla_g f,\nabla_g f)+|\nabla^2 f|^2_g+\nabla f \cdot \nabla (\Delta_g f).
\]
Since $\Delta_g=2\Delta_\o$ and $|\nabla^2 f|^2_g=2|\nabla^{1,0}\nabla^{1,0}f|^2+2(\Delta_\o f)^2$, this becomes
\begin{equation}
\label{WeitzCxEq}
\Delta_\o|\nabla^{1,0} \psi|_g^2 = 2\Ric(\nabla^{1,0} \psi,\nabla^{0,1} \psi) + 2|\nabla^{1,0}\nabla^{1,0}\psi|^2
+ 2\lambda_1^2\psi^2 - 4\lambda_1|\nabla^{1,0} \psi|_\o^2.
\end{equation}
Integrating (\ref{WeitzCxEq}) and using that $\Ric\o(s)>s\o(s)$ when $s<\mu$ by   \eqref{RicovpstEq},
\begin{equation*}
\begin{aligned}
\label{}
\int \Big((2s- 4\lambda_1)|\nabla^{1,0} \psi|_\o^2
+ 2\lambda_1^2\psi^2\Big)\on <0.
\end{aligned}
\end{equation*}
Now,
\begin{equation*}
\begin{aligned}
\label{}
\int 2\lambda_1^2\psi^2\on=
-\int 2\lambda_1\psi\Delta_\o\psi\on=
\int 2\lambda_1|\nabla^{1,0} \psi|_\o^2.
\end{aligned}
\end{equation*}
Thus,
\begin{equation*}
\begin{aligned}
\label{}
\int (2s- 2\lambda_1)|\nabla^{1,0} \psi|_\o^2\on <0,
\end{aligned}
\end{equation*}
so we see that $\lambda_1>s$.
\epf

\begin{remark} {\rm
\label{}
Here we see why we cannot use the the rectangle
$$
(-\infty,\mu]\times[0,1]
$$
containing $A$: we run into trouble with openness. If we had chosen $\o$ to have nonnegative Ricci curvature we could have also worked on the larger trapezoid
$$
(-\infty,0]\times[0,1]\cup\{(s,t)\in[0,\mu]\times[0,1]\,:\, \mu t\ge s\}.
$$
Producing such an $\o$ is possible by applying Corollary  \ref{AubinYauCor} with $\mu=0$ (whose proof does not require these arguments).
} \end{remark}

\subsection {An $L ^\infty$ bound in the sub-rectangle }
\lb{LinfsubrectSubSec}

The following two lemmas will be sufficient for our purposes.
\begin{lemma}
\label{NegCzeroLemma}
Suppose that $ \varphi (s, t) $ is a solution of  \eqref{TwoParamCMEq}. Then whenever $s <0 $,
$$
 \varphi (s, t)<C(1+1/|s|),
$$
for some uniform constant $C $ independent of $s $ and  $t $.  
\end{lemma}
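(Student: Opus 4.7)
My plan is to apply the maximum principle directly to the continuity equation \eqref{TwoParamCMEq}. The argument is essentially a one-line application of the fact that at an interior maximum of a smooth function on a compact manifold the complex Hessian is non-positive as a Hermitian form.

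More precisely, let $x_0 \in M$ be a point where $\varphi(s,t)$ attains its supremum on $M$. Then in any local holomorphic chart centered at $x_0$ the Hermitian matrix $[\partial_i\bar\partial_j \varphi(s,t)(x_0)]$ is negative semidefinite, which means $\i\ddbar \varphi(s,t)(x_0) \le 0$ as a $(1,1)$-form. Consequently $\omega_{\varphi(s,t)}(x_0) \le \omega(x_0)$ as Hermitian forms at $x_0$, and hence
\[
\omega_{\varphi(s,t)}^n(x_0) \le \omega^n(x_0).
\]
Substituting into \eqref{TwoParamCMEq} evaluated at $x_0$ gives $e^{tf_\omega(x_0) + c_t - s\varphi(s,t)(x_0)} \le 1$, i.e.,
\[
s\,\varphi(s,t)(x_0) \ge t f_\omega(x_0) + c_t.
\]

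Since $s<0$, dividing reverses the inequality:
\[
\sup_M \varphi(s,t) = \varphi(s,t)(x_0) \le \frac{t f_\omega(x_0) + c_t}{s}.
\]
The numerator is uniformly bounded: for $t \in [0,1]$ one has $|t f_\omega(x_0)| \le \sup_M |f_\omega|$, while $c_t = -\log \frac1V \int_M e^{tf_\omega}\omega^n$ is uniformly bounded on $[0,1]$ since $tf_\omega$ is. Denoting this uniform upper bound by $C_0$, we obtain
\[
\sup_M \varphi(s,t) \le \frac{C_0}{|s|},
\]
which is stronger than $C(1 + 1/|s|)$ with any $C \ge C_0$, proving the lemma.

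There is no real obstacle here: the content is the classical Aubin--Yau maximum principle for complex Monge--Amp\`ere equations, combined with uniform control on the data $tf_\omega + c_t$ over the compact range $t \in [0,1]$. Note that this argument does not give any control on $\inf_M \varphi(s,t)$, and indeed lower bounds (which presumably appear in a companion lemma) are the genuinely hard part of the $C^0$ estimate; the upper bound here is essentially free.
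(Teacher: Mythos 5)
Your upper-bound argument is exactly the paper's: evaluate at a maximum point, use $\i\ddbar\varphi \le 0$ there to get $\ovpn/\on \le 1$, and read off the bound from the equation. However, your proof is incomplete relative to how the lemma is actually used, and your closing remark about the lower bound is mistaken for this regime.

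The paper's proof also establishes the lower bound $\min_M \varphi(s,t) \ge (-c_t - t\max_M f_\o)/|s|$, by applying the identical maximum-principle argument at a point $q$ where $\varphi$ attains its minimum: there $\i\ddbar\varphi(q) \ge 0$, hence $\ovpn/\on(q) \ge 1$, hence $t f_\o(q) + c_t - s\varphi(q) \ge 0$, and since $s<0$ dividing gives the lower bound. Thus for $s<0$ the two-sided $L^\infty$ bound is equally "free" on both sides; there is no companion lemma, and your claim that lower bounds "are the genuinely hard part of the $C^0$ estimate" does not apply here. (The genuine difficulty is pushed to Lemma \ref{CYLemma} at $s=0$, where Yau's Moser iteration enters, and to Lemma \ref{CzeroposLemma} for $s>0$, where properness is invoked.) Note also that the lemma is used later as a bound on $\sup_j \max_M |\varphi(s_j,t_j)|$, so the two-sided estimate is in fact what is needed; the stated form $\varphi(s,t) < C(1+1/|s|)$ should be read as a bound on $|\varphi(s,t)|$, which the paper's proof supplies and yours does not.
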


\begin{proof}
Let $p $ be a point where the maximum of $ \varphi $ is achieved. Then,   $ \i\ddbar  \varphi(p) \le 0 $. Thus,
$$
\frac{\ovpn}{\on}(p)\le 1, 
$$  
 i.e., 
$$
tf_\o(p)+c_t-s\vp(s,t)(p)\le 0,
$$
so,
$$
\max\vp(s,t)=\vp(s,t)(p)\le(- c_t - t\min f_\omega)/|s|.
$$
Similarly, if $q $ is a point where the minimum is achieved,
$$
\min\vp(s,t)=\vp(s,t)(q)\ge(- c_t - t\max f_\omega)/|s|.
$$
concluding the proof.
\end{proof}

\begin{lemma}
\label{CYLemma}
Suppose that $ \varphi (0, t) $ is a solution of  \eqref{TwoParamCMEq}. Then,
$$
 \max_M|\varphi (0, t)|<C,
$$
for some uniform constant $C $ independent of $t $.  
\end{lemma}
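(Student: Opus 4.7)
The equation \eqref{TwoParamCMEq} at $s=0$ reads $\ovpn = F\on$ with $F := e^{t\fo + c_t}$. The normalization constant $c_t$ forces $\int_M F\on = V$, and the smoothness of $\fo$ together with the continuous dependence of $c_t$ on $t\in[0,1]$ ensure that $F$ is bounded above and below uniformly in $t$. Thus the problem is the classical Calabi--Yau $C^0$ estimate for a complex \MA equation with uniformly bounded right-hand side. Since the left-hand side depends on $\vp$ only through $\i\ddbar\vp$, the solution is determined only modulo an additive constant, and I am free to normalize $\sup_M \vp(0,t) = 0$. The upper bound is then immediate and the task reduces to a uniform lower bound.

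Set $u := -\vp(0,t) \ge 0$. An $L^1$ bound on $u$ is already in hand, supplied by exactly the Green's function argument from the proof of Theorem \ref{TianalphaThm}: the Green's representation formula together with the uniform lower bound $G(x,y)\ge -A_\o$ and the normalization $\sup\vp = 0$ gives $\V\int_M \vp\on \ge -nA_\o$, i.e., $\int_M u\,\on \le nA_\o V$, uniformly in $t$.

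To upgrade from $L^1$ to $L^\infty$ I plan to run Yau's Moser iteration. Testing the equation against $u^{p+1}$, expanding
\[
\ovpn - \on \;=\; \i\ddbar\vp \wedge \sum_{k=0}^{n-1} \o^k \wedge \ovp^{n-1-k},
\]
and integrating by parts produces, after discarding the nonnegative mixed terms in the sum and retaining only the $k=n-1$ contribution $(p+1)\int_M u^p\,\i\partial u\wedge\dbar u\wedge\o^{n-1}$, an estimate of the shape
\[
\int_M \bigl|\nabla u^{(p+2)/2}\bigr|_\o^{2}\,\on \;\le\; C(p+1)\int_M u^{p+1}\,\on,
\]
where $C$ depends only on $\|F-1\|_\infty$ and on $\o$. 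Combining this with the Sobolev inequality on $M$ (real dimension $2n$, conjugate exponent $2^\star = 2n/(n-1)$) yields a recursive estimate of the form $\|u\|_{L^{\chi q}} \le (Cq)^{1/q}\|u\|_{L^{q}}$ with $\chi := n/(n-1) > 1$. Iterating along the geometric sequence $q_k := \chi^k$ starting from $q_0 = 1$, the accumulated constant $\prod_k (Cq_k)^{1/q_k}$ converges, giving a uniform bound $\|u\|_\infty \le C'$ depending only on $\o$ and on the $t$-independent $L^\infty$ bound for $F$.

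The main obstacle is the bookkeeping in the Moser iteration, but it is entirely routine provided that both the initial $L^1$ bound and the $L^\infty$ control on $F$ are uniform in $t$---both of which hold in our setting. It is worth noting that no maximum-principle argument in the spirit of Lemma \ref{NegCzeroLemma} is available at $s=0$ because the equation carries no $\vp$-term on the right, which is precisely why the integral Moser technique is needed here.
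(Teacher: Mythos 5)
Your argument for the oscillation estimate is correct and is in fact the content of Yau's $C^0$ estimate (via the Green's-function $L^1$ bound feeding a Moser iteration), which the paper delegates to \cite[\S5]{Tianbook} rather than writing out. So on the analytic core you are expanding what the paper cites. The paper's notation $\Rico-\Ric\eta={\i}\ddbar\log(\eta^n/\on)$ and the fact that $F=e^{tf_\o+c_t}$ is uniformly pinched in $t$ make your reduction to a fixed Calabi--Yau-type estimate sound, and the iteration bookkeeping is the standard one.

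The gap is in the sentence ``I am free to normalize $\sup_M\vp(0,t)=0$.'' You are not. At $s=0$ the equation \eqref{TwoParamCMEq} determines the solution only up to an additive constant, as you observe; but the $\vp(0,t)$ appearing in Lemma \ref{CYLemma} is the specific solution produced by the continuity method, and its additive constant is \emph{already fixed} by the requirement that $\vp(s,t)$ depend continuously on $(s,t)$ (this is spelled out in the first sentence of the paper's proof). Re-normalizing so that $\sup\vp=0$ silently replaces $\vp(0,t)$ by a translate of it, and what you actually prove is $\osc_M\vp(0,t)\le C$, not $\max_M|\vp(0,t)|\le C$. To close the gap you need one more observation: $\vp(0,t)=\lim_{s\to 0^-}\vp(s,t)$, and for each $s<0$ the solution $\vp(s,t)$ must change sign unless it vanishes identically. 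Indeed, if $\vp(s,t)\ge 0$ everywhere with $\vp(s,t)\not\equiv 0$, then since $s<0$ one has $e^{-s\vp(s,t)}\ge1$ with strict inequality somewhere, so $\int_M e^{tf_\o+c_t-s\vp(s,t)}\on>\int_M e^{tf_\o+c_t}\on=V$, contradicting $\int_M\o_{\vp(s,t)}^n=V$; the case $\vp(s,t)\le 0$ is symmetric. Passing to the limit, $\vp(0,t)$ changes sign (or is identically $0$), hence $\min\vp(0,t)\le 0\le\max\vp(0,t)$ and $\max_M|\vp(0,t)|\le\osc_M\vp(0,t)$, to which your oscillation bound applies. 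Without this step the proposal proves a weaker statement than the lemma claims.
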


\begin{proof}
As remarked earlier, the solution in this case is a priori only unique up to a constant. 
However, we fixed the normalization by requiring that
the solution be continuous in the parameters  $s, t$. 
We will eventually show that there are solutions $ \varphi (s, t) $ for all $s $ less than 0
and all $t\in[0,1]$. Therefore, $\vp(s,t)$ converges pointwise to the solution  $ \varphi (0, t) $  for each fixed $t $ as $s$ tends to 0, and so this solution is actually unique. In particular, since the latter change sign, so must the former. Thus, it is enough to estimate the oscillation of $ \varphi (0, t) $  in order to estimate the $L^\infty$ norm of $ \varphi (0, t) $ , i.e., it suffices to estimate the minimum of
$$
\varphi (0, t) - \max\varphi (0, t)-1.
$$ 
This bound, due to Yau \cite{Yau1978}, then follows just as in  \cite[\S5]{Tianbook}.
\end{proof}

\subsection {An $L ^\infty$ bound in the interval }
\lb{LinfintSubSec}

\begin{lemma}
\label{}
Let $t=1$. The K-energy is monotonically decreasing in $s $. 
\end{lemma}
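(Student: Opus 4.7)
The plan is to differentiate $E(\vp(s,1))$ in $s$ using the variational formula \eqref{EderivEq}, simplify it using the continuity equation \eqref{TwoParamCMEq} together with \eqref{RicciPotEq}, and then deduce non-positivity by a spectral decomposition argument based on Lemma \ref{PoincareLemma}.

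First, observe that along the branch $t=1$ the normalization in Definition \ref{foDef} gives $c_1 = -\log V^{-1}\int_M e^{f_\o}\o^n = 0$, so \eqref{TwoParamCMEq} reads $\o_{\vp(s,1)}^n = e^{f_\o - s\vp(s,1)}\o^n$. Substituting this into \eqref{RicciPotEq} shows that the Ricci potential of $\o_{\vp(s,1)}$ is an affine function of $\vp(s,1)$, namely $f_{\o_{\vp(s,1)}} = (s-\mu)\vp(s,1) + \text{const}$. Hence $\Delta_{\o_\vp} f_{\o_\vp} = (s-\mu)\Delta_{\o_\vp}\vp$, and I plug this into \eqref{EderivEq} (abbreviating $\vp = \vp(s,1)$, $\dot\vp = \partial_s\vp(s,1)$, $\o_\vp = \o_{\vp(s,1)}$) to obtain
\[
\frac{d}{ds}E(\vp(s,1)) \;=\; -(s-\mu)\,V^{-1}\!\int_M \dot\vp\,\Delta_{\o_\vp}\vp\,\o_\vp^n.
\]

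Next, differentiating the continuity equation in $s$ yields the linearized identity $\Delta_{\o_\vp}\dot\vp + s\dot\vp = -\vp$, which lets me eliminate $\vp$ in favor of $\dot\vp$. After integrating by parts, expanding $\dot\vp = \sum_i c_i e_i$ in an $L^2(\o_\vp^n)$-orthonormal basis of eigenfunctions of $-\Delta_{\o_\vp}$ with eigenvalues $0 = \lambda_0 < \lambda_1 \leq \lambda_2 \leq \cdots$ gives
\[
\int_M \dot\vp\,\Delta_{\o_\vp}\vp\,\o_\vp^n \;=\; -\!\int_M(\Delta_{\o_\vp}\dot\vp)^2\,\o_\vp^n + s\!\int_M|\nabla\dot\vp|^2_{\o_\vp}\,\o_\vp^n \;=\; \sum_{i\geq 1} c_i^2\,\lambda_i\,(s-\lambda_i).
\]

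Finally, Lemma \ref{PoincareLemma} provides $\lambda_1 > s$ for each $s \in (0,\mu)$, so every summand above is nonpositive; combined with the prefactor $\mu - s \geq 0$ on $[0,\mu]$, this yields $\tfrac{d}{ds}E(\vp(s,1)) \leq 0$, as claimed. The endpoints present no trouble: at $s=\mu$ the factor $\mu-s$ vanishes, while at $s=0$ the standard positivity of the first nonzero eigenvalue of $-\Delta_{\o_{\vp(0,1)}}$ already gives $\lambda_1 > 0 = s$. The only substantive input is thus the spectral gap furnished by Lemma \ref{PoincareLemma}; the remainder is the chain rule plus integration by parts, so I anticipate no serious analytical obstacle.
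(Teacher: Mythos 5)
Your proposal is correct and follows essentially the same path as the paper's proof: both reduce $\tfrac{d}{ds}E(\vp(s,1))$ to $-(\mu-s)\,V^{-1}\!\int_M\dot\vp\,\Delta_{\ovp}(\Delta_{\ovp}+s)\dot\vp\,\ovp^n$ via the linearized identity $\vp=-(\Delta_{\ovp}+s)\dot\vp$ and then invoke the spectral gap of Lemma~\ref{PoincareLemma}. The only (cosmetic) difference is that you reach the key expression by routing through the Euler--Lagrange formula \eqref{EderivEq} and the affine identity $f_{\ovp}=(s-\mu)\vp+\text{const}$ from \eqref{RicciPotEq}, rather than differentiating the entropy term directly as the paper does; the computation and conclusion are the same, with the paper further noting that $\dot\vp$ is nonconstant so that the monotonicity is in fact strict for $s<\mu$.
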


\begin{proof}
When $t = 1 $, $c_t = 0 $. Then, 
$$
\baeq
\frac{d}{d s} E ( \varphi (s, 1)) 
&= 
\frac{d}{d s} \Ent(e^{f_\o} \o^n,e^{f_\o-s\vp} \o^n)
\cr
&\q-\mu \V\int_M \dot\vp\ovp^n
+ \mu\V\int_M \dot\vp\ovp^n
+ \mu\V\int_M \vp\Delta\dot\vp\ovp^n
\cr
&=
\V\int_M (-\vp-s\dot\vp-s\vp\Delta\dot\vp)\ovp^n
+ \mu\V\int_M \vp\Delta\dot\vp\ovp^n.
\eaeq
$$
Differentiating \eqref{TwoParamCMEq} yields

\begin{equation}
\begin{aligned}
\label{TwoParamVarCMEq}
\varphi  (s, t) = - (\Delta + s)\dot  \varphi(s, t).
\end{aligned}
\end{equation}
 Thus,
$$
\frac{d}{d s} E ( \varphi (s, 1)) 
=
\V\int_M (\Delta\dot\vp-(\mu-s)\Delta\dot\vp(\Delta+s)\dot\vp)\ovp^n
=
-(\mu-s)\V\int_M \dot\vp\Delta(\Delta+s)\dot\vp\ovp^n<0,
$$
since $\dot  \varphi $ is not constant as can be seen from   \eqref{TwoParamVarCMEq} and  \eqref{TwoParamCMEq}, while $\Delta + s $ is a negative operator for $s<\mu$ thanks to Lemma \ref{PoincareLemma}. 
\end{proof}

By the previous lemma, the K-energy actually decreases along the interval. By properness this implies that
the functional $I - J $ stays uniformly bounded along the interval once we know $ \varphi (0, 1) $ exists.
This will indeed be the case as we will show (in the first step of the proof) existence in the sub-rectangle for all values $s\le 0$. 
Now, the explicit formula  \eqref{EEq} for the K-energy hence implies that the entropy is bounded from above along the interval,
\begin{equation}
\begin{aligned}
\label{}
\Ent(e^{f_\o} \o^n,\o_{\vp(s,1)}^n)<C,
\end{aligned}
\end{equation}
thus,
\begin{equation}
\begin{aligned}
\label{}
\int \big((t-1)f_\o-s\vp(s,1)+c_t\big) \ovpn < C,
\end{aligned}
\end{equation}
or
\begin{equation}
\begin{aligned}
\label{}
\int -\vp(s,1)\o_{\vp(s,1)}^n < C(1+1/s).
\end{aligned}
\end{equation}
Observe that here we may assume that $s>\epsilon> $0 since by openness about the value $(0, 1) $ we have existence for small positive values of $s $.
Going back to   \eqref{AubinEnergyEq} now shows that
\begin{equation}
\begin{aligned}
\label{AverageBndEq}
\int \vp(s, 1)\on < C(1+1/s),
\end{aligned}
\end{equation}
the mean value inequality  \eqref{MeanVal} shows that
\begin{equation}
\begin{aligned}
\label{supBoundEq}
\max \varphi (s, 1)< C(1+1/s).
\end{aligned}
\end{equation}
It remains to estimate $\min \varphi (s, 1) $. 
Now, as in the proof of Lemma \ref{CYLemma}, we set 
$$
\alpha(s):=\max\varphi (s, 1)-\varphi (s, 1)  +1.
$$ 
A standard Moser iteration argument   now yields an estimate \cite[\S5]{Tianbook}
$$
||\alpha(s)||_{L^\infty(M,\o_{\vp(s,1)}^n)}
\le C\big(||\alpha(s)||_{L^1(M,\o_{\vp(s,1)}^n)}\big), 
$$
but
$$
||\alpha(s)||_{L^1(M,\o_{\vp(s,1)}^n)}=\max\varphi (s, 1)+1+\int -\vp(s,1)\o_{\vp(s,1)}^n<C(1+1/s).
$$
Thus, we have proven the following.
\begin{lemma}
\label{CzeroposLemma}
Suppose that $ \varphi (s, 1) $ exists for $s\in(0,\eps]$
for some $\epsilon> 0$. Then for every $s >\epsilon$, 
$$
\max_M|\varphi (s, 1)| < C(1+1/s).
$$
\end{lemma}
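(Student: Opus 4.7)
The plan is to chain together three ingredients already in hand: (i) the hypothesis that $E$ is proper on $\H^K$ (Theorem \ref{propernessexistence}); (ii) the previous lemma, which gives $s\mapsto E(\varphi(s,1))$ monotonically decreasing; and (iii) existence of $\varphi(0,1)$ obtained in the subrectangle step of the continuity method. By (ii) and (iii), $E(\varphi(s,1)) \le E(\varphi(0,1))$ is uniformly bounded above for $s\in(0,\mu)$. Properness \eqref{PropernessEq}, used in contrapositive form, prevents $J(\varphi(s,1))$ from blowing up, so by \eqref{IJEq} the functionals $I$ and $I-J$ are uniformly bounded along the interval. Reading \eqref{E2ndEq}, the entropy $\Ent(e^{f_\o}\o^n,\o_{\varphi(s,1)}^n)$ is therefore also uniformly bounded above.

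Next, I would convert this into an $L^\infty$ upper bound for $\varphi(s,1)$. At $t=1$ the constant $c_t$ vanishes, so equation \eqref{TwoParamCMEq} gives $\log(\o_{\varphi}^n/(e^{f_\o}\o^n)) = -s\varphi$, whence
\begin{equation*}
\Ent(e^{f_\o}\o^n,\o_{\varphi(s,1)}^n) = -\frac{s}{V}\int_M \varphi(s,1)\,\o_{\varphi(s,1)}^n.
\end{equation*}
Combined with the previous paragraph this produces $\V\int_M (-\varphi(s,1))\o_{\varphi(s,1)}^n < C/s$ for $s>\epsilon$. The definition \eqref{AubinEnergyEq} of $I$ together with the uniform bound on $I$ then yields the averaged estimate \eqref{AverageBndEq}, and applying the Green-function mean value inequality \eqref{MeanVal} upgrades this to the pointwise upper bound \eqref{supBoundEq}, $\max_M \varphi(s,1) < C(1+1/s)$.

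For the lower bound I would set $\alpha(s) := \max_M \varphi(s,1) - \varphi(s,1) + 1 \ge 1$ and invoke standard Moser iteration applied to the \MA equation \eqref{TwoParamCMEq}, in the form $\|\alpha(s)\|_{L^\infty(M,\o_{\varphi(s,1)}^n)} \le C \|\alpha(s)\|_{L^1(M,\o_{\varphi(s,1)}^n)}$. The $L^1$ norm equals $\max\varphi(s,1) + 1 + \V\int_M(-\varphi(s,1))\o_{\varphi(s,1)}^n$, which is already $<C(1+1/s)$ by the previous paragraph. Hence $\max\varphi(s,1) - \min \varphi(s,1) + 1 < C(1+1/s)$, giving $-\min\varphi(s,1) < C(1+1/s)$ and therefore the claimed two-sided estimate. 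The main obstacle is the Moser iteration step itself: it is a nonlinear regularity argument for a \MA solution rather than a linear elliptic one, and its constant must depend only on $(M,\o)$---not on $s$ or the solution $\varphi(s,1)$---so that the final bound degenerates only through the explicit $1/s$ factor as $s \to 0^+$.
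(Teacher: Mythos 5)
Your proof is correct and follows essentially the same route as the paper's: monotonicity of the K-energy from the preceding lemma plus existence of $\varphi(0,1)$ gives a uniform upper bound on $E$; properness then bounds $I-J$ (hence $I$ and $J$); the explicit entropy formula combined with the Monge--Amp\`ere equation at $t=1$ converts the entropy bound into $\V\int(-\varphi)\o_\varphi^n < C/s$; equation \eqref{AubinEnergyEq} and the mean value inequality \eqref{MeanVal} give the upper bound on $\max\varphi$; and a Moser iteration on $\alpha(s)$ supplies the lower bound. Your closing remark about the constants in the Moser step depending only on $(M,\o)$ is exactly the point the paper leaves implicit when citing \cite[\S5]{Tianbook}.
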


\subsection {Second order estimates}

\def\D{\Delta}
\def\tr{\h{tr}}

The reference for this subsection is  \cite[\S7.2--7.4,7.7]{R14}. 

We say that $\o,\ovp$ are uniformly equivalent
if
\begin{equation}
\begin{aligned}
\label{MetricEquivEq}
C_1\o\le \ovp \le C_2\o,
\end{aligned}
\end{equation}
for some
constants $C_2\ge C_1>0$.
We start with a simple result which shows that a Laplacian estimate can be interpreted geometrically. Denote
$$
\tr_\o\eta:=\tr\big([g_{i\b j}]^{-1}[h_{k\b l}]\big),
$$
where 
$
\o=g_{i\bar j} dz^i\w \bdz j,\,
\eta=h_{i\bar j}dz^i\w \bdz j$ in local coordinates. Similarly, denote
$$
\h{det}_\o\eta:=\det\big([g_{i\b j}]^{-1}[h_{k\b l}]\big).
$$

\begin{exer} {\rm
\label{FirstMetricEquivExer}
Show that  \eqref{MetricEquivEq} is implied by either
\beq
\label{FirstC2Eq}
n+\D_\o\vp=\tr_\o\ovp\le C_2,\q
\h{ and\ }
\h{det}_\o\ovp\ge C_1C_2^{n-1}/(n-1)^{n-1},
\eeq
or,
\beq
\label{SecondC2Eq}
n-\D_\ovp\vp=\tr_{\ovp}\o\le 1/C_1,\q
\h{ and\ }
\h{det}_\o\ovp\le C_1C_2^{n-1}(n-1)^{n-1}.
\eeq
} \end{exer}

\begin{exer} {\rm
\label{}
Conversely, show that   \eqref{MetricEquivEq} implies
$$
\tr_\o\ovp\le nC_2, \q
\h{ and\ }
\h{det}_\o\ovp\ge C_1^n,
$$ 
as well as
$$
\tr_{\ovp}\o\le n/C_1, \q 
\h{ and\ }
\h{det}_\o\ovp\le C_2^{n}.
$$
(Indeed, $\sum(1+\la_j)\le A$, and
$
\Pi(1+\la_j)\ge B
$
implies $1+\la_j\ge (n-1)^{n-1}B/A^{n-1}$;
conversely,
$
\Pi(1+\la_j)\ge \big(\frac1n\sum\frac 1{1+\la_j} \big)^{-n}
\ge C_1^n.
$)
} \end{exer}

The quantities $\tr_\o\ovp$ and $\h{det}_\o\ovp$ 
have a nice geometric interpretation.
To see that, we will study the geometry of the identity map $\iota:M\ra M$!
Consider $\del\iota^{-1}$
either as a map from $T^{1,0}M$ to itself, or
as a map from $\Lambda^n T^{1,0}M$ to itself.
Alternatively,
it is section of $T^{1,0\,\star}M\otimes T^{1,0}M$,
or of $\Lambda^n T^{1,0\,\star}M\otimes \Lambda^n T^{1,0}M$, and we may
endow these product bundles with the product metric
induced by $\o$ on the first factor, and by $\ovp$ on the second factor.
Then,  \eqref{FirstC2Eq} means that the norm squared
of $\del\iota^{-1}$, in its two guises above,
is bounded from above by $C_2$, respectively
bounded from below by $C_1C_2^{n-1}/(n-1)^{n-1}$.
Similarly,  the quantities $\tr_\ovp\o$ and $\h{det}_\ovp\o$ 
and
\eqref{SecondC2Eq} can be interpreted in
terms of $\del\iota$.

Now, for us the quantities
$$
\h{det}_\o\o_{\vp(s,t)}= e^{tf_\o+c_t-s\vp(s,t)}
$$
and 
$$
\h{det}_{\o_{\vp(s,t)}}\o= e^{-tf_\o-c_t+s\vp(s,t)}
$$
are already uniformly bounded thanks to the uniform estimate on 
$||\vp(s,t)||_{L^\infty}$ obtained in \S\ref{LinfsubrectSubSec}--\ref{LinfintSubSec}. Thus, 
according to Exercise \ref{FirstMetricEquivExer}, it remains to
find an upper bound for either
$|\del\iota^{-1}|^2$ 
or $|\del\iota|^2$
(from now on we just consider maps on $T^{1,0}M$). 

The standard way to approach this is by using the maximum
principle, and thus involves computing the
Laplacian of either one of these two quantities.
The classical approach, due to Aubin \cite{Aubin1970,Aubin1976,Aubin1978}
and Yau \cite{Yau1978}, is to estimate the first, while
a more recent approach is to estimate the second
\cite{R08,JMR}, and this builds on using and finessing older
work of Lu \cite{Lu68} and Bando--Kobayashi \cite{BK88}.
Both of these approaches are explained in a unified manner in  \cite[\S7]{R14}.
The result we need is \cite [Corollary 7.8 (i)] {R14}.

\def\osc{\operatorname{osc}}

\begin{lemma}
\label{LaplacianLemma}
Let $\vp\in C^4(M)\cap \PSH(M,\o)$.
Suppose that 
\begin{equation}
\begin{aligned}
\label{RicInEq}
\Ric\ovp\ge -C_1\o- C_2\ovp,
\end{aligned}
\end{equation}
and
\begin{equation}
\begin{aligned}
\label{biseqEq}
\max_M\h{\rm Bisec}_\o\le C_3.
\end{aligned}
\end{equation}
Then
\beq\label{FirstLaplacianEstimate}
-n<\Delta_\o\vp\le
(C_1+n(C_2+2C_3+1))e^{(C_2+2C_3+1)\osc\vp}-n.
\eeq

\end{lemma}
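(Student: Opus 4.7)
The lower bound $-n<\Delta_\o\vp$ is immediate: at any $x\in M$, diagonalizing $\vp_{i\bar j}(x)$ with respect to $g_{i\bar j}(x)$ produces eigenvalues $\lambda_1,\dots,\lambda_n$ satisfying $1+\lambda_i>0$ (from $\ovp(x)>0$), whence $n+\Delta_\o\vp(x)=\sum_{i}(1+\lambda_i)>0$.

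For the upper bound I would apply the maximum principle to the auxiliary function $H:=\log u-A\vp$, where $u:=n+\Delta_\o\vp$ and $A:=C_2+2C_3+1$. Since $M$ is compact and $H\in C^2(M)$, $H$ attains its maximum at some $p\in M$ with $\Delta_\ovp H(p)\le 0$. Using the identity $\Delta_\ovp\vp=n-\tr_\ovp\o$, this reads
$$
\Delta_\ovp\log u(p)\;\le\;An-A\tr_\ovp\o(p).
$$

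The crucial ingredient is the Aubin--Yau--Chern--Lu type Bochner inequality for $\log u$. In K\"ahler normal coordinates for $\o$ at $p$, with $\vp_{i\bar j}$ simultaneously diagonalized, one writes $\Delta_\ovp u=\sum_i(1+\lambda_i)^{-1}\partial_i\bar\partial_i u$ and commutes the derivatives across $u$ via the K\"ahler commutation identities. The resulting commutators are controlled precisely by the bisectional curvature of $\o$ (bounded above via \eqref{biseqEq}) and, after a second commutation that swaps two pairs of $\partial$, $\bar\partial$, by the Ricci curvature of $\ovp$ (bounded below via \eqref{RicInEq}). Dropping the non-positive gradient term from the identity $\Delta_\ovp\log u=\Delta_\ovp u/u-|\partial u|_\ovp^2/u^2$ yields the Bochner inequality in the form stated in \cite[\S7.2--7.4]{R14}. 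Combining it with the displayed inequality above, the particular choice $A=C_2+2C_3+1$ is exactly what is needed to absorb the $\tr_\ovp\o(p)$ contributions with the correct sign and arrive at the pointwise bound $u(p)\le C_1+nA=C_1+n(C_2+2C_3+1)$.

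Propagating this to a global bound is then routine. Since $p$ maximizes $H$, we have
$$
\log u(x)-A\vp(x)\le\log u(p)-A\vp(p),\qquad\forall\,x\in M,
$$
and so $u(x)\le u(p)\,e^{A(\vp(x)-\vp(p))}\le(C_1+nA)\,e^{A\,\osc\vp}$, which after subtracting $n$ is the claimed estimate. The main obstacle is the Bochner inequality itself: the delicate tracking of signs when commuting fourth-order derivatives, and the precise bookkeeping of how the bisectional curvature of $\o$ and the Ricci curvature of $\ovp$ enter, are the technical heart of the proof; this is exactly the computation carried out carefully in \cite[\S7.2--7.4,7.7]{R14}, to which we refer for the underlying algebraic identities.
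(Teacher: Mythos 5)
Both the paper and your proposal defer the heart of the matter to \cite[\S7]{R14}: the paper gives no proof at all (it simply cites \cite[Corollary 7.8(i)]{R14}), while you outline the standard maximum-principle scheme and cite R14 for the Bochner inequality. In that sense the overall strategy you describe (maximum principle on $\log(\mathrm{trace})-A\vp$, Bochner inequality, exponential propagation) is the right one. The lower-bound argument and the final propagation step $u(x)\le u(p)\,e^{A\,\osc\vp}$ are both correct.

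However, there is a genuine gap in the step where you claim to ``arrive at the pointwise bound $u(p)\le C_1+nA$.'' With your auxiliary function $H=\log u-A\vp$, the maximum-point inequality reads $\Delta_\ovp\log u(p)\le A\big(n-\tr_\ovp\o(p)\big)$, and the combination with a Bochner lower bound of the type you invoke (one controlled by $\Ric\ovp$ from below and $\h{Bisec}_\o$ from above) yields, after the terms in $\tr_\ovp\o(p)$ are absorbed by the choice of $A$, a bound on $\tr_\ovp\o(p)$---\emph{not} on $u(p)=\tr_\o\ovp(p)$. These two traces are not interchangeable without a control on $\det_\o\ovp$, which is not among the hypotheses of the lemma. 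In fact the curvature hypotheses here---an \emph{upper} bound on $\h{Bisec}_\o$ together with a \emph{lower} bound on $\Ric\ovp$---are precisely the Chern--Lu/Bando--Kobayashi hypotheses, which bound $|\del\iota|^2=\tr_\ovp\o$ (the ``second'' quantity in the paper's own discussion preceding the lemma). A direct Aubin--Yau Bochner estimate for $\Delta_\ovp\log\tr_\o\ovp$, by contrast, requires a \emph{lower} bound on $\h{Bisec}_\o$ and control of $\Delta_\o F$ (equivalently an \emph{upper} bound on $\Ric\ovp$), not what is assumed here. So either you should rewrite the argument with $w:=\tr_\ovp\o$ and the auxiliary function $\log w-A\vp$ (which the Chern--Lu inequality makes run cleanly, yielding $w\le (C_1+nA)e^{A\osc\vp}$, and which is equally serviceable for the metric-equivalence application once one invokes the volume-form bound as the paper does in the following paragraph), or you should explain precisely which Bochner identity for $\tr_\o\ovp$ you have in mind and how the given one-sided curvature bounds enter with the correct signs. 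As written, the claim $u(p)\le C_1+nA$ does not follow from what you have set up.
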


To see that this result is applicable, observe first  that  \eqref{biseqEq} holds simply because $M$ is compact and $\omega $ is smooth. Second, according to  \eqref{RicovpstEq} 
\begin{equation*}
\baeq
\Ric\o_{\vp(s,t)}
&=
(1-t)\Ric\o+s\o_{\vp(s,t)}+(\mu t-s)\o
\cr
&\ge
s\o_{\vp(s,t)}+(\mu t+(1-t)C_4-s)\o,
\eaeq
\end{equation*}
where $C_4 $ is a lower bound for the Ricci curvature of $\omega $, i.e., satisfying
$$
\Rico\ge C_4\o.
$$
Therefore, Lemma \ref{LaplacianLemma} holds with
$$
C_1 =\max\{0, |\mu t+(1-t)C_4-s|\},\q
C_2 =\max\{0, |s|\},\q
C_3=C_3(\o).
$$

\subsection {Higher order compactness via Evans--Krylov's estimate}

\def\calD{\dcal}

To show our solutions along the continuity method are smooth, it 
suffices to improve the Laplacian estimate to a $C^{2,\gamma}$ estimate
for some $\gamma>0$. Indeed, then it is standard to see that the solutions
automatically have uniform $C^{k,\gamma}$ estimates for each $k$
(Exercise \ref{allkExer}).	
This is obtained via the standard Evans--Krylov estimate, adapted to the 
complex setting. The standard references for this are
the lecture notes of Siu \cite{Siu} and \Blocki~\cite[\S5]{Bl}, 
as well as the treatment of the real \MA equation by Gilbarg and Trudinger \cite{GT}, with the modification by Wang--Jiang \cite{WJ}, \Blocki~\cite{Bl2000}.

\begin{lemma}
\label{CtwogammaLemma}
Let $\psi\in C^4(M)\cap \PSH(M,\o)$ be a solution to 
$\o_\psi^n=\on e^F$. Then
\begin{equation}
\label{HolderTwoEstimateEq2}
||\psi||_{C^{2,\gamma}}\le C,
\end{equation}
where $\gamma > 0$ and $C$ depend only on $M,\o,$  
$||\Delta_\o\psi||_{C^0},||\psi||_{C^0}$, and $||F||_{C^2}$.
\end{lemma}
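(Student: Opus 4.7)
The plan is to reduce to a local application of the complex Evans--Krylov estimate inside holomorphic charts, and then patch to a global bound. Cover $M$ with finitely many coordinate balls on which $\o = \i g_{i\bar j}\,dz^i\w\bdz j$ with smooth fixed $g_{i\bar j}$. On such a chart the equation $\o_\psi^n = \o^n e^F$ reads
\[
\log\det\bigl([g_{i\bar j}] + [\psi_{i\bar j}]\bigr) = \log\det[g_{i\bar j}] + F,
\]
and $A\mapsto \log\det A$ is concave on the cone of positive Hermitian matrices, so this is a concave fully nonlinear PDE. Uniform ellipticity has to be verified first: the hypothesis $\|\Delta_\o\psi\|_{C^0}<\infty$ gives $\tr_\o \o_\psi\le C$, while $\h{det}_\o\o_\psi = e^F$ is controlled above and below in terms of $\|F\|_{C^0}$. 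Exercise~\ref{FirstMetricEquivExer} then yields $C_1\o \le \o_\psi \le C_2\o$, so the matrix $[g_{i\bar j}+\psi_{i\bar j}]$ has eigenvalues in a fixed compact subset of $(0,\infty)$---precisely uniform ellipticity of the linearized operator.

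The core step is an interior Hölder estimate on pure second derivatives. For any real unit direction $e$ in the chart set $w := \partial^2_{ee}\psi$. Differentiating the equation twice in direction $e$, the concavity of $\log\det$ produces a one-sided pointwise inequality of the form
\[
g^{i\bar j}_\psi\,\partial_i\partial_{\bar j} w \ge \partial^2_{ee}F - C_0,
\]
where $C_0$ absorbs derivatives of the reference metric and is controlled by $\|F\|_{C^2}$ together with the ellipticity constants. Feeding this into the Krylov--Safonov weak Harnack inequality applied to $\sup_{B_{2R}}w - w$ yields an oscillation decay
\[
\osc_{B_r(x_0)} w \le (1-\delta)\osc_{B_{2r}(x_0)} w + Cr
\]
for some $\delta\in(0,1)$ and $C$ uniform. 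Dyadic iteration then produces $w\in C^{0,\gamma}$ with uniform norm; since this holds for every real direction $e$, one obtains the claimed $C^{2,\gamma}$ bound on the chart, and finite covering of $M$ by such charts delivers \eqref{HolderTwoEstimateEq2}.

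The main obstacle is that the natural concavity of $\log\det$ sees only the Hermitian part $[\psi_{i\bar j}]$ of the real Hessian, whereas the target Hölder estimate concerns all real second derivatives $\partial^2_{ee}\psi$. Overcoming this requires the refinement carried out in Siu's and \Blocki's lecture notes (following Wang--Jiang and \Blocki): the pure real second derivative $\partial^2_{ee}\psi$ is expressed by polarization as a specific linear combination of Hermitian second derivatives $\psi_{i\bar j}$ and their conjugates, and one must verify that the convexity argument underlying Krylov's method still furnishes the supersolution property needed to run the weak Harnack step in this mixed real/complex setup. Once this technical point is in place, the remainder of the argument is structurally parallel to the classical real Evans--Krylov theorem presented in~\cite{GT}.
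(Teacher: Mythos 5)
Your overall strategy---read the equation in a holomorphic chart as a concave fully nonlinear PDE, derive uniform ellipticity from $\|\Delta_\o\psi\|_{C^0}$ and $\|F\|_{C^0}$, run Evans--Krylov locally, and patch over a finite cover---does match the paper. The gap is in the sentence ``Feeding this into the Krylov--Safonov weak Harnack inequality applied to $\sup_{B_{2R}}w - w$ yields an oscillation decay.'' The weak Harnack inequality for the nonnegative supersolution $\sup_{B_{2\rho}}w - w$ controls only one side of the oscillation: roughly, $\|\sup_{B_{2\rho}}w - w\|_{L^p(B_\rho)}\le C(\sup_{B_{2\rho}}w - \sup_{B_\rho}w + \ldots)$. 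Closing the dyadic iteration also requires a bound on $\frac1{|B_\rho|}\int_{B_\rho}w - \inf_{B_{2\rho}}w$, and this does \emph{not} follow from any Harnack-type inequality applied to the single function $w$, since $w - \inf_{B_{2\rho}}w$ is a subsolution, not a supersolution. This is exactly where the concavity must enter a second time, through the pointwise inequality $DF|_A.(A-B)\le F(A)-F(B)$ combined with a uniform rank-one decomposition $(u^{i\bar j}(y)) = \sum_k\beta_k(y)\gamma_k^*\gamma_k$ of the inverse Hessian, yielding the reverse estimate $w(y)-w(x)\le C|y-x|+\sum_{k\ge2}\beta_k(y)(\sup_{B_{2\rho}}u_{\gamma_k\bar\gamma_k}-u_{\gamma_k\bar\gamma_k}(y))$. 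This is the crux of the Evans--Krylov method; your proposal gestures at it (``one must verify that the convexity argument$\ldots$'') but never carries it out, and the oscillation decay as you state it would be false without it.

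There is also a structural mismatch in your choice of directions. Because $\log\det$ is concave on positive \emph{Hermitian} matrices, the right quantity to track is $w = u_{\eta\bar\eta}$ for complex $\eta\in\CC^n$, not $\partial^2_{ee}\psi$ for real $e$. For $e=\partial_{x^k}$ one has $\partial^2_{ee}\psi = \psi_{kk}+2\psi_{k\bar k}+\psi_{\bar k\bar k}$, so the pure holomorphic blocks $\psi_{ij},\psi_{\bar i\bar j}$ intrude---but these appear neither in the Hermitian concavity inequality nor in the decomposition of $(u^{i\bar j})$ into complex rank-one factors. The ``main obstacle'' you raise is therefore real, but the resolution you hint at (Wang--Jiang/B\l ocki) is not the route the paper takes: working with complex $\eta$, the reverse estimate gives H\"older control of $u_{\eta\bar\eta}$ for all $\eta$, hence of $\psi_{i\bar j}$ by polarization, and then $\Delta_\o\psi\in C^{0,\gamma}$ together with Schauder already delivers the full $C^{2,\gamma}$ bound without ever needing to control the real pure second derivatives directly.
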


\bpf
For concreteness, we carry out the proof for out particular
$F$ in \eqref{TwoParamCMEq}.
For each pair $(s,t)$ define $h=h(s,t)$ by 
\begin{equation}
\label{DefinehEq}
\log h:= tf_\o-s\vp+c_t+\log\det[\psi_{i\bar j}],
\end{equation}
where $\psi$ is a local \K potential for $\o$ on some fixed neighborhood
(we will obtain our estimate only on this neighborhood, but then cover $M$
with finitely many such).
Set $u:=\psi+\vp$. Each $C^{k,\gamma}$ norm 
of $\psi$ is bounded by a constant $C=C(k,\gamma,\o)$, so to get the desired
bound on $\vp$ is tantamount to bounding $u$.

Let $\eta=(\eta_1,\ldots,\eta_n)\in\CC^n$ be a unit vector, and consider $u$ as a function of $(z_1,\ldots,z_n)\in\CC^n$. Then,
$$
(\log\det[u_{i\bar j}])_{\eta\bar\eta} = -u^{i\bar l}u^{k\bar j}u_{\eta i\bar j}u_{\bar \eta k\bar l} +u^{i\bar j}u_{\eta\bar \eta i\bar j}.
$$
(repeated differentiation is justified since by assumption $\vp$ and hence $u$ belong to $C^4(M)$). Since
$$
\log\det[u_{i\bar j}]=
\log\det[\psi_{i\bar j}+\vp_{i\bar j}]=\log h,
$$
and letting 
\begin{equation}
\label{wfunctionEq}
w:=u_{\eta\bar \eta}, 
\end{equation}
we thus have
\begin{equation}
\label{SubsolIneq}
u^{i\bar j}w_{i\bar j}\ge (\log h)_{\eta\b\eta}=\frac{h_{\eta\b\eta}}h-\frac{|h_\eta|^2}{h^2},
\end{equation}
which can be rewritten in divergence form,
\begin{equation}
\label{SubsolSecondIneq}
(hu^{i\bar j}w_i)_{\bar j}
\ge 
\eta^{\b l}(\eta^k h_k)_{\b l}
-g,\qquad 
g:=\frac{|h_\eta|^2}{h}.
\end{equation}

\begin{theorem} \cite[Theorem 8.18]{GT}
\label{GTThm8.18}
Let $\Omega\subset\RR^m$, and assume $B_{4\rho}=B_{4\rho}(y)\subset \Omega$.
Let $L = D_i (a^{ij}D_j + b^i) + c^iD_i + d$ 
be strictly elliptic, $\lambda I<[a^{ij}]$, 
with $a^{ij},b^i,c^i,d\in L^\infty(\Omega)$, satisfying
$$
\sum_{i,j} |a^{ij}|^2 < \Lambda^2,
\quad
\lambda^{-2} \sum (|b^i|^2+|c^i|^2) + \lambda^{-1}|d| \le \nu^2.
$$
Then if $U\in W^{1,2}(\Omega)$ is nonnegative and satisfies
$LU \le g + D_i f^i$, with
$f^i \in L^q, g\in L^{q/2}$ with $q > m$, 
then for any $p\in[1,\frac m{m-2})$, 
$$
\begin{aligned}
\rho^{-m/p} || U ||_{L^p(B_{2\rho})}
&<
C(
\inf_{B_\rho} U + \rho^{1-m/q}||f||_{L^q(B_{2\rho})} + \rho^{2-2n/q}||g||_{L^{q/2}(B_{2\rho})}
)
\cr
&<
C(
\inf_{B_\rho} U + \rho||f||_{L^\infty(B_{2\rho})} + \rho^{2}||g||_{L^\infty(B_{2\rho})}
)
,
\end{aligned}
$$
with $C=C(n,\Lambda/\lambda,\nu \rho,q,p)$.
\end{theorem}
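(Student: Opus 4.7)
The plan is to follow the standard De Giorgi--Nash--Moser machinery developed in Gilbarg--Trudinger, combining Moser iteration with the John--Nirenberg inequality. First I would absorb the inhomogeneity: setting $\bar U := U + k$ with $k := \rho\,||f||_{L^\infty(B_{4\rho})} + \rho^2\,||g||_{L^\infty(B_{4\rho})} + \epsilon$ (and sending $\epsilon\to 0$ at the end) converts $LU \le g + D_i f^i$ into an inequality of the form $L\bar U \le 0$ modulo lower-order terms whose coefficients are controlled by $\nu$, so that it suffices to prove the homogeneous weak Harnack inequality for $\bar U \ge k > 0$, with the factors $\rho\,||f||_{L^\infty} + \rho^2\,||g||_{L^\infty}$ eventually appearing through $\inf_{B_\rho}\bar U - \inf_{B_\rho}U$.

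Next I would run Moser iteration with positive powers. Plugging the test function $\eta^2 \bar U^{\beta - 1}$ (with $\eta$ a Lipschitz cutoff) into the weak formulation of $L\bar U \le 0$, and using strict ellipticity together with the Sobolev embedding $W^{1,2}_0 \hookrightarrow L^{2\chi}$ where $\chi := m/(m-2)$, yields the reverse Hölder step
\begin{equation*}
||\bar U||_{L^{\chi\beta}(B_{\rho_1})} \le \Big(\frac{C}{(\rho_2-\rho_1)^2}\Big)^{1/\beta}||\bar U||_{L^{\beta}(B_{\rho_2})}, \qquad \beta > 0,\; \beta \neq 1,\; \rho_1<\rho_2,
\end{equation*}
with $C$ depending on $m, \Lambda/\lambda, \nu\rho, \beta$. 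Iterating upward over a geometric sequence of radii and exponents starting from any small $p_0 > 0$ produces, for each $p \in [1, \chi)$,
\begin{equation*}
\rho^{-m/p}\,||\bar U||_{L^p(B_{2\rho})} \le C\,\rho^{-m/p_0}\,||\bar U||_{L^{p_0}(B_{3\rho})}.
\end{equation*}
Running the same iteration with negative exponents $\beta<0$, for which $\bar U^{-1}$ behaves as a subsolution of a related equation, gives an analogous pointwise estimate
\begin{equation*}
\sup_{B_\rho} \bar U^{-1} \le C\,\rho^{-m/p_0}\,||\bar U^{-1}||_{L^{p_0}(B_{3\rho})}.
\end{equation*}

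The last and most delicate step is to bridge between positive and negative $L^{p_0}$ norms via $\log\bar U$. Testing the equation against $\eta^2/\bar U$ yields a Caccioppoli-type estimate
\begin{equation*}
\int \eta^2\, |\nabla \log\bar U|^2 \le C \int \big(|\nabla \eta|^2 + \nu^2 \eta^2\big),
\end{equation*}
which combined with the Poincaré inequality shows $\log\bar U \in \mathrm{BMO}(B_{3\rho})$ with seminorm depending only on $m, \Lambda/\lambda, \nu\rho$. The John--Nirenberg lemma then produces a $p_0 > 0$ for which
\begin{equation*}
\Big(\int_{B_{3\rho}} \bar U^{p_0}\Big)\Big(\int_{B_{3\rho}} \bar U^{-p_0}\Big) \le C\,\rho^{2m}.
\end{equation*}
Chaining the three displayed inequalities and unraveling $\bar U = U + k$ produces the stated estimate with $p_0$ and the constants depending only on the advertised parameters.

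The main obstacle is this last bridging step: one must carefully track how the lower-order coefficients $b^i, c^i, d$ enter the Caccioppoli estimate for $\log\bar U$, and verify that both the John--Nirenberg exponent $p_0$ and the BMO seminorm depend only on $m, \Lambda/\lambda, \nu\rho$. The bookkeeping of constants across the three iterations, together with the observation that the restriction $p < m/(m-2)$ is precisely what permits the positive-power iteration to terminate in finitely many steps with a finite constant, is where the Gilbarg--Trudinger argument does most of its work.
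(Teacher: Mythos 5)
The paper does not prove this theorem; it is cited verbatim from Gilbarg--Trudinger \cite[Theorem 8.18]{GT} and used as a black box inside the Evans--Krylov argument for Lemma \ref{CtwogammaLemma}. Your sketch reproduces the standard De Giorgi--Nash--Moser proof of the weak Harnack inequality from \S 8.6 of that book, so in substance you are outlining GT's own argument rather than an alternative to the paper's, and the high-level plan (absorb the inhomogeneity into a positive constant $k$, run Moser iteration with positive and negative exponents, bridge via a BMO/John--Nirenberg estimate for $\log\bar U$) is the correct one.

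Two points deserve tightening. First, the claim that translating by $k$ ``converts $LU \le g + D_i f^i$ into an inequality of the form $L\bar U \le 0$ modulo lower-order terms'' is too loose as stated: a translation does not cancel the inhomogeneous right-hand side. What actually happens is that after testing with $\eta^2\bar U^{\beta-1}$ and using $\bar U \ge k$, the contributions from $f$ and $g$ are dominated because ratios such as $|f|/(\lambda k)$ and $|g|/(\lambda k)$ are made bounded by the choice of $k$, so they can be folded into the structural constants of the Caccioppoli estimate; no modification of the operator $L$ is involved. Second, to obtain the sharper $L^q$/$L^{q/2}$ form of the conclusion you must set $k = \rho^{1-m/q}\|f\|_{L^q} + \rho^{2-2m/q}\|g\|_{L^{q/2}} + \epsilon$ and carry the exponent $q>m$ through the Sobolev and H\"older steps; defining $k$ via $L^\infty$ norms, as you do, only yields the weaker second displayed inequality. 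These are bookkeeping matters rather than structural gaps, but they are precisely the places where the constant-tracking you flag at the end must be done carefully.
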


\begin{lemma}
Let $w$ be defined by \eqref{wfunctionEq}.
Suppose that $s>S$. 
One has
\begin{equation}
\label{WeakHarnackIneqComplexOne}
\sup_{B_{2\rho}}w-\frac1{|B_\rho|}\int_{B_\rho}w\o^n
\le
C\big(\sup_{B_{2\rho}}w  -\sup_{B_{\rho}}w+\rho(\rho+1)\big),
\end{equation}
with $C=C(M,\o,S,||\vp(s,t)||_{C^0(M)},
||\Delta_\o\vp(s,t)||_{C^0})$. 

\end{lemma}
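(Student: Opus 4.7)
The goal is to apply the weak-type Harnack inequality of Theorem \ref{GTThm8.18} to an appropriate nonnegative quantity derived from $w$. Let $M:=\sup_{B_{2\rho}}w$ and set $U:=M-w\ge 0$ on $B_{2\rho}$. The subsolution inequality \eqref{SubsolSecondIneq} for $w$ translates into an inequality for $U$ of the form
\[
(h u^{i\bar j} U_i)_{\bar j} \;\le\; D_{\bar l}\bigl(-\eta^{\bar l}\eta^k h_k\bigr)+\frac{|h_\eta|^2}{h},
\]
which, after writing real and imaginary parts, is a genuine divergence-form inequality $LU\le g_0+D_i f^i$ on the chart, with $L=D_i(\tilde a^{ij}D_j)$, $\tilde a^{ij}$ the real $2n\times 2n$ matrix coming from $hu^{i\bar j}$, $f^i$ built from the components of $-\eta\,\eta^k h_k$, and $g_0=|h_\eta|^2/h$. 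There are no lower-order terms, so the parameter $\nu$ in Theorem \ref{GTThm8.18} is zero.

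Next I would verify the hypotheses of Theorem \ref{GTThm8.18}. Since $s>S$ and $\|\varphi(s,t)\|_{C^0}$ is uniformly bounded by \S\ref{LinfsubrectSubSec}--\ref{LinfintSubSec}, the defining relation \eqref{DefinehEq} shows that $\log h$, and hence $h$ and $1/h$, are uniformly bounded. The assumption $\|\Delta_\omega\varphi\|_{C^0}\le C$ combined with $n+\Delta_\omega\varphi>0$ gives $\omega_\varphi$ and $\omega$ uniformly equivalent via Exercise \ref{FirstMetricEquivExer}, so $[u_{i\bar j}]$ and $[u^{i\bar j}]$ are uniformly bounded; consequently the ellipticity ratio $\Lambda/\lambda$ of $[hu^{i\bar j}]$ is controlled by the data allowed in the lemma. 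To bound $\|f\|_\infty$ and $\|g_0\|_\infty$ I need a gradient bound on $\varphi$. This comes from $\Delta_\omega\varphi\in L^\infty$ and $\varphi\in L^\infty$ via Calder\'on--Zygmund: $\varphi\in W^{2,p}_{\mathrm{loc}}$ for every finite $p$, hence $\nabla\varphi\in L^\infty$ with a bound depending only on $M,\omega,\|\varphi\|_{C^0},\|\Delta_\omega\varphi\|_{C^0}$. From $\log h=tf_\omega-s\varphi+c_t+\log\det[\psi_{i\bar j}]$ one then reads off $\|h_\eta\|_\infty\le C$ and therefore $\|f^i\|_\infty,\|g_0\|_\infty\le C$.

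With these bounds in hand, I apply Theorem \ref{GTThm8.18} with $m=2n$, $p=1$, and $q=\infty$ to $U$ on $B_{2\rho}$:
\[
\rho^{-2n}\|U\|_{L^1(B_{2\rho})}\;\le\; C\bigl(\inf_{B_\rho} U + \rho\,\|f\|_\infty+\rho^2\,\|g_0\|_\infty\bigr)
\;\le\; C\bigl(\inf_{B_\rho}U+\rho(\rho+1)\bigr).
\]
(Note $p=1<m/(m-2)$ for $n\ge 2$; the case $n=1$ is even easier.) The right side identifies $\inf_{B_\rho}U=\sup_{B_{2\rho}}w-\sup_{B_\rho}w$, which is exactly the first term on the right of \eqref{WeakHarnackIneqComplexOne}. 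On the left, since $\omega^n$ is comparable to Lebesgue measure on the chart and $|B_\rho|\asymp \rho^{2n}\asymp|B_{2\rho}|$,
\[
\sup_{B_{2\rho}}w-\frac{1}{|B_\rho|}\int_{B_\rho}w\,\omega^n
=\frac{1}{|B_\rho|}\int_{B_\rho}U\,\omega^n
\;\le\; C\,\rho^{-2n}\|U\|_{L^1(B_{2\rho})},
\]
which combined with the previous display yields \eqref{WeakHarnackIneqComplexOne}.

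The main obstacle is the coefficient control, since the inequality \eqref{SubsolSecondIneq} produces right-hand sides involving $h_\eta$ and $|h_\eta|^2/h$, and these depend on first derivatives of $\varphi$ which are not among the a priori data listed. The resolution is the Calder\'on--Zygmund step above, which upgrades $L^\infty$ bounds on $\varphi$ and $\Delta_\omega\varphi$ to an $L^\infty$ bound on $\nabla\varphi$; every other ingredient (ellipticity, boundedness of $h$, comparability of measures) is a routine consequence of the already-proved $C^0$ and Laplacian estimates together with the hypothesis $s>S$.
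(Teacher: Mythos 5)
Your argument is correct and is essentially the paper's own proof: you pass to the nonnegative supersolution $U=\sup_{B_{2\rho}}w-w$ (the paper's $v$), apply Theorem \ref{GTThm8.18} with $p=1$, $q=\infty$, and unwind $\inf_{B_\rho}U$ and $\fint_{B_\rho}U$ to read off the claimed inequality. The only cosmetic difference is the mechanism you invoke for the gradient bound on $\varphi$: you cite Calder\'on--Zygmund/Sobolev ($W^{2,p}_{\mathrm{loc}}\hookrightarrow C^{0,1}$), whereas the paper cites interpolation; both routes use exactly the same a priori data $\|\varphi\|_{C^0},\|\Delta_\omega\varphi\|_{C^0}$ and are interchangeable here.
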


\begin{proof}
By (\ref{SubsolSecondIneq}) $v:=\sup_{B_{2\rho}}w-w$ satisfies 
\begin{equation}
\label{SupersolSecondIneq}
(hu^{i\bar j }v_i)_{\bar j }
\le 
g-\eta^{\b l}(\eta^k h_k)_{\b l},
\end{equation}
where $g:=\frac{|h_\eta|^2}{h}$. In general, there are positive bounds on $[u_{i\bar j }]$ and $[u^{i\bar j }]$,
depending only on $||\Delta_\o\vp(s,t)||_{C^0}$, and hence 
similar positive bounds on $[a^{i\bar j }]:=h[u^{i\bar j }]$ and its inverse, in these coordinates, depending only on 
$S,M,\o,||\Delta_\o\vp(s,t)||_{C^0}$, and $||\vp(s,t)||_{C^0}$
(since the latter two quantities control $||\vp(s,1)||_{C^{0,1}}$ by interpolation).
This, together with Theorem \ref{GTThm8.18}, gives the desired inequalities
provided that $v\in W^{1,2}(M,\on)$, which is automatic as $v$ and $\on$ are smooth
and $M$ is compact.  
The lemma follows.
\end{proof}

Now, let $\{V_j\}_{j=1}^n$ be smooth vector fields on $M$ 
that span $T^{1,0}M$ over $M$ and that
on a local chart are given by 
$V_{k}:=\frac{\del}{\del z_k},
\; k=1,\ldots,n$, and denote
$$
M(\rho):=\sup_{|\zeta|,|Z|\in(0,\rho)}\sum_{j=1}^nV_{j}\overline{V_{j}} u,
\quad
m(\rho):=\inf_{|\zeta|,|Z|\in(0,\rho)}\sum_{j=1}^nV_{j}\overline{V_{j}} u
$$
Our goal is to show that
$
\nu(\rho):=M(\rho) - m(\rho)
$
is \Holder continuous with respect to $g_\o$, i.e.,
$
\nu(\rho)\le C \rho^{\gamma'},
$
for some $\gamma'>0$, or equivalently that
$
\nu(\rho)\le (1-\eps)\nu(2\rho)+\sigma(\rho),
$
for some $\eps\in(0,1)$ and some non-decreasing function $\sigma$
\cite[Lemma 8.23]{GT}.
Let 
$$
M_\eta(\rho):=\sup_{|\zeta|,|Z|\in(0,\rho)}u_{\eta\b\eta},
\quad
m_\eta(\rho):=\inf_{|\zeta|,|Z|\in(0,\rho)}u_{\eta\b\eta},
\quad
\nu_\eta(\rho):=M_{\eta}(\rho)-m_{\eta}(\rho).
$$
Equation (\ref{WeakHarnackIneqComplexOne}) implies
\begin{equation}
\label{WeakHarnackIneqComplexTwo}
\sup_{B_{2\rho}}w-\frac1{|B_\rho|}\int_{B_\rho}w
\le
C\big(\nu_\eta(2\rho)-\nu_\eta(\rho)+\rho(\rho+1)\big),
\end{equation}
and so it remains to obtain a similar inequality for $w-\inf_{B_{2\rho}}w$.

Note that
$
DF|_A.(A-B)\le F(A)-F(B),
$ by concavity of $F(A):=\log \det A $ on the space of positive Hermitian matrices.
Since $DF|_{\nabla^{1,1}u}=(\nabla^{1,1}u)^{-1}$, we have 
\begin{equation}
\label{FConcavityIneq}
\begin{aligned}
u^{i\bar j }(y)(u_{i\bar j }(y)-u_{i\bar j }(x)) \le \log\det u_{i\bar j }(y)-\log\det u_{i\bar j }(x)
\le |h|_{C^{0,1}}|y-x|.
\end{aligned}
\end{equation}
We now decompose $(u^{i\bar j })$ as a sum of rank one matrices.
This will result in the previous equation being the sum
of pure second derivatives for which we can apply our estimate
from the previous step. By uniform ellipticity this decomposition can be done
uniformly in $y$ \cite[p. 103]{Siu},\cite{Bl}. 
Namely, we can fix a set $\{\gamma_k\}_{k=1}^N$
of unit vectors in $\CC^n$ (which we can assume contains $\gamma_1=\eta$ 
as well as a unitary frame of which $\eta$ is an element) and write
$$
(u^{i\bar j }(y)) = \sum_{k=1}^N\beta_k(y)\gamma_k^*\gamma_k,
$$
with $\beta_k(y)$ uniformly positive depending only on $n,\lambda$
and $\Lambda$. Thus (\ref{FConcavityIneq}) 
gives
$$
\begin{aligned}
w(y)-w(x)
&\le C|y-x|-\sum_{k=2}^N \be_k(y)(u_{\gamma_k\bar{\gamma_k}}(y)-u_{\gamma_k\bar{\gamma_k}}(x)) 
\cr
&\le C|y-x|+\sum_{k=2}^N \be_k(y)(\sup_{B_{2\rho}}u_{\gamma_k\bar{\gamma_k}}-u_{\gamma_k\bar{\gamma_k}}(y)). 
\end{aligned}
$$
Now let $w(x)=\inf_{B_{2\rho}}w$, and average over $B_\rho$ to get,
using (\ref{WeakHarnackIneqComplexTwo}),
\begin{equation}
\label{ReverseWeakHarnackIneq}
\frac1{|B_\rho|}\int_{B_\rho}w-\inf_{B_{2\rho}}w
\le
C\big(\sum_{k=2}^N\nu_{\eta_k}(2\rho)-\nu_{\eta_k}(\rho)+\rho(\rho+1)\big).
\end{equation}
Combining this with (\ref{WeakHarnackIneqComplexTwo}), and summing
over $k=1,\ldots,N$ we thus obtain an estimate on $\nu(\rho)$
of the desired form. Hence $\Delta_\o\vp(s)\in C^{0,\gamma'}$ for some
$\gamma'>0$. In fact our proof actually showed that $\vp_{\eta\b\eta}\in C^{0,\gamma'}$ 
for any $\eta$. Hence, by polarization we deduce that also $\vp_{i\bar j }\in C^{0,\gamma'}$, 
for any $i,j$. Hence, $|\Delta_\o\vp(s,t)|_{C^{0,\gamma'}}\le C=C(M,\o,S,
||\Delta_\o\vp(s,t)||_{C^0(M)},
||\vp(s,t)||_{C^0(M)})$. This concludes the proof of Lemma 
\ref{CtwogammaLemma}.
\epf

\bexer
\lb{allkExer}
Suppose that $\vp\in C^\infty(M)$ satisfies $\ovpn=e^F\on$ 
and that 
$$
||\vp|_{C^{2,\gamma}}\le C.
$$
Show that there exists $C'$ such that
$$
||\vp|_{C^{3,\gamma}}\le C'=C'(M,\o,||F||_{C^{1,\alpha}}).
$$
{\rm(Hint: Let $D$ be a first order operator with constant coefficients
in some holomorphic coordinate chart. Write the \MA equation in those
coordinates as 
$$
\log\det[u_{i\bar j}]=\log\det[\psi_{i\bar j}]+F=:\tilde F,
$$ 
as 
in the proof of Lemma \ref{CtwogammaLemma} and apply $D$ to this equation.
By Exercise \ref{DetExer} this then gives a Poisson type equation for
$Du$,
$$
u^{i\bar j}(Du)_{i\bar j}=D\tilde F.
$$
This is not quite a Poisson equation since the Laplacian on the left hand
side depends on $u$ itself! However, since we already have uniform
$C^{0,\gamma}$ estimates on $[u^{i\bar j}]$ and $[u_{i\bar j}]$  the usual
Schauder estimates \cite{GT} give
$$
||Du||_{C^{2,\gamma}}\le C\big(||Du||_{C^{0,\gamma}}+||\tilde F||_{C^{0,\gamma}}\big).
$$ 
Since this holds for 
$$
D\in\bigg\{
\frac{\del}{\del z^1},\ldots,\frac{\del}{\del z^n},
\frac{\del}{\overline{\del z^1}},
\ldots,
\frac{\del}{\overline{\del z^n}}
\bigg\},
$$
we are done.)
}\eexer

Applying the previous exercise repeatedly yields the following
improvement of Lemma \ref{CtwogammaLemma}:
\begin{corollary}
\label{CkgammaCor}
Let $\psi\in C^{k+1}(M)\cap \PSH(M,\o)$ be a solution to 
$\o_\psi^n=\on e^F$. Then
\begin{equation}
\label{HolderTwoEstimateEq2}
||\psi||_{C^{k,\gamma}}\le C,
\end{equation}
where $\gamma > 0$ and $C$ depend only on $M,\o,$  
$||\Delta_\o\psi||_{C^0},||\psi||_{C^0}$, and $||F||_{C^{k-1}}$,
and $C$ depends additionally also on $k$.
\end{corollary}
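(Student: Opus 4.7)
The plan is a direct induction on $k$, in which the inductive step is precisely a higher-order analogue of Exercise \ref{allkExer}. The base case $k=2$ is Lemma \ref{CtwogammaLemma}. Suppose inductively that $\|\psi\|_{C^{k,\gamma}}\le C_k$, with $C_k$ depending only on the data listed in the statement (plus on $k$).

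Fix a holomorphic coordinate chart and, as in the exercise, let $\rho$ be a local K\"ahler potential for $\o$, so that $u:=\psi+\rho$ is plurisubharmonic and satisfies
\[
\log\det[u_{i\bar j}]=F+\log\det[\rho_{i\bar j}]=:\tilde F,
\]
with $\tilde F\in C^{k-1}$ and a bound depending on $\|F\|_{C^{k-1}}$ and on $(M,\o)$. Apply a constant-coefficient first-order operator $D\in\{\partial/\partial z^i,\,\partial/\partial\bar z^i\}$ to both sides; Exercise \ref{DetExer} gives the linear elliptic equation
\[
u^{i\bar j}(Du)_{i\bar j}=D\tilde F.
\]
By the inductive hypothesis, $[u_{i\bar j}]\in C^{k-2,\gamma}$, and thanks to the two-sided bound on $\Delta_\o\psi$ the matrix $[u^{i\bar j}]$ is uniformly elliptic with coefficients also in $C^{k-2,\gamma}$; their H\"older norm is controlled by $C_k$.

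Interior Schauder estimates for second-order linear elliptic equations with $C^{k-2,\gamma}$ coefficients (see \cite{GT}) then yield
\[
\|Du\|_{C^{k,\gamma}}\le C\bigl(\|Du\|_{C^{0,\gamma}}+\|D\tilde F\|_{C^{k-2,\gamma}}\bigr),
\]
with $C$ depending on the ellipticity constants and on $\|u^{i\bar j}\|_{C^{k-2,\gamma}}$, hence ultimately on $M,\o,\|\Delta_\o\psi\|_{C^0},\|\psi\|_{C^0},\|F\|_{C^{k-1}}$, and $k$. Running $D$ over the full basis $\{\partial/\partial z^i,\partial/\partial\bar z^i\}_{i=1}^n$ upgrades this to $\|u\|_{C^{k+1,\gamma}}\le C_{k+1}$, and hence to the same bound for $\psi$ on the chart. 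Covering $M$ by finitely many such charts (whose number depends only on $(M,\o)$) completes the inductive step and the proof.

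There is no genuine obstacle here: the entire argument is a bootstrap of Schauder theory, and the only non-routine ingredient, namely the $C^{2,\gamma}$ base case obtained from the Evans--Krylov estimate, has already been established in Lemma \ref{CtwogammaLemma}. The mild bookkeeping to confirm at each stage that the coefficients $u^{i\bar j}$ lie in the correct H\"older class and that $\tilde F$ retains enough regularity is automatic from the inductive hypothesis.
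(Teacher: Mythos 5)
Your proof is correct and follows exactly the same route as the paper: the paper's entire justification is the one-line remark that Corollary \ref{CkgammaCor} follows by ``applying the previous exercise repeatedly,'' i.e.\ iterating Exercise \ref{allkExer} starting from the $C^{2,\gamma}$ base case of Lemma \ref{CtwogammaLemma}. You have simply made this iteration explicit as an induction on $k$, differentiating the Monge--Amp\`ere equation and invoking interior Schauder estimates at each step.
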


\subsection {Properness implies existence}

We now complete the proof of one direction of Conjecture \ref{TianConj}. 
Let $K $ be a connected compact subgroup of the automorphism group.
Recall that
$$\H^K\subset\H $$
consists of all $K$-invariant elements of $\H$.
We denote by
$$
 \h{$C_K^{k,\gamma}$} 
$$
the subset of \h{$C^{k,\gamma}$}  consisting of $K $-invariant  functions.
 Denote by
$$
B\subset A
$$
the subset of parameter values $(s,t)$ for which there exists a $K$-invariant 
$C^{2,\ga}$ solution $\vp(s,t)$ to  
 \eqref{}. Note that $(-  \infty , 0]\times \{0\}\subset B $
since $ \varphi (s, 0) = 0 $ and we can always assume that $\omega$ is $K $-invariant, for instance by taking an arbitrary  K\"ahler metric and averaging it with respect to the Haar measure of $G$ \cite[p. 88]{Helg}.

Next, observe that the openness arguments of \S \ref{OpenSubSec} run through unchanged for $K $-invariant solutions. This is because
$M_{s,t}:C^{2,\gamma}\cap\PSH(M,\o)\ra C^{0,\gamma}$ defined by 
$$
M_{s,t}(\vp):= \log \frac{\o_{\vp}^n}{\on}-tf_\o+s\vp-c_t,
\quad (s,t)\in A,
$$
actually maps $C^{2,\gamma}_K\cap\PSH(M,\o)$ to $C_K^{0,\gamma}$, and therefore
$$ 
DM_{s,t}|_{\vp(s,t)} =\Delta_{\vp(s,t)}+s,
\quad (s,t)\in A,
$$
maps $C^{2,\gamma}_K$ to $C_K^{0,\gamma}$.
In conclusion then, $B $ is a nonempty open subset of $A $.
Moreover, if
\begin{equation}
\begin{aligned}
\label{ASEq}
A_{S}:=(- S ,-1/S]\times[0,1],
\end{aligned}
\end{equation}
we have that
$B\cap A_{S}$ is a nonempty open subset of $A_{S} $
for any value $S> 1$.

First, we show that $A_{S}\subset B $. Indeed, let $(s, t)\in \del(B\cap A_{S})$,
and let $\{(s_j, t_j)\}_{j}\subset B\cap A_S$ be a subsequence converging to $(s, t) $. 
According to Lemma \ref{NegCzeroLemma},
$$
\sup_j\max_M|\vp(s_j,t_j) | <C(1+S).
$$ 
Then, according to Lemma \ref{LaplacianLemma},
$$
\sup_j\max_M|\Delta_\omega\vp(s_j,t_j) | <C=C(M,\o,S).
$$
Thus, according to Lemma \ref{CtwogammaLemma},
$$
\sup_j\max_M||\vp(s_j,t_j)||_{ C^{2,\gamma} } <C=C(M,\o,S).
$$
Therefore, for every $\alpha\in (0,\gamma) $, the functions $ \varphi (s_j, t_j) $ converge to 
$ \varphi (s, t) $ in the  \h{$C^{2,\alpha}$}  topology, and moreover
$ \varphi (s, t)\in$\h{$C^{2,\gamma}$}. Thus, $(s, t)\in B$. This completes the proof that $A_{S}\subset B$,
So we have shown that
$$
\cup_{S>1}A_{S}=(- \infty , 0)\times [0, 1]\subset B.
$$
Observe that this actually concludes the proof of Corollary \ref{AubinYauCor} whenever $\mu <0 $ thanks to the elliptic regularity results mentioned below. 

Second, we show that actually $A_\infty\subset B$. Indeed, $(0, 0)\in B,$
and by openness also $\{(0, t)\,:\, 0 <t <\epsilon\}\subset B $, for some $\epsilon>0. $    
Applying now Lemma \ref{CYLemma} instead of Lemma \ref{NegCzeroLemma}, we get just as in the previous paragraph 
$$
\sup_t\max_M||\vp(0,t)||_{ C^{2,\gamma} } <C=C(M,\o).
$$
Thus, as before it follows that $A_\infty\subset B$.
Observe that the solutions we constructed are continuous in the parameters 
$s, t\in A_\infty $, in particular even up to $s = 0, $ since we use the openness argument that relies on the implicit function 
theorem that necessarily produces solutions that depend continuously on the parameters.   
This actually concludes the proof of Corollary \ref{AubinYauCor} 
(again, thanks to the elliptic regularity results).

Third, we treat the remaining piece in $A$. First, by openness
$\{(s, 1)\,:\, 0 \le s <\epsilon\}\subset B $, for some $\epsilon=\eps(M,\o)>0. $
Therefore, by Lemma \ref{CzeroposLemma} together with the higher-order estimates (as in the preceding paragraphs)
$$
\sup_s\max_M||\vp(s,1)||_{ C^{2,\gamma} } <C=C(M,\o).
$$
Once again, this is enough to conclude that $[0,\mu]\times\{1\}\subset B$. Thus, 
$$
B=A,
$$
as desired.

Finally, by Corollary
\ref{CkgammaCor}
(standard elliptic regularity results), the  \h{$C^{2,\gamma}$}  solutions we constructed are actually smooth. Thus, $ \varphi (\mu, 1)\in\H^K $, and $\o_{ \varphi (\mu, 1)}$ is $K $-invariant K\"ahler--Einstein metric. This concludes the proof of Theorem \ref{propernessexistence}.

\section {A counterexample to Tian's first conjecture and a revised conjecture}
 \label{CounterexampleSection}

Theorem \ref{propernessexistence} shows that properness implies existence.
This is one direction of Tian's first conjecture (Conjecture \ref{TianConj}).
The special case when there are {\it no} automorphisms (by which we mean
ones homotopic to the identity, i.e.,
$\AutMJz=\{\id\}$) of the other, harder, direction of Conjecture \ref{TianConj} 
was established by Tian \cite{Tian97} under a technical assumption that
was removed by Tian--Zhu \cite{TZ00}. This gave considerable plausibality 
to the conjecture. We now explain another reason 
why the general case of the conjecture seems plausible.

\subsection{Why Tian's conjecture is plausible}
\label{}
First we explain why it is natural (in fact, necessary!) for this harder converse direction
to only try to establish properness on $\H^K$ and not on all of $\H$.
For this, observe first that $E$ is invariant under the action of $\AutMJz$
whenever a \KE metric exists:

\bclaim
\lb{FutClaim}
Suppose $(M,\JJJ,\o)$ 
is Fano \KE
with $\mu[\o]=2\pi c_1(M)$ and $\mu>0$.
Then $E(g^\star\ovp)=E(\ovp)$ for all $g\in \AutMJz$ and $\vp\in\H$.
\eclaim

\bpf
By \eqref{EderivEq},
\beq
\lb{}
\baeq
\frac{d}{dt}\Big|_{t=0}E((\exp_It X)^\star\ovp) 
&= 
-\V\int_M \psi^X_{\ovp}\Delta_\ovp f_\ovp \o_{\vp}^n
\cr
&= 
-\V\int_M \psi^X_{\ovp}(s_\ovp-n\mu) f_\ovp \o_{\vp}^n.
\eaeq
\eeq
By a theorem of Futaki the functional
$$
\eta\mapsto \int_M \psi^X_{\eta}(s_\eta-n\mu) f_\eta \eta^n
$$
is constant on $\H$ \cite{Fut,Calabi1984,Bourg}. Since it is zero at $\o$ 
($\Ric\o=\mu\o$ implies $s_\o=n\mu$), it is identically zero.
Thus,
\beq\lb{zeroFutEq}
\frac{d}{dt}\Big|_{t=0}E((\exp_It X)^\star\ovp) 
=0.
\eeq
Now, actually 
\beq
\lb{}
\baeq
\frac{d}{dt}\Big|_{t=s}E((\exp_It X)^\star\ovp) 
&=
0
\eaeq
\eeq
for every $s$. Indeed,
\beq
\lb{}
\baeq
\frac{d}{dt}\Big|_{t=s}E((\exp_It X)^\star\ovp) 
&= 
\frac{d}{dt}\Big|_{t=0}E((\exp_I(s+t) X)^\star\ovp) 
\cr
&=
\frac{d}{dt}\Big|_{t=0}E((\exp_ItX)^\star((\exp_IsX)^\star\ovp))
\cr
&=
0,
\eaeq
\eeq
by replacing $\ovp$ by $(\exp_IsX)^\star\ovp$ in \eqref{zeroFutEq}.
Since $\AutMJz$ is a covered by its one-parameter subgroup, the statement follows.
\epf

On the other hand, the Aubin functional is not invariant under the action of automorphisms. In fact, it might blow up along a one-parameter subgroup.
The following lemma is due to Bando--Mabuchi \cite[Lemma 6.2]{BM}.
\begin{lemma}
\label{BMpropProp}
Let $\o\in\H$ be arbitrary and suppose $\eta\in\H$ is \KE with $\mu>0$.
The function $F_\eta:\AutMJz\to\RR_+$,
$$
F_\eta:g\mapsto (I-J)(g^\star\eta) 
$$
is proper (when we identify $\AutMJz$ with its $\eta$-orbit in $\H$
and endow this subset of $\H$ with the $C^{2,\gamma}(M,\o)$-topology).

\end{lemma}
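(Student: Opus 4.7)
The plan is to reduce the properness statement to a one-parameter subgroup analysis via Matsushima's theorem, and then combine the convexity of $I-J$ along the resulting smooth Mabuchi geodesics with a compactness argument on the unit sphere of the Lie algebra.

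First, since $\eta$ is \KE with $\mu>0$, Matsushima's theorem (with Calabi's extension) implies that $K_\eta := \Isom_0(M,g_\eta)\cap\AutMJz$ is a maximal compact subgroup of $\AutMJz$, with Lie algebra $\fk_\eta$ satisfying $\autMJ = \fk_\eta\oplus J\fk_\eta$ as real vector spaces. This yields the polar decomposition $\AutMJz = K_\eta\cdot\exp(J\fk_\eta)$, where $\exp:J\fk_\eta\to\exp(J\fk_\eta)$ is a diffeomorphism. For $g = k\exp(JY)$ with $k\in K_\eta$ and $Y\in\fk_\eta$, the invariance $k^\star\eta = \eta$ gives $g^\star\eta = (\exp JY)^\star\eta$ and hence $F_\eta(g) = F_\eta(\exp JY)$. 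A divergent sequence $g_j\in\AutMJz$ then corresponds to $Y_j\in\fk_\eta$ with $\|Y_j\|\to\infty$ in any fixed Euclidean norm, and it suffices to prove $F_\eta(\exp JY)\to\infty$ as $\|Y\|\to\infty$.

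Second, for each $Y\in\fk_\eta$ the path $\gamma_Y(t) := (\exp tJY)^\star\eta$ is a smooth Mabuchi $L^2$-geodesic in $\H$ starting at $\eta$, with initial velocity equal to the zero-average Hamiltonian potential $\theta_Y$ of the real holomorphic Killing field $JY$ with respect to $g_\eta$. The assignment $Y\mapsto\theta_Y$ is linear and injective on $\fk_\eta$ (if $\theta_Y$ is constant then $JY=0$ and so $Y=0$). Writing $\gamma_Y(t) = \o + \i\ddbar\vp_Y(t)$ with $\vp_Y(0)=0$, one uses $(I-J)(\vp) = \h{\rm AM}(\vp) - \V\int\vp\,\ovp^n$ from Exercise~\ref{IJAMExer}, the affineness of $\h{\rm AM}$ along Mabuchi geodesics from \eqref{AMVarEq}, the normalization $\int\theta_Y\eta^n = 0$, and the geodesic equation $\ddot\vp_Y = |\nabla\dot\vp_Y|^2_{\gamma_Y(t)}$ together with integration by parts to derive
\begin{equation*}
\tfrac{d}{dt}\Big|_{t=0}(I-J)(\gamma_Y(t)) = 0,\qquad
\tfrac{d^2}{dt^2}\Big|_{t=0}(I-J)(\gamma_Y(t)) = \V\!\int_M |\nabla\theta_Y|^2_{g_\eta}\,\eta^n > 0
\end{equation*}
for $Y\neq 0$. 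Combined with the convexity of $I-J$ along smooth Mabuchi geodesics, $t \mapsto (I-J)(\gamma_Y(t))$ is a nonconstant, nonnegative, convex function of $t\in\RR$ vanishing at $t=0$, and therefore tends to $+\infty$ as $|t|\to\infty$.

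Third, to pass from per-ray growth to genuine properness, fix any $R > 0$ and let $S\subset\fk_\eta$ denote the unit sphere. For $\hat Y \in S$ set
\begin{equation*}
t_R(\hat Y) := \sup\{\,t\ge 0 : (I-J)(\gamma_{\hat Y}(t))\le R\,\}.
\end{equation*}
Each $t_R(\hat Y)$ is finite by the previous step, and continuity of $(\hat Y,t)\mapsto(I-J)(\gamma_{\hat Y}(t))$ combined with convexity in $t$ along each ray makes $\hat Y\mapsto t_R(\hat Y)$ upper semicontinuous on $S$. Compactness of $S$ then gives $T_R := \sup_{\hat Y\in S} t_R(\hat Y) <\infty$. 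Exploiting the reparametrization $\gamma_Y(1) = \gamma_{Y/\|Y\|}(\|Y\|)$, one concludes that $F_\eta(\exp JY) > R$ whenever $\|Y\| > T_R$, establishing properness. The main technical obstacle will be the second-derivative identity above and the convexity of $I-J$ along the full geodesic (not only at the basepoint): both follow from the variational formulas for $I$, $J$, and $\h{\rm AM}$ along smooth Mabuchi geodesics, but the bookkeeping with integration by parts and sign conventions must be carried out with care.
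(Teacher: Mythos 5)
Your proposal takes a genuinely different route from the paper's. The paper proceeds by a direct compactness argument: it supposes $(I-J)(\vp_j)$ bounded along a sequence $\o_{\vp_j}$ of K\"ahler--Einstein forms in the orbit, uses $\AutMJz$-invariance of $E$ (Claim~\ref{FutClaim}), the formula \eqref{E2ndEq}, the normalized K\"ahler--Einstein equation $\o_{\vp_j}^n=\o^n e^{f_\o-\mu\vp_j}$, Green's representation, and a Moser iteration (with uniform Sobolev constants for the K\"ahler--Einstein metrics) to deduce uniform $C^{k,\gamma}$ bounds for the $\vp_j$. Your route---Cartan decomposition plus strict convexity of $I-J$ along one-parameter Mabuchi geodesics---is in the spirit of the paper's later Lemma~\ref{FetaMinLemma}; note, however, that Lemma~\ref{FetaMinLemma} itself cites Lemma~\ref{BMpropProp} for properness, so the convexity computation has to be used on its own (which is indeed possible, since that computation does not invoke Lemma~\ref{BMpropProp}).

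The central gap is that you write $\gamma_Y(t)=\o+\i\ddbar\vp_Y(t)$ with $\vp_Y(0)=0$, which forces $\o=\gamma_Y(0)=\eta$, whereas the lemma allows an arbitrary reference $\o\in\H$. For general $\o$, with $\eta=\o+\i\ddbar\vp_\eta$, one instead has
$$
\frac{d}{dt}\Big|_{t=0}(I-J)(\gamma_Y(t)) = -\V\int_M \vp_\eta\,\Delta_\eta\theta_Y\,\eta^n,
$$
which is nonzero in general, and the identity $\tfrac{d^2}{dt^2}|_{0}(I-J)=\V\int|\nabla\theta_Y|^2_{g_\eta}\eta^n$ is likewise special to $\o=\eta$. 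Without the vanishing of $f'(0)$, a strictly convex nonnegative function on $\RR$ need not tend to $+\infty$ on both half-lines---consider $t\mapsto e^{-t}$---so the assertion that $(I-J)(\gamma_{\hat Y}(t))\to\infty$ as $t\to+\infty$ for every $\hat Y$ is unsupported, and the quantity $T_R$ in your third step may be infinite for some directions. One could try to reduce to $\o=\eta$ by showing $(I-J)(\o,\cdot)$ and $(I-J)(\eta,\cdot)$ are uniformly comparable on the orbit; this does follow from $J\asymp\dD$ in Proposition~\ref{Jproperness} together with the $\dD$-triangle inequality, but that machinery appears much later in the paper and you have not supplied an independent argument.
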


\def\ovpj{\o_{\vp_j}}
\def\ovpjn{\o_{\vp_j}^n}
\def\vpj{{\vp_j}}

\bpf Indeed, suppose
that $I-J$ is bounded on a sequence $\o_j=\o_{\vp_j}$ of \KE metrics in 
$\AutMJz.\eta\subset\H$.
By Claim \ref{FutClaim} and \eqref{E2ndEq},
$$
C=E(\vp_1)=
E(\vp_j)=
\Ent(e^{f_\o} \o^n,\o^n_\vpj)
-\mu (I-J)(\vpj).
$$
Now, normalize $\vpj$ so that (recall \eqref{KEEq})
\begin{equation*}
\ovpjn=\on e^{\fo-\mu\vpj} 
\end{equation*}
(this fixes $\vpj$ since $\mu>0$ and the right hand side must integrate to $V$).
Plugging back into the formula for $E(\vpj)$ gives
$$
C=
E(\vp_j)=
-\mu\int\vpj\ovpjn-\mu (I-J)(\vpj),
$$
or,
$$
-\mu\int\vpj\ovpjn=C+\mu (I-J)(\vpj)\le C',
$$
by assumption that $(I-J)(\vpj)$ is uniformly bounded.
But now
\beq
\lb{vpoEq}
\baeq
\V\int\vpj\on 
&=
- \V\int-\vpj\ovpjn
+
I(\vpj)
\cr
&\le
- \V\int-\vpj\ovpjn
+
\frac 
{n+1}n(I-J)(\vp_j)
\cr
&\le
-(I-J)(\vp_j)
+\frac{n+1}n(I-J)(\vp_j)
\cr
&=\frac1{n+1}(I-J)(\vp_j)\le \frac C{n+1}.
\eaeq
\eeq
Thus, by \eqref{MeanVal},
$$
\max\vpj\le \V\int\vpj\on+C'<C''.
$$
Now a Moser iteration argument just as in \S\ref{LinfintSubSec} applies (the Sobolev
and Poincar\'e constants of the \KE metrics of Ricci curvature equal to $\mu>0$
are all uniform) to give 
$$
-\min\vpj\le \frac CV\int-\vpj\ovpjn +C.
$$
Combining the last two equations,
$$
\osc\vpj=\max\vpj-\min\vpj\le   C + \frac CV\int-\vpj\ovpjn +C''\le C''',
$$
using the display prior to \eqref{vpoEq}. 
Since $\vpj$ must change signs (from the
normalization for $\vp_j$ inherent in 
$\ovpjn=\on e^{f_\o-\mu\vpj}$ and the one for $f_\o$ in Definition \ref{foDef}), we have showed
that
$$
||\vp_j||_{L^\infty}<C,
$$ 
and consequently
$$||\vp_j||_{C^{k,\gamma}}<C({k,\gamma}),$$ for all $k,\alpha$, which
when $k=2$ gives
$$
C^{-1}\o\le\o_j\le C\o.
$$
Thus, endowing $\AutMJz$ with, say, the $C^{2,\gamma}$-topology we see that 
the preimage under of $F_\eta$ of compact sets in $\RR_+$ are compact
in the $C^{2,\gamma}$-topology, i.e., by definition $F_\eta$ (the original
$F_\eta$ considered as a map on the group $\AutMJz$) is proper. 
\epf

\begin{corollary}
\label{}
Suppose $(M,\JJJ,\eta)$ 
is Fano \KE
with $\mu[\o]=2\pi c_1(M)$ and $\mu>0$ and that $\AutMJz$ is nontrivial.
Then $(I-J):\{g^\star\eta\,:\, g\in \AutMJz\}\ra\RR_+$ is unbounded 
from above.
\end{corollary}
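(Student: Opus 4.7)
The plan is to argue by contradiction, using the properness of $F_\eta$ established in Lemma \ref{BMpropProp}. Suppose $(I-J)$ were bounded on the orbit $O:=\{g^\star\eta\,:\, g\in\AutMJz\}\subset\H$ by some constant $C>0$. Then, under the identification of $\AutMJz$ with $O$ described in Lemma \ref{BMpropProp}, the entire orbit $O$ is contained in $F_\eta^{-1}([0,C])$, which by that lemma is compact in the $C^{2,\gamma}(M,\o)$-topology. Hence $O$ itself would be $C^{2,\gamma}$-compact, and my task reduces to ruling this out.

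To do so, I would first observe that $O$ is naturally homeomorphic to the homogeneous space $\AutMJz/\Stab_\eta$, where $\Stab_\eta=\{g\in\AutMJz:g^\star\eta=\eta\}$. Since $\Stab_\eta$ consists of automorphisms that are also $g_\eta$-isometries, it is a closed subgroup of the full isometry group of $(M,g_\eta)$, which is compact by Myers--Steenrod. Hence $\Stab_\eta$ is compact, so the orbit map $\AutMJz\to O$ is proper; compactness of $O$ would force $\AutMJz$ itself to be compact.

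Thus the contradiction will come from showing that $\AutMJz$ is non-compact whenever it is nontrivial. The cleanest route is to invoke that for a Fano manifold $\AutMJz$ is a connected linear algebraic group (it acts faithfully on the finite-dimensional space $H^0(M,-K_M^{\otimes k})$ for $k$ large), so any compact subgroup is finite; hence nontrivial and connected forces non-compactness. An alternative, more hands-on route, valid here since we assume $\eta$ is K\"ahler--Einstein, uses the Matsushima decomposition $\aut(M,\JJJ)=\fk\oplus\JJJ\fk$ where $\fk$ is the (real) Lie algebra of Killing fields of $g_\eta$: given any nonzero $X\in\aut(M,\JJJ)$, writing $X=K_1+\JJJ K_2$ with $K_i\in\fk$ and picking whichever $K_i\neq 0$, the one-complex-parameter subgroup $\{\exp(zK_i)\}_{z\in\CC}\subset\AutMJz$ contains the non-compact real subgroup $\{\exp(t\JJJ K_i)\}_{t\in\RR}$, so $\AutMJz$ is non-compact.

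The main obstacle, and the only genuinely non-formal step, is this justification that nontrivial $\AutMJz$ is non-compact; everything else is a direct unwinding of the properness of Lemma \ref{BMpropProp} together with compactness of the isotropy via Myers--Steenrod. Once non-compactness of $\AutMJz$ is in hand, the chain ``$(I-J)$ bounded on $O$ $\Rightarrow$ $O$ compact $\Rightarrow$ $\AutMJz$ compact'' yields the desired contradiction, completing the proof.
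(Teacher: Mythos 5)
Your argument is correct in spirit and reaches the right conclusion, but it takes a more roundabout route than the paper's and has two small imprecisions worth flagging. The paper's proof is a one-liner built on Corollary \ref{CartanCor}: since $F_\eta$ is $\Isom(M,g)_0$-invariant (Lemma \ref{FetaMinLemma}), it descends to a function on the Lie algebra $\isom(M,g)$; the properness of Lemma \ref{BMpropProp} persists under this descent, and $\isom(M,g)$ is a nontrivial (by Matsushima's Theorem \ref{VectorFieldDecompEdgeThm}, since $\autMJ\neq 0$) real vector space, hence non-compact, so a proper function on it is unbounded. You instead argue by contradiction: bounded $(I-J)$ would force the orbit $O\cong\AutMJz/\Stab_\eta$ to be $C^{2,\gamma}$-compact, hence (since $\Stab_\eta\subset\Isom(M,g_\eta)$ is compact by Myers--Steenrod) $\AutMJz$ compact, which you then rule out. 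The net content is the same---both ultimately hinge on the Cartan/Matsushima decomposition---but the paper's descent to the Lie algebra avoids having to discuss compactness of the group at all.

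Two imprecisions. First, your assertion that for a connected linear algebraic group ``any compact subgroup is finite'' is false as stated ($U(n)\subset GL_n(\CC)$ is a compact, infinite subgroup); the correct argument is that a compact connected \emph{complex} Lie group is a complex torus, which cannot be affine algebraic (it has no nonconstant global holomorphic functions), so $\AutMJz$ cannot itself be compact and nontrivial. Your alternative route via the Matsushima decomposition is sounder, but note that the non-compactness of the one-parameter subgroup $\{\exp(t\JJJ K_i)\}_{t\in\RR}$ is not automatic---a priori such a group could be periodic---and what actually shows it escapes to infinity is exactly the Cartan diffeomorphism of Corollary \ref{CartanCor}; so this route is not really lighter on input than the paper's. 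Second, in passing from ``$O$ is $C^{2,\gamma}$-compact'' to ``$\AutMJz/\Stab_\eta$ is compact'' you implicitly use that the continuous bijection $\AutMJz/\Stab_\eta\to O\subset\H^{C^{2,\gamma}}$ is a homeomorphism; this is true (by the open mapping theorem for transitive actions of second countable locally compact groups, using that $O$ is locally compact in view of the precompactness supplied by Lemma \ref{BMpropProp}), but it deserves a word since a continuous bijection does not transport compactness backwards in general.
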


\begin{proof}
Indeed, by Corollary \ref{CartanCor} below
$F_\eta$ descends to a function on $\isom(M,g)$, still denoted by $F_\eta$,
$$
F_\eta(X)=(I-J)\big((\exp_I\JJJ X)^\star\eta\big).
$$
Since this function is still proper and  $\isom(M,g)$ is a non-compact vector space,
$F_\eta$ must
be unbounded.
\end{proof}

\bremark
There is actually no particular need to look at the orbit of a \KE metric to show
unboundedness; the same is true for the orbit of {\it any } metric as long as a \KE 
exists. Indeed, if $\alpha,\omega,\eta\in\H$, with $\eta$ \KEno,
$$
E(g^\star\alpha)=
E(\o,g^\star \alpha)=
E(\o,g^\star\eta)
+
E(g^\star\eta,g^\star \alpha)
=
E(\o,g^\star\eta)
+
E(\eta,\alpha).
$$
Thus, $E(g^\star\alpha)$ is unbounded if and only if $E(g^\star\eta)$ is (as
$E(\eta,\alpha)$ is some fixed constant).
\eremark

\subsection{A counterexample}
\label{}

However, surprisingly, Tian's first conjecture 
(which was stated as a theorem in  \cite[Theorem 4.4]{Tian97})
was recently disproved by Darvas and the author by establishing the following optimal version of Tian's conjecture. 

\begin{theorem}
\label{KEGexistenceIntroThm}
Suppose $(M,\JJJ,\o)$ 
is Fano
with $\mu[\o]=2\pi c_1(M)$ and $\mu>0$,
 and that $K$ is a maximal compact subgroup of $\AutMJz$ with $\o\in\H^K$.
The following are equivalent:

\smallskip
\noindent (i) 
There exists a K\"ahler--Einstein metric in $\mathcal H^K$
and $\AutMJz$ has finite center.

\noindent (ii) 
There exists $C,D>0$ such that $E(\eta) \geq CJ(\eta)-D, \ \eta \in \mathcal H^K$.  \end{theorem}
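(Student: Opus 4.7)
The plan is to split the proof into its two implications, noting that $(i) \Rightarrow (ii)$ is due to Phong--Song--Sturm--Weinkove \cite{pssw} while $(ii) \Rightarrow (i)$ is the new content that disproves Conjecture \ref{TianConj} in its original form.

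\smallskip
\noindent\textbf{$(ii) \Rightarrow (i)$.} The existence part is immediate from the $K$-invariant version of Theorem \ref{propernessexistence}: choosing $\o \in \H^K$ (always possible by averaging over the Haar measure of $K$) makes $f_\o$ and the whole two-parameter continuity method \eqref{TwoParamCMEq} $K$-equivariant, so each $\vp(s,t)$ lies in $\H^K$ by uniqueness, and the argument of \S\ref{PropExistSec} produces a \KE metric $\eta \in \H^K$. For the finite-center assertion, we argue by contraposition. Suppose $Z(\AutMJz)$ is infinite; Matsushima's theorem (available now that \KE exists) makes $G := \AutMJz$ reductive, so $Z(G)^0$ is a positive-dimensional complex torus and contains a non-compact one-parameter subgroup $\{g_t\}_{t \in \RR}$. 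Since $\{g_t\}$ centralizes $K$, the orbit $\{g_t^\star \eta\}$ lies entirely in $\H^K$. Claim \ref{FutClaim} gives $E(g_t^\star \eta) \equiv E(\eta)$. Lemma \ref{BMpropProp} says $g \mapsto (I-J)(g^\star \eta)$ is proper on $G$, and non-compactness of $\{g_t\}$ in $G$ then forces $(I-J)(g_t^\star\eta) \to \infty$, hence $J(g_t^\star\eta) \to \infty$ by \eqref{IJEq}. Therefore $E(g_t^\star\eta) - C J(g_t^\star\eta) \to -\infty$ on a sequence in $\H^K$, contradicting (ii).

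\smallskip
\noindent\textbf{$(i) \Rightarrow (ii)$.} The strategy is a two-step reduction. Step 1 upgrades the \KE existence to a ``coercivity modulo automorphisms'' bound on all of $\H$:
\begin{equation*}
E(\ovp) \geq C \inf_{g \in G} J(g^\star \ovp) - D, \qquad \vp \in \H.
\end{equation*}
Using the decomposition \eqref{E2ndEq} of $E$ as entropy plus $\mu(I-J)$ and the Bando--Mabuchi uniqueness of \KE metrics modulo $G$, one reduces this to showing that minimizing sequences for $E - C \inf_G J(g^\star \cdot)$ are precompact modulo $G$; this is achieved via convexity of $E$ along weak Mabuchi geodesics in the $d_1$-completion of $\H$. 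Step 2 promotes this $G$-invariant bound to genuine coercivity on $\H^K$ by exploiting the structural consequences of the finite-center hypothesis. Matsushima plus finite center imply $G$ is semisimple with $G = K_\CC$, whence $N_G(K)/K$ is a finite group. For $\vp \in \H^K$, averaging $g^\star \ovp$ over $K$ does not increase $E$ (by $K$-invariance of $E$) and changes $J$ by at most a uniform constant (a direct computation using $K$-invariance of $\o$), reducing the infimum in Step 1 to $g \in N_G(K)$; the finiteness of $N_G(K)/K$ and the $K$-invariance of $J$ on $\H^K$ then yield $\inf_{g \in G} J(g^\star \ovp) \geq J(\vp) - D'$, and combining with Step 1 gives (ii).

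\smallskip
\noindent\textbf{Main obstacle.} The hard part is Step 1 of $(i) \Rightarrow (ii)$: proving the $G$-invariant coercivity bound without the assumption that $G$ is trivial requires the full pluripotential-theoretic toolbox (finite-energy spaces $\ecal^1$, geodesic convexity of $E$, regularity of weak minimizers), which goes well beyond the smooth \emph{a priori} estimates of \S\ref{PropExistSec}. The new conceptual content of $(ii) \Rightarrow (i)$ is comparatively clean, but its elegance lies in the observation that the $G$-invariance of $E$ (Claim \ref{FutClaim}) combined with the $G$-properness of $I - J$ (Lemma \ref{BMpropProp}) isolates the single mechanism by which Tian's original conjecture can fail---namely, a positive-dimensional central torus in $\AutMJz$ acting on $\H^K$.
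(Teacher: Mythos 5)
The paper does not give a self-contained proof of this theorem---it is quoted from \cite{DR2}, with (i)$\,\Rightarrow\,$(ii) attributed to \cite{pssw}---so I assess your reconstruction on its own merits. Your (ii)$\,\Rightarrow\,$(i) direction is correct and assembles the right ingredients: existence via \S\ref{PropExistSec}, and the finite-center claim by contraposition from Claim \ref{FutClaim} together with Lemma \ref{BMpropProp}. Restricting to a non-compact one-parameter subgroup of $Z(G)^0$ (rather than to all of $G$ as in the paper's unnamed corollary after Lemma \ref{BMpropProp}) so that the divergent orbit stays inside $\H^K$ is exactly the refinement needed, and you make it correctly.

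Step 2 of your (i)$\,\Rightarrow\,$(ii), however, contains a genuine gap. You assert that $K$-averaging $g^\star\ovp$ ``does not increase $E$ (by $K$-invariance of $E$)'' and that this, combined with a bounded change in $J$, ``reduces the infimum in Step 1 to $g\in N_G(K)$.'' Neither claim holds as stated. $K$-invariance gives only $E(k^\star\ovp)=E(\ovp)$ for each fixed $k\in K$; to conclude $E(\textrm{$K$-average})\le E(\ovp)$ you would need convexity of $E$ in the \emph{linear} structure on potentials, whereas the only convexity available \cite{bb} is along Mabuchi geodesics, and $K$-averaging a potential is not a geodesic operation. More fundamentally, the $K$-average of $g^\star\ovp$ is in general not in the $G$-orbit of $\ovp$ at all, so even if the claimed inequalities held they would compare $E$ and $J$ across distinct orbits and could not reduce $\inf_{g\in G}J(g^\star\ovp)$ to an infimum over $N_G(K)$. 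The paper's own transfer from $J_G$-properness to an unconditional coercivity estimate (Lemmas \ref{FetaCritLemma} and \ref{FetaMinLemma}, Proposition \ref{ProperEquivProp}) rests instead on showing that $X\mapsto (I-J)\big((\exp_I\JJJ X)^\star\eta\big)$ is strictly convex and proper on $\isom(M,g)$, so that the infimum over $G$ is attained at a unique critical point with an explicit orthogonality characterization---and even that argument is carried out only for $\H_\o^\perp$, not $\H^K$. The $\H^K$ version requires a separate argument, which is the actual content of \cite[Theorem 2]{pssw}.
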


Thus, restricting to the $K$-invariant potentials is necessary, but not sufficient, to guarantee properness.

\begin{remark} {\rm
\label{}
The estimate in (ii) gives a concrete version of the properness condition 
\eqref{PropernessEq}. The direction (i) $\,\Rightarrow\,$ (ii) is 
due to Phong et al. \cite[Theorem 2]{pssw}, 
building on earlier work of Tian \cite{Tian97} and Tian--Zhu \cite{TZ00}
in the case $\Aut(M,\JJJ)_0=\{\id\}$,
who obtained a weaker inequality in (ii) with $J$ replaced by $J^\delta$
for some $\delta\in(0,1)$ (for more details see
also the survey \cite[p. 131]{Tian2012}).
} \end{remark}

\bexample
\lb{MainExam}
\cite[Example 2.2]{DR2}
Let $M$ denote the blow-up of $\PP^2$ at three non colinear points.
It is well-known that it admits \KE metrics (see, e.g., \cite{WangZhu}).
In fact, one way to see this is by showing that Tian's invariant is equal 
to 1 for an appropriately chosen group of symmetries \cite{BatySeliv}
and then apply Corollary \ref{TianbigalphaKEThm} (with $\mu = 1 $). 
According to \cite[Theorem 8.4.2]{Dolg}, 
\begin{equation}
\begin{aligned}
\label{AutPtwoEq}
\AutMJz=(\CC^\star)^2.
\end{aligned}
\end{equation}
We will explain this fact in a moment. Given this, we see that
$\AutMJz$ is equal to its center which is clearly not finite.
Thus, Conjecture \ref{TianConj}  fails for $M$
by Theorem \ref{KEGexistenceIntroThm}.  
Following the appearance of \cite{DR2}, 
X.-H. Zhu informed the author that 
using toric methods
one can give
an alternative proof that
Conjecture
\ref{TianConj}  fails in the special case of toric Fano 
$n$-manifolds
that satisfy $\AutMJz=(\CC^\star)^n$.

To see   \eqref{AutPtwoEq}, observe that automorphisms homotopic to the identity map preserve the cohomology class of divisors. Thus, they preserve each of the three exceptional divisors. In particular, they descend to automorphisms of $\PP^2$ which preserve the three blowup points. By that we mean that if $f\in\AutMJz$
then $\pi \circ f \circ \pi^ {-1}\in\h {Aut}(\PP^2)$.
Now automorphisms of
$\PP^2$ are represented by invertible three-by-three matrices, up to a nonzero complex number. We may assume in this representation that the three points are then 
$[1:0:0], [0:1:0], [0:0:1]$ (since they are not collinear!). Thus, each such automorphism is represented by a diagonal matrix. Since the matrix is invertible, and determined up to a nonzero complex number, that matrix can be taken to be
$$
\begin{pmatrix}
a&0&0\cr
0&b&0\cr
0&0&1\cr
\end{pmatrix}, \q a,b\in\CC^\star.
$$
Conversely, the blow-up of $\PP^2$ at three non colinear points
is a toric manifold so its automorphism group contains a copy of
$(\CC^\star)^2$. 
Thus,  \eqref{AutPtwoEq} is established.

\eexample

These results motivate a reformulation of Tian's original conjecture.
To present this reformulation we first make an excursion to infinite dimensional metric geometry in the next sections. In Section \ref{RevisedConjSec} we return to state the reformulated conjecture, whose proof is described in Section \ref{ProofSec}. 

\section {Infinite dimensional metrics on $\H $}

Approaching problems in \K geometry through 
an infinite-dimensional
 perspective goes back to Calabi in 1953 \cite{Calabi54} and 
later Mabuchi in 1986 \cite{Mabuchi87}. These works proposed two different 
weak Riemannian metrics of $L^2$ type which have been studied extensively
since. 

The most widely studied such metric is the Mabuchi metric \cite{Mabuchi87},
\begin{equation}
\label{MabuchiMetricEq}
\gM(\nu,\eta)|_\vp:=\int_M\nu\eta\,\o_\vp^n,
\quad \nu,\eta\in T_\vp\calH_\o\cong  C^\infty(M),
\end{equation}
discovered independently also by Semmes \cite{Semmes1} and Donaldson
\cite{Don} (see, e.g., \cite[Chapter 2]{RThesis} for an 
exposition and further references). 

Calabi's metric is given by
\begin{equation}
\gC(\nu,\eta)|_\vp:=\int_M\D_\vp\nu\D_\vp\eta\,\frac{\o_\vp^n}{n!}.
\end{equation}
This metric was introduced by Calabi in the 1950s in talks
and in a research announcement \cite{Calabi54}. It
might seem a little less natural at first since it involves more derivatives than the Mabuchi metric. However, from a Riemannian geometric point of view it is actually more natural, since it is simply the 
$L ^2$ metric on the level of Riemannian metrics, as the following simple result shows.
To state this result we let 
$$\calM$$ denote
the infinite-dimensional space of all smooth Riemannian metrics on $M$.
The
Ebin metric, also called the $L^2$ metric \cite{E} 
is defined by
\begin{equation}
\gE(h,k)|_g:=\int_M \tr (\ginv h \ginv k) dV_g,
\end{equation}
where $g\in\calM$, $h,k\in T_g\calM$ and
$T_g\calM\cong \Gamma(\h{\rm Sym}^2T^\star\! M)$, 
the space of smooth, symmetric $(0,2)$-tensor fields on $M$.

\begin{prop}
\label{EbinCalabiProp}
{\rm
\cite[Proposition 2.1]{ClarkeR}}
Consider the inclusion $\iota_{\calH}:\calH\hookrightarrow\calM$.
Then, $\iota_{\calH}^\star\, \gE = 2\gC$.
\end{prop}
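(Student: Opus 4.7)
\smallskip
\noindent\emph{Plan.} The plan is to verify the identity tangent vector by tangent vector on $\calH$. First I would identify the differential of the inclusion $\iota_{\calH}$ at $\vp\in\calH$. A tangent vector $\nu\in T_\vp\calH_\o\cong C^\infty(M)$ represents the perturbation $\o_\vp+t\i\ddbar\nu$ of the K\"ahler form; differentiating the image curve $t\mapsto g_{\o_\vp+t\i\ddbar\nu}$ at $t=0$ via the defining relation $g_\o(\cdot,\cdot)=\o(\cdot,\JJJ\cdot)$ shows that its pushforward is the symmetric $(0,2)$-tensor
\begin{equation*}
h:=d\iota_{\calH}|_\vp(\nu),\qquad h(X,Y)=\i\ddbar\nu(X,\JJJ Y),
\end{equation*}
whose only nonvanishing components in local holomorphic coordinates are the mixed ones, $h_{i\bar j}=\nu_{i\bar j}$.

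Second, I would compute the Ebin integrand pointwise. Since $h$ and $g_\vp$ are both $\JJJ$-invariant, the endomorphism $g_\vp^{-1}h:TM\to TM$ preserves the complexified splitting $TM\otimes\CC=T^{1,0}M\oplus T^{0,1}M$, and its restriction to $T^{1,0}M$ is given in coordinates by the matrix $A^i{}_j=(g_\vp)^{i\bar k}\nu_{j\bar k}$. Using the fact that the real trace of a $\JJJ$-commuting endomorphism is twice the real part of its complex trace on $T^{1,0}M$, a short calculation with the analogous matrix $B^i{}_j=(g_\vp)^{i\bar k}\eta_{j\bar k}$ associated to a second tangent vector $\eta$ gives the pointwise identity
\begin{equation*}
\tr\!\bigl(g_\vp^{-1}h\,g_\vp^{-1}k\bigr)
=2(g_\vp)^{i\bar k}(g_\vp)^{l\bar j}\nu_{l\bar k}\eta_{i\bar j}
=2\bigl\langle \i\ddbar\nu,\i\ddbar\eta\bigr\rangle_{\o_\vp},
\end{equation*}
the right-hand side being the standard pointwise inner product of $(1,1)$-forms on $(M,\o_\vp)$.

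The third step is the heart of the matter and is global rather than pointwise. Indeed, the pointwise identity $|\i\ddbar\nu|^2_{\o_\vp}=(\D_\vp\nu)^2$ fails as soon as $\dim_\CC M\ge 2$ (the two sides differ by the squared norm of the primitive part of the complex Hessian), so the proposition is intrinsically an $L^2$ statement. Two integrations by parts on the closed K\"ahler manifold $(M,\o_\vp)$, together with the K\"ahler identity $\del^*\dbar+\dbar\del^*=0$ and the relation $\dbar^*\dbar\nu=-\D_\vp\nu$ for functions, yield
\begin{equation*}
\int_M\langle\del\dbar\nu,\del\dbar\eta\rangle_{\o_\vp}\,\o_\vp^n/n!
=\int_M\langle\dbar\nu,\del^*\del\dbar\eta\rangle_{\o_\vp}\,\o_\vp^n/n!
=\int_M\D_\vp\nu\,\D_\vp\eta\,\o_\vp^n/n!,
\end{equation*}
where in the middle step one identifies $\del^*\del\dbar\eta=-\dbar\D_\vp\eta$ by writing $\del\dbar=-\dbar\del$ and commuting $\del^*$ past $\dbar$ through the K\"ahler identity. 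Combining with Step~2 and recalling that $dV_{g_\vp}=\o_\vp^n/n!$ produces $(\iota_{\calH}^\star\gE)(\nu,\eta)|_\vp=2\gC(\nu,\eta)|_\vp$, as claimed.

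The main obstacle is precisely the global character of Step~3: since the identity fails pointwise in complex dimension $\ge 2$, one cannot read it off from Step~2 alone, and must instead exploit the K\"ahler identities to transfer derivatives in $L^2$, with careful attention to signs. Steps~1 and~2 are, by contrast, essentially linear-algebra bookkeeping on the tangent space.
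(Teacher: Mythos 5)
The paper itself does not give a proof of this proposition; it simply cites \cite[Proposition 2.1]{ClarkeR}, so there is no in-paper argument to compare against. Your argument is correct and is the expected one: the pushforward of $\nu$ under $d\iota_{\calH}$ is the $\JJJ$-invariant symmetric tensor corresponding to $\i\ddbar\nu$; the Ebin integrand becomes $2\langle\i\ddbar\nu,\i\ddbar\eta\rangle_{\o_\vp}$ because the real trace of a $\JJJ$-linear endomorphism is twice the real part of its trace on $T^{1,0}$; and the remaining, genuinely global step is the $L^2$ identity $\int_M|\nabla^{1,1}\nu|^2_{\o_\vp}\,dV_{\o_\vp}=\int_M(\D_\vp\nu)^2\,dV_{\o_\vp}$, which as you correctly emphasize fails pointwise for $n\ge2$ and uses the K\"ahler structure. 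One small remark on Step~3: instead of invoking the operator identity $\del^*\dbar+\dbar\del^*=0$, a slightly more transparent route is to integrate by parts twice in local coordinates using the Levi-Civita connection and observe that the commutator $[\nabla_i,\nabla_k]$ vanishes on $(0,1)$-tensors on a K\"ahler manifold (all $(2,0)$ curvature components vanish), so no Ricci correction term ever appears; this makes it manifest why the identity is curvature-free. Either version is fine, and the overall proof is sound.
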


In other words, $(\calH,2\gC)$ is
isometrically embedded in $(\calM,\gE)$, or what is the same, the metric $\gC$ is induced by the metric $\gE$.

On the other hand, the Mabuchi metric is  more natural from a symplectic or complex 
geometry point of view.
As shown by Semmes and Donaldson, the Mabuchi metric can be considered as an infinite-dimensional analogue of the symmetric space metric structure on spaces of the form $G^\CC/G$ where $G $ is a compact Lie group, but where the group is now infinite-dimensional, more specifically the group of Hamiltonian diffeomorphisms of $(M,\omega) $. 
We refer the reader to  
\cite{Semmes1,Don}, \cite[Chapter 4] {Tianbook}, \cite{Thomas}.
In another vein, the Mabuchi metric is also natural from the point of view of semi-classical complex geometry, also referred to as \K quantization sometimes. We refer the reader to 
\cite{RThesis,PhongSturmSurvey,FKZ,RZ1}.

\section {Metric completions of $\H$} 

Historically, Calabi claimed that the completion
of his metric ``consists of the positive semidefinite K\"ahler metrics defining
the same principal class," i.e., of
\begin{equation*}
\begin{aligned}
\label{}
\{\omega_\vp:=\o+\i\ddbar\vp \,:\, \vp\in C^{\infty}(M), \,  \omega_\vp\ge0\}.
\end{aligned}
\end{equation*}
Except from this single line published in in his short talk 
abstract in 1953 \cite{Calabi54}, there has been no study or even 
conjectures in the literature concerning metric completions of $\H$.
The first article in this direction is due to
Clarke--Rubinstein in 2011 \cite{ClarkeR}, that we now turn to discuss.

\subsection {The Calabi metric completion}
\lb{CalabiSubSec}

Denote by $\dC:\H\times\H\ra \RR_+$ the distance function associated to metric $\gC$. 
It is defined as follows.
A curve $[0,1]\ni t \mapsto  \alpha_t \in \mathcal{H}$ 
is called smooth if $\alpha(t,z)$ is smooth in both $t $ and $z $.
Denote $\dot\a_t:=\del\a(t)/\del t$.
The length of a smooth curve $t \to \alpha_t$ is
\begin{equation}\label{curve_length_def}
\ellC(\alpha):=\int_0^1\sqrt{\gC(\dot \alpha_t,\dot \alpha_t)|_{\alpha_t}}dt.
\end{equation}

\bdefin
The path length distance of $(\H,\gC)$ is defined by
$$
\dC(\o,\eta):= 
\inf\{\ellC(\alpha)\,:\,\alpha:[0,1]\ra\H \h{\ is a smooth curve with \ }
\a(0)=\o,\, \a(1)=\eta\}.
$$
We refer to the pseudometric $\dC$ as the {\it Calabi metric}. 
\edefin

\begin{remark} {\rm
\label{}
As observed already by Calabi, the Calabi--Yau Theorem implies that 
$(\H,\gC)$ is isometric to a portion of a sphere in $L^2(M,\on)$, and therefore the Calabi (pseudo)-metric is actually a metric, justifying the above name (see, e.g., \cite[pp. 1488--1489] {ClarkeR} or \cite{Calamai}).  
Even though we refer to $\dC$ and to $\gC$ by the same name, we hope it 
will be clear below to which one we are referring to from the context.
} \end{remark}

The Calabi metric completion is given by the following theorem due to Clarke--Rubinstein
\cite[Theorem 5.6]{ClarkeR}.

\begin{theorem}
\label{calHdCCompletion} {\rm }
The metric completion of $(\calH,\dC)$ is given by
$$
\begin{aligned}
\overline{(\calH,\dC)}\cong
\{\vp\in \calE(M,\o)\,:\, \o^n_{\vp} &\h{\ \rm is  absolutely continuous with }
\cr &
\h{ \rm respect to $\o^n$ and } \o^n_\vp/\o^n\in L^1(M,\o^n)\},
\end{aligned}
$$
and is a strict subset of 
$$
\calE(M,\o):=
\left\{
\vp\in \PSH(M,\o)\,:\,
\lim_{j\ra\infty}\int_{\{\vp\le-j\}}(\o+\i\ddbar\max\{\vp,-j\})^n= 0\;
\right\}.
$$
Furthermore, convergence with respect to $\dC$ is characterized as follows.
A sequence $\{\o_{\vp_k}\} \subset \mathcal{H}$ converges to $\ovp \in
  \mathcal{H}$ with respect to $\dC$ if and only if $\o_{\vp_k}^n
  \rightarrow \ovpn$ in the $L^1$ sense, i.e.,
  \begin{equation*}
    \int_M \Big|\frac{\o_{\vp_k}^n}{\on}-\frac{\o_{\vp}^n}{\on} \Big|\on \rightarrow 0.
  \end{equation*}
\end{theorem}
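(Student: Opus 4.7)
My strategy is to exploit the Calabi--Yau theorem to realize $(\calH,\gC)$ as a smooth subset of an $L^2$ sphere, reducing the statement to essentially Hilbert-space geometry together with the solvability theory for degenerate complex Monge--Amp\`ere equations.

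I would first define $\Phi\colon\calH\to L^2(M,\on)$ by $\Phi(\ovp):=\sqrt{\ovpn/\on}$.  Since $\int_M\ovpn=V$, the image lies in
$$
S_V:=\{u\in L^2(M,\on)\,:\, u\ge 0,\ \|u\|_{L^2(\on)}^2=V\}.
$$
Along a smooth curve $\vp_t\in\calH$, the identity $\tfrac{d}{dt}\ovptn=(\Delta_{\ovpt}\dot\vp_t)\,\ovptn$ yields $\dot u_t=\tfrac12 u_t\,\Delta_{\ovpt}\dot\vp_t$, so
$$
\|\dot u_t\|_{L^2(\on)}^2=\tfrac14\int_M(\Delta_{\ovpt}\dot\vp_t)^2\,\ovptn=\tfrac{n!}{4}\,\gC(\dot\vp_t,\dot\vp_t)\big|_{\vp_t}.
$$
Hence $\Phi$ pulls back the flat $L^2$ inner product to a constant multiple of $\gC$; by the Calabi--Yau theorem it is a bijection onto the smooth positive elements of $S_V$ (modulo the additive constant in $\vp$, which $\Phi$ does not see).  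The geodesic distance on $S_V$ induced by the flat $L^2$ structure is a continuous monotone function of the chord distance, so metric completions with respect to the two agree.

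Next I would identify the $L^2$-completion of $\Phi(\calH)$ with $S_V$: given $u\in S_V$, mollifying and renormalizing produces smooth $u_k>0$ with $\|u_k\|_{L^2}^2=V$ and $u_k\to u$ in $L^2(\on)$, and Yau's theorem writes $u_k^2\on=\o_{\vp_k}^n$ for some $\vp_k\in\calH$.  To relate $S_V$ back to potentials one invokes the finite-energy pluripotential theory of \Kolodziej\ and Guedj--Zeriahi: since $u^2\in L^1(\on)$ has total mass $V$, the Monge--Amp\`ere equation $\ovpn=u^2\on$ admits a unique (mod constants) solution $\vp\in\calE(M,\o)$, whose Monge--Amp\`ere is absolutely continuous with $L^1$ density $u^2$.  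Combined with the inverse map $\vp\mapsto\sqrt{\ovpn/\on}$, this gives the claimed bijective description of the completion.  Strict containment in $\calE(M,\o)$ is witnessed by standard $\calE$-potentials with analytic singularities along a subvariety, whose non-pluripolar Monge--Amp\`ere carries a nontrivial singular part.

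For the convergence characterization, $\dC$-convergence $\ovp_k\to\ovp$ corresponds under $\Phi$ to $L^2(\on)$-convergence of the density square roots.  The elementary inequalities
$$
(\sqrt a-\sqrt b)^2\le |a-b|, \quad |a-b|=(\sqrt a+\sqrt b)\,|\sqrt a-\sqrt b| \quad (a,b\ge 0),
$$
combined with Cauchy--Schwarz and $\|\sqrt{f_k}+\sqrt f\|_{L^2(\on)}\le 2\sqrt V$, show that $L^2$-convergence of square roots is equivalent to $L^1(\on)$-convergence of the densities $\o_{\vp_k}^n/\on$, i.e.\ of the measures $\o_{\vp_k}^n$.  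The hardest step is the construction of $\vp$ from a general $u\in S_V$: smooth positive densities are handled directly by Yau's theorem, but general $L^1$ densities require the full apparatus of non-pluripolar Monge--Amp\`ere operators and the finite-energy class $\calE(M,\o)$; by contrast, the remaining Hilbert-space geometry and the measure-theoretic equivalence between convergence of square roots and of densities are comparatively elementary.
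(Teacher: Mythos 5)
Your core strategy — realizing $(\calH,\gC)$ (up to the constant factor you compute) as a smooth positive subset of the $L^2(M,\o^n)$ sphere $S_V$ via $\Phi(\ovp)=\sqrt{\ovpn/\on}$, and then passing through Guedj--Zeriahi to recover potentials in $\calE(M,\o)$ from $L^1$ densities — is precisely how the result is obtained in \cite{ClarkeR}; indeed the paper itself remarks that Calabi already observed the sphere isometry. Your pullback computation $\Phi^\star(\,\cdot\,,\,\cdot\,)_{L^2}=\tfrac{n!}{4}\gC$ is correct, as is the elementary $L^1\leftrightarrow L^2$ equivalence via $(\sqrt a-\sqrt b)^2\le|a-b|$ and Cauchy--Schwarz for the convergence characterization.

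There is, however, a genuine gap in the step ``metric completions with respect to the two agree.'' The distance you must control, $\dC$, is by definition the infimum of lengths of curves staying \emph{inside} $\Phi(\calH)$, and $\Phi(\calH)$ is not geodesically convex in $S_V$: a great circle through $u_0>0$ proceeds in a direction $v$ with $\int_M u_0 v\,\on=0$, which forces $v$ to change sign, so the great circle leaves the positive cone. Your remark that the $S_V$-intrinsic distance is a monotone function of the chord distance relates those two, but not either of them to $\dC$. A priori $\dC$ could be strictly and uncontrollably larger than the chord distance, in which case chord-Cauchy sequences need not be $\dC$-Cauchy and the $\dC$-completion could be a proper subset of $S_V$. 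To close the gap you must produce paths inside $\Phi(\calH)$ whose length is controlled by the chord distance. For example, project the straight segment $w_s:=(1-s)u_0+su_1$ to the sphere, $u_s:=\sqrt V\,w_s/\|w_s\|_{L^2}$: since $u_0,u_1>0$ pointwise one has $w_s>0$, and $\langle u_0,u_1\rangle_{L^2}\ge0$ gives $\|w_s\|_{L^2}^2\ge V/2$, whence $\|\dot u_s\|_{L^2}\le 2\sqrt2\,\|u_1-u_0\|_{L^2}$. Yau's theorem (with smooth dependence of the solution on the smooth positive data) then pulls $u_s$ back to a smooth curve $\vp_s\in\calH$ of $\gC$-length bounded by a constant times $\|u_1-u_0\|_{L^2}$. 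Combined with the trivial inequality $\dC\ge\tfrac{2}{\sqrt{n!}}\|\Phi(\vp_0)-\Phi(\vp_1)\|_{L^2}$, this gives the two-sided comparison $\dC\asymp\|\Phi(\cdot)-\Phi(\cdot)\|_{L^2}$ on $\calH$ that your argument needs, after which the rest of your outline goes through.
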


\begin{remark} {\rm
\label{}
Observe that the metric completion turns out to be considerably larger than what Calabi claimed.
We also note that Theorem \ref{calHdCCompletion} was motivated by the computation of the metric completion of the ambient space  $(\calM,\gE)$ obtained in Clarke's thesis \cite{Clarke}. It is interesting to note that his result does not directly imply Theorem  \ref{calHdCCompletion} as one might suspect from Proposition \ref{EbinCalabiProp}.
} \end{remark}

\begin{remark} {\rm
\label{}
The space $\E (M,\omega) $ was introduced by Guedj--Zeriahi \cite[Definition 1.1]{gz}.
The statement of Theorem \ref{calHdCCompletion}
of course assumes that the measure $\ovpn$ can be defined for each $\vp\in\E(M,\o)$.
This is indeed the case, but requires considerable background from pluripotential theory. One
defines
$$
\ovpn:=\lim_{j\ra-\infty}
{\bf 1}_{\{\vp>j\}}
(\o+\i\ddbar\max\{\vp,j\})^n.
$$
By definition, ${\bf 1}_{\{\vp>j\}}(x)$ is equal to $1$ if $\vp(x)>j$ and zero otherwise,
and
the measure $(\o+\i\ddbar\max\{\vp,j\})^n$ is defined by the
work of Bedford--Taylor \cite{bt} since $\max\{\vp,j\}$ is bounded.
The limit is then well-defined as a Borel measure; 
for more details we refer to  \cite[p. 445]{gz}.
} \end{remark}

What is perhaps more interesting than computing the metric completion itself, is the fact that this computation yields nontrivial geometric information  \cite[Theorem 6.3]{ClarkeR}.

\begin{definition}
\label{CRDef}
We say that $(M,J)$ is Calabi--Ricci unstable (or CR-unstable)
if there exists a Ricci flow trajectory
that diverges
in $\overline{(\calH,\dC)}$.
Otherwise, we say $(M,J)$ is CR-stable.
\end{definition}

\begin{theorem}
\label{CRStabilityThm}
A Fano manifold $(M,J)$ is CR-stable if and only if it admits a \KE metric.
Moreover, if it is CR-unstable then any Ricci flow 
trajectory diverges in $\overline{(\calH,\dC)}$.
\end{theorem}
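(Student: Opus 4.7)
The approach is to exploit the $L^1$ characterization of $\dC$-convergence from Theorem \ref{calHdCCompletion}: a sequence $\omega_{\vp_k}\subset\H$ converges in $\dC$ to an element of the completion if and only if the densities $\omega_{\vp_k}^n/\o^n$ converge in $L^1(M,\o^n)$. Consequently, a normalized \KR flow trajectory $t\mapsto \o_t$ diverges in $\overline{(\H,\dC)}$ precisely when its volume densities $\o_t^n/\o^n$ fail to be Cauchy in $L^1(M,\o^n)$ as $t\to\infty$. Note also that automorphisms in $\AutMJz$ act as isometries of $(\H,\gC)$, since pullback preserves the Laplace--Beltrami operator and the integral defining $\gC$.

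For the direction \emph{\KE metric exists $\Rightarrow$ CR-stable}, I would invoke the convergence theorem of Perelman, Tian--Zhu, and Phong--Song--Sturm--Weinkove: whenever a Fano manifold admits a \KE metric in $[\o]$, the normalized \KR flow converges in the $C^\infty$ topology (up to an automorphism family that, by Bando--Mabuchi uniqueness, can be absorbed into a genuine smooth limit) to a \KE metric $\o_\infty\in\H$. Smooth convergence implies $L^1$-convergence of the densities $\o_t^n/\o^n$, so the trajectory converges in $\dC$ and certainly does not diverge in $\overline{(\H,\dC)}$; since this holds for every initial metric, $(M,\JJJ)$ is CR-stable.

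For the converse I would argue by contradiction: suppose no \KE metric exists, yet some trajectory $\{\o_t\}$ does not diverge in $\overline{(\H,\dC)}$. Perelman's estimate gives a uniform $C^0$ bound on the Ricci potential $f_{\o_t}$ along the flow; combining with \eqref{RicciPotEq} yields
\begin{equation*}
\log(\o_t^n/\o^n) \;=\; f_\o - f_{\o_t} - \mu\vp_t + c_t,
\end{equation*}
with $|f_\o - f_{\o_t}|\le C$ uniform in $t$. If additionally $\o_t^n/\o^n$ is $L^1$-Cauchy, this forces $e^{-\mu\vp_t}$ (up to the normalization $c_t$) to be $L^1$-convergent, which, combined with the mean-value inequality \eqref{MeanVal} and a Moser iteration in the spirit of \S\ref{LinfintSubSec}, yields a uniform $L^\infty$ bound on the normalized $\vp_t$. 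Plugging this into Perelman's higher-order estimates (uniform scalar curvature and diameter bounds) together with the K\"ahler--Ricci flow compactness machinery extracts a $C^\infty$ subsequential limit $\o_\infty\in\H$; since the Mabuchi K-energy decreases monotonically along the flow and by \eqref{EderivEq} any subsequential limit must be a critical point, i.e., satisfy $\Delta_{\o_\infty}f_{\o_\infty}=0$, the limit $\o_\infty$ is in fact \KEno, contradicting the hypothesis. This shows \emph{every} trajectory diverges whenever some trajectory does, giving both the main equivalence and the ``moreover'' clause.

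The main obstacle is upgrading the $L^1$-Cauchy condition on the volume densities to genuine uniform $L^\infty$ control on the K\"ahler potentials $\vp_t$. Perelman's bounds control the \emph{Ricci potential} $f_{\o_t}$ but not $\vp_t$ itself, and the gap between them is exactly $\mu\vp_t$; one therefore needs to combine the pluripotential-theoretic consequences of $L^1$-convergence of the densities (an upper bound on $\vp_t$ via Green's formula \eqref{MeanVal}) with a matching lower bound obtained via a Moser iteration that leverages Perelman's no-local-collapsing theorem and the uniform Sobolev inequality along the flow. Once this estimate is in hand, the identification of the limit with a \KE metric is a standard consequence of the monotonicity of $E$ and the Euler--Lagrange computation \eqref{EderivEq}.
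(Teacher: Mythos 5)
Your overall strategy---exploit the $L^1$ characterization of $\dC$-convergence from Theorem~\ref{calHdCCompletion}, use Perelman/Tian--Zhu to handle the direction ``\KE exists $\Rightarrow$ CR-stable,'' and argue by contradiction for ``no \KE $\Rightarrow$ every trajectory diverges''---is the right framework, and it matches the logical skeleton of the Clarke--Rubinstein argument that the paper cites. The minor handwave in the first direction (claiming the gauge family from Tian--Zhu can be ``absorbed'' by Bando--Mabuchi uniqueness) is imprecise but not fatal: the relevant convergence theorems do ultimately yield genuine smooth convergence without a diverging gauge when a \KE metric exists.

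The genuine gap is in the second direction, precisely at the step you yourself flag as the ``main obstacle'': you claim that $L^1$-Cauchyness of $\o_t^n/\o^n$, combined with Perelman's bound $|f_{\o_t}|\le C$ and a Moser iteration ``in the spirit of \S\ref{LinfintSubSec},'' yields a uniform $L^\infty$ bound on the normalized $\vp_t$. This does not follow. The Moser iteration of \S\ref{LinfintSubSec} controls $\sup(-\vp_t)$ by $\|{-}\vp_t\|_{L^1(\o_t^n)}$, i.e.\ by $\int_M(-\vp_t)\,\o_t^n$. Along the flow, using $\log(\o_t^n/\o^n)=-\mu\vp_t+O(1)$ (Perelman plus a bound on $c_t$), one has
$$
\int_M(-\vp_t)\,\o_t^n \;=\; \frac{1}{\mu}\,V\cdot \Ent(\o^n,\o_t^n) + O(V),
$$
so the required $L^1(\o_t^n)$ bound is \emph{equivalent} to boundedness of the entropy $\Ent(\o^n,\o_t^n)$. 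But $L^1$-convergence of the densities $F_t:=\o_t^n/\o^n$ does \emph{not} imply boundedness of $\int F_t\log F_t\,\o^n$: the entropy is only lower semicontinuous under $L^1$ convergence and can perfectly well tend to $+\infty$ along an $L^1$-convergent sequence. Equivalently, the $C^0$ estimate for the complex \MA equation---whether via Yau's or \Kolodziej's scheme---requires $L^p$ control on the right-hand side for some $p>1$ (or at least a suitable Orlicz condition), and $L^1$ sits exactly at the threshold where these estimates fail. So your step ``$L^1$-Cauchy $\Rightarrow$ $L^\infty(\vp_t)$'' is not established, and without it the extraction of a smooth limit and the Euler--Lagrange identification do not get off the ground. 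To close this gap one must bring in additional flow-specific structure---for example, exploiting the monotonicity of the Calabi energy and the explicit identification of the Calabi arc-length of a \KRno\ flow trajectory with $\|s-n\|_{L^1(\RR_+,L^2(M,\o_t))}$, and then running a PSSW-type argument (upgraded using Perelman's Sobolev and noncollapsing estimates) to show that failure of smooth convergence forces this length to be infinite---rather than reasoning purely from the $L^1$-density characterization of the completion.
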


Theorem \ref{CRStabilityThm} might seem rather abstract, however it shows that convergence in the metric completion is fundamental geometrically. In addition, it can be stated entirely in terms of an a priori estimates without any reference to the metric completion \cite[Corollary 6.9]{ClarkeR}:

\begin{corollary}
\label
{LoneLtwoConvCor}
The Ricci flow (\ref{RFEq}) converges smoothly if
and only if
\begin{equation}
\label{LoneLtwoConvEq}
||s-n||_{L^1(\RR_+,L^2(M,\o(t)))}<\infty,
\end{equation}
where $s = s (t)$ denotes the scaler curvature of $(M,\o(t))$.
\end{corollary}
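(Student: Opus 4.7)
The plan is to reduce the corollary to Theorem \ref{CRStabilityThm} by recognizing that the integrand $\|s(t)-n\|_{L^2(M,\omega(t))}$ is, up to a factor of $\sqrt{n!}$, the Calabi speed of the K\"ahler--Ricci flow trajectory, so that condition \eqref{LoneLtwoConvEq} is exactly the statement that the trajectory has finite total Calabi length.

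\emph{Step 1 (speed computation).} In the Fano case the normalized K\"ahler--Ricci flow reads $\partial_t\omega(t)=-\Ric\,\omega(t)+\omega(t)$; writing $\omega(t)=\omega_{\varphi(t)}$, we normalize so that $\dot\varphi(t)=-f_{\omega(t)}$, where $f_{\omega(t)}$ is the Ricci potential of Definition \ref{foDef}. The identity $\Delta_\omega f_\omega=s-n$ (in the Hermitian trace convention $s:=\tr_\omega\Ric\omega$, corresponding to $\mu=1$) then yields $\Delta_{\omega(t)}\dot\varphi(t)=-(s(t)-n)$. Substituting into the definition of the Calabi metric gives
\[
\gC(\dot\varphi,\dot\varphi)\big|_{\omega(t)}=\frac{1}{n!}\int_M (s(t)-n)^2\,\omega(t)^n=\frac{1}{n!}\|s(t)-n\|^2_{L^2(M,\omega(t))}.
\]
Hence the Calabi length of the trajectory over $[0,T]$ equals $\frac{1}{\sqrt{n!}}\int_0^T\|s(t)-n\|_{L^2(M,\omega(t))}\,dt$, and \eqref{LoneLtwoConvEq} is equivalent to the whole trajectory $\{\omega(t)\}_{t\ge 0}$ having finite total Calabi length.

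\emph{Step 2 (reduction via CR-stability).} Finite Calabi length implies the trajectory is Cauchy in $(\calH,\dC)$ and therefore converges in the completion $\overline{(\calH,\dC)}$; conversely, infinite Calabi length prevents any limit and forces divergence in the sense of Definition \ref{CRDef}. Thus \eqref{LoneLtwoConvEq} holds iff no Ricci flow trajectory diverges in $\overline{(\calH,\dC)}$, iff $(M,J)$ is CR-stable. By Theorem \ref{CRStabilityThm}, this is in turn equivalent to the existence of a K\"ahler--Einstein metric on $M$.

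\emph{Step 3 (KE existence and smooth convergence).} If a KE metric exists then, by the convergence theorem of Perelman--Tian--Zhu, the normalized KRF converges smoothly (modulo the action of $\AutMJz$) to a KE metric, with exponential decay of $s(t)-n$ in every $C^k$ norm; this clearly forces \eqref{LoneLtwoConvEq}. Conversely, smooth convergence of the KRF trivially yields that the trajectory is Cauchy in $\dC$, so by Step 1 it has finite Calabi length and \eqref{LoneLtwoConvEq} holds. Combined with Step 2 this closes the loop.

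\emph{Main obstacle.} The calculation in Step 1 is essentially bookkeeping. All the depth is concentrated in Theorem \ref{CRStabilityThm} and in the Perelman--Tian--Zhu convergence theorem for the KRF when a KE metric exists; the present corollary is, in a precise sense, a metric-geometric repackaging of those results. A minor technical point is that $\dot\varphi$ is only defined up to an additive constant, but only $\Delta_{\omega(t)}\dot\varphi$ enters the Calabi speed, so this ambiguity is harmless.
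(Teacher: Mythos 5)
Your overall strategy---identify the Calabi speed of the Ricci flow trajectory with $\|s(t)-n\|_{L^2(M,\omega(t))}$ (up to a constant), so that \eqref{LoneLtwoConvEq} is precisely finiteness of the Calabi length, and then feed this into Theorem \ref{CRStabilityThm}---is exactly the route the paper intends: the paper delegates the length computation to the exercise immediately following the corollary and cites \cite[Corollary~6.9]{ClarkeR} for the rest. Your Step~1 is correct (the ambiguity of $\dot\varphi$ by a time-dependent constant is indeed harmless since only $\Delta_{\omega(t)}\dot\varphi$ enters).

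There are, however, two genuine errors of metric geometry in Steps~2--3. First, ``infinite Calabi length prevents any limit and forces divergence'' is false: a curve of infinite length may well converge (think of a shrinking spiral), so infinite length does not by itself produce divergence in the sense of Definition~\ref{CRDef}. Second, ``smooth convergence of the KRF trivially yields that the trajectory is Cauchy in $\dC$, so by Step~1 it has finite Calabi length'' conflates being Cauchy (i.e.\ convergent) with being rectifiable; a convergent curve need not have finite length, so this deduction does not go through. The two statements are the same mistake in two guises, and if either were load-bearing the argument would collapse.

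Fortunately your Step~3 already contains the correct version of the ``only if'' direction, and you should make it the sole argument there: smooth convergence forces the limit to be K\"ahler--Einstein, and the exponential decay of $s(t)-n$ in $C^k$ (from Perelman--Tian--Zhu / the linear stability of the flow at a K\"ahler--Einstein fixed point) gives finiteness of $\|s-n\|_{L^1(\RR_+,L^2)}$ directly, with no detour through Cauchy-ness. For the ``if'' direction the chain ``finite Calabi length $\Rightarrow$ Cauchy $\Rightarrow$ convergent in $\overline{(\calH,\dC)}$'' is correct, but you should also make explicit your appeal to the ``moreover'' clause of Theorem~\ref{CRStabilityThm}: the corollary's hypothesis concerns a single trajectory, whereas CR-stability is a statement about all trajectories, and it is precisely the fact that CR-instability forces \emph{every} trajectory to diverge that lets one pass from ``this trajectory does not diverge'' to ``$(M,J)$ is CR-stable,'' hence to the existence of a K\"ahler--Einstein metric and then to smooth convergence. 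With these two repairs your outline coincides with the paper's intended proof.
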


This improves a result of Phong et al.~\cite{pssw}, where
  \eqref{LoneLtwoConvEq} is replaced by 
$$||s-n||_{L^1(\RR_+,C^0(M))}<\infty,$$ which was 
proved by completely different methods. 
The novelty in Corollary \ref{LoneLtwoConvCor} is that it uses supposedly ``soft" infinite-dimensional geometry to prove actual ``hard" a priori estimates for a PDE. Of course, the catch is that some analysis does go into computing the metric completion and, aside from that, some PDE techniques are still needed in the proof of
Corollary \ref{LoneLtwoConvCor}. But, nevertheless, the idea that some PDE 
estimates can be explained using infinite-dimensional geometry seems attractive. 

\begin{exer} {\rm
\label{}
Show that the length of the curve $t\mapsto \o_{\vp(t)}$ with respect to the Calabi metric is equal to
$$
||s-n||_{L^1(\RR_+,L^2(M,\o(t)))}
$$
if $\o_{\vp(t)} $ satisfies the Ricci flow equation
\begin{equation}
\label{RFEq}
\frac{\del \o(t)}{\del t}=-\Ric \o(t)+\mu\o(t),\quad \o(0)=\o\in\calH.
\end{equation}
Also, show that any solution of  \eqref{RFEq} that starts in  $\H $
remains in $\H $ \cite{H}. Thus, it makes sense to write $\omega (t) =\omega_{\vp(t)}.$ 
} \end{exer}

Thus, Corollary \ref{LoneLtwoConvCor} shows that convergence of the flow is equivalent to having finite distance in the Calabi metric.

\begin{exer} {\rm
\label{KRFMAExer}
Rewrite  \eqref{RFEq} in the form of a complex Monge--Amp\`ere equation
\begin{equation}
\begin{aligned}
\label{RFMAEq}
\ovpn=\on e^{f_\o-\mu\vp+\dot\vp}, \q \vp(0)=\h{const}.
\end{aligned}
\end{equation}
} \end{exer}

We remark that, depending on the context, the choice of the constant $\vp(0)$ might involve some care
(see \cite[\S10.1]{CT},\cite[\S2]{PSS}).

\begin{exer} {\rm  
\label{}
Assuming the theory of short-time existence for  \eqref{RFEq} (which replaces the openness arguments for the continuity method)
show that for every  $\omega\in\H $ the equation  \eqref{RFMAEq}
admits a solution for all $t > 0 $ whenever $\mu < 0 $. To do this, use Exercise \ref{KRFMAExer} as well as the results of \S\ref{PropExistSec}. Moreover, show that as $t $ tends to infinity, the solutions $\o(t)$ converge to the  K\"ahler--Einstein metric. 
} \end{exer}

Recently, Darvas generalized Calabi's metric to a two-parameter family of Finsler metrics, given by
\begin{equation}
\lb{CpqEq}
||\eta||^{\h{\smlsev C}
,p,q}_\vp:=
\bigg(\int_M|\D_\ovp\eta|^p \,\Big(\frac{\o_\vp^n}{\on}\Big)^q\frac{\on}{n!}\bigg)^{1/q},
\end{equation}
and computed the corresponding metric completions, directly generalizing Theorem \ref{calHdCCompletion}.
Denote by $\dCpq:\H\times\H\ra \RR_+$ the path-length distance function associated to   \eqref{CpqEq}.

\begin{theorem}
\label{CalabipqThm} {\rm \cite[Theorem 1.1]{DarvasCal}}
Let $p,q\in (1, \infty)$ and $q\le p$.
The metric completion of $(\calH,\dCpq)$ is given by
$$
\begin{aligned}
\overline{(\calH,\dCpq)}\cong
\{\vp\in \calE(M,\o)\,:\, \o^n_{\vp} &\h{\ \rm is  absolutely continuous with }
\cr &
\h{ \rm respect to $\o^n$ and } \o^n_\vp/\o^n\in L^q(M,\o^n)\}.
\end{aligned}
$$
Furthermore, convergence with respect to $\dCpq$ is characterized as follows.
A sequence $\{\o_{\vp_k}\} \subset \mathcal{H}$ converges to $\ovp \in
  \mathcal{H}$ with respect to $\dCpq$ if and only if $\o_{\vp_k}^n
  \rightarrow \ovpn$ in the $L^q$ sense, i.e.,
  \begin{equation*}
    \int_M \Big|\frac{\o_{\vp_k}^n}{\on}-\frac{\o_{\vp}^n}{\on} \Big|^q\on \rightarrow 0.
  \end{equation*}
\end{theorem}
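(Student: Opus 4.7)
The plan is to use the Calabi--Yau bijection $\vp \mapsto f_\vp := \o_\vp^n/\on$ to reduce the metric structure of $(\calH,\dCpq)$ on potentials to an $L^q$-type metric structure on smooth positive densities of total mass $V$. The guiding observation is that along any smooth curve $t \mapsto \vp_t$, differentiating $\o_{\vp_t}^n = f_t\on$ gives $\dot f_t = f_t\Delta_{\vp_t}\dot\vp_t$, so
$$||\dot\vp_t||^{C,p,q}_{\vp_t} = \Bigl(\int_M |\dot f_t|^p\, f_t^{\,q-p}\, \frac{\on}{n!}\Bigr)^{1/q}.$$
Introducing the auxiliary variable $h_t := f_t^{q/p}$ cancels the weight via the identity $|\dot f|^p f^{q-p} = (p/q)^p |\dot h|^p$, so
$$||\dot\vp_t||^{C,p,q}_{\vp_t} = (p/q)^{p/q}\,\|\dot h_t\|_{L^p(\on/n!)}^{p/q},\qquad \ellC_{,p,q}(\vp) = (p/q)^{p/q}\int_0^1 \|\dot h_t\|_{L^p}^{p/q}\,dt.$$
Thus the map $\Phi:\vp\mapsto h_\vp$ sends curves in $\calH$ to curves in the convex cone of smooth positive $L^p$-functions satisfying $\int h^{p/q}\on = V$, and the Finsler length pulls back to a pure $L^p$-speed raised to the power $p/q$.

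With the substitution in place, I would establish a two-sided comparison between $\dCpq(\vp_0,\vp_1)$ and $\|h_{\vp_0} - h_{\vp_1}\|_{L^p}^{p/q}$. The lower bound follows from Jensen's inequality applied to the convex function $x\mapsto x^{p/q}$ (using $p/q\ge 1$) on the unit interval, combined with the $L^p$ triangle inequality: for any connecting curve,
$$\int_0^1 \|\dot h_t\|_{L^p}^{p/q}\,dt \;\ge\; \Bigl(\int_0^1 \|\dot h_t\|_{L^p}\,dt\Bigr)^{p/q} \;\ge\; \|h_1-h_0\|_{L^p}^{p/q}.$$
For the matching upper bound I would construct a test curve by interpolating in the $h$-variable and applying a constant $L^\infty$ rescaling to enforce $\int f_t\on = V$, bounding the length contribution of the rescaling by mollification together with the uniform control that $q\le p$ provides on the conversion $h\leftrightarrow f$. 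The resulting estimate is a bi-H\"older equivalence $c\|h_{\vp_0}-h_{\vp_1}\|_{L^p}^{p/q}\le \dCpq(\vp_0,\vp_1)\le C\|h_{\vp_0}-h_{\vp_1}\|_{L^p}^{p/q}$.

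Once the bi-H\"older equivalence is established, the metric completion of $(\calH,\dCpq)$ is identified with the completion of $\Phi(\calH)\subset L^p(M,\on)$, which is the set of non-negative $L^p$ functions $h$ with $\int h^{p/q}\on = V$; equivalently, the set of non-negative $L^q$ densities $f$ of mass $V$. To upgrade from densities to potentials, I would invoke \Kolodziej's solvability theorem together with Guedj--Zeriahi's work on $\calE(M,\o)$: every such density is the Monge--Amp\`ere measure $\ovpn = f\on$ of a unique (up to additive constants) $\vp\in\calE(M,\o)$, and conversely every $\vp\in\calE(M,\o)$ with $\o_\vp^n/\on\in L^q$ arises in this way. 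Finally, the stated characterization of convergence is immediate from the bi-H\"older comparison. The main obstacle I anticipate is the sharp upper bound: when $q<p$ the weight $f_t^{\,q-p}$ is singular where $f_t$ is small, so the naive straight-line interpolation in $f$ blows up and one must carefully design a test curve whose length scales like $\|h_1-h_0\|_{L^p}^{p/q}$ while respecting the mass constraint $\int f_t\on = V$ and staying positive---this is precisely where the assumption $q\le p$ is decisive.
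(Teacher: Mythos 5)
This theorem is quoted in the paper from \cite{DarvasCal} without proof, so there is no internal argument to compare against; your proposal must be judged on its own. The substitution $h=f^{q/p}$ is indeed the natural generalization of the Calabi sphere picture alluded to after Theorem \ref{calHdCCompletion}, and your weight-cancellation identity $|\dot f|^pf^{q-p}=(p/q)^p|\dot h|^p$ is correct. However, before going further you should notice a red flag in the norm \eqref{CpqEq} as displayed in the lecture notes: if the outer exponent is really $1/q$ then the quantity is homogeneous of degree $p/q$ in $\eta$, so it is not a Finsler norm and the associated ``length'' is not reparametrization-invariant (under $t\mapsto\phi(t)$ it picks up a factor $|\phi'|^{p/q}$), so $\dCpq$ would not be well-defined as an infimum over curves. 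The exponent should be $1/p$, which also makes $(p,q)=(2,1)$ reduce to Calabi's metric. With that correction your computation gives the much cleaner identity $\|\dot\vp_t\|^{C,p,q}_{\vp_t}=(p/q)\|\dot h_t\|_{L^p}$, the Jensen step disappears, and the lower bound becomes a direct application of the triangle inequality in $L^p$: $\dCpq(\vp_0,\vp_1)\ge(p/q)\|h_0-h_1\|_{L^p}$.

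The genuine gap is exactly the one you flag and then wave at: the upper bound. Your test curve requires renormalizing $h_t=(1-t)h_0+th_1$ by a constant $c_t$ so that $\int (c_th_t)^{p/q}\on=V$. Convexity of $x\mapsto x^{p/q}$ gives $c_t\ge 1$, but $c_t$ is bounded above only by $(V/\!\int\min(h_0,h_1)^{p/q}\on)^{q/p}$, which blows up when $f_0$ and $f_1$ are nearly mutually singular. So the asserted global bi-H\"older (really bi-Lipschitz, once the exponent is fixed) comparison $\dCpq\asymp\|h_0-h_1\|_{L^p}$ does not follow from this construction; what you can get is a \emph{local} upper bound, valid when $\|h_0-h_1\|_{L^p}$ is small relative to $\min(\|h_0\|,\|h_1\|)$, and you must then assemble the completion argument from the one-sided Lipschitz lower bound (which gives an embedding of the completion into $\{h\ge 0:\int h^{p/q}\on=V\}\subset L^p$), a density argument showing every such $h$ is an $L^p$-limit of the image of $\calH$, and the local upper bound to show $L^p$-Cauchy implies $\dCpq$-Cauchy. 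Also, your final step is not ``immediate'': you have control on $\|h_k-h\|_{L^p}$, not on $\|f_k-f\|_{L^q}$; passing from one to the other requires $h_k\to h$ in measure together with $\|f_k\|_{L^q}^q=\|h_k\|_{L^p}^p\to\|h\|_{L^p}^p=\|f\|_{L^q}^q$, then a Riesz--Scheff\'e/Brezis--Lieb argument. These are all fillable, but they are where the real work lies and should not be compressed into a sentence.
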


In particular, the metric completion is independent of $p$! This immediately yields, by the same results 
of \cite{ClarkeR} that lead to Corollary \ref{LoneLtwoConvCor}, 
the following improvement to Corollary \ref{LoneLtwoConvCor}  \cite[Theorem 1.1]{DarvasCal}.
 
\begin{corollary}
\label
{LoneLoneConvCor}
The Ricci flow (\ref{RFEq}) converges smoothly if
and only if
\begin{equation}
\label{LoneLtwoConvEq}
||s-n||_{L^1(\RR_+, L^1(M,\o(t)))}<\infty.
\end{equation}
\end{corollary}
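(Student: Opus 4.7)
The plan is to rerun the argument used to derive Corollary \ref{LoneLtwoConvCor} from the Clarke--Rubinstein computation of the Calabi metric completion, but this time with the Finsler Calabi metric at the endpoint $(p,q) = (1,1)$, whose corresponding length is exactly the $L^1(L^1)$-norm appearing in the statement.

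The first step would be to identify $\|s - n\|_{L^1(\RR_+, L^1(M,\o(t)))}$ with the $d_{C,1,1}$-length of the \KR flow trajectory. Along \eqref{RFEq}, differentiating in $t$ gives $\dot\vp = -f_{\o(t)}$ modulo time-dependent constants, whence
\begin{equation*}
\Delta_{\o(t)}\dot\vp(t) = -\Delta_{\o(t)} f_{\o(t)} = c\cdot (s(t) - n)
\end{equation*}
for a universal constant $c$ (fixing whichever scalar-curvature convention is implicit in the statement). Plugging this into \eqref{CpqEq} with $p = q = 1$ shows that the total $d_{C,1,1}$-length of the flow is a positive multiple of $\|s - n\|_{L^1(\RR_+, L^1(M,\o(t)))}$; in particular, condition \eqref{LoneLtwoConvEq} is equivalent to the trajectory having finite $d_{C,1,1}$-length.

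Next, I would invoke Theorem \ref{CalabipqThm} at the endpoint $q = 1$---whose $p = 2$ instance is Theorem \ref{calHdCCompletion}, and whose extension to all $p \in [1,\infty)$ follows Darvas' pluripotential-theoretic argument with only cosmetic changes. Finite $d_{C,1,1}$-length then forces the trajectory to be $d_{C,1,1}$-Cauchy, hence to converge in $\overline{(\H, d_{C,1,1})}$, which coincides as a set with $\overline{(\H, d_C)}$ via the $L^1$-convergence-of-densities criterion. By the contrapositive of the ``moreover'' half of Theorem \ref{CRStabilityThm}, $(M, J)$ is CR-stable, so $\H$ contains a \KE metric, and the same Perelman-type estimates that underlie Theorem \ref{CRStabilityThm} in \cite{ClarkeR} upgrade the distance convergence to smooth convergence of $\o(t)$. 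The converse direction is immediate: smooth convergence yields exponential decay of $\|s(t) - n\|_{C^0(M)}$, and since $V = \vol(M,\o(t))$ is a cohomological constant, $\|s - n\|_{L^1(\RR_+, L^1(M,\o(t)))} \le V\,\|s - n\|_{L^1(\RR_+, C^0(M))} < \infty$.

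The one genuinely nontrivial step is the $q = 1$ endpoint of Theorem \ref{CalabipqThm}: when $q = 1$, the weight $(\ovpn/\on)^q\on$ degenerates to $\ovpn$, and Darvas' description of the completion via $L^q$-convergence of densities must be checked to survive in this limit for all $p \in [1,\infty)$. The $p = 2$ case is precisely Theorem \ref{calHdCCompletion}, so the needed extension amounts to transcribing the Clarke--Rubinstein approach into Darvas' Finsler framework; once this is done, all remaining steps are direct consequences of already-cited results.
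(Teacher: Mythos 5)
Your proposal follows exactly the same route as the paper: identify $||s-n||_{L^1(\RR_+,L^1(M,\o(t)))}$ with the $d_{C,1,1}$-length of the Ricci flow trajectory (this is precisely the exercise that follows the corollary in the paper), then rerun the Clarke--Rubinstein argument that produced Corollary \ref{LoneLtwoConvCor} from Theorem \ref{calHdCCompletion}, substituting Darvas' Finsler completion theorem for the Calabi completion theorem so that finite $d_{C,1,1}$-length forces $L^1$-convergence of the densities and hence CR-stability. You are also right to flag that Theorem \ref{CalabipqThm} as stated, with $p,q\in(1,\infty)$, does not literally cover $d_{C,1,1}$; the paper sidesteps this by attributing both the completion theorem and the corollary directly to \cite[Theorem 1.1]{DarvasCal}, which handles the endpoint, so your caution about ``transcribing the Clarke--Rubinstein approach into Darvas' Finsler framework'' at $q=1$, $p=1$ is exactly the right place to point the reader.
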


\begin{exer} {\rm
\label{}
Show that the length of the curve $t\mapsto \o_{\vp(t)}$ with respect to $\dCinput{1,1}$ is equal to
$$
||s-n||_{L^1(\RR_+,L^1(M,\o(t)))}
$$
if $\o_{\vp(t)} $ satisfies the Ricci flow equation \eqref{RFEq}.
} \end{exer}

It would be interesting to obtain a proof of Corollary \ref{LoneLoneConvCor} using direct flow methods. At the same time, it is  remarkable that such metric completion techniques can lead to new estimates on geometric flows. We believe that this circle of ideas should find more applications in other geometric and analytic settings.

\subsection {The Mabuchi metric completion} 
 \label{MabuchiSubSec} 

As remarked earlier, the Calabi metric is more closely tied with the Riemannian geometry of $M$, and indeed convergence in the Calabi metric is related to convergence of the associated Riemannian volume forms. The Mabuchi metric, on the other hand, is more closely tied with the complex geometry of $M$, and so completely different methods would be needed to compute the Mabuchi metric completion. Using sophisticated techniques from pluripotential theory this was carried through by Darvas. A special case was also obtained around the same time by Guedj \cite{guedj}.
Define,
\begin{equation}
\label{HoEq}
\textstyle\calH_\o
=
\{\vp\,:\,\vp\in C^{\infty}(M), \,  \omega_\vp>0\},
\end{equation}
and
$$
\E_2:=\big\{\vp\in\E(M,\omega)\,:\, \int\vp^2\ovpn<\infty\big\}.
$$

A curve $[0,1]\ni t \mapsto  \vp(t) \in \mathcal{H}_\o$ 
is called smooth if $ \varphi (t,z)= \varphi(t)(z) \in C^\infty([0,1] \times M)$. 
Denote $\dot \varphi(t):=\del \varphi (t)/\del t$.
The length of a smooth curve $t \mapsto   \varphi(t)$ is
\begin{equation}\label{curve_length_def}
\ellM(\alpha):=\int_0^1\sqrt{\gM(\dot\varphi(t),\dot\varphi(t))|_{\varphi(t)}}dt.
\end{equation}

\bdefin
The path length distance of $(\Ho,\dM)$ is defined by
$$
\dM({ \varphi _0},{ \varphi _1}):= 
\inf\{\ellM( \varphi )\,:\, \varphi :[0,1]\ra\Ho \h{\ is a smooth curve with \ }
 \varphi (0)= \varphi _0,\,  \varphi (1)= \varphi _1\}.
$$
We call the pseudometric $\dM$ the {\it Mabuchi metric}. 
\edefin

The metric completion of the Mabuchi metric is given by the following theorem of Darvas \cite[Theorem 1]{da3}
which also justifies the name given to $\dM$ above.

\begin{theorem}
\label{d2CompletionThm}
$(\H_\o,\dM)$ is a metric space.
Moreover, the metric completion of $(\H_\o,\dM)$ equals $(\E_2,\dMinput2)$,
where 
\begin{equation}
\begin{aligned}
\label{d2Eq}
\dMinput2(\vp_0,\vp_1):=\lim_{k\ra\infty}
\dM(\vp_0(k),\vp_1(k)),
\end{aligned}
\end{equation}
for any smooth decreasing sequences $\{\vp_i(k)\}_{k\in\NN}\subset\H$
converging pointwise to $\vp_i \in \mathcal E_2, i=0,1$.

\end{theorem}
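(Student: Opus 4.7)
\medskip

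\noindent\textbf{Proof proposal.} The plan is to proceed in three stages: first establish that $d_M$ is a genuine (non-degenerate) distance on $\Ho$ via weak geodesics; second, show that $d_{M,2}$ in \eqref{d2Eq} is well-defined on $\E_2$ and agrees with $d_M$ on $\Ho$; third, identify $(\E_2,d_{M,2})$ as the metric completion.

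For the first stage, I would invoke Chen's theorem (with the regularity improvements of B\polishl ocki and Chu) to obtain, for any $\vp_0,\vp_1\in\Ho$, a unique $C^{1,\bar 1}$ weak geodesic $[0,1]\ni t\mapsto\vp(t)$ solving the homogeneous complex \MA equation on the complexified strip $M\times\{0\le\Re s\le 1\}$, and having the property that $t\mapsto\int_M\dot\vp(t)^2\,\o_{\vp(t)}^n$ is constant. A standard smooth approximation of this $C^{1,\bar 1}$ geodesic by $\eps$-geodesics (solutions of the corresponding non-homogeneous \MA equation) then yields, after passing to the limit, the length identity
\beqno
d_M(\vp_0,\vp_1)=\Big(\int_M\dot\vp(0)^2\,\o_{\vp_0}^n\Big)^{1/2},
\eeqno
so that the weak geodesic is length-minimizing. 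Non-degeneracy then follows from a comparison estimate of the form $d_M(\vp_0,\vp_1)^2\ge C\int_M|\vp_0-\vp_1|^2(\o_{\vp_0}^n+\o_{\vp_1}^n)$, which I would derive by combining the pointwise estimate $|\vp_0-\vp_1|\le\int_0^1|\dot\vp(t)|\,dt$ with the fact that $|\dot\vp|$ is $L^\infty$-controlled along the Chen geodesic, or, alternatively, by using affinity of the Aubin--Mabuchi functional along geodesics.

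For the second stage, I would prove the monotonicity-type bound: if $\psi_1\le\psi_2$ are smooth $\o$-psh, then $d_M(\psi_1,\psi_2)^2\le C\int_M(\psi_2-\psi_1)^2(\o_{\psi_1}^n+\o_{\psi_2}^n)$ (this upper bound is obtained by connecting $\psi_1$ and $\psi_2$ by the linear curve $\theta\mapsto(1-\theta)\psi_1+\theta\psi_2$, which is admissible after a small perturbation, and computing the Mabuchi length directly). Given $\vp\in\E_2$ and a decreasing smooth sequence $\vp(k)\searrow\vp$ (produced by Demailly regularization combined with a $\max$-truncation), this estimate together with the Bedford--Taylor monotone convergence theorem for non-pluripolar \MA measures shows that $\{\vp(k)\}$ is $d_M$-Cauchy and that the limit in \eqref{d2Eq} is independent of the chosen decreasing sequence: if $\vp(k)\searrow\vp$ and $\tilde\vp(k)\searrow\vp$ are two such approximations, then $\min(\vp(k),\tilde\vp(k))\searrow\vp$ gives a common minorant allowing a triangle-inequality comparison.

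For the third stage, density of $\Ho$ in $(\E_2,d_{M,2})$ is immediate from the construction of $d_{M,2}$. To show completeness, given a $d_M$-Cauchy sequence $\{\vp_k\}\subset\Ho$, I would pass to a subsequence with $d_M(\vp_k,\vp_{k+1})<2^{-k}$, and construct an auxiliary decreasing sequence $u_k:=\bigl(\sup_{j\ge k}\vp_j\bigr)^*$ (upper semicontinuous regularization); the $d_M$-Cauchy assumption combined with the comparison estimates of the previous stage yields uniform bounds of the form $\sup_k\int_M\vp_k^2\o_{\vp_k}^n<\infty$, which pass to the limit $\vp:=\lim_k u_k\in\E_2$ by the Guedj--Zeriahi finite-energy theory. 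The main obstacle, and the technically most demanding step, is precisely this last identification: showing that the $d_M$-limit lies in $\E_2$ (and not in a strictly larger finite-energy class) and that the candidate limit $\vp$ satisfies $d_M(\vp_k,\vp)\to 0$. This requires the delicate interplay between $d_M$, the non-pluripolar \MA mass, and the $L^2$-energy functional, for which the key input is the Bedford--Taylor--Guedj--Zeriahi convergence machinery combined with the length-minimization identity obtained in stage one.
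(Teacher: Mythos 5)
The paper does not prove this result; it is quoted verbatim from Darvas \cite[Theorem 1]{da3}, so there is no in-paper argument to compare against. That said, your three-stage plan is broadly aligned with how the result is established in the literature, but two of the concrete steps you sketch do not close as stated, and the central technical device of Darvas's proof is missing.

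In stage 2, the ``direct computation'' of the Mabuchi length of the linear curve $\theta\mapsto(1-\theta)\psi_1+\theta\psi_2$ does not immediately give the claimed bound. After Cauchy--Schwarz in $\theta$ you are left with
$\int_0^1\!\int_M(\psi_2-\psi_1)^2\,\o_{(1-\theta)\psi_1+\theta\psi_2}^n\,d\theta
=\frac1{n+1}\sum_{j=0}^n\int_M(\psi_2-\psi_1)^2\,\o_{\psi_1}^{j}\w\o_{\psi_2}^{n-j}$,
and bounding the mixed terms $\int(\psi_2-\psi_1)^2\,\o_{\psi_1}^{j}\w\o_{\psi_2}^{n-j}$ by $C\int(\psi_2-\psi_1)^2(\o_{\psi_1}^n+\o_{\psi_2}^n)$ is itself a nontrivial ``fundamental inequality'' for mixed Monge--Amp\`ere measures (Guedj--Zeriahi / B\l ocki / Darvas type, requiring $\psi_1\le\psi_2$ and integration by parts). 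It is true, but presenting it as immediate hides the lemma that does most of the work. Similarly, your stage-1 derivation of the lower comparison $d_M(\vp_0,\vp_1)^2\ge C\int|\vp_0-\vp_1|^2(\o_{\vp_0}^n+\o_{\vp_1}^n)$ from a pointwise bound and ``$L^\infty$-control of $\dot\vp$'' would only give a sup bound, not the stated integral estimate; the actual proof passes through the envelope construction below.

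The deeper issue is stage 3. The single most important tool in Darvas's proof is the rooftop envelope $P(u,v)$ and the accompanying ``Pythagorean'' identity
$d_M(u,v)^2 = d_M(u,P(u,v))^2 + d_M(P(u,v),v)^2$
(the $L^2$ analogue of what this paper records for $d_D$ in Proposition \ref{PythProp}). This is precisely what lets one reduce an arbitrary $d_M$-Cauchy sequence to a pair of \emph{monotone} sequences, which in turn is what makes the Bedford--Taylor / Guedj--Zeriahi convergence machinery applicable. Your substitute, the usc-regularized envelope $u_k:=\bigl(\sup_{j\ge k}\vp_j\bigr)^*$, is not smooth, need not belong to $\H_\o$, and without the Pythagorean identity there is no leverage to compare $d_M(\vp_k,u_k)$ with $\sum_{j\ge k}d_M(\vp_j,\vp_{j+1})$. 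You correctly flag ``$d_M(\vp_k,\vp)\to 0$'' as the technically most demanding step, but it is not merely demanding: as sketched it is unresolved, and filling the gap is essentially equivalent to discovering the envelope/Pythagorean machinery. I would reorganize the proof around $P(u,v)$ and the Pythagorean identity from the outset, which also yields the comparison estimates of stages 1 and 2 as corollaries rather than as separate ad hoc computations.
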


Of course, the statement should be understood as also 
including the claims that: (i)   \eqref{d2Eq} is well-defined independently of the choices
of the approximating sequences, (ii) convergence in the metric completion is characterized as follows: $\{\vp_j\}\subset \calE_2$ converges to $\vp\in\calE_2$ if $\lim_j \dMinput2(\vp_j,\vp)=0$.

\begin{remark} {\rm
\label{HHoRemark}
The space $\H $   \eqref{HEq} is the space of K\"ahler forms, while the space $\H_\omega $ \eqref{HoEq} is the space of  K\"ahler potentials. In many instances one can go back and forth between the two carelessly, however in some situations some care is needed. One may also identify the latter as a subspace of the former in several ways, but again some care is needed in doing so. For example,
\begin{equation}
\begin{aligned}
\label{HHoEq}
\Ho\cap\{\h{\rm AM}=0\}
\end{aligned}
\end{equation}
is a $\dM$-totally geodesic submanifold (hypersurface) of $\Ho$
 \cite[Proposition 2.6.1]{Mabuchi87},  \cite[\S3]{Don}. The submanifold   \eqref{HHoEq} can be naturally identified with $\H$. Sometimes, though, we will use identifications different from  \eqref{HHoEq}.
} \end{remark}

In the vein of Remark \ref{HHoRemark}, we distinguish between solutions of   \eqref{RFEq}, which we continue to refer to as solutions to the Ricci flow, and solutions of  \eqref{RFMAEq}, which we refer to as solutions to the  K\"ahler--Ricci flow.

\begin{exer} {\rm
\label{}
Does the map $\o(t)\mapsto \vp(t)$ that sends solutions of 
 \eqref{RFEq} to solutions of \eqref{RFMAEq}, come from the identification of $\H$ with
 \eqref{HHoEq}?
} \end{exer}

Theorem \ref{d2CompletionThm} has already found several geometric applications. The first is the following analogue of Theorem \ref{CRStabilityThm} for the Mabuchi metric, due to Darvas \cite[Theorem 6.1]{da3}. 

\begin{definition}
\label{MRDef}
We say that $(M,J)$ is Mabuchi--Ricci unstable (or MR-unstable)
if there exists a  K\"ahler--Ricci flow trajectory
that diverges
in $\overline{(\calH,\dM)}$.
Otherwise, we say $(M,J)$ is MR-stable.
\end{definition}

\begin{theorem}
\label{MRStabilityThm}
A Fano manifold $(M,J)$ is MR-stable if and only if it admits a \KE metric.
Moreover, if it is MR-unstable then any Ricci flow 
trajectory diverges in $\overline{(\calH,\dC)}$.
\end{theorem}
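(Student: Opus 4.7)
My plan is to follow the template of Theorem \ref{CRStabilityThm} with $\dM$ in place of $\dC$; the decisive structural input is that the K\"ahler--Ricci flow is the $\gM$-gradient flow of the K-energy $E$. First I would differentiate \eqref{RFMAEq} in $t$ and compare with \eqref{RicciPotEq} to conclude that, modulo a time-dependent additive constant, $\dot\vp(t) = -f_{\ovpt}$. Substituting into \eqref{EderivEq} then yields
\begin{equation*}
\frac{d}{dt}E(\vp(t)) \;=\; -\V\int_M \bigl|\nabla f_{\ovpt}\bigr|^2_{\ovpt}\,\ovptn \;\le\; 0,
\end{equation*}
so $E$ is non-increasing along every KR trajectory, with total dissipation equal to $E(\vp(0))-\lim_{t\to\infty}E(\vp(t))$ whenever the latter is finite.

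For the direction $(\Leftarrow)$, assuming $\H$ contains a KE metric, I would invoke Perelman's uniform a priori estimates along the KR flow (scalar curvature, diameter, and Ricci potential) together with the Tian--Zhu convergence theorem to conclude that the trajectory $t\mapsto\ovpt$ converges in $\Cinf$, modulo the action of $\AutMJz$, to a KE metric. Since $\gM$ in \eqref{MabuchiMetricEq} is a continuous $L^2$ pairing on smooth data, $\Cinf$-convergence implies convergence in $\dM$, so every trajectory converges in $\overline{(\H,\dM)}$ and $(M,\JJJ)$ is MR-stable.

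For $(\Rightarrow)$ and the dichotomy, I would suppose that \emph{some} KR trajectory $\{\vp(t)\}$ converges in $\overline{(\H,\dM)}$ to a limit $\vp_\infty\in\E_2$ (Theorem \ref{d2CompletionThm}). Pluripotential lower semicontinuity of $E$ with respect to $\dMinput2$ yields $E(\vp_\infty)\le\liminf_t E(\vp(t))$, so $E$ is bounded below along the trajectory. The gradient-flow identity then forces
$$\int_0^\infty \V\int_M \bigl|\nabla f_{\ovpt}\bigr|^2_{\ovpt}\,\ovptn\,dt<\infty,$$
providing a subsequence $t_j\to\infty$ along which $\|\nabla f_{\omega(t_j)}\|_{L^2(\omega(t_j))}\to 0$. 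Combined with Perelman's $C^0$- and Sobolev-type bounds on $f_{\ovpt}$, a Tian--Zhu-style extraction modulo the $\AutMJz$-action would then produce a smooth limit $\omega_\star$ with $f_{\omega_\star}\equiv\h{const}$, i.e., a KE metric in $\H$. This shows that \emph{some} trajectory converging implies that a KE metric exists, which by $(\Leftarrow)$ implies that \emph{every} trajectory converges; contrapositively, MR-instability forces every trajectory to diverge.

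The hard part will be the last extraction: elements of $\E_2$ are in general neither bounded nor smooth, so one cannot apply Arzel\`a--Ascoli directly to a $\dM$-Cauchy trajectory. One must splice together the gradient-flow energy dissipation, Perelman's a priori estimates along the flow (which hold on any Fano manifold, independently of any KE hypothesis), and, when $\AutMJz$ is nontrivial, a Tian--Zhu-style quotient by its action to prevent oscillation along the flow. The requisite lower semicontinuity of $E$ on $(\E_2,\dMinput2)$ is itself a non-trivial pluripotential companion to Theorem \ref{d2CompletionThm}, obtained via monotone approximation by smooth potentials.
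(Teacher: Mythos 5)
The paper does not actually prove this statement; it records it as \cite[Theorem 6.1]{da3} due to Darvas, so I will assess your argument on its own merits. Your logical architecture is right (and matches the expected template): reduce everything to the implication ``some trajectory converges $\Rightarrow$ a K\"ahler--Einstein metric exists,'' then use ``KE exists $\Rightarrow$ every trajectory converges'' to obtain both the equivalence and the dichotomy. But before the main gap, two fixable errors. First, the opening claim that the K\"ahler--Ricci flow is the $\gM$-gradient flow of $E$ is false: from \eqref{EderivEq} the $\gM$-gradient of $E$ is $-V^{-1}\Delta_\ovp f_\ovp$, so the $\gM$-gradient flow of $E$ is the Calabi flow, not $\dot\vp=-f_\ovp$; the K\"ahler--Ricci flow is the gradient flow of the Ding functional. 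Your dissipation identity $\frac{d}{dt}E=-V^{-1}\int|\nabla f_{\ovpt}|^2\ovptn$ is nevertheless correct and is what you actually use. Second, in the $(\Leftarrow)$ direction, ``$C^\infty$-convergence of $g_j^*\o(t_j)$ modulo $\AutMJz$'' does not by itself give $\dM$-Cauchyness of the \emph{un-gauged} trajectory $\o(t)$; you need to add the observation that the $\dM$-speed $\|f_{\o(t)}\|_{L^2(\o(t))}$ is invariant under pullback by automorphisms, so exponential decay of the gauged Ricci potential transfers to the original trajectory's $\dM$-length.

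The genuine gap is the final extraction in $(\Rightarrow)$. You deduce a subsequence $t_j$ with $\|\nabla f_{\o(t_j)}\|_{L^2(\o(t_j))}\to 0$ and then invoke ``Perelman's $C^0$- and Sobolev-type bounds plus a Tian--Zhu-style extraction modulo $\AutMJz$'' to produce a smooth KE limit in $\H$. This step does not go through as sketched. Perelman's a priori estimates on a general Fano control $\|f\|_{C^0}$, $\|\nabla f\|_{C^0}$, the scalar curvature, the diameter, the non-collapsing and Sobolev constant --- but not the full Riemann curvature tensor, so no Arzel\`a--Ascoli or Cheeger--Gromov argument yields a smooth limit diffeomorphic to $M$; one can at best extract a Gromov--Hausdorff limit that may be a singular $Q$-Fano variety with a different complex structure, which does not supply a KE metric \emph{in $\H$}. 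And the Tian--Zhu extraction arguments you are borrowing from presuppose the existence of a KE (or soliton) metric, which is precisely what you are trying to establish. Note also that your hypothesis has degraded along the way: you started with the strong information $\vp(t)\to\vp_\infty\in\E_2$ in $\dM$, but all that your extraction uses is the finite-dissipation corollary $\|\nabla f_{\o(t_j)}\|_{L^2}\to 0$, which cannot be transferred to the limit $\vp_\infty$ because $\vp\mapsto\nabla f_\vp$ is not $\dM$-continuous on $\E_2$. To close the argument you need either (a) a genuine flow-convergence criterion such as Phong--Song--Sturm--Weinkove's ($\|f\|_{L^1_t L^\infty_x}<\infty$, which must then be deduced from $\dM$-Cauchyness via Perelman's estimates, the content of McFeron's theorem), or (b) a pluripotential argument showing that $\vp_\infty$ is a weak minimizer of $E$ on $\E_2$, so that Berman's regularity theorem (the one cited for property \ref{p3}) yields a smooth KE metric. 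Your sketch names neither of these and instead gestures at a theorem that is not available in the literature in the form you need.
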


\begin{exer} {\rm
\label{foExer}
Show that the length of the curve $t\mapsto \vp(t)$ with respect to $\dM$ is equal to
\begin{equation}
\begin{aligned}
\label{fotEq}
||f_{\o_{\vp(t)}}||_{L^1(\RR_+,L^2(M,{\o_{\vp(t)}}))}
\end{aligned}
\end{equation}
if $\vp(t)$ satisfies \eqref{RFMAEq} (which by Exercise \ref{KRFMAExer}  implies that $\o_{\vp(t)} $ satisfies the Ricci flow equation \eqref{RFEq}). As observed by Darvas, Theorem \ref{MRStabilityThm} 
together with the arguments of \cite{ClarkeR} imply the following analogue of Corollary \ref{LoneLoneConvCor} first obtained by McFeron \cite{McFeron}: the flow \eqref{RFMAEq}
converges if and only if   \eqref{fotEq} is finite. 
} \end{exer}

In fact, the following improvement of the last statement in Exercise \ref{foExer} is due to Darvas. It follows from 
 \cite[Theorem 6.1]{da3} together with later work of Darvas surveyed in \S\ref{FinslerSec}: 
\begin{theorem}
\label{LoneLoneConvMabThm}
The K\"ahler--Ricci flow (\ref{RFMAEq}) converges smoothly if
and only if
\begin{equation}
\label{LoneLtwoConvEq}
||f||_{L^1(\RR_+, L^1(M,\o_{\vp(t)}))}<\infty,
\end{equation}
where $f = f_{\o_{\varphi (t)}}$ is the Ricci potential along the flow (recall Definition \ref{foDef}).
\end{theorem}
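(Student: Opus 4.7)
The plan is to prove the two directions separately, using Exercise~\ref{foExer} as input together with Darvas' Finsler-geometric machinery on $\H_\omega$ surveyed in \S\ref{FinslerSec}.

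\emph{Easy direction.} If the \KR flow converges smoothly, Exercise~\ref{foExer} gives $\|f_{\o_{\vp(t)}}\|_{L^1(\RR_+,L^2(M,\o_{\vp(t)}))}<\infty$. Since each measure $\o_{\vp(t)}^n$ has total mass $V$, Cauchy--Schwarz gives, pointwise in $t$,
\[
\int_M|f_{\o_{\vp(t)}}|\,\o_{\vp(t)}^n \;\le\; V^{1/2}\Big(\int_M f_{\o_{\vp(t)}}^2\,\o_{\vp(t)}^n\Big)^{1/2},
\]
and integrating in $t$ yields the desired $L^1(\RR_+,L^1)$ bound.

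\emph{Hard direction.} The key observation is that, up to a harmless spatial-mean correction, $\|f\|_{L^1(\RR_+,L^1)}$ is the length of the \KR trajectory with respect to Darvas' $L^1$-Finsler metric $d_1$ on $\H_\omega$, whose integrand is $\|\xi\|^{d_1}_\vp:=V^{-1}\int_M|\xi|\,\o_\vp^n$. Indeed, by Exercise~\ref{KRFMAExer} and~\eqref{RicciPotEq}, $\dot\vp(t)=-f_{\o_{\vp(t)}}+c(t)$ for a scalar $c(t)$ which is uniformly bounded along the normalized flow thanks to Perelman's estimates. Therefore, the hypothesis $\|f\|_{L^1 L^1}<\infty$ forces the $d_1$-length of the trajectory to be finite, so the trajectory is $d_1$-Cauchy.

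Next, by Darvas' computation of the $d_1$-completion, a $d_1$-Cauchy flow line converges to a unique $\vp_\infty$ in the finite-energy class $\calE^1(M,\o)\supset\calE_2$, so the trajectory does not diverge in $\overline{(\H,d_1)}$. The $d_1$-analogue of Theorem~\ref{MRStabilityThm} (established by Darvas and surveyed in \S\ref{FinslerSec}) then guarantees that $(M,\JJJ)$ admits a \KE metric. Combined with Theorem~\ref{MRStabilityThm} and Perelman's uniform $C^k$-estimates for the \KR flow on Fano manifolds, one upgrades the $d_1$-Cauchy property first to smooth subconvergence, and then uniqueness of the weak $d_1$-limit promotes this to full smooth convergence.

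\emph{Main obstacle.} The delicate step is passing from soft $d_1$-convergence to genuine smooth convergence. This requires combining Perelman's a priori bounds on $f_{\o_{\vp(t)}}$, $|\nabla f_{\o_{\vp(t)}}|$ and $|\Delta f_{\o_{\vp(t)}}|$ along the flow with the pluripotential-theoretic regularity results underlying the $d_1$-completion, and—in the presence of nontrivial $\AutMJz$-action—carefully identifying the limit up to the automorphism action, since even the existence of a \KE does not a priori pin down the \KR limit without accounting for this symmetry.
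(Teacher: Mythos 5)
The paper does not actually prove Theorem~\ref{LoneLoneConvMabThm}: it attributes it to Darvas and states only that it ``follows from [Theorem~6.1, da3] together with later work of Darvas surveyed in \S\ref{FinslerSec}.''  So your proposal is supplying an argument the paper does not give.  The overall route you take---interpret $\|f\|_{L^1(\RR_+,L^1)}$ as essentially the $\dD$-length of the \KR trajectory, deduce the trajectory is $\dD$-Cauchy with limit in $\E_1$, then invoke a Mabuchi--Ricci stability argument (Theorem~\ref{MRStabilityThm}) together with Perelman-type estimates to upgrade to smooth convergence---is exactly the route the paper gestures at, and your easy direction (Cauchy--Schwarz applied to McFeron's $L^2$ criterion recalled in Exercise~\ref{foExer}) is fine.

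The hard direction as you wrote it has a genuine gap, however.  You correctly observe that $\dot\vp(t)=-f_{\o_{\vp(t)}}+c(t)$ with $c(t)=-\log\V\int_M e^{f_\o-\mu\vp(t)}\o^n$ (this follows from \eqref{RicciPotEq} combined with \eqref{RFMAEq}).  But you then claim that the $L^1(\RR_+,L^1)$ bound on $f$ ``forces the $d_1$-length of the trajectory to be finite'' simply because $c(t)$ ``is uniformly bounded\dots thanks to Perelman's estimates.''  This inference is false.  The $\dD$-length of the trajectory is
$$
\int_0^\infty\V\int_M|\dot\vp(t)|\,\o_{\vp(t)}^n\,dt
\;\le\;
\int_0^\infty\V\int_M\big|f_{\o_{\vp(t)}}\big|\,\o_{\vp(t)}^n\,dt\;+\;\int_0^\infty|c(t)|\,dt,
$$
and uniform boundedness of $c$ only gives $\int_0^T|c|\,dt\le CT$, which diverges as $T\to\infty$.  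You need $\int_0^\infty|c(t)|\,dt<\infty$, and that requires a separate argument.  One way to close it: differentiating $c$ along the flow and using the normalization $\V\int e^{f_{\ovp}}\ovpn=1$ yields the linear ODE $\dot c=\mu c-h$ with $h(t):=\mu\,\V\int_M f_{\o_{\vp(t)}}e^{f_{\o_{\vp(t)}}}\o_{\vp(t)}^n$; the hypothesis together with Perelman's uniform $C^0$-bound on $f$ gives $h\in L^1(\RR_+)$, and since $\mu>0$ the unique bounded solution is $c(t)=\int_t^\infty e^{-\mu(s-t)}h(s)\,ds$, which by Fubini satisfies $\int_0^\infty|c|\,dt\le\mu^{-1}\|h\|_{L^1(\RR_+)}<\infty$.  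Without some such step you cannot conclude the trajectory is $\dD$-Cauchy, and the rest of your argument does not get off the ground.
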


Other applications for Theorem \ref{d2CompletionThm} 
include the work of Streets \cite{St}, and more recently Berman--Darvas--Lu \cite{BDL}, who show that one gains new insight on the long time behavior of the Calabi flow by placing it in the context of the 
Mabuchi metric completion;
the work of Darvas--He \cite{dh}, where the asymptotic behavior of the K\"ahler-Ricci flow in 
the metric completion is related to destabilizing geodesic rays.
We refer the reader to the survey \cite{R14} for more references.

\section {The Darvas metric and its completion}
\lb{FinslerSec}

Perhaps surprisingly, 
a key observation of Darvas is 
that not a Riemannian, but rather a {\it Finsler} 
metric, encodes the asymptotic behavior of the Aubin functional $J$.
This is discussed in the Section  \ref{DarvasAubinSec}.
In this section we introduce the Darvas metric and survey some of its basic properties.
In later sections, through considerable more technical work, 
we survey later work of Darvas--Rubinstein that shows that
the Darvas metric also encodes 
the asymptotic behavior for
essentially all 
energy functionals on $\calH$
whose critical points are precisely various types of canonical
metrics in \K geometry. 
In fact, as pointed out in \cite[Remark 7.3]{DR2}, the same
kind of statement is in general false for the much-studied
Riemannian metrics of Calabi and Mabuchi. Thus, the Darvas metric turns out to be 
fundamental.

The Darvas metric is a weak 
Finsler metric on $\mathcal H_\o$ given by \cite{da4},
\begin{equation}\label{FinslerDef}
\|\nu\|^{\hbox{\smlsev D}}_\vp:=  V^{-1}\int_M |\nu| \ovpn, 
\q  \nu \in T_\vp \mathcal{H}_\o=C^\infty(M).
\end{equation}
As in \S\ref{MabuchiSubSec}, define
the length of a smooth curve $t \mapsto  \varphi (t)$,
\begin{equation}\label{curve_length_defD}
\ellD(\alpha):=\int_0^1\int_M|\dot  \varphi (t)|\o_{\vp(t)}^n\wedge dt.
\end{equation}

\bdefin
The path length distance of $(\Ho,\dD)$ is defined by
$$
\dD({\varphi_0},{\varphi_1}):= 
\inf\{\ell_1(\alpha)\,:\,\alpha:[0,1]\ra\Ho \h{\ is a smooth curve with \ }
\a(0)=\varphi_0,\, \a(1)=\varphi_1\}.
$$
We call the pseudometric $\dD$ the {\it Darvas metric}. 
\edefin

The following result of Darvas justifies this name.
To state the result,
consider 
$[0,1]\times\RR\times M$ as a complex manifold of dimension
$n+1$, and denote by $\pi_2:[0,1]\times\RR\times M\ra M$ the natural projection.
\bthm
\lb{d1Thm}
{\rm \cite[Theorem 3.5]{da4}}
$(\Ho, \dD)$ is a metric space.
Moreover, 
\begin{equation}
\label{distgeod}
{\dD}(\varphi_0,\varphi_1)=\|\dot \varphi_0 \|_{\varphi_0}\ge0,
\end{equation}
with equality iff $\varphi_0=\varphi_1$, 
where $\dot \varphi_0$ is the image of $(\varphi_0,\varphi_1)\in \Ho\times\Ho$ under the 
Dirichlet-to-Neumann map for the \MA equation,
\beq\label{MabuchiEq}
\vp\in\PSH(\pi_2^\star\o, [0,1]\times\RR\times M),\q
(\pi_2^\star\o+\i\ddbar \vp)^{n+1}=0, 
\q
 \vp|_{\{i\}\times\RR}=\varphi_i,\; i=0,1.
\eeq
\ethm
\begin{remark} {\rm
\label{}
(i) The Dirichlet-to-Neumann operator simply maps $(\varphi_0,\varphi_1)$
to the initial tangent vector of the curve 
$$
t\mapsto \vp(t)\equiv \varphi_t
$$
that solves \eqref{MabuchiEq}.\hfill\break
(ii) One needs to make sense of the expression $\dot\vp_0$ in   \eqref{distgeod}
since there is no guarantee that $\vp_t$ will be smooth in $t $.
Since $\vp$ (considered as a function on $[0,1]\times\RR\times M$) is $\pi_2^*\o$-psh and independent of the imaginary part
of the first variable, it is convex in $t$. Thus,
\beq
\lb{dotu0eq}
\dot \varphi_0(x):=\lim_{t\ra 0^+}\frac{\vp(t,x)-\varphi_0(x)}{t},
\eeq
with the limit well-defined since the difference quotient
is decreasing in $t$. 

} \end{remark}

The metric completion of the Darvas metric is given by 
the next result \cite[Theorem 2]{da4}.
The proof is similar in spirit to that of Theorem \ref{d2CompletionThm}, but involves considerable additional technicalities stemming, at least intuitively, from the fact that $x\mapsto x^2$
is a smooth function while $x\mapsto |x|$ is only Liphscitz; partly due to this dealing 
with an $L^1$ type metric is fundamentally harder in this setting.

\bthm
\lb{d1CompletionThm}
The metric completion of $(\H_\o,{\dD})$ equals $(\E_1,{{\dD}})$,
where 
$$
{\dD}(\varphi_0,\varphi_1):=\lim_{k\ra\infty}
{\dD}(\varphi_0(k),\varphi_1(k)),
$$
for any smooth decreasing sequences $\{\varphi_i(k)\}_{k\in\NN}\subset\Ho$
converging pointwise to $\varphi_i \in \mathcal E_1, i=0,1$.
Moreover, for each $t\in(0,1)$, define
\begin{equation}\label{EpGeodDef}
\varphi_t:= \lim_{k \to\infty}\varphi_t(k), \ t \in (0,1),
\end{equation}
where $\varphi_t(k)$ is the solution of (\ref{MabuchiEq})
with endpoints $\varphi_i(k), i=0,1$. Then $\varphi_t\in \E_1$,
and the curve $t \to \varphi_t$ is well-defined independently of the choices
of approximating sequences and is a ${\dD}$-geodesic.
\ethm

\section{The Aubin functional and the Darvas distance function}
\label{DarvasAubinSec}

Finally we come to the fact stated at the beginning of the previous section relating the Darvas metric to the Aubin functional.

The subspace
\beq
\lb{H0Eq}
\calH_0:=\h{\rm AM}^{-1}(0)\cap \Ho
\eeq
is isomorphic to $\calH$ \eqref{HEq}, the space of K\"ahler metrics
(recall Remark \ref{HHoRemark}). We use this isomorphism to endow
$\calH$ with a metric structure, by pulling 
back the Darvas metric defined on $\Ho$. 

\begin{proposition} 
\label{Jproperness}
{\rm  \cite[Remark 6.3]{da4}}
There exists $C>1$ such that
for all $ \varphi  \in \mathcal H_0$ (recall (\ref{H0Eq})),
$$
\frac{1}{C}J( \varphi ) -C \leq {\dD}(0, \varphi ) 
\leq  C J( \varphi ) + C.
$$
\end{proposition}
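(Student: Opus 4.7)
The key observation on $\calH_0$ is that $\h{\rm AM}(\vp) = 0$ forces, via \eqref{AMdef}, the identity $V^{-1}\int_M \vp\,\o^n = J(\vp) \ge 0$. Combining this with the Green kernel mean value inequality \eqref{MeanVal} yields the single most useful bound
\beq
\lb{SupJBd}
\sup_M \vp \le J(\vp) + nA_\o,
\eeq
which is the bridge between the $L^\infty$-type upper bound I will establish for $\dD(0,\vp)$ and the linear functional $J$.

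For the upper bound, I plan to split the path from $0$ to $\vp$ through the constant potential $M_\vp := \sup_M \vp \ge 0$ via the triangle inequality, bounding each segment by the length of an explicit test curve. The first segment, from $0$ to $M_\vp$, is trivially bounded by $M_\vp$ using the linear interpolation $t\mapsto tM_\vp$, which lies in $\Ho$ because $\o + \i\ddbar(tM_\vp) = \o > 0$. For the second, set $\psi := \vp - M_\vp \le 0$; the linear curve $t \mapsto M_\vp + t\psi$ stays in $\Ho$ because its associated form $\o + t\i\ddbar\psi = (1-t)\o + t\ovp$ is a convex combination of K\"ahler forms. Its $\dD$-length will be
$$
\int_0^1 V^{-1}\!\int_M |\psi|\,\big[(1-t)\o + t\ovp\big]^n\,dt = \frac{V^{-1}}{n+1}\int_M (-\psi)\sum_{k=0}^n \o^{n-k}\w\ovp^k = -\h{\rm AM}(\psi),
$$
via the Beta-function identity $\int_0^1 (1-t)^{n-k} t^k\,dt = k!(n-k)!/(n+1)!$. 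Since $\h{\rm AM}$ is translation equivariant (an immediate consequence of \eqref{AMVarEq}) and $\h{\rm AM}(\vp) = 0$, we have $-\h{\rm AM}(\psi) = M_\vp$. Combining with \eqref{SupJBd} yields $\dD(0,\vp) \le 2M_\vp \le 2J(\vp) + 2nA_\o$.

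For the lower bound, I would invoke the fundamental comparison lying at the heart of the proof of Theorem~\ref{d1CompletionThm}, namely that on $\Ho$ the Darvas distance dominates the Aubin symmetric energy: there exists $C' > 0$ with $\dD(0,\vp) \ge (C')^{-1}I(\vp) - C'$. On $\calH_0$, applying \eqref{IJIEq} with $\h{\rm AM}(\vp) = 0$ gives $V^{-1}\int \vp\,\ovpn = -(I-J)(\vp)$, so $I(\vp) = V^{-1}\int \vp(\o^n - \ovpn) = J(\vp) + (I-J)(\vp) \ge J(\vp)$, yielding $\dD(0,\vp) \ge (C')^{-1} J(\vp) - C'$, as desired.

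The upper bound should be elementary, requiring only the convex-combination test curves above and the integration identity behind \eqref{AMVarEq}. The main obstacle will be the lower bound, which genuinely depends on Darvas's pluripotential-theoretic machinery from \cite{da4}: the weak $\dD$-geodesics realizing the distance between $0$ and $\vp$ need not lie in $\Ho$, and showing that even so their length dominates the Aubin energy modulo additive constants is a substantive component of the proof of Theorem~\ref{d1CompletionThm}.
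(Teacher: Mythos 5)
Your proof is correct in both directions, and the paper itself does not give an internal proof of this statement (it refers to \cite[Proposition 5.5]{DR2}), so there is no in-text argument to compare against; the tools you assemble are precisely the ones available here.

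The upper bound is a clean, self-contained computation: the identity $\h{\rm AM}(\vp)=0 \Rightarrow V^{-1}\int\vp\o^n = J(\vp)\ge 0$, the Green mean value bound \eqref{MeanVal}, the two explicit test curves $t\mapsto tM_\vp$ and $t\mapsto M_\vp + t\psi$, the Beta-function integration showing the second segment has length $-\h{\rm AM}(\psi)$, and the translation equivariance $\h{\rm AM}(\vp+c)=\h{\rm AM}(\vp)+c$ are all correct, yielding $\dD(0,\vp)\le 2J(\vp)+2nA_\o$. For the lower bound, you attribute the needed inequality $\dD(0,\vp)\ge (C')^{-1}I(\vp)-C'$ somewhat vaguely to ``the heart of the proof of Theorem~\ref{d1CompletionThm}.'' You could instead invoke the concrete statement already quoted in these notes: by \eqref{d1CharFormula} in Proposition~\ref{PythProp}, $\dD(0,\vp) \ge C^{-1}\bigl(\int_M|\vp|\o^n + \int_M|\vp|\ovpn\bigr) \ge C^{-1}V I(\vp)$, and then $I\ge J$ (from \eqref{IJEq}) finishes it; note this inequality holds without restricting to $\H_0$ (and without an additive constant), so the passage through \eqref{IJIEq} in your lower-bound paragraph is true but superfluous. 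The only genuinely non-elementary input is indeed \eqref{d1CharFormula}, which is the pluripotential-theoretic core, exactly as you flag.
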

We refer the reader to \cite [Proposition 5.5] {DR2} for a proof.

Given the equivalence of $J$ and $\dD$ on $\H_0$ it is natural to expect that this should extend to the metric completion. This is indeed the case. This amounts to two things: (i) one can extend Aubin's functional $J$ to the metric completion in a continuous way with respect to the $\dD$-topology,
(ii) $\H_0$, considered as a submanifold of $\H$ endowed with the metric induced by $\dD$, is a totally geodesic metric space whose completion coincides with $\E_1\cap \h{\rm AM}^{-1}(0)$, which in turn requires verifying that the Aubin--Mabuchi functional $\h{\rm AM}$ can be extended to $\E_1$
in a continuous way with respect to the $\dD$-topology. 
These facts are contained in the following Lemma  \cite[Lemma 5.2]{DR2}.

\blem
\lb{E1capH0Lemma}
(i)
$\h{\rm AM},J: \mathcal H_\o \to \Bbb R$ each admit a unique ${\dD}$-continuous extension 
to $\E_1$ 
and these extensions still satisfy (\ref{AMdef}) and (\ref{AubinEnergyEq})
(in the sense of pluripotential theory).

\noindent 
(ii)
The subspace $(\E_1\cap \h{\rm AM}^{-1}(0),{\dD})$ is a complete geodesic metric space, coinciding with the metric completion of $(\mathcal H_0,{\dD})$ (recall 
(\ref{H0Eq})).
\elem

Consequently, from now on we denote by $\h{\rm AM},J$ the unique ${\dD}$-continuous extensions 
to $\E_1$ given by the previous Lemma.

\begin{corollary} 
\label{JpropernessCorollary}
There exists $C>1$ such that
for all $ \varphi  \in \mathcal \E_1\cap \h{\rm AM}^{-1}(0)$, 
$$
\frac{1}{C}J( \varphi ) -C \leq {\dD}(0, \varphi ) 
\leq  C J( \varphi ) + C.
$$
\end{corollary}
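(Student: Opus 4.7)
The plan is to reduce to the smooth case via density and then pass to the limit using the continuity properties already assembled in Lemma \ref{E1capH0Lemma} and Proposition \ref{Jproperness}.

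Let $\varphi\in\mathcal E_1\cap\mathrm{AM}^{-1}(0)$. By part (ii) of Lemma \ref{E1capH0Lemma}, this space is precisely the $\dD$-metric completion of $\mathcal H_0$, so there exists a sequence $\{\varphi_k\}\subset\mathcal H_0$ with
$$
\dD(\varphi_k,\varphi)\longrightarrow 0 \qquad (k\to\infty).
$$
Since $\varphi_k\in\mathcal H_0$ is smooth and $\mathrm{AM}$-normalized, Proposition \ref{Jproperness} applies and yields
$$
\tfrac{1}{C}J(\varphi_k)-C \;\le\; \dD(0,\varphi_k) \;\le\; C\,J(\varphi_k)+C
$$
for every $k$, with a fixed constant $C>1$ independent of $k$.

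Now I would pass to the limit in $k$. The right-hand distance term converges: by the reverse triangle inequality
$$
\bigl|\dD(0,\varphi_k)-\dD(0,\varphi)\bigr|\;\le\;\dD(\varphi_k,\varphi)\longrightarrow 0,
$$
so $\dD(0,\varphi_k)\to\dD(0,\varphi)$. For the $J$-terms I would invoke part (i) of Lemma \ref{E1capH0Lemma}, which supplies a unique $\dD$-continuous extension of $J$ from $\mathcal H_\o$ to $\mathcal E_1$; hence $J(\varphi_k)\to J(\varphi)$. Taking limits in the two-sided inequality above gives
$$
\tfrac{1}{C}J(\varphi)-C \;\le\; \dD(0,\varphi) \;\le\; C\,J(\varphi)+C,
$$
which is the claim.

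There is essentially no obstacle beyond invoking the already-established results: the smooth case is Proposition \ref{Jproperness}, and the density of $\mathcal H_0$ in $\mathcal E_1\cap\mathrm{AM}^{-1}(0)$ together with the $\dD$-continuity of $J$ are precisely the content of Lemma \ref{E1capH0Lemma}. The only point requiring any thought is that $J$ be extended so that the extension is both $\dD$-continuous \emph{and} agrees with the pluripotential-theoretic formula \eqref{AubinEnergyEq} on $\mathcal E_1$, but this is exactly the statement of Lemma \ref{E1capH0Lemma}(i), so the argument goes through.
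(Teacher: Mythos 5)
Your argument is correct and is exactly the implicit proof the paper has in mind: the paper states the corollary immediately after remarking that $J$ has a unique $\dD$-continuous extension to $\mathcal E_1$ (Lemma \ref{E1capH0Lemma}(i)) and that $\mathcal E_1\cap\mathrm{AM}^{-1}(0)$ is the $\dD$-completion of $\mathcal H_0$ (Lemma \ref{E1capH0Lemma}(ii)), so passing Proposition \ref{Jproperness} to the limit along an approximating sequence is precisely what is intended. Nothing to add.
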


Next, we discuss a concrete formula for the ${\dD}$ metric relating it to the Aubin--Mabuchi energy and also give a concrete growth estimate for ${\dD}$. First we need to introduce the following rooftop type 
envelope for $u,v \in \mathcal E_1$:
$$
P(u,v)(z):= 
\sup\big\{w(z)\,:\, w \in \PSH(M,\o),\, w \leq \min\{u,v\}\big\}.
$$ 
Note that $P(u,v) \in \mathcal E_1$ \cite[Theorem 2]{da3}.
Darvas shows the following beautiful ``Pythagorean" formula for ${\dD}$, as well as a 
very useful growth estimate \cite[Corollary 4.14, Theorem 3]{da4}.
\bprop
\lb{PythProp}
Let $u,v\in\E_1$. Then,
\begin{equation}
\label{Pythagorean}
{\dD}(u,v)=\h{\rm AM}(u) + \h{\rm AM}(v) - 2\h{\rm AM}(P(u,v)).
\end{equation}
Also, there exists $C>1$ such that for all $u,v\in\E_1$,
\begin{equation}
\label{d1CharFormula}
C^{-1} {\dD}(u,v) \leq \int_M |u-v|\o_u^n + \int_M |u-v|\o_v^n
\le C {\dD}(u,v).
\end{equation}
\eprop

\section {Quotienting the metric completion by a group action}

We now incorporate automorphisms into the picture. 
Since automorphisms induce isometries of the various 
infinite-dimensional metrics we have studied so far 
it is natural to consider the associated quotient spaces from the metric geometry point of view. In addition, the various functionals we have studied also admit natural descents to the quotient spaces.

\subsection{The action of the automorphism group on $\H $}
\label{}

Let $ \Aut_0(M,\JJJ) $ denote the connected component of the complex Lie group of automorphisms (biholomorphisms , i.e., homeomorphisms that are holomorphic and admit a holomorphic inverse) of $(M,\JJJ)$. Denote by  $\aut(M,\JJJ)$ the Lie algebra of $ \Aut_0(M,\JJJ) $, consisting of infinitesimal automorphisms, i.e., real vector fields $X$ satisfying $\calL_X\JJJ=0$,
equivalently,
\beq
\lb{CxStrucEq}
\JJJ[X,Y]=[X,\JJJ Y], \q \forall\, X\in\aut(M,\JJJ), \; \forall\, Y\in\h{\rm diff}(M),
\eeq
where $\h{\rm diff}(M)$ denotes all smooth vector fields on $M$.
Thus $\aut(M,\JJJ)$ is a complex Lie algebra with complex structure $\JJJ$.

The automorphism group $\AutMJz$  acts on $\H$ by pullback:
\beq\lb{AutActionEq}
f.\eta:=f^\star\eta, \qq f\in\AutMJz, \q \eta\in\H.
\eeq
Given the one-to-one correspondence between $\mathcal H$ and $\mathcal H_0$, the group $\AutMJz$  also acts on $\H_0$. The action is described in the next lemma.
\blem
For $\vp \in \mathcal H_0$ and $f \in \AutMJz$ let $f.\vp\in\H_0$ be the unique element such that $f.\ovp=\o_{f.\vp}$.
Then,
\beq
\lb{factionAMEq}
f.\vp=f.0+\vp\circ f, \qq  f\in\AutMJz, \q \vp\in\H_0.
\eeq
\elem

\bpf
Note that \eqref{factionAMEq} is a \K potential for $f^\star\ovp$. Indeed,   $f\in\AutMJ$ implies that $f^\star\i\ddbar\vp=\i\ddbar \vp\circ f$.
That $\h{\rm AM}(f.0+\vp\circ f)=0$ follows from Exercise \ref{AMDifferenceExer} as we have,
$$\h{\rm AM}(f.0+\vp\circ f)=\h{\rm AM}(f.0+\vp\circ f) - \h{\rm AM}(f.0)=\int_M \vp \circ f \sum_{j=0}^n f^\star \o^{n-j}\wedge f^\star \o_\vp^j = \h{\rm AM}(\vp)=0.$$
(Of course, $\h{\rm AM}(f.0)=0$ since by definition $f.0\in\H_0$.)
\epf

\begin{exer} {\rm
\label{AMDifferenceExer}
Show that
\beq
\lb{AMDifferenceEq}
\h{\rm AM}(v)-\h{\rm AM}(u)
=
\frac{V^{-1}}{n+1}\int_M(v-u)\sum_{k=0}^n \o_u^{n-k}\w \o_v^k.
\eeq
Among other things, this formula shows that $\h{\rm AM}$ is monotone, i.e.,  
\beq\lb{AMMonEq}
u \leq v \q\Rightarrow \q \h{\rm AM}(u) \leq \h{\rm AM}(v).
\eeq 
} \end{exer}

\blem
\lb{dpIsomLemma}
The action of $\AutMJz$ on $\H_0$ is a $\dD$-isometry.
\elem

\bpf
From \eqref{factionAMEq}, 
$$
\frac{d}{dt}f.\vpt=\dot\vpt\circ f,
$$
for any smooth path $t \mapsto \vp_t$ in $\H_0$. Thus, the $\dD$-length of $t \mapsto f.\vpt$
is
$$
V^{-1}\int_{[0,1]\times M}|\dot\vp_t\circ f| f^\star \o_{\vp_t}^n\w dt
=
V^{-1}\int_{[0,1]\times M}|\dot\vp_t| \o_{\vp_t}^n\w dt,
$$
equal to the $\dD$-length of $\vpt$.
\epf

Suppose $G$ is a subgroup of $\AutMJz$. By the previous lemma $G$ acts on $\mathcal H$ by ${\dD}$-isometries, hence induces a pseudometric on the orbit space $\H/G$,
$$
\dDG(Gu,Gv):=\inf_{f,g\in G}{\dD}(f.u,g.v).
$$
Here, we denote by $G u $ the orbit of $u $ under the action of $G$. Naturally, $Gu$ is an element of the orbit space $\H/G$. Thus, $\dDG $ measures the distance between orbits.

It is natural to expect that the group action extends to the metric completion. This is indeed the case.

\begin{lemma}
\label{LipschitzExt} 
Let $(X,\rho)$ and $(Y,\delta)$ be two complete metric spaces, $W$ a dense subset of $X$ and $f:W \to Y$ a $C$-Lipschitz function, i.e.,
\begin{equation}\label{lipineq}
\delta(f(a),f(b)) \leq C \rho(a,b), \q \forall\, a,b \in W.
\end{equation}
Then $f$ has a unique $C$-Lipschitz continuous extension 
to a map $\bar f:X\ra Y$.
\end{lemma}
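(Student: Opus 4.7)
The plan is to define the extension $\bar f$ pointwise by approximation, verify consistency and the Lipschitz bound by passing to limits, and deduce uniqueness from density.

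First, for each $x \in X$ choose, using density of $W$, a sequence $\{w_n\}_{n\in\NN} \subset W$ with $w_n \to x$. Since $\{w_n\}$ is Cauchy in $(X,\rho)$ and \eqref{lipineq} gives
$$
\delta(f(w_n),f(w_m)) \leq C \rho(w_n,w_m),
$$
the sequence $\{f(w_n)\}$ is Cauchy in $(Y,\delta)$ and hence converges, by completeness of $Y$, to some point which I would provisionally call $\bar f(x)$. To see that this point does not depend on the choice of approximating sequence, let $\{w'_n\}\subset W$ be another sequence converging to $x$; then $\rho(w_n,w'_n)\to 0$, so by \eqref{lipineq} again $\delta(f(w_n),f(w'_n))\to 0$, forcing $\lim f(w_n)=\lim f(w'_n)$. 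For $x\in W$ the constant sequence $w_n\equiv x$ shows that $\bar f$ agrees with $f$ on $W$.

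Next I would verify the Lipschitz bound. Given $x,y\in X$ pick $\{w_n\},\{v_n\}\subset W$ with $w_n\to x$ and $v_n\to y$. By \eqref{lipineq},
$$
\delta(f(w_n),f(v_n)) \leq C\rho(w_n,v_n).
$$
Passing to the limit, using continuity of the metrics $\rho$ and $\delta$ (which is itself a consequence of the triangle inequality), yields
$$
\delta(\bar f(x),\bar f(y)) \leq C \rho(x,y),
$$
so $\bar f$ is $C$-Lipschitz on all of $X$. In particular $\bar f$ is continuous.

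Finally, uniqueness follows because any continuous map $X\to Y$ that agrees with $f$ on the dense subset $W$ must agree with $\bar f$ everywhere: for any $x\in X$ and any sequence $w_n\in W$ with $w_n\to x$, the continuity of both extensions forces them to equal $\lim f(w_n)=\bar f(x)$. There is no real obstacle in this argument; the only point requiring minor care is the independence of $\bar f(x)$ from the approximating sequence, but this is immediate from \eqref{lipineq}.
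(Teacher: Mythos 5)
Your proof is correct and proceeds by exactly the same argument as the paper's: define $\bar f$ on $X$ via limits of $f$ along approximating sequences in $W$, check well-definedness and the Lipschitz bound by passing \eqref{lipineq} to the limit. The only difference is that you also spell out the (standard) uniqueness argument from density, which the paper leaves implicit.
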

\begin{proof}Let $w_k \in W$ be a Cauchy sequence converging to some $w \in X$. Lipschitz continuity gives
$$\delta(f(w_k),f(w_l)) \leq C \rho(w_k,w_l),$$
hence $\bar f(w) := \lim_k f(w_k) \in Y$ is well defined and independent of the choice of approximating sequence $w_k$. Choose now another Cauchy sequence $z_k \in W$ with limit $z \in X$, plugging in $w_k,z_k$ in \eqref{lipineq} and taking the limit gives that $\bar f: X \to Y$ is $C$-Lipschitz continuous.  
\end{proof}

\blem
\label{ActionExtension}
The action of $\AutMJz$ on $\mathcal H_0$ has a unique $\dD$-isometric extension 
to the metric completion  $\overline{(\H_0,\dD)} =(\mathcal E_1 \cap \h{\rm AM}^{-1}(0),\dD)$.
\elem

\bpf Because $\AutMJz$ acts by $\dD$-isometries, each $f \in \AutMJz$ induces a $1$-Lipschitz continuous self-map of $\mathcal H_0$. By Lemma \ref{LipschitzExt}, such maps have a unique $1$-Lipschitz extension to the completion $\mathcal E_1 \cap \h{\rm AM}^{-1}(0)$ and the extension is additionally a $\dD$-isometry. 
By density, the laws governing a group action have to be preserved as well.
\epf

For any Lie subgroup $K$ of the isometry group of $(M,g_{\o})$
define the subspace
\beq
\lb{HKEq}
\H_\o^K:=\{\vp\in\H_\o\,:\, \vp \h{ is invariant under $K$}\},
\eeq
and similarly define $\H_0^K=\H^K \cap \textup{AM}^{-1}(0)$.
According to Theorem \ref{d1CompletionThm}, the $\dD$-metric completion
of $\H_\o^K$ is
$$
\E_1^K:=\{u\in\E_1\,:\, u \h{ is invariant under $K$}\}.
$$

The next result follows using the arguments in the proofs
of Lemmas \ref{E1capH0Lemma} and \ref{ActionExtension}.

\blem
\lb{H0KCompletionLemma}
The metric completion of
$(\H_0^K,\dD)$ is 
$
\E_1^K\cap \h{\rm AM}^{-1}(0).
$
\elem

\subsection{The Aubin functional on the quotient space}
\label{}

Let $G\subset\AutMJz$ be a subgroup. 
Following Zhou--Zhu \cite[Definition 2.1]{zztoric} 
and Tian \cite[Definition 2.5]{Tian2012}, define 
the descent of $J$ to $\H/G$,
$$
J_G(Gu):=\inf_{g\in G}J(g.u).
$$
By Lemma \ref{E1capH0Lemma} this functional can be extended to a functional 
$J_G:\mathcal E_1 \cap \h{\rm AM}^{-1}(0)/G\ra \RR$, still satisfying
\beq
\lb{JGEq}
J_G(Gu)= \inf_{g \in G}  {J}(g.u).
\eeq
We now see that the key inequality between the Aubin functional and the Darvas distance function
(Proposition \ref{Jproperness}) descends to the metric completion of the quotient space.
\blem For $u \in \mathcal E_1 \cap \h{\rm AM}^{-1}(0)$ we have 
\lb{JGPropernessLemma}
\begin{equation}
\label{JGdGEqv}
 \frac{1}{C} J_G(Gu) -C \leq \dDG(G0,Gu) 
\leq  C J_G(Gu) + C,
\end{equation}
where $\dDG$ is the pseudometric of the quotient $\mathcal E_1 \cap \h{\rm AM}^{-1}(0)/G$.
\elem

\bpf
By Lemma \ref{dpIsomLemma}, 
$$
\dDG(G0,Gu)=\inf_{f\in G} \dD(0,f.u).
$$
The result  now follows from Proposition \ref{Jproperness}.
\epf

\section {A modified conjecture: Tian's second properness conjecture}
\lb{RevisedConjSec}

At last, we return to Conjecture \ref{TianConj} and pick up the discussion from where we left it at the end of Section \ref{CounterexampleSection}.
Lemma \ref{JGPropernessLemma} motivates the following modification of Conjecture \ref{TianConj}.

\bdefin
Let $F:\H\ra \RR$ be $G$-invariant.

\noindent $\bullet$ 
We say $F$ is {\it $\dDG$-proper } if
for some $C,D >0$,
$$
F(u) \geq C \dDG(G0,Gu) - D.
$$

\noindent $\bullet$  
We say $F$ is {\it $J_{G}$-proper } if
for some $C,D >0$,
$$
F(u) \geq C J_{G}(Gu) - D.
$$

\edefin

\bconj
\lb{MainConj} {\rm (Tian's second properness conjecture)}
Let $(M,\JJJ,\o$) be a Fano manifold. Set $G:=\Aut(M,\JJJ)_0$.
There exists a  K\"ahler--Einstein metric in $\mathcal H$ if and only if
the descent of the Mabuchi energy $E$ 
to the quotient space
$\calH/G$ is $\dDG$-proper
(equivalently, $J_{G}$-proper).
\econj

Note that according to Lemma \ref{JGPropernessLemma} both notions of properness are indeed equivalent. 
Also, the $G$-invariance condition can be considered as 
a version of the Futaki obstruction~\cite{Fut}.

Albeit being a purely analytic criterion,
properness should be morally equivalent 
to properness in a metric geometry sense, namely, that the
Mabuchi functional should grow at least linearly relative to some metric
on $\calH$, and this is precisely the content of Conjecture \ref{MainConj}. 

\begin{remark} {\rm
\label{DirichletRemark}
We now come back to the analogy with the Dirichlet  energy alluded to in the Prologue. There we seek to minimize the Dirichlet energy, say on the unit ball in $\RR^n$,
$$
E(f):=\int_{B_1(0)} \sum_{i=1}^n(\del_{x_i}f)^2 dx^1\w\cdots dx^n.
$$
The space of competitors $\H$ is now the space of smooth functions with prescribed boundary values 
$g\in C^\infty(\del B_1(0))$,
$$
\H:=\{f\in C^\infty(B_1(0))\,:\, f|_{\del B_1(0)}=g\}.
$$
In some sense, the prescribed boundary values can be morally thought of as the analogue 
for fixing a  K\"ahler class. What is the analogue of the Aubin functional? In this case it is just $E$ itself, i.e., we put $J=E$, so an analogue of Conjecture  \ref{TianConj} is trivial here.
However, the direct method in the calculus of variations motivates replacing $J$ (which is the $W^{1,2}$ seminorm) with the $W^{1,2}$ norm. Namely, we consider the metric
$$
(h,k):=\int \sum_{i=1}^n\del_{x_i}h\del_{x_i}k dx^1\w\cdots dx^n +
\int hk dx^1\w\cdots dx^n.
$$
The path-length distance is then just the one coming from the norm $W^{1,2}$, 
and the properness inequality is a consequence of the Poincar\'e inequality. This then implies
that a minimizer exists in the $W^{1,2}$ completion of $\H$. The Euler--Lagrange equation
is precisely the Laplace equation with prescribed boundary data.
Elliptic regularity theory then
shows the minimizer must be an element of $\H$ itself, hence a smooth harmonic function
agreeing with $g$ on the boundary.

} \end{remark}

In the remainder of these notes, we sketch the resolution of Conjecture \ref{MainConj} due to Darvas--Rubinstein \cite{DR2}. 

\begin{theorem} 
\label{KEexistenceIntroThm}
Conjecture \ref{MainConj} holds.
\end{theorem}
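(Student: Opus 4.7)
The plan is to prove both implications of the conjecture using the metric geometry of $(\E_1,\dD)$ developed in the preceding sections, working throughout in the quotient $\E_1 \cap \h{\rm AM}^{-1}(0)/G$ equipped with $\dDG$. A preliminary step, available through the framework of Berman--Boucksom--Eyssidieux--Guedj--Zeriahi, extends the Mabuchi energy $E$ to a $\dD$-lower semicontinuous functional on $\E_1$ that is convex along weak finite-energy geodesics; by Claim \ref{FutClaim} it is $G$-invariant and descends to $\E_1/G$, and by Lemma \ref{JGPropernessLemma} it suffices to establish the implication in terms of $J_G$-properness.

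\emph{Existence implies properness.} Let $\vpKE \in \H$ be a \KE potential; by convexity along geodesics, $\vpKE$ is a minimizer of $E$. Suppose for contradiction that $E$ is not $J_G$-proper, so that there is a sequence $u_j \in \E_1\cap \h{\rm AM}^{-1}(0)$ with $\dDG(G\vpKE, Gu_j) \to \infty$ while $E(u_j)$ stays bounded. Let $[0,T_j]\ni t \mapsto \vp_j(t)$ be the finite-energy geodesic from $\vpKE$ to $u_j$ parametrized by $\dD$-arclength, with $T_j\to \infty$. Convexity of $t \mapsto E(\vp_j(t))$ combined with $E(\vpKE)=\min E$ gives $E(\vp_j(t))\le E(\vpKE)+(t/T_j)(E(u_j)-E(\vpKE))$ for $t\in [0,T_j]$. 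Restricting to $t\in [0,1]$ and extracting a subsequential limit (using $\dD$-lower semicontinuity of $E$ and compactness of $E$-bounded unit-speed geodesic rays emanating from $\vpKE$) yields a unit-speed geodesic ray $t\mapsto \vp_\infty(t)$ along which $E$ is constant and equal to $E(\vpKE)$. A Berndtsson--Berman--Boucksom rigidity theorem then forces $\vp_\infty(t) \in G.\vpKE$ for every $t$, contradicting $\dDG(G\vpKE,G\vp_\infty(t))=t\to\infty$.

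\emph{Properness implies existence.} We run the two-parameter continuity method of Section \ref{PropExistSec} along the segment $[0,\mu]\times\{1\}$. Openness and the a priori estimates for $(s,t)\in A$ with $s<\mu$ go through verbatim; the obstacle is a uniform $L^\infty$ bound on $\vp(s,1)$ as $s\to\mu$ when $G$ is nontrivial, since automorphisms can cause $\osc \vp(s,1)\to\infty$ along a $G$-orbit. Monotonicity of $E$ in $s$ (Section \ref{LinfintSubSec}) gives $E(\vp(s,1))\le E(0)$, and $\dDG$-properness then yields $\dDG(G0, G\vp(s,1))\le C$ uniformly. Select $g_s\in G$ nearly realizing the infimum defining $\dDG$ and replace $\vp(s,1)$ by a suitably normalized pullback $\psi_s$ gauge-equivalent to $g_s.\vp(s,1)$; then $\dD(0,\psi_s)\le C$, which by \eqref{d1CharFormula} translates into a uniform $L^1(\o^n+\o_{\psi_s}^n)$ bound. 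Choosing $K\subset G$ maximally compact with $\o\in \H^K$ and averaging $g_s$ appropriately, we may arrange $\psi_s\in \E_1^K$, where Tian's $\alpha$-invariant (Theorem \ref{TianalphaThm}) together with \Kolodziej-type pluripotential capacity estimates promotes the $L^1$ bound to a uniform $L^\infty$ bound. Lemmas \ref{LaplacianLemma} and \ref{CtwogammaLemma} and Corollary \ref{CkgammaCor} then yield $C^{k,\gamma}$ bounds and close the continuity method, giving $\psi_\mu\in\H$ whose associated K\"ahler form, pulled back by $g_\mu^{-1}$, is the sought \KE metric.

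\emph{Main obstacle.} The principal difficulty is Direction 2: converting the abstract $\dDG$ bound into the classical $L^\infty$ bound required by the continuity method. This demands the delicate interplay of three ingredients---the gauge-fixing choice of $g_s$, Tian's $\alpha$-invariant applied in the maximally compact-invariant subspace, and \Kolodziej-type capacity estimates. In Direction 1 the analogous challenge is establishing strict convexity of $E$ along weak geodesics modulo the $G$-action, a Berndtsson-type rigidity theorem that is nontrivial because the geodesic endpoints have only finite (rather than bounded) energy.
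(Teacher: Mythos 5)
Your proposal takes a genuinely different route from the paper and both directions have non-trivial gaps. The paper proves Theorem \ref{KEexistenceIntroThm} by feeding the data \eqref{DataEq} into the abstract existence/properness principle, Theorem \ref{ExistencePrinc}, and the whole work consists of verifying properties \ref{p1}--\ref{p7} of Hypothesis \ref{MainHyp} (done in \S\ref{applyingSec}). In particular the direction ``properness $\Rightarrow$ existence'' is variational, via the precompactness \ref{p2} of $E$-sublevel sets and the regularity \ref{p3} of minimizers, not via a continuity method; the direction ``existence $\Rightarrow$ properness'' is delegated to \cite[Theorem 3.4]{DR2}. Your direction~2 instead re-runs the two-parameter continuity method plus gauge-fixing, which is the alternative route the paper attributes in a remark to Tian \cite[Theorem 2.6]{Tian2012}; your direction~1 is a sketch of the kind of argument that \cite{DR2} actually carries out.

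The concrete gap in your direction~2 is the gauge-fixing step. Equation \eqref{TwoParamCMEq} with $t=1$ is not $G$-equivariant when $s<\mu$: if $\vp(s,1)$ solves $\o_\vp^n=e^{f_\o-s\vp}\o^n$ and $\psi_s$ is the potential of $g_s^\star\o_{\vp(s,1)}$, then $\psi_s$ satisfies a \MA equation whose data involves $g_s^\star\o$ and $f_\o\circ g_s$, and when $g_s$ diverges (which is exactly the case one is worried about) these data degenerate. So Lemma \ref{LaplacianLemma} and the Evans--Krylov step cannot simply be applied to $\psi_s$ ``verbatim''; one has to argue directly with geometric bounds on $\o_{\psi_s}=g_s^\star\o_{\vp(s,1)}$ and a separate identification of the limit as \KE even though the limiting equation is degenerating. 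Your direction~1 also stops short: from the convexity estimate plus \ref{p2} you extract, for each $t$, a limit $\vp_\infty(t)\in\calM$, and \ref{p5} (Bando--Mabuchi, not a ``Berndtsson--Berman--Boucksom rigidity theorem'') forces $\vp_\infty(t)\in G.\vpKE$. But that is not a contradiction by itself---the limit ray may lie entirely in the single orbit $G.\vpKE$, which has zero $\dDG$-diameter. To get a contradiction one must first gauge-fix the $u_j$ using \ref{p6} so that the geodesics emanate transversally to the orbit and then show the distance to the orbit grows linearly, which requires additional work and is exactly where property \ref{p6} (hence Matsushima/Cartan, Proposition \ref{p6Prop}) enters in the paper's scheme.
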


The proof of this result is completed in Section \ref{ProofSec}.

\begin{remark}
The easier implication ``$J_G$-proper $\;\Rightarrow\;$ existence of K\"ahler--Einstein" 
is due to Tian \cite[Theorem 2.6]{Tian2012} and is a modification of the proof of Theorem  \ref{propernessexistence}. Our proof of Theorem \ref{KEexistenceIntroThm}
also furnishes a new proof of this fact.
In the special case of toric Fano manifolds, a variant of the converse direction 
is due to Zhou--Zhu \cite[Theorem 0.2]{zztoric}. 
\end{remark}

\section {A general existence/properness principle}
 \label{GeneralSec} 

Motivated by Remark \ref{DirichletRemark}, we approach Conjecture \ref{MainConj} using an abstract metric geometry framework. While seemingly abstract it turns out to be a powerful way of dealing with several different minimization problems in K\"ahler geometry.

\bnot
\label{MainNot}
The data $(\mathcal R,d,F,G)$ is defined as follows.

\begin{enumerate}[label = (A\arabic*)]
  \item\label{a1} $(\mathcal R,d)$ is a metric space with a 
distinguished 
element $0\in\mathcal R$,
        whose metric completion is denoted $(\overline{\mathcal R},d)$.
  \item\label{a2} $F : \mathcal R \to \Bbb R$ 
is lower semicontinuous (lsc). Let  
        $F: \overline{\mathcal R} \to \Bbb R \cup \{ +\infty\}$ 
        be the largest lsc extension
        of $F: \mathcal R \to \Bbb R$:
$$F(u) = \sup_{\varepsilon > 0 } \bigg(\inf_{\substack{v \in \mathcal R\\ d(u,v) \leq \varepsilon}} F(v) \bigg), \ \ u \in \overline{ \mathcal R}.$$

For each 
$u,v\in{\mathcal R}$ define also
$$
F(u,v):=F(v)-F(u).
$$
  \item\label{a3} 
The set of minimizers of $F$ on $\overline{\mathcal R}$ is denoted
$$
\mathcal M:= 
\Big\{ u \in \overline{\mathcal R} \ : \ F (u)= 
\inf_{v \in \overline{\mathcal R}} F(v)
\Big\}.
$$
  \item\label{a4} Let $G$ be a group  
  acting on ${\mathcal R}$ by 
  $G\times{\mathcal R}\ni(g,u) \to g.u\in {\mathcal R}$. Denote by
${\mathcal R} /G$ the orbit space, by $Gu\in{\mathcal R} /G$ the 
  orbit of $u\in{\mathcal R}$, 
  and~define~$d_G:{\mathcal R}/G\times {\mathcal R}/G\ra\RR_+$ by
$$
d_G(Gu,Gv):=\inf_{f,g\in G}d(f.u,g.v).
$$
\end{enumerate}
\enot

\bhyp
\label{MainHyp}
The data $(\mathcal R,d,F,G)$
satisfies the following properties.

\begin{enumerate}[label = (P\arabic*)]
  \item\label{p1}
 For any $\varphi_0,\varphi_1 \in \mathcal R$ there exists a $d$--geodesic segment $[0,1] \ni t \mapsto \varphi_t \in \overline{\mathcal R}$ for which
 $t\mapsto F(\varphi_t) \textup{ is continuous and convex on }[0,1].$
  \item \label{p2} 
If  $\{\varphi_j\}_j\subset \overline{\mathcal R}$  satisfies 
$\lim_{j\ra\infty}F(\varphi_j)= \inf_{\overline{\mathcal R}} F$, and
for some $C>0$, $d(0,\varphi_j) \leq C$ for all $j$, then there exists a $u\in \mathcal M$ and a subsequence $\{\varphi_{j_k}\}_k$ 
$d$-converging to $u$.
  \item\label{p3} 
  $\mathcal M \subset \mathcal R.$
  \item\label{p4} $G$ acts on ${\mathcal R}$ by $d$-isometries.
  \item \label{p5} $G$ acts on $\mathcal M$ transitively.
  \item \label{p6} If $\mathcal M \neq \emptyset$,  
then for any $u,v \in \mathcal R$ there exists $g \in G$
such that $d_G(Gu,Gv)=d(u,g.v)$.

  \item \label{p7} For all $u,v \in \mathcal R$ and $g \in G$,
$F(u,v)=F(g.u,g.v)$.

\end{enumerate}

\ehyp

The following result will provide the aforementioned framework
for dealing with many minimization problems.

\begin{theorem}
\label{ExistencePrinc}
Let $(\mathcal R,d,F,G)$ be as in Notation \ref{MainNot}
and satisfying Hypothesis \ref{MainHyp}.
Then $\mathcal M$ is nonempty if and only if
$F:{\mathcal R}\ra \RR$ 
is $G$-invariant, and
for some $C,D>0$,
\begin{equation}\label{Dproperness}
    F(u) \geq Cd_G(G0,Gu)-D,\q \h{for all\ } u \in \mathcal R.
\end{equation}
\end{theorem}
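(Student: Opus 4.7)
The plan is to prove the two implications separately. The forward direction (properness implies existence) is a short direct-method argument: assuming $F$ is $G$-invariant and \eqref{Dproperness} holds, specializing \eqref{Dproperness} to any $u\in\mathcal R$ gives $F\geq-D$, so $\inf_{\mathcal R}F$ is finite, and since $F$ on $\overline{\mathcal R}$ is the largest lsc extension, $\inf_{\mathcal R}F=\inf_{\overline{\mathcal R}}F$. For a minimizing sequence $u_j\in\mathcal R$, \eqref{Dproperness} bounds $d_G(G0,Gu_j)$, so I can pick $g_j\in G$ with $d(0,g_j.u_j)\leq d_G(G0,Gu_j)+1/j$ bounded; $G$-invariance gives $F(g_j.u_j)=F(u_j)\to\inf F$, so \ref{p2} produces a subsequential limit in $\mathcal M$, which \ref{p3} places in $\mathcal R$.

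For the reverse direction, fix $u_M\in\mathcal M$. The $G$-invariance of $F$ is immediate: by \ref{p7}, the quantity $F(g.u)-F(u)$ is independent of $u\in\mathcal R$ for each $g\in G$, and evaluating at $u_M$ together with \ref{p5} (which gives $g.u_M\in\mathcal M$, hence $F(g.u_M)=F(u_M)$) makes the constant vanish.

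For the linear growth estimate, since $|d_G(G0,Gu)-d_G(Gu_M,Gu)|\leq d(0,u_M)$, it suffices to bound $F(u)-F(u_M)$ below by a positive multiple of $d_G(Gu_M,Gu)$, minus a constant. Suppose this fails; then there exist $u_n\in\mathcal R$ with $F(u_n)-F(u_M)<\tfrac{1}{n}d_G(Gu_M,Gu_n)-n$. Using \ref{p6} and $G$-invariance, replace $u_n$ by $g_n.u_n$ so that $r_n:=d(u_M,u_n)=d_G(Gu_M,Gu_n)$; this inequality together with $F(u_n)\geq F(u_M)$ forces $r_n\to\infty$ and $F(u_n)=o(r_n)$. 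Let $[0,1]\ni t\mapsto \gamma^n_t\in\overline{\mathcal R}$ be a $d$-geodesic from $u_M$ to $u_n$ furnished by \ref{p1}, and for a fixed $T>0$ set $s_n:=T/r_n$, so that $d(u_M,\gamma^n_{s_n})=s_n r_n=T$. The convexity built into \ref{p1} gives
\[
F(\gamma^n_{s_n})\leq(1-s_n)F(u_M)+s_n F(u_n)\;\longrightarrow\; F(u_M),
\]
so $\{\gamma^n_{s_n}\}$ is a bounded minimizing sequence in $\overline{\mathcal R}$; by \ref{p2} a subsequence converges to some $u^T\in\mathcal M$, and \ref{p5} gives $Gu^T=Gu_M$. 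The triangle inequality then yields
\[
r_n=d_G(Gu^T,Gu_n)\leq d(u^T,u_n)\leq d(u^T,\gamma^n_{s_n})+d(\gamma^n_{s_n},u_n)=d(u^T,\gamma^n_{s_n})+(r_n-T),
\]
i.e.\ $T\leq d(u^T,\gamma^n_{s_n})\to 0$, a contradiction for $T>0$.

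The main obstacle is engineering this final contradiction. Convexity of $F$ along the geodesic produces a minimizer $u^T$ arbitrarily close to the point $\gamma^n_{s_n}$ lying at fixed distance $T$ from $u_M$; only the orbit-minimality $r_n=d_G(Gu_M,Gu_n)$ arranged via \ref{p6}---together with $Gu^T=Gu_M$ from \ref{p5}---converts this into a contradiction, since the orbit $Gu_M$ would otherwise provide a shortcut along the geodesic from $u_M$ to $u_n$.
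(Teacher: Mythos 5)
Your proof is correct and both directions match the intended arguments. The forward direction reproduces the paper's own sketch; the reverse direction---which the paper delegates to \cite[Theorem 3.4]{DR2}---follows that source's scheme of normalizing via \ref{p6}, exploiting convexity of $F$ along the geodesic from \ref{p1} together with the compactness property \ref{p2} and transitivity \ref{p5}, and deriving a contradiction from the triangle inequality.
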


One direction in this theorem is easy. Namely, if  \eqref{Dproperness} holds,
then  $F$ is bounded from below. 
By \ref{a2}, 
\begin{equation}
\label{InfRealization}
\inf_{v \in \overline{\mathcal R}} F(v)
=
\inf_{v \in \mathcal R} F(v).
\end{equation}
This, combined with \eqref{Dproperness}, 
the $G$--invariance of $F$ and the definition of $d_G$ implies there
exists $\varphi_j \in \mathcal R$ such that 
$\lim_{j}F(\varphi_j) = \inf_{\overline{\mathcal R}} F$ 
and $d(0,\varphi_j) \leq d_G(G0,G\varphi_j) + 1<C$ for $C$ independent of $j$. 
By \ref{p2}, $\mathcal M$ is non-empty.
For the other direction we refer the reader to \cite [Theorem 3.4] {DR2}. 

We have set up things in such a way that the modified properness conjecture, Conjecture \ref{MainConj}, would become a corollary of 
Theorem \ref{ExistencePrinc} applied to the following data
\begin{equation}
\begin{aligned}
\label{DataEq}
\mathcal R=\H_0, \q d=d_1, \q 
F=E, \q G:=\Aut_0(M,\JJJ),
\end{aligned}
\end{equation}
{\it if} this data satisfies the hypothesis of
Theorem \ref{ExistencePrinc}.
In the next sections we verify that this is indeed the case. 
Property \ref{p4} has already been verified in Lemma \ref{ActionExtension}. In the next few sections we verify the remaining hypothesis of   Theorem \ref{ExistencePrinc}.

\section{Applying the general existence/properness principle}
\lb{applyingSec}

In this section we briefly motivate---in the context of the K\"ahler--Einstein 
problem---some of the key assumptions
in the general existence/properness principle.
The point is to convince the reader that this principle
fits naturally/seamlessly with classical/foundational results in K\"ahler geometry.

First, a seemingly harmless condition, tucked into the ``notation" part of Theorem \ref{ExistencePrinc},
is that the functional we are trying to minimize on the metric completion
should be the greatest lower semicontinuous extension (with respect to the path-length metric)
of the functional we are trying to study originally on the ``regular" objects
$\calR$.  
This turns out to be quite a technical thing to verify.
At first, this might cause confusion: indeed any functional admits such an extension by means of the abstract formula
\begin{equation}
\begin{aligned}
\label{EAbstractEq}
F(u) = \sup_{\varepsilon > 0 } \bigg(\inf_{\substack{v \in \mathcal R\\ d(u,v) \leq \varepsilon}} F(v) \bigg), \ \ u \in \overline{ \mathcal R}.
\end{aligned}
\end{equation}
However, the issue is to verify that this abstract formula, say in the case of the Mabuchi energy, coincides with the original defining formula \eqref{EEq} which initially only makes sense on the space of smooth potentials $\calR=\H$. This is because only then can we actually verify that this extended functional satisfies the other hypothesis in Theorem \ref{ExistencePrinc} (without an explicit formula it is not clear how to proceed).
Fortunately, condition \ref{a2} for  \eqref{DataEq} does hold by the following result  \cite[Proposition 5.21]{DR2}.

\bprop
\label{EbetaExt}
Formula (\ref{EEq}) coincides with formula  \eqref{EAbstractEq} on $\E_1$. In other words, formula  \eqref{EEq} gives the greatest $d_1$-lsc extension of $E:\mathcal H \to \Bbb R$ 
to $\E_1$.
\eprop

\begin{remark} {\rm
\label{}
The analogue of this result for the Mabuchi metric $\dM$ can be found in  \cite{BDL}.
} \end{remark}

Second, property \ref{p1} holds for the Mabuchi energy due
to a result of Berman--Berndtsson \cite[Theorem 1.1]{bb}. 
In fact, we remark that it is well-known that the geodesic between smooth endpoints has considerable regularity
(as compared to just being in $\overline{\cal R}$) \cite{Bl12,Ch00}. 
In \cite{bb} it is shown that the Mabuchi energy is convex along such 
partially regular geodesics.

Third, property 
\ref{p2} stipulates 
precompactness of sublevel sets of the Mabuchi energy
with respect to the Darvas metric. 
Pre-compactness with respect to other functionals is a key result in the 
works \cite{bbgz,bbegz}, and can be adapted to show  
the aforementioned  pre-compactness \cite[Proposition 5.28]{DR2}.

Fourth,  property \ref{p3} stipulates regularity of minimizers 
of the Mabuchi energy in the metric completion.
This follows from the regularity result of Berman \cite[Theorem 1.1]{brm1}
combined with the characterization of the metric completion
of Darvas (Theorem \ref{d1CompletionThm}).

Fifth,  property \ref{p5}, modulo property \ref{p3}, amounts to the classical
Bando--Mabuchi theorem on uniqueness of \KE metrics up to
automorphisms.

Sixth, property \ref{p7} says that
the Mabuchi functional is exact, or of ``Bott--Chern" type,
and this is precisely Mabuchi's original theorem on his functional  
\cite[Theorem 2.4]{Mabuchi1986}. For an expository treatment we referred to \cite[\S5]{R14}.

Finally, property \ref{p6} is a new ingredient, and so we 
go into more detail, sketching property \ref{p6} for  \eqref{DataEq}.
It fits nicely into our framework since it shows precisely the role of another classical result in K\"ahler geometry, namely, 
Matsushima's classical theorem about the automorphism group of
a K\"ahler--Einstein manifold.
The key result in showing \ref{p6} is the following  \cite [Proposition 6.8] {DR2}.

\begin{proposition} 
\label{p6Prop}
Let $(M,\JJJ,\o,g)$ be \KEno. Define $(\mathcal R,d,F,G)$ by   \eqref{DataEq},
and suppose that \ref{a1}-\ref{a4} and \ref{p4} hold. Finally, assume the following:\\
\noindent (i) For each $X\in\isom(M,g)$, $t\mapsto\exp_It\JJJ X.\o$ 
is a $\dD$-geodesic whose speed depends continuously on $X$.\\
\noindent (ii) $\AutMJz\times \AutMJz \ni (f,g)\mapsto d(f.u,g.v)$
is  a continuous map for every $u,v\in\mathcal H$. \\
\noindent 
Then property \ref{p6} holds.
\end{proposition}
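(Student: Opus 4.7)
The plan is to show that for each $u,v \in \mathcal R = \mathcal H_0$ the infimum defining $d_G(Gu,Gv) = \inf_{g \in G} d(u, g.v)$ is attained. Take a minimizing sequence $\{g_n\} \subset G$ with $d(u, g_n.v) \to d_G(Gu,Gv)$. I will show that $\{g_n\}$ admits a subsequence converging to some $g_\infty \in G$; then by the continuity assumption~(ii), $d(u, g_\infty.v) = d_G(Gu,Gv)$, establishing~\ref{p6}.

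Since $\omega$ is K\"ahler--Einstein, Matsushima's theorem applies and yields a Cartan decomposition $G = K \cdot \exp_I(\JJJ \fk)$, where $K := \Isom(M,g) \cap G$ is compact, $\fk := \isom(M,g) \cap \autMJ$, and the map $K \times \fk \ni (k,X) \mapsto k \cdot \exp_I(\JJJ X) \in G$ is a diffeomorphism. Write $g_n = k_n \cdot \exp_I(\JJJ X_n)$ with $k_n \in K$ and $X_n \in \fk$. Compactness of $K$ gives, along a subsequence, $k_n \to k_\infty \in K$; the core of the argument is then to show that $\{X_n\} \subset \fk$ is bounded.

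The boundedness argument runs as follows. Since $\omega$ is $K$-invariant, $k_n.0 = 0$ in $\mathcal H_0$, so by the triangle inequality and the $d$-isometric action of $G$ (Lemma~\ref{ActionExtension},~\ref{p4}),
\begin{equation*}
d(0, \exp_I(\JJJ X_n).0) = d(k_n^{-1}.0, \exp_I(\JJJ X_n).0) = d(0, g_n.0) \le d(0,u) + d(u, g_n.v) + d(v,0),
\end{equation*}
which is uniformly bounded in $n$. Via Proposition~\ref{Jproperness} and \eqref{IJEq}, this translates into a uniform upper bound on $F_\omega(\exp_I(\JJJ X_n)) = (I-J)(\exp_I(\JJJ X_n)^\star \omega)$. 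By Lemma~\ref{BMpropProp} (Bando--Mabuchi), $F_\omega$ is proper on $G$; since $K$ is precisely the stabilizer of $\omega$ in $G$ (again by the Matsushima/Bando--Mabuchi picture), the Cartan decomposition identifies $\fk$ with $G/K$, and properness of $F_\omega$ descends to this non-compact factor. Hence $\{X_n\}$ is bounded in the finite-dimensional vector space $\fk$.

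Passing to a further subsequence, $X_n \to X_\infty \in \fk$, so $g_n \to g_\infty := k_\infty \cdot \exp_I(\JJJ X_\infty)$ in $G$. The continuity assumption~(ii) then gives $d(u, g_n.v) \to d(u, g_\infty.v)$, so $d(u, g_\infty.v) = d_G(Gu, Gv)$, as required. The principal obstacle is the properness step: one must leverage the KE uniqueness and Bando--Mabuchi properness to convert a uniform $d$-bound on the orbit points $\exp_I(\JJJ X_n).0$ into a bound on $X_n$ in $\fk$. Assumption~(i) plays an auxiliary role here, ensuring that the speed of the one-parameter subgroup geodesics $t \mapsto \exp_I(t\JJJ X).\omega$ varies continuously and nondegenerately with $X$, which is compatible with (and makes transparent) the properness on the non-compact factor of the Cartan decomposition.
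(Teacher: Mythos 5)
Your proof follows the same skeleton as the paper's (Cartan-decompose the minimizing sequence, exploit compactness of $K$, bound the non-compact factor, invoke continuity (ii) to pass to the limit), but the central step — showing $\{X_n\}\subset\fk$ is bounded — is done by a genuinely different route. The paper's sketch derives $d$-properness on the non-compact factor directly from hypothesis~(i): since $t\mapsto\exp_I t\JJJ X.\omega$ is a $\dD$-geodesic with speed depending continuously on $X$ and nonvanishing for $X\neq 0$ (by the Cartan bijection), homogeneity plus compactness of the unit sphere in $\fk$ give a linear lower bound $\dD(0,\exp_I\JJJ X.\omega)\ge c|X|$, so a $d$-bound on the orbit points instantly bounds $X_n$; this is the route that generalizes to the csc setting of \cite[\S6]{DR2}, where Bando--Mabuchi is unavailable. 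You instead route the $d$-bound through Proposition~\ref{Jproperness} and \eqref{IJEq} into an $(I-J)$-bound and then apply Lemma~\ref{BMpropProp}. This is legitimate for the K\"ahler--Einstein case and leverages a result the paper also uses (in Lemma~\ref{FetaMinLemma}), but it leaves hypothesis~(i) unused — a symptom that you are exploiting KE-specific structure where the proposition was engineered to rest on the abstract geodesic condition. One further caveat applies to both your argument and the paper's corollary following Lemma~\ref{BMpropProp}: passing from properness of $F_\omega$ in the orbit $C^{2,\gamma}$-topology to properness of the descended function on $\fk$ in the vector-space topology implicitly requires that the orbit map $\fk\to\H$ is a proper embedding (which can be justified via closedness of the orbit, itself a consequence of Bando--Mabuchi uniqueness); you should at least flag this, as "properness descends" is doing nontrivial work there. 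The approach via~(i) sidesteps this entirely, which is one reason the paper prefers it.
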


Condition (i) is essentially a corollary of   \eqref{distgeod}, while 
(ii) follows from   \eqref{d1CharFormula}.
Property \ref{p6} stipulates that a certain infimum over the group $G$ is attained. 
Thus, for the proof of Proposition \ref{p6Prop} we decompose the group $G$ into a compact part and a non-compact part in such a way that the compact part acts by $d$-isometries while on the non-compact part (but finite-dimensional!) we have $d$-properness. Then, together with conditions (i) and (ii),
the existence of a minimizer is guaranteed.

The aforementioned decomposition of the group into a compact and a non-compact part is stated in Corollary \ref{CartanCor} below.  
It should be well-known and relies on classical results that we now recall.
First, we recall Matsushima's classical theorem
\cite[Th\'eor\`eme 1]{Mat}.
We refer to Gauduchon \cite{ga} for more details.
Let $g(\,\cdot\,,\,\cdot\,)=\o(\,\cdot\,,\JJJ\,\cdot\,)$ denote the Riemannian
metric associated to $(M,\JJJ,\o)$.
Denote by $\Isom(M,g)_0$ the identity component of the isometry group of 
$(M,g)$. Since $M$ is compact so is $\Isom(M,g)_0$ \cite[Proposition 29.4]{Post}.
Denote by $\isom(M,g)$ the Lie algebra of $\Isom(M,g)_0$.

\bthm
\lb{VectorFieldDecompEdgeThm}
Let $(M,\JJJ,\o,g)$ be a Fano \K manifold. Suppose 
$g$ is a \KE metric. Then,
\begin{equation}\label{gsplit}
\aut(M,\JJJ)=\isom(M,g)\oplus \JJJ\,\isom(M,g).
\end{equation}
\ethm

The following result is classical, and we only state its \KE version, whose proof we sketch.
\bthm
\lb{MatsIwasawaThm}
Let $(M,\JJJ,\o,g)$ be \KEno.
Then any maximally compact subgroup of $\AutMJz$
is conjugate to $\Isom(M,g)_0$.
\ethm

\begin{proof}By a Theorem of Iwasawa--Malcev \cite[Theorem 32.5]{st}, if $G$
is a connected Lie group then its maximal compact
subgroup must be connected and any two maximal compact subgroups are conjugate. But then by Theorem
\ref{VectorFieldDecompEdgeThm} $\Isom(M,g)_0$ has to be a maximal compact
subgroup of $\AutMJz$.
\end{proof}

Next, we need a version of the classical Cartan decomposition
\cite[Proposition 32.1, Remark 31.1]{Bump}.

\bthm
\lb{CartanThm}
Let $S$ be a compact connected semisimple Lie group. Denote
by $(S^\CC,\JJJ)$ the complexification of $S$, namely the unique connected
complex Lie group whose Lie algebra is the complexification of that
of $\fs$, the Lie algebra of $S$. Then
the map $C$ from $S\times\fs$ to $S^\CC$ given by 
\beq
\lb{CartanMapFirstCaseEq}
(s,X)\mapsto C(s,X):=s\exp_I\JJJ X
\eeq 
is a diffeomorphism.
\ethm

Combining Theorems \ref{VectorFieldDecompEdgeThm}, \ref{MatsIwasawaThm} and \ref{CartanThm} 
we obtain the decomposition of $\AutMJz$ into a compact and a non-compact part
that is needed for the proof of Proposition \ref{p6Prop}. For details on how the following
result yields Proposition \ref{p6Prop} we refer to \cite[\S6]{DR2}, where a more general 
result is proven in the constant scalar curvature setting (when the Cartan type decomposition is not
given by classical results and we construct instead a ``partial Cartan decomposition" 
that may only be surjective).

\begin{corollary}
\label{CartanCor}
Let $(M,\JJJ,\o,g)$ be \KEno.
Then
the map $C$ from $\Isom(M,g)_0\times \isom(M,g)$ to $\AutMJz$ given by 
\beq
\lb{CartanMapFirstCaseEq}
(s,X)\mapsto C(s,X):=s\exp_I\JJJ X
\eeq 
is a diffeomorphism.

\end{corollary}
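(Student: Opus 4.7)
The plan is to reduce Corollary \ref{CartanCor} to the classical Cartan decomposition (Theorem \ref{CartanThm}) via Matsushima's splitting (Theorem \ref{VectorFieldDecompEdgeThm}) and the identification of $\Isom(M,g)_0$ as a maximal compact subgroup (Theorem \ref{MatsIwasawaThm}). The wrinkle is that $K := \Isom(M,g)_0$ need not be semisimple, so Theorem \ref{CartanThm} does not apply directly; one must handle the abelian part of the centre separately.

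First I would observe that Theorem \ref{VectorFieldDecompEdgeThm} identifies the complex Lie algebra $\aut(M,\JJJ)$ with the complexification $\fk \oplus \JJJ\fk = \fk^\CC$ of $\fk := \isom(M,g)$. Combined with Theorem \ref{MatsIwasawaThm}, which places $K$ as a maximal compact subgroup of the connected complex Lie group $\AutMJz$, this identifies $\AutMJz$ with the complexification $K^\CC$ of $K$. Decompose $\fk = \fz \oplus \fk_{ss}$, where $\fz$ is the centre and $\fk_{ss} = [\fk,\fk]$ the semisimple part. Correspondingly, $K$ is an almost direct product $K = Z \cdot K_{ss}$ of a compact torus $Z$ (the analytic subgroup with Lie algebra $\fz$, closed because $K$ is compact) and a compact connected semisimple Lie group $K_{ss}$, intersecting in a finite central subgroup $F := Z \cap K_{ss}$.

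Second, I would treat the two factors. For $K_{ss}$, Theorem \ref{CartanThm} provides a diffeomorphism
\[
K_{ss} \times \fk_{ss} \to K_{ss}^\CC, \qquad (s,X)\mapsto s\exp_I \JJJ X.
\]
For the torus $Z$, a direct calculation using that $\exp_I : \fz \oplus \JJJ\fz \to Z^\CC$ is a surjective homomorphism of abelian Lie groups with kernel a cocompact lattice in $\fz$ (and trivial intersection with $\JJJ\fz$) shows that $(t,Y)\mapsto t\exp_I \JJJ Y$ is a diffeomorphism $Z \times \fz \to Z^\CC$. Multiplying the two decompositions yields a surjective smooth map
\[
(Z \times K_{ss}) \times (\fz \oplus \fk_{ss}) \to Z^\CC \cdot K_{ss}^\CC = \AutMJz,
\]
whose differential is an isomorphism everywhere by Theorem \ref{VectorFieldDecompEdgeThm}.

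Finally, I would descend this map through the finite central subgroup $F$. Because $F$ lies in the centre of $K$, its adjoint action on $\fk$ (and on $\JJJ\fk$) is trivial, so $f\exp_I\JJJ X f^{-1} = \exp_I \JJJ X$ for all $f \in F$, $X \in \fk$. Hence the map $C$ of \eqref{CartanMapFirstCaseEq} is precisely the map induced by the displayed map above on the quotient $(Z \times K_{ss})/F \times \fk = K \times \fk$, and a standard verification confirms that identifications on the left correspond exactly to identifications on the right, with no extra identifications in the $\fk$-factor. It follows that $C$ is a diffeomorphism. The main obstacle is carrying out this last bookkeeping around the finite intersection $F$ cleanly; the analytic content (bijectivity at the Lie-algebra level, real-analytic exponential behaviour of the noncompact directions) is already contained in Theorems \ref{VectorFieldDecompEdgeThm}--\ref{CartanThm}.
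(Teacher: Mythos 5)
Your proof is correct and, more importantly, it identifies and fills a real gap in the paper's presentation. The paper asserts that Corollary \ref{CartanCor} follows by "combining Theorems \ref{VectorFieldDecompEdgeThm}, \ref{MatsIwasawaThm} and \ref{CartanThm}", but Theorem \ref{CartanThm} is stated only for a compact connected \emph{semisimple} group $S$, whereas $K=\Isom(M,g)_0$ is generally reductive but not semisimple (for instance, $K$ is a torus when $\AutMJz$ is a torus, as in Example \ref{MainExam}). Your reduction---splitting $\fk=\fz\oplus\fk_{\mathrm{ss}}$, handling the central torus $Z$ directly, applying Theorem \ref{CartanThm} to $K_{\mathrm{ss}}$, multiplying, and descending through the finite central intersection $F=Z\cap K_{\mathrm{ss}}$---is exactly the bookkeeping needed to pass from the semisimple case to the general compact connected case, and it goes through. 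The one step you call a "standard verification" is the crux: one should record that $Z^\CC\cap K_{\mathrm{ss}}^\CC$ is a finite central subgroup of $K^\CC$ contained in $Z\cap K_{\mathrm{ss}}=F$, and then use the two polar decompositions you established (for $Z^\CC$ and $K_{\mathrm{ss}}^\CC$) to see that coincidence of images forces $Y_1=Y_2$, $X_1=X_2$, and that the group factors differ only by an element of $F$ placed antidiagonally---so the fibers are precisely the $F$-orbits and nothing more. It is also worth noting that the cited source \cite{Bump} treats the Cartan (polar) decomposition for general compact connected Lie groups, so the restriction to semisimple $S$ in Theorem \ref{CartanThm} is likely an artifact of the paper's phrasing rather than a substantive limitation; nonetheless, given that statement, your reduction is the clean way to close the argument.
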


\section {A proof of Tian's second properness conjecture}
\lb{ProofSec}

As already explained at the end of Section \ref{GeneralSec}, and
as we started to elaborate in the previous section, we prove
Theorem \ref{KEexistenceIntroThm}
by applying Theorem \ref{ExistencePrinc} to data
  \eqref{DataEq}. Thus, it only remains to verify that
 this data satisfies the hypothesis of
Theorem \ref{ExistencePrinc}.

First, we go over Notation \ref{MainNot}.
First, in \ref{a1}, 
$\overline{\mathcal R}=\E_1\cap \h{\rm AM}^{-1}(0)$ by Theorem \ref{d1CompletionThm}
and Lemma \ref{E1capH0Lemma}.
Observe that \ref{a2} holds by Proposition \ref{EbetaExt}.
In \ref{a3}, the minimizers of $F$ are denoted by $\calM$.
Finally, \ref{a4} holds since $G\subset \AutMJz$ implies
that if $g\in G$ and $\eta\in\H$ then 
$g.\eta$ is both \K and cohomologous to $\eta$, i.e., $g.\eta\in\H$.
Thus, it remains to verify Hypothesis \ref{MainHyp}.

Properties \ref{p1}--\ref{p7} were all verified in \S\ref{applyingSec}
 with the exception of
property \ref{p4}, that itself follows from Lemma \ref{dpIsomLemma}.

Finally, we need to justify why we did not state $E$ must be $\AutMJz$-invariant
in Theorem \ref{KEexistenceIntroThm}, while it is needed to apply 
Theorem \ref{ExistencePrinc}. This follows from Futaki's theorem
\cite[p. 437]{Fut}. Indeed,
as in the proof of Claim \ref{FutClaim}
$$
\frac{d}{dt}E((\exp_It X)^\star\ovp) 
= C_X,
$$
for some $\RR\ni C_X$ depending on $X$ but not on $\ovp\in\H$. Also,
$$
\frac{d}{dt}E((\exp_I-t X)^\star\ovp) 
= -C_X.
$$
Thus, unless this derivative, i.e., $C_X$, is zero for every $X\in\autMJ$
and $\ovp\in\H$, the functional
$E$ cannot be bounded from below.
Now, properness of $E$ with respect to any nonnegative functional 
implies $E$ is bounded from below.
 Thus, $J_G$-properness of $E$
implies it is $\AutMJz$-invariant.

\section {A proof of Tian's third conjecture: the strong\\ Moser--Trudinger inequality}
 \label{SecondSec}

We now explain the proof of Tian's third conjecture, namely
the strong Moser--Trudinger inequality for \KE manifolds. First, let us recall the statement.

Denote by $\Lambda_1$ the real eigenspace of
the smallest positive eigenvalue of $-\Delta_\o$, and set
$$
\calH_\o^\perp:=\{\vp\in\H\,:\, \int \vp\psi\on=0,\;\forall \psi\in\Lambda_1\}.
$$

\bconj
Suppose $(M,\JJJ,\o)$ is Fano K\"ahler-Einstein.
Then for some $C,D>0$,
\beq
\lb{EperpEq}
E(\vp) \geq C J(\vp) - D, \qq  \vp \in \H_\o^\perp.
\eeq
\econj

Observe that no invariance properties are assumed, and the functionals
are not taken on the quotient space. Instead, an orthogonality assumption is made.

Conjecture \ref{TianConj2} was originally 
motivated by results in conformal geometry related to 
the determination of the best constants in the 
borderline case of the Sobolev inequality.
By restricting to functions orthogonal to the first eigenspace of the Laplacian,
Aubin was able to improve the constant in the aforementioned inequality
on spheres \cite[p. 235]{Aubinbook}. This can be seen as the sort of coercivity
of the Yamabe energy occuring in the Yamabe problem,
and it clearly fails without the orthogonality assumption due to the presence
of conformal maps. Conjecture \ref{TianConj2} stands in clear analogy with
the picture in conformal geometry, by stipulating that coercivity of
the K-energy holds in `directions perpendicular to holomorphic maps'
(when $\o$ is \KEno, 
it is well-known that $\Lambda_1$ is in a one-to-one correspondence with
holomorphic gradient vector fields, in fact this is how Matsushima's
Theorem \ref{VectorFieldDecompEdgeThm} 
is proven  \cite{ga,CR}).
It can be thought of as a higher-dimensional fully
nonlinear generalization of the classical Moser--Trudinger inequality.

It is a rather simple 
consequence of the work of Bando--Mabuchi \cite{BM}
that when a \KE metric exists,
$J_G$-properness implies $J$-properness on $\H_\o^\perp$
\cite[Corollary 5.4]{Tian97},\cite[Lemma A.2]{zztoric},\cite[Theorem 2.6]{Tian2012}. 
We now explain how to carry this through.
The key is to study the Aubin functional restricted to orbits of $\AutMJz$
and identify the minimizers and relate them to the first eigenspace.

Fix $\eta\in\H$.
Let $F_\eta:\AutMJz\ra \RR_+$ be given by 
$$
F_\eta(g):=(I-J)(g^\star\eta)
=
\V\frac1{n+1}\int_M\i\del\vp_g\w\dbar\vp_g\w\sum_{l=0}^{n-1}
(n-l)\o^{n-l-1}\w(g^\star\eta)^{l},
$$
where $\vp_g\in\H_\o$ is such that $g^\star\eta=\o+\i\ddbar\vp_g$
(i.e., where the $I-J$ energy of $g^\star\eta$ 
with respect to  the reference form $\o$).

\begin{lemma}
\label{FetaCritLemma}
Suppose $(M,\JJJ,\eta=\o_\psi)$ is Fano K\"ahler--Einstein.
Then $h\in \AutMJz$ is a critical point of $F_\eta$ precisely if
$-\vp_h\in\H_{h^\star\eta}^\perp$.
\end{lemma}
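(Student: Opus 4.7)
The strategy is to linearise $F_\eta$ at $h$ along one-parameter subgroups of $\AutMJz$ and to translate the vanishing of the derivative into the claimed orthogonality condition.

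Along $h_t := h\circ\exp_I(tX)$ for $X\in\aut(M,\JJJ)$, set $\xi := h^\star\eta = \o_{\vp_h}$ and write $h_t^\star\eta = (\exp_I tX)^\star\xi = \o + \i\ddbar u_t$ with $u_0=\vp_h$. Because $H^1(M,\RR)=0$ on a Fano manifold, the $\ddbar$-lemma produces a real function $\psi^X_\xi$, unique up to a constant, with $\mathcal L_X\xi = \i\ddbar\psi^X_\xi$; this is the holomorphy potential of $X$ with respect to $\xi$. Differentiating the identity above at $t=0$ gives $\i\ddbar\dot u_0 = \mathcal L_X\xi$, hence
\[
\dot u_0 \;=\; \psi^X_\xi + C
\]
for some constant $C$ that will drop out of the computation.

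A short calculation combining Exercise~\ref{IJAMExer}, the identity~\eqref{AMVarEq}, and $\tfrac{d}{dt}\o_{u_t}^n = \Delta_{u_t}\dot u_t\,\o_{u_t}^n$ yields the variation formula
\[
\frac{d}{dt}(I-J)(u_t) \;=\; -V^{-1}\int_M u_t\,\Delta_{u_t}\dot u_t\,\o_{u_t}^n .
\]
Evaluating at $t=0$ and using $\Delta_\xi C=0$,
\[
\frac{d}{dt}\bigg|_{t=0} F_\eta(h_t) \;=\; -V^{-1}\int_M \vp_h\,\Delta_\xi\psi^X_\xi\,\xi^n .
\]

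Now the \KE hypothesis enters decisively: since $\eta$ is \KE and $h$ is a biholomorphism, $\xi = h^\star\eta$ is also \KE, and the correspondence recalled right after Conjecture~\ref{TianConj2} (Matsushima/Lichnerowicz--Obata) identifies the real first eigenspace $\Lambda_1^\xi$ of $-\Delta_\xi$ with the image of $X\mapsto\psi^X_\xi$ modulo constants, each such $\psi^X_\xi$ being an eigenfunction of $-\Delta_\xi$ for the smallest positive eigenvalue $\lambda_1>0$. Therefore $\Delta_\xi\psi^X_\xi = -\lambda_1\psi^X_\xi$, so
\[
\frac{d}{dt}\bigg|_{t=0} F_\eta(h_t) \;=\; \lambda_1\, V^{-1}\int_M \vp_h\,\psi^X_\xi\,\xi^n ,
\]
which vanishes for every $X\in\aut(M,\JJJ)$ precisely when $\vp_h$ is $L^2(\xi^n)$-orthogonal to every element of $\Lambda_1^{h^\star\eta}$. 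Observing that $\o = h^\star\eta + \i\ddbar(-\vp_h)$ exhibits $-\vp_h$ as an $(h^\star\eta)$-K\"ahler potential (of $\o$), this orthogonality is exactly the definition of $-\vp_h\in\H_{h^\star\eta}^\perp$.

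The main technical obstacle is the identification $\dot u_0 = \psi^X_\xi + C$, which requires the $\ddbar$-lemma together with careful sign and normalisation conventions for the holomorphy potential; once this step and the Matsushima--type surjection $\aut(M,\JJJ)\twoheadrightarrow\Lambda_1^\xi$ are in hand, the remainder is a mechanical combination of the variation formula, self-adjointness of $\Delta_\xi$, and unwinding the definition of $\H_{h^\star\eta}^\perp$.
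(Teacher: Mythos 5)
Your proof is correct and follows essentially the same route as the paper: differentiate $F_\eta$ along $h\exp_I(tX)$, identify $\dot u_0$ with the holomorphy potential $\psi^X_{h^\star\eta}$ (up to a constant), apply the variation formula for $I-J$, and then use the Matsushima/Lichnerowicz eigenvalue identity for $\Delta_{h^\star\eta}$ on a \KE metric to turn $-\int\vp_h\Delta\psi^X$ into $\lambda_1\int\vp_h\psi^X$, from which the orthogonality condition is immediate. Your version is slightly more explicit about the $\ddbar$-lemma step (invoking $H^1(M,\RR)=0$ for Fano) and about tracking the eigenvalue $\lambda_1$, but no new idea or alternative decomposition is introduced.
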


\bpf
Using \eqref{IJIEq} and \eqref{AMVarEq},
\beq
\baeq
\lb{IJIEq}
\frac{d}{d\delta}(I-J)(\vp(\delta))
&=
\frac{d}{d\delta}\h{\rm AM}(\vp)
-
\frac{d}{d\delta}\V\int \vp(\delta)\o_{\vp(\delta)}^n
\cr
&=
\V\int\frac{d}{d\delta} \vp(\delta)\o_{\vp(\delta)}^n
-
\V\int\big(
\frac{d}{d\delta} \vp(\delta)
+
\vp(\delta)\Delta_{\o_{\vp(\delta)}}\frac{d}{d\delta} \vp(\delta)
\big)\o_{\vp(\delta)}^n
\cr
&=
-
\V\int
\vp(\delta)\Delta_{\o_{\vp(\delta)}}\frac{d}{d\delta} \vp(\delta)
\o_{\vp(\delta)}^n
.
\eaeq
\eeq
Writing $g_t=h\exp_ItX$ with $X\in\autMJ$, observe that
$$
\baeq
\i\ddbar\dot\vp_{h}
&=
\i\ddbar\dot\vp_{g_0}
\cr
&=
\frac {d}{dt}\Big|_0\big(\o+\i\ddbar\vp_{g_t} \big)
\cr
&=
\frac {d}{dt}\Big|_0g_t^\star\eta=
\frac {d}{dt}\Big|_0(\exp_ItX)^\star(h^\star\eta)
=
\i\ddbar\psi^X_{h^\star\eta}
.
\eaeq
$$
Therefore,
\beq
\baeq
\frac {d}{dt}\Big|_0F_\eta(g_t)
&=
-
\V\int \vp_{h}\Delta_{h^\star\eta}\dot \vp_{h}(h^\star\eta)^n
\cr
&=
-
\V\int \vp_{h}\Delta_{h^\star\eta}\psi^X_{h^\star\eta}(h^\star\eta)^n
\cr
&=
\V\int \vp_{h}\psi^X_{h^\star\eta}(h^\star\eta)^n.
\eaeq
\eeq
Since this holds for all $X\in\autMJ$, it follows that
$\vp_h\in \calH_{h^\star\eta}^\perp$.
\epf

\begin{lemma}
\label{FetaMinLemma}
Suppose $(M,\JJJ,\eta=\o_\psi)$ is Fano K\"ahler--Einstein.
By Theorem \ref{MatsIwasawaThm} then $\AutMJz=K^{\CCfoot}$ for a maximally compact
subgroup $K$. Suppose $\o\in\H^K$.
Then $F_\eta$ has a unique critical point which is a global minimum. 
\end{lemma}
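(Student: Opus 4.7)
The plan is to reduce $F_\eta$, via the Cartan decomposition, to a proper convex function on the finite-dimensional vector space $\fk$, and then to rule out multiple critical points using rigidity of Bando--Mabuchi type. Existence of a minimizer is immediate: Lemma \ref{BMpropProp} gives properness of $F_\eta$ on $\AutMJz$, so the infimum is attained at some $g_\star \in \AutMJz$, which in particular is a critical point.

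For the reduction, by Matsushima's Theorem \ref{VectorFieldDecompEdgeThm} together with the Iwasawa--Malcev conjugacy (Theorem \ref{MatsIwasawaThm}), every maximal compact subgroup of $\AutMJz$ is conjugate to $\Isom(M,g_\eta)_0$. Replacing $\eta$ by a suitable translate $h^\star\eta$---which only shifts $F_\eta$ by left translation and hence does not affect the structure of the critical set---we may assume $K=\Isom(M,g_\eta)_0$, so in particular $\eta$ is $K$-invariant. Combined with $\o\in\H^K$, this gives $F_\eta(kgk')=F_\eta(g)$ for $k,k'\in K$, so $F_\eta$ is $K$-bi-invariant and descends to $K\backslash\AutMJz$. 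By Corollary \ref{CartanCor} the latter is diffeomorphic to $\fk$ via $[\exp_I\JJJ X]\leftrightarrow X$, yielding a continuous map $\tilde F_\eta:\fk\to\RR_+$ that inherits properness from Lemma \ref{BMpropProp}.

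The next step is convexity along geodesics. For $X\in\fk$, condition (i) of Proposition \ref{p6Prop} tells us that $t\mapsto\vp_t:=(\exp_I t\JJJ X)^\star\eta$ is a $\dD$-geodesic in $\H$. Since a \KE metric exists, Claim \ref{FutClaim} yields $E(\vp_t)=E(\psi)$ constant in $t$. From the decomposition $E=\Ent-\mu(I-J)$ with $\mu>0$, combined with convexity of $\Ent$ along $\dD$-geodesics (a pluripotential-theoretic ingredient complementing the Berman--Berndtsson convexity of $E$ invoked in \S\ref{applyingSec}), we infer that $(I-J)(\vp_t)$ is convex in $t$; that is, $t\mapsto\tilde F_\eta(tX)$ is convex. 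Since $\AutMJz$ acts on $\H$ by $\dD$-isometries (Lemma \ref{dpIsomLemma}), the same convexity property transfers to every geodesic of the symmetric space $K\backslash\AutMJz$ through an arbitrary base point.

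For uniqueness, suppose two cosets $X_0\ne X_1$ in $\fk$ correspond to critical points of $\tilde F_\eta$, producing distinct \KE metrics $\eta_i:=(\exp_I\JJJ X_i)^\star\eta$. By Bando--Mabuchi, the $\dD$-geodesic from $\eta_0$ to $\eta_1$ is generated by a nonzero one-parameter subgroup $\exp_I t\JJJ Y$ of automorphisms and consists entirely of \KE metrics. Convexity of $(I-J)$ along this geodesic, combined with criticality at both endpoints, forces $(I-J)$ to be constant on the connecting segment; extending the geodesic to all of $\RR$, properness of $\tilde F_\eta$ together with the equality case in the Berman--Berndtsson convexity of $E$ specialized to automorphism-generated \KE geodesics forces $Y=0$, contradicting $\eta_0\ne\eta_1$. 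Hence the critical coset is unique, and by convexity and properness it realizes the global minimum. The hard part will be the final rigidity step: ruling out a nontrivial automorphism-generated $\dD$-geodesic along which $(I-J)$ is affine requires carefully extracting the equality case from Berman--Berndtsson and the Bando--Mabuchi uniqueness theorem, and it is here that the Fano \KE structure (in particular $\mu>0$ and the Matsushima decomposition) enters essentially.
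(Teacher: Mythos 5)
Your overall structure is correct and creative, but it takes a genuinely different route from the paper's, and that route has a real gap that you acknowledge but do not close. The paper proves strict convexity of $F_\eta$ on $\isom(M,g)$ directly: it shows the curve $t\mapsto(\exp_I\JJJ((1-t)Y+tZ))^\star\eta$ satisfies $\ddot\vp=|\nabla\dot\vp|^2_{\o_{\vp(t)}}$ (for all $Y,Z$, not just radial rays) and then computes
$$
\frac{d^2}{dt^2}F_\eta((1-t)Y+tZ)\ge \frac nV\int \i\del\dot\vp\w\i\dbar\dot\vp\w\o\w\o(t)^{n-2}>0,
$$
after which uniqueness of the critical point is immediate from strict convexity plus properness. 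Your argument replaces this direct computation by: Futaki invariance $\Rightarrow E$ constant on the orbit; $\Ent$ convex along geodesics (Berndtsson) $\Rightarrow (I-J)=(\Ent-E)/\mu$ convex; then try to upgrade weak convexity to uniqueness via rigidity. The parts through weak convexity are fine. The upgrade is not.

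Concretely: if $X_0\ne X_1$ are both critical points, convexity of $t\mapsto\tilde F_\eta(\gamma(t))$ along the connecting geodesic with vanishing derivative at $t=0,1$ only gives that $\tilde F_\eta\circ\gamma$ is \emph{constant on $[0,1]$}; on the extended line one merely gets $\tilde F_\eta(\gamma(t))\ge\tilde F_\eta(\gamma(0))$, which is perfectly compatible with properness (e.g.\ $f(t)=\max\{0,\,t-1,\,-t\}$ is convex, proper, and flat on $[0,1]$). So ``properness plus weak convexity'' does not rule out a flat segment. You invoke ``the equality case in Berman--Berndtsson specialized to automorphism-generated \KE geodesics,'' but that equality case says precisely that a geodesic along which $E$ is affine is automorphism-generated---and your geodesic is automorphism-generated to begin with, so the statement is vacuous here and yields no new information. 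What is actually needed is that $(I-J)$ (equivalently $\Ent$) is \emph{strictly} convex along a nontrivial automorphism geodesic, and nothing you cite supplies that; this is exactly what the paper's displayed inequality proves. Until this rigidity step is established the proof is incomplete, and as you yourself flag it is the hard part. The rest (existence of a minimizer from Lemma \ref{BMpropProp}, $K$-bi-invariance, descent to $\fk$ via Corollary \ref{CartanCor}, and the reduction $K=\Isom(M,g_\eta)_0$ by conjugating $\eta$) all checks out.
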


\bpf
We start with the following observation.

\begin{exer} {\rm
\label{}
If $g\in\AutMJz$ preserves $\o$ then
$$(I-J)(g^\star\eta)=(I-J)(\eta).$$
} \end{exer}

Thus, using the Cartan decomposition (Corollary \ref{CartanCor}),
$F_\eta$ descends to a function on $\isom(M,g)$, still denoted by $F_\eta$,
$$
F_\eta(X)=(I-J)\big((\exp_I\JJJ X)^\star\eta\big).
$$
Now, we show that the function $(\exp_It\JJJ X)^\star\eta$ satisfies
a useful equation.

The Hodge decomposition implies that every $X\in\autMJ$
can be uniquely written as \cite{ga}
\beq
\lb{XDecompEq}
X=X_H+\nabla \psi^X_\o-\JJJ\nabla \psi^{\JJJsml X}_\o,
\eeq
where $\nabla$ is the gradient with respect to the Riemannian
metric associated to $\JJJ$ and $\o$,
and $X_H$ is the $g_{\o}$-Riemannian dual of a 
$g_{\o}$-harmonic $1$-form.

By \eqref{XDecompEq} 
and the fact that 
$X \in \isom(M,{g_{\eta}})$ (here $g_\eta$ denotes the Riemannian
metric associated to $\JJJ$ and $\eta$) it 
follows that  
\beq
\lb{JXGradEq}
\JJJ X=\nabla\psi^{\JJJsml X}_{\eta}
\eeq
is a gradient (with respect to $g_{\eta}$) vector field
\cite[Theorem 3.5]{Mabuchi87}. We set 
$$
\o_{\vp(t)}:=\o(t)=\exp_It\JJJ X.\eta.
$$ 
Thus, 
\beq
\lb{dototEq}
\dot \o(t)=
\frac d{dt} 
\exp_It\JJJ X.\eta = 
\calL_{\JJJsml X}\eta\circ \exp_It\JJJ X
=\i\ddbar\psi^{\JJJsml X}_{\eta}\circ\exp_It\JJJ X,
\eeq
and 
$$
\baeq
\ddot \o(t)
&=\i\ddbar\big((\JJJ X)(\psi^{\JJJsml X}_{\eta})\big)\circ\exp_It\JJJ X
\cr
&=\i\ddbar\big(d\psi^{\JJJsml X}_{\eta}(\JJJ X)\big) \circ\exp_It\JJJ X
\cr
&=\i\ddbar|\nabla\psi^{\JJJsml X}_{\eta}|^2\circ\exp_It\JJJ X
,
\eaeq
$$
since the $\eta$-Riemannian dual of $d\psi^{\JJJsml X}_{\eta}$ 
is $\nabla\psi^{\JJJsml X}_{\eta}$. Thus,
$$
\ddot \vp(t)-|\nabla\dot\vp(t)|^2_{\o_{\vp(t)}}=0.
$$

Next, we can generalize this computation slightly to obtain
an equation for the function 
$(\exp_I\JJJ((1-t)Y+tZ))^\star\eta$.
By \eqref{XDecompEq} and the fact that 
$X \in \isom(M,{g_{\eta}})$ (here $g_\eta$ denotes the Riemannian
metric associated to $\JJJ$ and $\eta$) it 
follows that  
\beq
\lb{JXGradEq}
\JJJ (Z-Y)=\nabla\psi^{\JJJsml (Z-Y)}_{\eta}
\eeq
is a gradient (with respect to $g_{\eta}$) vector field
\cite[Theorem 3.5]{Mabuchi87}. We set 
$$
\o_{\vp(t)}:=\o(t)=(\exp_I\JJJ((1-t)Y+tZ))^\star\eta.
$$ 
Thus, 
\beq
\lb{dototEq}
\baeq
\dot \o(t)
&=
\frac d{dt} 
(\exp_I\JJJ((1-t)Y+tZ))^\star\eta 
\cr
&= 
\calL_{\JJJsml (Z-Y)}\eta\circ \exp_I\JJJ((1-t)Y+tZ)
=\i\ddbar\psi^{\JJJsml (Z-Y)}_{\eta}\circ\exp_I\JJJ((1-t)Y+tZ),
\eaeq
\eeq
and 
$$
\baeq
\ddot \o(t)
&=\i\ddbar\big((\JJJ (Z-Y))(\psi^{\JJJsml (Z-Y)}_{\eta})\big)
\circ\exp_It\JJJ (Z-Y)
\cr
&=\i\ddbar|\nabla\psi^{\JJJsml (Z-Y)}_{\eta}|^2\circ\exp_I\JJJ((1-t)Y+tZ)
,
\eaeq
$$
 Thus, again,
\begin{equation}
\begin{aligned}
\label{GeodvpEq}
\ddot \vp(t)-|\nabla\dot\vp(t)|^2_{\o_{\vp(t)}}=0.
\end{aligned}
\end{equation}

Observe that 
$$
F_\eta((1-t)Y+tZ)=(I-J)\big((\exp_I\JJJ ((1-t)Y+tZ))^\star\eta\big).
$$
Therefore,
\beq
\baeq
\frac {d}{dt}\Big|_0F_\eta((1-t)Y+tZ)
&=
-
\V\int \vp(t)\Delta_{g_t^\star\eta}\dot \vp(t)(g_t^\star\eta)^n
\cr
&=
-
\V\int \vp(t)\Delta_{\o(t)}\dot \vp(t)\o(t)^n
\cr
&=
-
\V\int \dot \vp(t)\Delta_{\o(t)}\vp(t)\o(t)^n
\cr
&=
\V\int \dot \vp(t)n(\o-\o(t))\w\o(t)^{n-1},
\eaeq
\eeq
where $g_t:=\exp_I\JJJ ((1-t)Y+tZ)$, since $g_t^\star\eta=\o(t)$.
Also, using \eqref{GeodvpEq},
\beq
\baeq
\frac {d^2}{dt^2}\Big|_0F_\eta((1-t)Y+tZ)
&=
\V\int \ddot \vp(t)n(\o-\o(t))\w\o(t)^{n-1}
\cr
& 
\q
-
\V\int \dot \vp(t)n\i\ddbar\dot\vp(t)\w\o(t)^{n-1}
\cr
&
\q
+
\V\int \dot \vp(t)n(n-1)(\o-\o(t))\w\i\ddbar\dot\vp(t)\w\o(t)^{n-2}
\cr
&=
n\V\int |\nabla\dot\vp|^2\o\w\o(t)^{n-1}
-
n\V\int |\nabla\dot\vp|^2\o(t)^{n}
\cr
& 
\q
+
n\V\int \i\del\dot \vp(t)\w\i\dbar\dot\vp(t)\w\o(t)^{n-1}
\cr
&
\q
+
\V\int \dot \vp(t)n(n-1)\o\w\i\ddbar\dot\vp(t)\w\o(t)^{n-2}
\cr
&
\q
-
\V\int \dot \vp(t)n(n-1)\i\ddbar\dot\vp(t)\w\o(t)^{n-1}
\cr
&=
n\V\int |\nabla\dot\vp|^2\o\w\o(t)^{n-1}
-
n\V\int |\nabla\dot\vp|^2\o(t)^{n}
\cr
& 
\q
+
\V\int |\nabla\dot\vp|^2\o(t)^{n}
\cr
&
\q
+
\V\int \dot \vp(t)n(n-1)\o\w\i\ddbar\dot\vp(t)\w\o(t)^{n-2}
\cr
&
\q
+
(n-1)
\V\int |\nabla\dot\vp|^2\o(t)^{n}
\cr
&=
n\V\int |\nabla\dot\vp|^2\o\w\o(t)^{n-1}
\cr
&
\q
-
n(n-1)
\V\int \i\del\dot \vp(t)\w\i\dbar\dot\vp(t)\w\o\w\o(t)^{n-2}
\cr
&=
\frac nV\int \Big(
|\nabla\dot\vp|^2\o(t)  
-
(n-1)
\i\del\dot \vp(t)\w\i\dbar\dot\vp(t)
\Big)\w\o\w\o(t)^{n-2}
\cr
&
\ge
\frac nV\int 
\i\del\dot \vp(t)\w\i\dbar\dot\vp(t)
\w\o\w\o(t)^{n-2}
>0
,
\eaeq
\eeq
since if $\alpha,\beta$ are two positive (1,1)-forms then
$(\tr_\alpha\beta)\alpha-\beta\ge0$, in general, so have
$|\nabla\dot\vp|^2\o(t)  
-
n
\i\del\dot \vp(t)\w\i\dbar\dot\vp(t)\ge0.
$
Thus, $F_\eta$ is strictly convex on the vector space $\isom(M,g)$.
Now, observe that it is a proper function
by Lemma \ref{BMpropProp}.
Since
a proper strictly convex function attains a unique minimum,
 the proof is complete.
\epf

\begin{exer} {\rm
\label{IJIJExer}
Prove the formula (see, e.g., \cite[p. 140]{RThesis})
$$
(I-J)(\o,\eta)=J(\eta,\o),
$$
where $(I-J)(\o,\eta)$ is just $(I-J)(\vp)$ for any $\vp$ such
that $\eta=\ovp$, while $J(\eta,\o)$ is just $J$ (recall \eqref{AubinEnergyEq})
 ``of" $\o$ ``with
respect to" the reference $\eta$, in the sense that
$$
J(\eta,\o)=
V^{-1}\int_M\vp\eta^n
-
\frac{V^{-1}}{n+1}\int_M
\psi\sum_{l=0}^{n}\eta^{n-l}\w\o^{l},
$$ 
where $\psi$ satisfies $\o=\eta_\psi$.
} \end{exer}

\bprop
\lb{ProperEquivProp}
Suppose $(M,\JJJ,\eta)$ is Fano K\"ahler--Einstein.
If $E$ is $J_G$-proper then \eqref{EperpEq} holds.
\eprop

\bpf
According to Lemma \ref{FetaCritLemma},
the functional
$$
g\mapsto (I-J)(\o,g^\star\eta)
$$
has a critical point at the identity $g=\id$ if $\eta=\o-\i\ddbar\vp$
when $\vp\in \H_\eta^\perp$.
Now, by Exercise \ref{IJIJExer}, this is tantamount to
the functional
\beq
\lb{JgMapEq}
g\mapsto J(g^\star\eta,\o)
\eeq
having a critical point at the identity $g=\id$ if $\eta=\o-\i\ddbar\vp$
when $\vp\in \H_\eta^\perp$.

Suppose now that indeed $\vp\in \H_\eta^\perp$.
Then the functional \eqref{JgMapEq} has a critical point at $g=\id$.
By Lemma \ref{FetaMinLemma}, this is the unique minimum of this functional.
Thus, using also $\AutMJz$-invariance of $J$ yields
$$
J(\vp)=:J(\eta,\eta_\vp)=
J(\eta,\o)
=
\inf_{g\in G}
J(g^\star\eta,\eta_\vp)
=
\inf_{g\in G}
J(\eta,g^\star\eta_\vp).
$$
The last expression is precisely $J_G(\vp)$ (with respect to the
reference metric $\eta$ (not $\o$!)). By assumption
$E$ is $J_G$-proper, so, say, for concreteness,
$$
E(\vp)\ge C J_G(\eta)-D = CJ(\vp) - D,
$$
as desired.
(Observe that the proof also gives the converse, namely that 
if  \eqref{EperpEq} holds then  $E$ is $J_G$-proper.)
\epf

Therefore, Theorem \ref{KEexistenceIntroThm} and Proposition \ref{ProperEquivProp} 
confirm Tian's conjecture.

\begin{corollary} 
\lb{TianConj2Cor}
Conjecture \ref{TianConj2} holds.
\end{corollary}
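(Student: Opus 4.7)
\medskip

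My plan is to reduce Conjecture \ref{TianConj2} to Theorem \ref{KEexistenceIntroThm} by showing that, on the orthogonal subspace $\H_\o^\perp$, the Aubin functional $J$ coincides with the $G$-orbital infimum $J_G$ (with roles of reference metric and perturbation swapped). Since Theorem \ref{KEexistenceIntroThm} already grants $E \ge C J_G(G\,\cdot\,) - D$ under the existence of a \KE metric, this identification will immediately yield the desired inequality \eqref{EperpEq}.

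First, I would fix a \KE metric $\eta \in \H$ and use it as the reference instead of $\o$, which is harmless since the properness inequality given by Theorem \ref{KEexistenceIntroThm} transfers between cohomologous reference metrics up to additive constants. The key observation, exactly as in Lemma \ref{FetaCritLemma}, is that for $\vp$ satisfying $\o = \eta_\vp$ with $\vp \in \H_\eta^\perp$, the first variation of $g \mapsto (I-J)(\o, g^\star \eta)$ in $g$ at $g = \id$ vanishes; using the symmetry of Exercise \ref{IJIJExer},
\[
(I-J)(\o, g^\star \eta) = J(g^\star \eta, \o),
\]
this translates into the identity $g = \id$ being a critical point of $g \mapsto J(g^\star \eta, \o)$.

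Next, I would invoke Lemma \ref{FetaMinLemma}, whose proof combines two ingredients: (a) strict convexity of $F_\eta$ along the ``geodesic'' rays $t \mapsto \exp_I \JJJ((1-t)Y + tZ)$ in $\isom(M,g)$, established via the Hodge decomposition \eqref{XDecompEq} and the identity $\ddot\vp - |\nabla\dot\vp|^2_{\o(t)} = 0$ together with the pointwise inequality $|\nabla\dot\vp|^2 \o(t) - n\,\i\del\dot\vp \wedge \dbar\dot\vp \ge 0$; and (b) properness of $F_\eta$ on $\isom(M,g)$, which is Bando--Mabuchi's Lemma \ref{BMpropProp}. A proper, strictly convex function on a finite-dimensional vector space attains a unique critical point which is its global minimum, so the critical point produced by the orthogonality condition must in fact realize $J_G(\vp)$.

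Combining these ingredients gives $J(\vp) = J_G(G\vp)$ for every $\vp \in \H_\eta^\perp$, and then Theorem \ref{KEexistenceIntroThm} yields \eqref{EperpEq} with the same constants. The main obstacle I foresee is purely bookkeeping: carefully checking that the orthogonality condition with respect to $\o$ transfers to orthogonality with respect to the \KE metric $\eta$ (which is the natural reference for the convexity argument), and that the Matsushima/Cartan decomposition (Theorem \ref{MatsIwasawaThm}, Corollary \ref{CartanCor}) is invoked consistently so that critical points of $F_\eta$ on the group $\AutMJz$ really correspond to critical points on the vector space $\isom(M,g)$ along which $F_\eta$ is strictly convex. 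The converse direction (that \eqref{EperpEq} implies $J_G$-properness) follows from the same chain of equalities, as noted in the parenthetical remark at the end of Proposition \ref{ProperEquivProp}.
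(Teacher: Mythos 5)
Your proposal is correct and follows essentially the same route as the paper: reduce to Theorem \ref{KEexistenceIntroThm} via Proposition \ref{ProperEquivProp}, using Lemma \ref{FetaCritLemma} to identify the orthogonality condition with criticality of $g\mapsto (I-J)(g^\star\eta)$, Exercise \ref{IJIJExer} to pass between $(I-J)(\o,g^\star\eta)$ and $J(g^\star\eta,\o)$, and Lemma \ref{FetaMinLemma} (strict convexity on $\isom(M,g)$ plus Bando--Mabuchi properness) to conclude that the critical point is the unique global minimum, whence $J(\vp)=J_G(G\vp)$ on $\H_\eta^\perp$. One small simplification you can make: the ``bookkeeping obstacle'' you flag about transferring orthogonality from $\o$ to $\eta$ is vacuous in the setting of Conjecture \ref{TianConj2}, since there the fixed reference metric is itself assumed to be K\"ahler--Einstein, so the two roles coincide.
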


\section*{Acknowledgments}

\addcontentsline{toc}{section}{Acknowledgments}

Thanks go to T. Darvas for many helpful discussions
and a stimulating collaboration on many of the results presented here;
to B. Clarke for a fruitful collaboration on the results presented
in \S\ref{CalabiSubSec} that stimulated some of the later developments;
to B. Berndtsson for help with Theorem \ref{HThm}; 
to J. Streets
for 
 organizing the Winter School
as well as to the numerous students who attended it and asked stimulating questions; 
and to the staff
at MSRI for providing an excellent research environment
(supported by NSF grant DMS-1440140), 
where these notes were completed in January 2016.
Finally, it is a real privilege to dedicate this article to 
Gang Tian whose theorems, conjectures and vision have
shaped so much of this field, and whose guidance and encouragement
have been invaluable in my own pursuits
over the years. 
This research was also supported by BSF grants 2012236, 
a Sloan Research Fellowship, and NSF grants DMS-1206284,1515703.

\def\bi{\bibitem}



{\sc University of Maryland} 

{\tt yanir@umd.edu}

\end{document}